\documentclass[12pt]{article}
\usepackage[latin1]{inputenc}

\usepackage[latin1]{inputenc}
\usepackage{fullpage}
\usepackage{amsfonts}
\usepackage{amssymb}
\usepackage{amsmath}
\usepackage[mathscr]{euscript}
\usepackage{wasysym}
\usepackage{tikz-cd}
\usepackage{stmaryrd}
\usepackage{color}
\usepackage{cite}
\usepackage[colors]{optsys}

\usepackage{algorithm}
\usepackage[algo2e,boxruled]{algorithm2e}

\renewcommand{\depart}{u} 
\renewcommand{\Depart}{U} 
\renewcommand{\DEPART}{\mathcal{U}} 
\renewcommand{\arrivee}{v} 
\renewcommand{\Arrivee}{V} 
\renewcommand{\ARRIVEE}{\mathcal{V}} 

\newcommand{\DEPARTbis}{\mathcal{W}} 
\newcommand{\arriveebis}{w}

\newcommand{\ARRIVEEbis}{\mathcal{W}}

\renewcommand{\arriveeter}{w}

\renewcommand{\ARRIVEEter}{\mathcal{W}}

\newcommand{\LinearQuadratic}{q}
\newcommand{\Quadratic}{Q}
\newcommand{\SquareMapping}{\sigma}
\newcommand{\VecteurObjective}{\pi}
\newcommand{\MatriceObjective}{\Pi}
\newcommand{\VecteurConstraint}{\gamma}
\newcommand{\MatriceConstraint}{\Gamma}
\newcommand{\NonsingularMatrix}{\Sigma}
\renewcommand{\matrice}{\Lambda}
\renewcommand{\vecteur}{\delta}
\newcommand{\scalaire}{\varpi}

\renewcommand{\PRIMAL}{{\mathcal X}}
\renewcommand{\DUAL}{{\mathcal Y}}
\renewcommand{\UNCERTAIN}{{\mathcal W}}
\renewcommand{\uncertainbis}{z}

\newcommand{\perspective}[1]{\widehat{#1}}
 
\newcommand{\constraintdim}{p}
\newcommand{\FONCTIONDEPART}{F}

\title{Conditional Infimum,\\ Hidden Convexity
  and S-Procedure}

\author{Jean-Philippe Chancelier and Michel De Lara, \\ 
 Cermics, \'Ecole nationale des ponts et chauss\'ees, IP Paris, France}

\begin{document}

\maketitle

\begin{abstract}
  Detecting hidden convexity is one of the tools to address nonconvex minimization problems,
  and find global minimizers. We introduce the notion of conditional infimum, develop the theory,
  and establish a tower property, relevant for minimization problems. Then, we illustrate how
  the conditional infimum is instrumental in revealing hidden convexity. Thus equipped, we provide
  a new sufficient condition for hidden convexity in nonconvex quadratic minimization problems,
  that encompasses and goes beyond known results (with the notion of block-signed pair matrix-vector).
  We also show how the conditional infimum is especially adapted to tackle the so-called S-procedure.
\end{abstract}



\section{Introduction}
\label{Introduction}

Convex minimization problems display well-known features that make their
numerical resolution appealing. In particular, convex minimization algorithms
are known to be simpler and less computationally intensive, in comparison with
nonconvex ones.  Thus, it is tempting to ``convexify'' a problem in order to
solve it, rather than to use nonconvex optimization.  More generally, it has
long been searched how to relate a nonconvex minimization problem to a convex
one.  If the original nonconvex minimization problem is formulated on a convex
set, then the convex lower envelope of the objective function has the same
minimum and a solution (argmin) of the original nonconvex problem is solution of
the convex lower envelope problem \cite[Proposition~11]{Horst:1984}.  Needless
to say that computing the lower envelope can be at least as difficult as solving
the original nonconvex problem.  This is why other approaches have been
developed, like convexification by domain or range transformation, as exposed
in~\cite{Horst:1984}, which provides a survey.

The vocable of ``hidden convexity'' covers different approaches: duality and
biduality analysis like in~\cite{BenTal-Ben-Teboulle:1996}; recasting a
nonconvex optimization problem as a convex one in~\cite[\S8.2.7]{Beck:2014};
identifying classes of nonconvex optimization problems whose convex relaxations
have optimal solutions which at the same time are global optimal solutions of
the original nonconvex problems~\cite{Ben-Tal-den-Hertog-Laurent:2011}.  A
survey of hidden convex optimization can be found in~\cite{XiaYong:2020}, with
its focus on three widely used ways to reveal the hidden convex structure for
different classes of nonconvex optimization problems.
In~\cite{Chancelier-DeLara:2021_ECAPRA_JCA}, we dealt with the notion of
\emph{hidden convexity in a function} as one of \emph{convex factorization}\footnote{%
Giving so-called \emph{convex composite} functions.}
  ---
that we characterized by means of one-sided linear conjugacies --- in the
following sense.  We considered a set~$\UNCERTAIN$, a function
\( \fonctionuncertain \colon \UNCERTAIN \to \barRR \), a vector space~$\PRIMAL$
and a mapping \( \theta \colon \UNCERTAIN \to \PRIMAL \).  Then, we said that the
function \( \fonctionuncertain \colon \UNCERTAIN \to \barRR \) displayed hidden
convexity with respect to the mapping~\( \theta \) if there existed a convex function
\( \fonctionprimal \colon \PRIMAL \to \barRR \) such that
\( \fonctionuncertain = \fonctionprimal \circ \theta \).

%

In this paper, we propose a new way to reveal hidden convexity by means of what
we call the ``conditional infimum'', an umbrella notion for well-known
operations in optimization.  It covers the operation of marginalization (that
is, partial minimization as in \cite[Theorem~5.3]{Rockafellar:1970}), widely
used in optimization, especially in the context of studying how optimal values
and optimal solutions depend on the parameters in a given problem.  Another
operation through which new functions are constructed by minimization is the
so-called epi-composition, developed by Rockafellar (see
\cite[p.~27]{Rockafellar-Wets:1998} and the historical note in
\cite[p.~36]{Rockafellar-Wets:1998}); epi-composition is called infimal
postcomposition in \cite[p.~214]{Bauschke-Combettes:2017}.
Notice that the vocable of marginalization hinges at a corresponding operation
(of partial integration) in probability theory.  A nice parallelism between
optimization and probability theories has been pointed out by several authors
\cite{DelMoral:1997,Akian-Quadrat-Viot:1998}.  Following this approach, we have
relabelled epi-composition as \conditionalinfimum\ in
\cite[Definition~2.4]{Chancelier-DeLara:2021_ECAPRA_JCA}, with the notation
\( \ConditionalInfimum{\theta}{\fonctionprimal} \).
The expression ``conditional infimum'' appears in the conclusion part of
\cite{Witsenhausen:1975b}, where it is defined with respect to a partition
field, that is, a subset of the power set which is closed \wrt\ (with respect
to) union and intersection, countable or not; however, the corresponding theory
has not been developed in~\cite{Witsenhausen:1975b}.  Related notions can be
found --- but defined on a measurable space equipped with a unitary Maslov measure
--- in the following works:
in~\cite{DelMoral:1997}, the theory of performance is sketched and the
``conditional performance'' is defined;
in~\cite{Akian-Quadrat-Viot:1998}, the ``conditional cost excess'' is defined;
in~\cite{Barron-Suprema-Jensen:2003}, the ``conditional essential supremum'' is
defined.
In this paper, we define the conditional infimum with respect to a
correspondence between two sets, without requiring measurable structures, and we
study its properties in the perspective of applications to optimization.
\medskip

The paper is organized as follows.
In Sect.~\ref{Conditional_infimum_with_respect_to_a_correspondence}, we provide
a definition of the conditional infimum (and supremum) of a function with
respect to a correspondence (between two sets), followed by examples, main
properties and relations to minimization problems.
%
The paper
\cite{Ben-Tal-den-Hertog-Laurent:2011} claims that the main mathematical tools
to detect hidden convexity are results on the range space of (indefinite)
quadratic forms and the S-procedure (called the S-lemma). Interestingly, these
are the two applications that we develop.
In Sect.~\ref{Hidden_convexity_in_quadratic_optimization_problems}, we provide a
new sufficient condition for detecting hidden convexity in nonconvex
quadratic minimization problems.
In Sect.~\ref{Conditional_infimum_and_the_S-procedure}, we provide new
sufficient conditions --- expressed in terms of conditional infimum --- for the
so-called S-procedure to hold true.
We conclude in Sect.~\ref{Conclusion}.
In Appendix~\ref{Appendix}, we provide additional material and some proofs.

\section{Conditional infimum with respect to a correspondence}
\label{Conditional_infimum_with_respect_to_a_correspondence}

In~\S\ref{Definitions_and_examples_of_the_conditional_infimum}, we provide a
definition of the conditional infimum (and supremum) of a function with respect
to a correspondence between two sets, and we provide examples.  Then, we expose
properties of the conditional infimum
in~\S\ref{Properties_of_the_conditional_infimum}.
In~\S.\ref{Applications_of_the_conditional_infimum_to_minimization_problems} we
develop applications of the conditional infimum to minimization problems.

We use the notation
\( \ic{j,k}=\na{j, j+1,\ldots,k-1,k} \) for any pair of natural numbers such
that \( j \leq k \).
We denote $\barRR = \ClosedIntervalClosed{-\infty}{+\infty} $,
$\RR_{-}=\OpenIntervalClosed{-\infty}{0}$,
$\RR_{+} = \ClosedIntervalOpen{0}{+\infty} $,
$\RR_{++}=\OpenIntervalOpen{0}{+\infty}$.
As we manipulate functions with values in~$\barRR$, we adopt the Moreau
\emph{lower ($\LowPlus$)} and \emph{upper ($\UppPlus$) additions}
\cite{Moreau:1970}, which extend the usual addition~($+$) with
\( \np{+\infty} \LowPlus \np{-\infty}=\np{-\infty} \LowPlus \np{+\infty}=-\infty \) and
\( \np{+\infty} \UppPlus \np{-\infty}=\np{-\infty} \UppPlus \np{+\infty}=+\infty \).
For any set~$\UNCERTAIN$ and function
\( \fonctionuncertain \colon \UNCERTAIN \to \barRR \), its \emph{epigraph} is
\( \epigraph\fonctionuncertain= \defset{
  \np{\uncertain,t}\in\UNCERTAIN\times\RR}%
{\fonctionuncertain\np{\uncertain} \leq t} \), its \emph{strict epigraph} is
\( \epigraph_{s}\fonctionuncertain= \defset{
  \np{\uncertain,t}\in\UNCERTAIN\times\RR}%
{\fonctionuncertain\np{\uncertain} < t} \), its \emph{effective domain} is
\( \dom\fonctionuncertain= \defset{\uncertain\in\UNCERTAIN}{
  \fonctionuncertain\np{\uncertain} <+\infty} \).  A function
\( \fonctionuncertain \colon \UNCERTAIN \to \barRR \) is said to be \emph{convex}
if its epigraph is a convex set, \emph{proper} if it never takes the
value~$-\infty$ and that \( \dom\fonctionuncertain \not = \emptyset \), \emph{lower semi
  continuous (\lsc)} if its epigraph is closed.

\subsection{Definitions and examples of the conditional infimum}
\label{Definitions_and_examples_of_the_conditional_infimum}

In~\S\ref{Background_on_correspondences}, we provide background on
correspondences.  In~\S\ref{Definitions_of_the_conditional_infimum}, we give a
formal definition of the conditional infimum (and supremum) with respect to a
correspondence between two sets.  In~\S\ref{Examples} we give examples, and
in~\S\ref{Special_cases_and_examples} we explore several special cases.

\subsubsection{Background on correspondences}
\label{Background_on_correspondences}
In optimization, one is more familiar with set-valued mappings
\cite[Chapter~5]{Rockafellar-Wets:1998} than with correspondences, though the
two notions are essentially equivalent.  We favor the notion of correspondence
because, regarding conditional infimum, we will obtain nicer formulas with the
composition of correspondences than with the composition of set-valued mappings
(see Footnote~\ref{ft:tower_property}).

We recall that a correspondence~$\correspondence$ between two sets~$\DEPART$
and~$\ARRIVEE$ is a subset \( \correspondence \subset \DEPART\times\ARRIVEE \).
We denote \( \depart\correspondence\arrivee \iff 
\np{\depart,\arrivee} \in \correspondence \).
A \emph{foreset} of a correspondence~$\correspondence$ is
any set of the form \( \correspondence\arrivee = 
\bset{\depart \in \DEPART}{ \depart\correspondence\arrivee } \),
where \( \arrivee \in \ARRIVEE \),
or, by extension, of the form
\( \correspondence\Arrivee = \bset{\depart \in \DEPART}%
{ \exists \arrivee\in\Arrivee, \, \depart\correspondence\arrivee } \),
where \( \Arrivee \subset \ARRIVEE \).
An \emph{afterset} of a correspondence~$\correspondence$ is
any set of the form \( \depart\correspondence=
\bset{\arrivee \in \ARRIVEE}{ \depart\correspondence\arrivee } \),
where \( \depart \in \DEPART \),
or, by extension, of the form
\( \Depart\correspondence= \bset{\arrivee \in \ARRIVEE}%
{ \exists  \depart \in \Depart \eqsepv \depart\correspondence\arrivee } \),
where \( \Depart \subset \DEPART \).
%
%
%
We denote by \( \Converse{\correspondence} \subset \ARRIVEE\times\DEPART \) the
\emph{inverse correspondence}
between the two sets~\( \ARRIVEE \) and \( \DEPART \) given by
\( \arrivee \Converse{\correspondence} \depart \iff
\depart \correspondence \arrivee \).
The \emph{domain} and the \emph{range} of a correspondence~$\correspondence$
are given respectively by 
\( \dom\correspondence = \bset{ \depart \in \DEPART }%
{ \depart\correspondence \not= \emptyset } \) 
and
\( \range\correspondence = \bset{ \arrivee \in \ARRIVEE }%
{ \correspondence\arrivee \not= \emptyset } \).
We have that \( \range\correspondence = \dom\Converse{\correspondence} \)
and \( \dom\correspondence = \range\Converse{\correspondence} \).
%
For any pair of correspondences $\correspondence$ between~$\DEPART$ and~$\ARRIVEE$ 
and $\correspondencebis$ between~$\ARRIVEE$ and $\ARRIVEEter$,
the \emph{composition}
\( \correspondence\correspondencebis \) denotes the correspondence
between the two sets~\( \DEPART \) and \( \ARRIVEEter \) given by, 
for any \( \np{\depart,\arriveeter} \in \DEPART\times\ARRIVEEter \),
\( \depart \np{\correspondence\correspondencebis} \arriveeter
\iff \exists \arrivee \in \ARRIVEE \) such that 
\( \depart \correspondence \arrivee \) and 
\( \arrivee \correspondencebis \arriveeter \).


\subsubsection{Definition of the conditional infimum}
\label{Definitions_of_the_conditional_infimum}

We give a formal definition of the conditional infimum (and supremum) with
respect to a correspondence between two sets.

\subsubsubsection{Optimization over a subset}

First, like in Probability theory\footnote{%
  Even if we draw parallels between optimization and probability theories, we do
  not develop the parallelism to its potential full extent as, for instance, we
  do not consider the equivalent of a generic probability distribution.
  Compared to
  \cite{DelMoral:1997,Akian-Quadrat-Viot:1998,Akian:1999,Barron-Suprema-Jensen:2003}
  which consider Maslov measures and densities --- that is, an analog of
  probability measures --- we could say that, in this paper, we only focus on the
  theory of the conditional infimum/supremum for the analog of the uniform
  probability (see the introduction of~\cite{Akian:1999}).  } where one starts
by defining the conditional probability \wrt\ a subset of the sample space, we
define the conditional infimum (and supremum) \wrt\ a subset.
We adopt the conventions\footnote{%
  Such conventions arise naturally as the mapping
  \( \Depart \in 2^\DEPART \mapsto \inf_{\Depart} \fonctiondepart = \inf_{\depart \in
    \Depart} \fonctiondepart\np{\depart} \) is nonincreasing and as the mapping
  \( \Depart \in 2^\DEPART \mapsto \sup_{\Depart} \fonctiondepart = \inf_{\depart \in
    \Depart} \fonctiondepart\np{\depart} \) is nondecreasing.  However, one has
  to be careful because
  \( \inf_{\Depart} \fonctiondepart \leq \sup_{\Depart} \fonctiondepart \) if
  \( \Depart \neq \emptyset \), but
  \( +\infty = \inf_{\emptyset} \fonctiondepart > \sup_{\emptyset } \fonctiondepart = -\infty \).  }
that 
\cite[p.~1]{Rockafellar-Wets:1998} 
\begin{equation}
  \inf_{\emptyset } \fonctiondepart = 
  \inf_{\depart \in \emptyset } \fonctiondepart\np{\depart} = +\infty 
  \mtext{ and } 
  \sup_{\emptyset } \fonctiondepart =
  \sup_{\depart \in \emptyset } \fonctiondepart\np{\depart} = -\infty \eqfinp
  \label{eq:correspondence_convention}
\end{equation}

\begin{definition}
  Let $ \fonctiondepart \colon \DEPART \to \barRR $ be a function
  and \( \Depart \subset \DEPART \) a subset.
  We define the \emph{conditional infimum}
  (resp. the \emph{conditional supremum})
  of the function~$\fonctiondepart$ with respect to the subset~\( \Depart \)
  by 
  \begin{subequations}
    \begin{equation}
      \InfCond{\fonctiondepart}{\Depart} =
      \inf_{\depart \in \Depart} \fonctiondepart\np{\depart}
      \eqsepv
      \SupCond{\fonctiondepart}{\Depart} =
      \sup_{\depart \in \Depart} \fonctiondepart\np{\depart}
      \eqfinp
      \label{eq:subset_conditional_infimum}  
    \end{equation}
    To make the link with minimization (resp. maximization) under constraint, we also define
    \begin{equation}
      \argminInfCond{\fonctiondepart}{\Depart} =
      \argmin_{\depart \in \Depart} \fonctiondepart\np{\depart}
      \eqsepv
      \argmaxSupCond{\fonctiondepart}{\Depart} =
      \argmax_{\depart \in \Depart} \fonctiondepart\np{\depart}
      \eqfinp
      \label{eq:subset_conditional_infimum_argmin}  
    \end{equation}
  \end{subequations}  
\end{definition}
Thus, this first notion of conditional infimum is related to minimization
problems under constraint.
All properties about the conditional infimum are easily carried to the
conditional supremum (and conversely) because
\begin{equation}
  -\SupCond{\fonctiondepart}{\correspondence} 
  =  \InfCond{-\fonctiondepart}{\correspondence}
  \eqsepv
  -\InfCond{\fonctiondepart}{\correspondence} 
  = \SupCond{-\fonctiondepart}{\correspondence} \eqfinp
  \label{eq:correspondence_conditional_supremum_infimum}
\end{equation}
In the sequel, we will favor the conditional infimum as we are interested in
applications to minimization problems.

\subsubsubsection{Definition of conditional infimum \wrt\ a correspondence} 

In the existing definitions of the conditional infimum of a function in the
literature
\cite{DelMoral:1997,Akian-Quadrat-Viot:1998,Barron-Suprema-Jensen:2003}, both
the original function and its conditional infimum are defined on a measurable
space equipped with a unitary Maslov measure.
By contrast, our definition of the conditional infimum of a function
(Definition~\ref{de:conditional_infimum} below) does not require a measurable
space (nor a Maslov measure) but a correspondence between two sets, a source set
and a target set; what is more, for a function whose domain is the source set,
its conditional infimum is defined on the target set.
The expression~\eqref{eq:correspondence_conditional_infimum} below is called the
marginal function (or minimal value function) in~\cite[Equation~(1.7.3),
page~41]{Pallaschke-Rolewicz:1997} (and noted
\( \overline{\fonctiondepart\correspondence} \) when $\correspondence$ is a
multifunction). However, \( \overline{\fonctiondepart\correspondence} \) is not
interpreted as a conditional infimum, and the theory is not developed in
\cite{Pallaschke-Rolewicz:1997}.

\begin{definition}
  \label{de:conditional_infimum} 
  Let $ \fonctiondepart \colon \DEPART \to \barRR $ be a function
  and $\correspondence$ be a correspondence between the sets~$\DEPART$ and~$\ARRIVEE$.
  We define the \emph{conditional infimum} 
  of the function~$\fonctiondepart$ with respect to the correspondence~$\correspondence$ 
  as the function~\( \InfCond{\fonctiondepart}{\correspondence} \colon \ARRIVEE \to \barRR \)
  given by
  \begin{subequations}
    \begin{equation}
      \InfCond{\fonctiondepart}{\correspondence} \colon 
      \ARRIVEE \to \barRR \eqsepv
      \InfCond{\fonctiondepart}{\correspondence}\np{\arrivee} 
      = 
      \InfCond{\fonctiondepart}{\correspondence\arrivee}
      = \inf_{\depart \in \correspondence\arrivee} \fonctiondepart\np{\depart}
      \eqsepv \forall \arrivee \in \ARRIVEE
      \eqfinv
      \label{eq:correspondence_conditional_infimum}    
    \end{equation}
    where we have used the notation~\eqref{eq:subset_conditional_infimum}.   
    We also define the \emph{conditional argmin} 
    of the function~$\fonctiondepart$ with respect to the correspondence~$\correspondence$ 
    as the set-valued functions \( \argminInfCond{\fonctiondepart}{\correspondence} \colon \ARRIVEE \rightrightarrows \DEPART \)
    given by
    \begin{equation}
      \argminInfCond{\fonctiondepart}{\correspondence} \colon 
      \arrivee \mapsto 
      \argminInfCond{\fonctiondepart}{\correspondence\arrivee}
      =
      \bset{ \depart \in \correspondence\arrivee}{\fonctiondepart(\depart) =
        \InfCond{\fonctiondepart}{\correspondence\arrivee}}
      \eqsepv \forall \arrivee \in \ARRIVEE
      \eqfinp
      \label{eq:correspondence_conditional_infimum_argmin}
    \end{equation}
  \end{subequations}
\end{definition}

The following interpretation of the conditional infimum is a straightforward consequence of
Definition~\ref{de:conditional_infimum}.
\begin{proposition}
  Let $ \fonctiondepart \colon \DEPART \to \barRR $ be a function
  and $\correspondence$ be a correspondence between the sets~$\DEPART$ and~$\ARRIVEE$.
  We have that
  \begin{equation}
    \InfCond{\fonctiondepart}{\correspondence}
    =
    \max\defset{ \phi \colon {\ARRIVEE} \to \barRR }{ \forall \arrivee \in \ARRIVEE\eqsepv 
      \depart \in \correspondence\arrivee \implies \phi\np{{\arrivee}} \leq \fonctiondepart\np{\depart} }
    \eqfinp               
  \end{equation}
\end{proposition}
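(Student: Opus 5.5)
The plan is to prove the identity in two steps. First, show that the conditional infimum \( \InfCond{\fonctiondepart}{\correspondence} \) belongs to the set of admissible functions on the right-hand side. Second, show that it dominates pointwise every admissible function. Together these give that the set has a greatest element for the pointwise order on \( \barRR^{\ARRIVEE} \), so the \( \max \) is attained and equals \( \InfCond{\fonctiondepart}{\correspondence} \).

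For membership, fix \( \arrivee \in \ARRIVEE \) and \( \depart \in \correspondence\arrivee \). By Definition~\ref{de:conditional_infimum},
\[
\InfCond{\fonctiondepart}{\correspondence}\np{\arrivee}
= \inf_{\depart' \in \correspondence\arrivee} \fonctiondepart\np{\depart'}
\leq \fonctiondepart\np{\depart} \eqfinv
\]
since \( \depart \) is one of the indices in the infimum. Hence \( \phi = \InfCond{\fonctiondepart}{\correspondence} \) satisfies the defining implication.

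For maximality, take any admissible \( \phi \) and fix \( \arrivee \in \ARRIVEE \). Then \( \phi\np{\arrivee} \) is a lower bound in \( \barRR \) of the family \( \defset{\fonctiondepart\np{\depart}}{\depart \in \correspondence\arrivee} \). Since the infimum is the greatest lower bound, \( \phi\np{\arrivee} \leq \InfCond{\fonctiondepart}{\correspondence\arrivee} = \InfCond{\fonctiondepart}{\correspondence}\np{\arrivee} \).

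The only point needing care, and the likely (mild) obstacle, is the degenerate case \( \correspondence\arrivee = \emptyset \), i.e. \( \arrivee \notin \range\correspondence \). There the constraint on \( \phi\np{\arrivee} \) is vacuous, so admissible functions may take any value in \( \barRR \) at \( \arrivee \), including \( +\infty \). The convention~\eqref{eq:correspondence_convention}, \( \inf_{\emptyset}\fonctiondepart = +\infty \), gives \( \InfCond{\fonctiondepart}{\correspondence}\np{\arrivee} = +\infty \), which is indeed the largest possible value. So the maximality argument stays valid, and in fact the greatest-lower-bound argument already covers this case, because every element of \( \barRR \) is a lower bound of the empty family. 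Working in the complete lattice \( \barRR \) also ensures that infima always exist, including \( -\infty \) values, so no further cases arise.
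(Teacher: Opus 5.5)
Your proof is correct and is the same argument the paper intends. The paper gives no written proof and simply calls the statement a straightforward consequence of Definition~\ref{de:conditional_infimum}; your two steps (the infimum is a lower bound, hence admissible, and it is the greatest lower bound, hence it dominates every admissible $\phi$, with empty foresets covered by the convention $\inf_{\emptyset} = +\infty$) supply exactly that omitted verification.
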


As a consequence of~\eqref{eq:correspondence_conditional_infimum},
\eqref{eq:correspondence_convention} and of the definition of the range of a
correspondence in~\S\ref{Background_on_correspondences}, we have that
\( \arrivee \not\in \range\correspondence \implies
\InfCond{\fonctiondepart}{\correspondence}\np{\arrivee} = +\infty \), hence we have
the inclusion\footnote{%
  To the left hand side of the inclusion, the notation $\dom$ refers to the
  effective domain of a \emph{function}, whereas to the right hand side, the
  notation $\dom$ refers to the domain of a \emph{correspondence}.}
\begin{equation}
  \dom\bp{\ConditionalInfimum{\correspondence}{\fonctiondepart}} \subset
  \range\correspondence = \dom \Converse{\correspondence}
  \eqfinp
  \label{eq:dom_ConditionalInfimum_subset_range}
\end{equation}

\subsubsection{Examples}
\label{Examples}

The value function of a minimization problem under parametric constraints is an
example of conditional infimum.

\begin{example}[Value function]
  Let $ \fonctiondepart \colon \DEPART \to \barRR $ be a function and
  $\correspondence$ be a correspondence between the sets~$\DEPART$
  and~$\ARRIVEE$.  Recall that the correspondence is a subset
  \( \correspondence \subset \DEPART\times\ARRIVEE \), and let us interpret the foreset
  \( \correspondence\arrivee = \bset{\depart \in \DEPART}{
    \np{\depart,\arrivee}\in\correspondence } \subset\DEPART \) as a set of constraints
  parameterized by~\( \arrivee \in \ARRIVEE \).  Then, the function
  \( \InfCond{\fonctiondepart}{\correspondence} \colon \ARRIVEE \to \barRR \) is
  the value function associated with the family of minimization problems
  \( \inf_{\depart \in \correspondence\arrivee}\fonctiondepart\np{\depart}\),
  where \( \arrivee \in \ARRIVEE \).  Indeed, we have that
  \( \InfCond{\fonctiondepart}{\correspondence}\np{\arrivee} =\inf_{\depart \in
    \correspondence\arrivee}\fonctiondepart\np{\depart}\), for any
  \( \arrivee \in \ARRIVEE \) by~\eqref{eq:correspondence_conditional_infimum}
  and~\eqref{eq:subset_conditional_infimum}.
\end{example}

In mathematical programming, one obtains a more detailed description. 

\begin{example}[Mathematical programming]
  \label{ex:Mathematical_programming}
  Let $ \fonctiondepart_{0}, \fonctiondepart_{1}, \ldots, \fonctiondepart_{\constraintdim} \colon \DEPART
  \to \barRR $ be functions.
  The so-called \emph{$\RR_{+}^{\constraintdim}$-epigraph} of the mapping
  \( \DEPART\ni\depart \mapsto \bp{ \fonctiondepart_{1}\np{\depart}, \ldots,
    \fonctiondepart_{\constraintdim}\np{\depart} } \in \RR^{\constraintdim} \)
  is defined by 
  (see~\cite[page~49]{Pallaschke-Rolewicz:1997},
  \cite[p.~236]{Pennanen:1999},
  \cite[Definition~8]{Gissler-Hoheisel:2023} and also~\S\ref{Application_to_lower_bound_convex_programs})
  \begin{subequations}
    \begin{equation}
      \Epigraph_{\RR_{+}^{\constraintdim}} \np{\fonctiondepart_{1}, \ldots,\fonctiondepart_{\constraintdim} }
      = \defset{ \np{\depart,\alpha_{1},\ldots,\alpha_{\constraintdim}} \in \DEPART{\times}\RR^{\constraintdim}}%
      { \fonctiondepart_{1}\np{\depart} \leq\alpha_{1}, \ldots,
        \fonctiondepart_{\constraintdim}\np{\depart} \leq\alpha_{\constraintdim} }
      \label{eq:RR_+p-epigraph}
    \end{equation}
    and defines a \emph{$\RR_{+}^{\constraintdim}$-epigraphic correspondence} between the sets~$\DEPART$ and~$\RR^{\constraintdim}$.
    Then, the value function of the classic mathematical programming minimization problem is
    \begin{equation}
      \inf_{\substack{\fonctiondepart_{1}\np{\depart} \leq\alpha_{1}\\
          \ldots\\ \fonctiondepart_{\constraintdim}\np{\depart} \leq\alpha_{\constraintdim} }}
      \fonctiondepart_{0}\np{\depart}
      = \InfCond{\fonctiondepart_{0}}{\Epigraph_{\RR_{+}^{\constraintdim}}
        \np{\fonctiondepart_{1}, \ldots,\fonctiondepart_{\constraintdim} }}\np{\alpha_{1},\ldots,\alpha_{\constraintdim}}
      \eqsepv \forall \np{\alpha_{1},\ldots,\alpha_{\constraintdim}} \in \RR^{\constraintdim} 
      \eqfinp
      \label{eq:value_function_of_the_classic_mathematical_programming_minimization_problem}
    \end{equation}
          \label{eq:RR_+p-epigraph_InfCond}
  \end{subequations}
\end{example}

\begin{example}[Conic hull of a function]
  

  Let $\DEPART$ be a real vector space.
  For any function $ \fonctiondepart \colon \DEPART \to \barRR $,
  \begin{itemize}
  \item
    the \emph{perspective function}~$\perspective{\fonctiondepart} \colon
    \RR\times\DEPART \to \barRR $ of the function~$\fonctiondepart$ is defined by
    \begin{subequations}
      \begin{align}
        \mtext{either }   \perspective{\fonctiondepart}(\lambda,\depart) 
        &=
          \begin{cases}
            \lambda \fonctiondepart(\depart/\lambda)
            & \text{ if } (\lambda,\depart) \in \RR_{++}\times\DEPART       \eqfinv
            \\
            +\infty 
            & \text{ else, }
          \end{cases}
          \label{eq:perspective_function}
        \\
        \mtext{or }     \StrictEpigraph\perspective{\fonctiondepart}
        &=
          \RR_{++} \bp{\na{1}\times\StrictEpigraph\fonctiondepart}
          \eqfinv
          \label{eq:StrictEpigraph_perspective_function}
      \end{align}
    \end{subequations}
  \item
    and the \emph{conic hull} \( \fonctiondepart_{c} \colon \DEPART \to \barRR \)
    is defined by \cite[\S8.l]{Moreau:1966-1967} 
    \begin{subequations}
      \begin{align}
        \mtext{either }     \fonctiondepart_{c}\np{\depart}
        &=
          \inf_{\lambda >0} \lambda\fonctiondepart\np{\depart/\lambda}
          =
          \inf_{\lambda >0} \perspective{\fonctiondepart}(\lambda,\depart) 
          \eqsepv \forall \depart\in\DEPART
          \eqfinv
          \label{eq:conic_hull}    
        \\      
        \mtext{or } \StrictEpigraph\fonctiondepart_{c}
        &
          = \RR_{++}\StrictEpigraph\fonctiondepart
          \eqfinv 
          \label{eq:StrictEpigraph_conic_hull}  
      \end{align}
    \end{subequations}
  \end{itemize}
and it is easy to see that \( \fonctiondepart_{c}=
\InfCond{\perspective{\fonctiondepart}}{\RR_{++}\times\Delta_{\DEPART}} \),
    where \( \RR_{++}\times\Delta_{\DEPART} \subset \RR\times\DEPART\times\DEPART \simeq \np{\RR\times\DEPART}\times\DEPART \)
    has to be understood as a correspondence between \( \RR\times\DEPART\) and~\(\DEPART \).
  We will use the following property (whose proof is left to the reader).
  Let $\DEPART$ and $\ARRIVEE$ be two real vector spaces,
  and let $\correspondence$ be a correspondence between~$\DEPART$ and~$\ARRIVEE$.
  For any function $ \fonctiondepart \colon \DEPART \to \barRR $,
  we have that
  \begin{equation}
    \correspondence \mtext{ is a cone of } \DEPART\times\ARRIVEE \implies
    \bp{\InfCond{\fonctiondepart}{\correspondence}}_{c} =\InfCond{\fonctiondepart_{c}}{\correspondence}
    \eqfinp
    \label{eq:conic_hull_of_correspondence_conditional_supremum_infimum}
  \end{equation}
\end{example}

\subsubsection{Special cases of conditional infimum \wrt\ mappings}
\label{Special_cases_and_examples}

After having given a formal definition of the conditional infimum,
we explore several special cases and provide additional examples.

\subsubsubsection{Conditional infimum \wrt\ a 
  correspondence induced by a set-valued mapping
  \( \Theta \colon \DEPART \rightrightarrows \ARRIVEE \)}

\begin{subequations}
  Let \( \Theta \colon \DEPART \rightrightarrows \ARRIVEE \) be a set-valued mapping,
  that is, \( \Theta \colon \DEPART \to  2^\ARRIVEE \).
  We define the \emph{graph} of~$\Theta$ by
  \begin{equation}
    \graph_{\Theta} = \bset{ \np{\depart,\arrivee} \in \DEPART \times \ARRIVEE }%
    { \arrivee \in \Theta\np{\depart} } \subset \DEPART \times \ARRIVEE 
    \eqfinv
    \label{eq:graph_set-valued_mapping}
  \end{equation}
  and \( \Converse{\Theta} \colon \ARRIVEE \rightrightarrows \DEPART$ by $\Converse{\Theta}\np{\arrivee}=
  \nset{\depart \in \DEPART}{ \arrivee \in \Theta\np{\depart}}
  \).
  As the graph~$\graph_{\Theta}$ defines a correspondence between the two sets~\(
  \DEPART \) and \( \ARRIVEE \), we introduce 
  specific definitions and notations. 
  For any function $ \fonctiondepart \colon \DEPART \to \barRR $, 
  we define the \emph{conditional infimum} 
  \( \InfCond{\fonctiondepart}{\Theta} \colon \ARRIVEE \to \barRR \)
  of the function~$\fonctiondepart$ \emph{with respect to the set-valued mapping~$\Theta$}
  by
  \begin{equation}
    \label{eq:ConditionalInfimum_set-valued_mapping}
    \InfCond{\fonctiondepart}{\Theta}\np{\arrivee}
    = \InfCond{\fonctiondepart}{ \graph_{\Theta} }\np{\arrivee}
    = \InfCond{\fonctiondepart}{\Converse{\Theta}\np{\arrivee}}
    \eqsepv \forall \arrivee \in \ARRIVEE
    \eqfinp
  \end{equation}
\end{subequations}

\begin{example}
  In \cite[p.~416]{Rockafellar:1970} (part of \cite[Section~39]{Rockafellar:1970} on convex processes),
  Rockafellar defines \( (Af)(x)= \inf\defset{f(u)}{u\in A^{-1}x} \),
where $f$ is a convex function on $\RR^m$ and $A$ is a so-called convex process from $\RR^m$ to $\RR^n$,
that is, a set-valued mapping from $\RR^m$ to $\RR^n$ whose graph is a convex cone in $\RR^{m+n}$ containing the origin
\cite[p.~413]{Rockafellar:1970}.
Thus, $Af$ can be intepreted as a conditional infimum \wrt\ a correspondence which is 
a convex cone containing the origin.
\label{ex:convex_processes}
\end{example}

\subsubsubsection{Conditional infimum \wrt\ a 
  correspondence induced by a set-valued mapping 
  \( \Phi \colon \ARRIVEE \rightrightarrows \DEPART \) (the other way round)}

Let \( \Phi \colon \ARRIVEE \rightrightarrows \DEPART \) be a set-valued mapping.  Beware that, to
the difference of the set-valued mapping
\( \Theta \colon \DEPART \rightrightarrows \ARRIVEE \) above, the source set
is~$\ARRIVEE$ and the target set is~$\DEPART$.  We consider such set-valued
mappings to make the connection with how they are used in optimization to handle
constraints (see Footnote~\ref{ft:set-valued_mappings_optimization}).
Regarding the graph of~$\Phi$ in~\eqref{eq:graph_set-valued_mapping}, we have
\begin{subequations}
  \begin{align}
    \graph_{ \Phi } 
    &= 
      \bset{ \np{\arrivee,\depart} \in \ARRIVEE \times \DEPART }%
      { \depart \in \Phi\np{\arrivee} } \subset \ARRIVEE \times \DEPART 
      \eqfinv
      \intertext{and, defining \( \Converse{\Phi} \colon \DEPART \rightrightarrows \ARRIVEE \) 
      by \( \Converse{\Phi}\np{\depart} =\nset{\arrivee \in \ARRIVEE }%
      { \depart \in \Phi\np{\arrivee} } \), we get that}
      \npConverse{ \graph_{ \Phi } }
    &= 
      \bset{ \np{\depart,\arrivee} \in \DEPART \times \ARRIVEE }%
      { \depart \in \Phi\np{\arrivee} } 
      = \graph_{ \Converse{\Phi} }
      \subset \DEPART \times \ARRIVEE 
      \eqfinp    
  \end{align}
\end{subequations}
For any function $ \fonctiondepart \colon \DEPART \to \barRR $, we have the
properties\footnote{%
  Set-valued mappings are used in optimization because they offer a handy way to
  denote constraints as in the left hand side expressions
  in~\eqref{eq:ConditionalInfimum_set-valued_mapping_Converse}.  We will explain
  in Footnote~\ref{ft:tower_property} why we have chosen to favor
  correspondences rather than set-valued mappings.
  \label{ft:set-valued_mappings_optimization}}
\begin{equation}
  \inf_{ \depart \in \Phi\np{\arrivee} }\fonctiondepart\np{\depart}
  = 
  \InfCond{\fonctiondepart}{ \Converse{\graph_{ \Phi }} }\np{\arrivee}
  = \InfCond{\fonctiondepart}{ \Converse{\Phi} }\np{\arrivee}
  \eqsepv \forall \arrivee \in \ARRIVEE
  \eqfinp  
  \label{eq:ConditionalInfimum_set-valued_mapping_Converse}
\end{equation}
where we have used the notations in~\eqref{eq:ConditionalInfimum_set-valued_mapping}.

\subsubsubsection{Conditional infimum \wrt\ a 
  correspondence induced by a mapping \( \theta \colon \DEPART \to \ARRIVEE \)}

\begin{subequations}
  Let \( \theta \colon \DEPART \to \ARRIVEE \) be a  mapping.
  The graph of~$\theta$ in~\eqref{eq:graph_set-valued_mapping} is now
  \begin{equation}
    \graph_{\theta} = \bset{ \np{\depart,\arrivee} \in \DEPART \times \ARRIVEE }%
    { \theta\np{\depart} = \arrivee } \subset \DEPART \times \ARRIVEE 
    \eqfinp
    \label{eq:graph}
  \end{equation}
  For any function $ \fonctiondepart \colon \DEPART \to \barRR $, 
  Equation~\eqref{eq:ConditionalInfimum_set-valued_mapping} gives
  (with the mapping~$\theta$ identified with the set-valued mapping $\depart
  \mapsto \ba{\theta(\depart)}$)
  \begin{equation}
    \InfCond{\fonctiondepart}{\theta}\np{\arrivee}
    = \InfCond{\fonctiondepart}{ \graph_{\theta} }\np{\arrivee}
    =  
    \inf_{\depart \in \Converse{\theta}\np{\na{\arrivee}}} \fonctiondepart\np{\depart} 
    =
    \inf_{ \theta\np{\depart} =\arrivee }\fonctiondepart\np{\depart}
    \eqsepv \forall \arrivee \in \ARRIVEE
    \eqfinp
    \label{eq:mapping_ConditionalInfimum}
  \end{equation}
  As \( \theta \colon \DEPART \to \ARRIVEE \) is a mapping,
  it induces a set-valued mapping
  \( \Converse{\theta} \colon  \ARRIVEE \rightrightarrows \DEPART \).
  Using Equations~\eqref{eq:ConditionalInfimum_set-valued_mapping},
  for any function $\fonctionarrivee:\ARRIVEE \to \barRR$, we have the following properties:
  \begin{equation}
    \InfCond{\fonctionarrivee}{ \Converse{\theta} }\np{\depart}
    =    \InfCond{\fonctionarrivee}{\Converse{\graph_{\theta}} }\np{\depart}
    =
    \inf_{\arrivee = \theta\np{\depart} }
    \fonctionarrivee\np{\arrivee}
    =
    \bp{\fonctionarrivee \circ \theta }\np{\depart}
    \eqsepv \forall \depart \in \DEPART
    \eqfinp
    \label{eq:mapping_composition_and_ConditionalInfimum}  
  \end{equation}
\end{subequations}

\begin{example}(Epi-composition/infimal postcomposition) Let
  $ \fonctiondepart \colon \DEPART \to \barRR $ be a function and
  \( \theta \colon \DEPART \to \ARRIVEE \) be a mapping.  Then, the so-called
  \emph{epi-composition} of the function~$\fonctiondepart$ with the
  mapping~\( \theta \) --- developed by Rockafellar (see
  \cite[Equation~1(17), Exercise~1.31]{Rockafellar-Wets:1998} 
  and the historical note in
  \cite[p.~37]{Rockafellar-Wets:1998}), 
  and also called \emph{infimal postcomposition} in \cite[p.~214]{Bauschke-Combettes:2017} --- is exactly the
  conditional infimum \( \InfCond{\fonctiondepart}{\theta} \) in~\eqref{eq:mapping_ConditionalInfimum}.
  \label{epi-composition}
\end{example}

\subsection{Properties of the conditional infimum}
\label{Properties_of_the_conditional_infimum}

In~\S\ref{General_properties_of_the_conditional_infimum}, we expose general
properties of the conditional infimum.
%
%
In~\S\ref{Conditional_infimum_and_convexity}, we provide conditions for the
conditional infimum to be convex.
In~\S\ref{Conditional_infimum_and_one-sided_homogeneous_sublinear_couplings}, we
make the link between the new notion of one-sided homogeneous sublinear
couplings and conditional infimum and supremum.
%

\subsubsection{General properties of the conditional infimum}
\label{General_properties_of_the_conditional_infimum}
%

The $\wedge$ and $\vee$ operations over functions with the same domain, and taking
extended real values, are the infimum and supremum operations.
\begin{proposition}
  \quad
  \begin{enumerate}
  \item
    Nonnegativity:\\ 
    for any function $ \fonctiondepart \colon \DEPART \to \barRR $
    and for any correspondence~$\correspondence$
    between the sets~$\DEPART$ and~$\ARRIVEE$, 
    we have that
    \begin{equation}
      \InfCond{\fonctiondepart}{\correspondence} \geq 0
      \iff
      \bp{ \depart\in\dom\correspondence \implies \fonctiondepart\np{\depart} \geq 0 }
      \eqfinp
      \label{eq:correspondence_conditional_infimum_nonnegativity}
    \end{equation}
  \item 
    Strict epigraph:\\ 
    for any function $ \fonctiondepart \colon \DEPART \to \barRR $
    and for any correspondence~$\correspondence$
    between the sets~$\DEPART$ and~$\ARRIVEE$, 
    we have that
    \begin{equation}
      \StrictEpigraph\InfCond{\fonctiondepart}{\correspondence}=
      \Converse{\correspondence} \bp{\StrictEpigraph\fonctiondepart}
      \eqfinv
      \label{eq:correspondence_conditional_infimum_strict_epigraph} 
    \end{equation}
    where, on the left hand side, \(
    \StrictEpigraph\InfCond{\fonctiondepart}{\correspondence} \) is a subset of \(\ARRIVEE\times\RR\),
    and,  on the right hand side, 
    \( \Converse{\correspondence} \bp{\StrictEpigraph\fonctiondepart} \) is the
    composition of the correspondence~\( \Converse{\correspondence} \)
        between the sets~$\ARRIVEE$ and~$\DEPART$
    with \( \StrictEpigraph\fonctiondepart \), a subset of
    \(\DEPART\times\RR\) hence a correspondence between~$\DEPART$ and $\RR $.

   \item
     Linearity and sublinearity \wrt\ $\wedge$ and $\vee$ operations:\\ 
     for any correspondence~$\correspondence$
     between the sets~$\DEPART$ and~$\ARRIVEE$,
     for any family \( \sequence{\fonctiondepart_i}{i \in I} \) of functions 
        $ \fonctiondepart_i \colon \DEPART \to \barRR $,
        we have that
        \begin{subequations}
          \begin{align}
        \InfCond{ \Bwedge_{i \in I} \fonctiondepart_i }{\correspondence} 
        &=
          \Bwedge_{i \in I} \InfCond{ \fonctiondepart_i }{\correspondence} 
          \eqfinv 
          \label{eq:correspondence_conditional_infimum_properties_wedge}
        \\ 
        \Bvee_{i \in I} \InfCond{ \fonctiondepart_i }{\correspondence} 
        &\leq
          \InfCond{ \Bvee_{i \in I} \fonctiondepart_i }{\correspondence} 
          \eqfinp            
          \end{align}
        \end{subequations}
   \item
     Linearity and sublinearity \wrt\ the $\UppPlus$ operation:\\ 
     for any correspondence~$\correspondence$
     between the sets~$\DEPART$ and~$\ARRIVEE$,
     for any  functions $ \fonctiondepart \colon \DEPART \to \barRR $ 
          and $ \fonctiondepartbis \colon \DEPART \to \barRR $,
    we have that
    \begin{subequations}
      \begin{align}
        \InfCond{\fonctiondepart \UppPlus \fonctiondepartbis}{\correspondence}
        &\geq
          \InfCond{\fonctiondepart}{\correspondence} 
          \UppPlus \InfCond{\fonctiondepartbis}{\correspondence} 
          \eqfinv 
          \intertext{and\footnotemark
          if the function~$ \fonctiondepartbis$ is constant on any foreset~\( \correspondence{\arrivee} \),
          for any \( \arrivee\in\ARRIVEE \) --- or, equivalently,
          if \( \argminInfCond{\fonctiondepartbis}{\correspondence}\np{\arrivee}
          =\correspondence{\arrivee} \), for all \( \arrivee\in\ARRIVEE \), 
          or, equivalently, if \( \InfCond{\fonctiondepartbis}{\correspondence}=\SupCond{\fonctiondepartbis}{\correspondence}\)
          --- then}
          \InfCond{\fonctiondepart \UppPlus \fonctiondepartbis}{\correspondence}
        &=
          \InfCond{\fonctiondepart}{\correspondence} 
          \UppPlus \InfCond{\fonctiondepartbis}{\correspondence} 
          \eqfinp
          \label{eq:correspondence_conditional_infimum_property_UppPlus}
      \end{align}
    \end{subequations}
    \footnotetext{This formula for conditional infima has the flavor of 
      \( \EE\bc{\phi(X)\psi(Y)\mid Y} = \EE\bc{\phi(X)\mid Y}\psi(Y) \) in probability
      theory.}
  \item 
    Monotonicity with respect to functions:\\
    for any correspondence~$\correspondence$
    between the sets~$\DEPART$ and~$\ARRIVEE$, 
    we have that
    \begin{subequations}
      \begin{align}
        \intertext{$\bullet$ for any functions $ \fonctiondepart \colon \DEPART \to \barRR $
        and $ \fonctiondepartbis \colon \DEPART \to \barRR $,}
        \fonctiondepart \leq \fonctiondepartbis \implies
        \InfCond{\fonctiondepart}{\correspondence} 
        &\leq 
          \InfCond{\fonctiondepartbis}{\correspondence} 
          \eqfinv
          \label{eq:correspondence_conditional_infimum_properties_monotonicity}
        \\
        \InfCond{\fonctiondepart}{\DEPART}
        \leq 
        \InfCond{\fonctiondepart}{\correspondence\ARRIVEE} 
        &\leq \InfCond{\fonctiondepart}{\correspondence}
          (\arrivee) \eqsepv \forall \arrivee \in \ARRIVEE
          \eqfinv 
          %
          \intertext{$\bullet$ for any function $ \fonctiondepart \colon \DEPART \to \barRR $ 
          and for any nondecreasing function \( \varphi \colon \barRR \to \barRR \),}
        \varphi \circ \InfCond{\fonctiondepart }{\correspondence} 
        &\leq
          \InfCond{\varphi \circ \fonctiondepart}{\correspondence}
          \eqfinp
      \end{align}
    \end{subequations}
  \item
    Monotonicity with respect to correspondences:\\
    for any pair $\correspondence$, $\correspondencebis$ of correspondences
    between~$\DEPART$ and~$\ARRIVEE$ and for any function
    $ \fonctiondepart \colon \DEPART \to \barRR $, we have that
    \begin{equation}
          \correspondence \subset \correspondencebis \implies
        \InfCond{f}{\correspondence}
        \geq \InfCond{f}{\correspondencebis} 
          \eqfinp
          \label{eq:correspondence_conditional_infimum_properties_inclusion} 
      \end{equation}
  \item 
    Family of correspondences between~$\DEPART$ and~$\ARRIVEE$:\\ 
    for any family $\sequence{\correspondence_i}{i \in I}$ of correspondences
    between~$\DEPART$ and~$\ARRIVEE$ and for any 
    function $ \fonctiondepart \colon \DEPART \to \barRR $,
    we have that
    \begin{subequations}
      \begin{align}
        \bInfCond{f}{\bigcup_{i\in I}\correspondence_{i}}
        &= 
          \bwedge_{i\in I} \InfCond{f}{\correspondence_{i}}
          \eqfinv 
        \\
        \bInfCond{f}{\bigcap_{i\in I}\correspondence_{i}}
        &\geq 
          \bvee_{i\in I} \InfCond{f}{\correspondence_{i}}
          \eqfinp
      \end{align}
    \end{subequations}
  \item 
    Pushforward property:\footnote{%
      This formula for the conditional infimum has the flavor of the
      change of variable formula under pushforward probability, as the set~\( \Arrivee \subset \ARRIVEE \)
      below is pushed onto the set~\( \correspondence\Arrivee \subset \DEPART \).}
    \\
    for any correspondence $\correspondence$ between~$\DEPART$ and~$\ARRIVEE$,
    for any subset \( \Arrivee \subset \ARRIVEE \)
    and for any function $ \fonctiondepart \colon \DEPART \to \barRR $,
    we have that 
    \begin{equation}
      \bInfCond{ \nInfCond{\fonctiondepart}{\correspondence} }{\Arrivee}
      = 
      \bInfCond{\fonctiondepart}{\correspondence\Arrivee}
      \eqfinp
      \label{eq:correspondence_conditional_infimum_properties_pushforward}
    \end{equation}
  \item Tower property:\footnote{%
      This formula for conditional infima has the flavor of the tower property
      for conditional expectations. Had we defined the conditional infimum not
      \wrt\ a correspondence, but \wrt\ a set-valued mapping (like done
      in~\cite[\S~1.7]{Pallaschke-Rolewicz:1997}), the tower property would
      write, in a reverse way, as
      \( \bInfCond{ \nInfCond{\fonctiondepart}{ \Converse{\Phi} } }{ \Converse{\Psi} }
      = \bInfCond{\fonctiondepart}{ \npConverse{ \Psi \circ \Phi} } \), making appear the
      composition~$\Psi \circ \Phi$ of two set-valued mappings
      \( \Psi \colon \ARRIVEEbis \rightrightarrows \ARRIVEE \) and
      \( \Phi \colon \ARRIVEE \rightrightarrows \DEPART \) as in
      \cite[p.~151]{Rockafellar-Wets:1998}.  Indeed, the
      composition~$\Psi \circ \Phi$ satisfies
      \( \graph_{\Psi \circ \Phi}=\graph_{\Phi}\graph_{\Psi} \), hence
      \( \graph_{ \npConverse{ \Psi \circ \Phi} }= \npConverse{ \graph_{\Psi \circ \Phi} }=
      \npConverse{ \graph_{\Phi}\graph_{\Psi} }= \npConverse{ \graph_{\Psi} }
      \npConverse{ \graph_{\Phi} } \).  We prefer the formula
      \( \bInfCond{ \nInfCond{\fonctiondepart}{\correspondence}
      }{\correspondencebis} =
      \bInfCond{\fonctiondepart}{\correspondence\correspondencebis} \) to the
      formula
      \( \bInfCond{ \nInfCond{\fonctiondepart}{ \Converse{\Phi} } }{ \Converse{\Psi} }
      = \bInfCond{\fonctiondepart}{ \npConverse{ \Psi \circ \Phi} } \).
      \label{ft:tower_property} }\\
    For any pair of correspondences $\correspondence$ between~$\DEPART$ and~$\ARRIVEE$ 
    and $\correspondencebis$ between~$\ARRIVEE$ and~$\ARRIVEEter$,
    and for any function $ \fonctiondepart \colon \DEPART \to \barRR $,
    we have that 
    \begin{equation}
      \bInfCond{ \nInfCond{\fonctiondepart}{\correspondence} }{\correspondencebis} 
      = 
      \bInfCond{\fonctiondepart}{\correspondence\correspondencebis} 
      \eqfinp
      \label{eq:tower_property}
    \end{equation}
  \item 
    Right composition with mappings:\\
    for any correspondence~$\correspondence$ between~$\DEPART$ and~$\ARRIVEE$, 
    we have that 

    \noindent $\bullet$ for any function $ \fonctiondepartbis \colon \DEPARTbis \to \barRR $ 
    and for any mapping \( \theta \colon \DEPART \to \DEPARTbis \),
    \begin{subequations}
      \begin{align}
        \InfCond{\fonctiondepartbis \circ \theta}{\correspondence} 
        &= 
          \bInfCond{\fonctiondepartbis}{  \Converse{\graph_{\theta}}\correspondence }
          \eqfinv  
          \label{eq:correspondence_conditional_infimum_right_composition_correspondence}
          \intertext{$\bullet$ for any function $ \fonctiondepart \colon \DEPART \to \barRR $ 
          and for any mapping \( \theta \colon \ARRIVEEbis \to \ARRIVEE \),}
          \InfCond{\fonctiondepart}{\correspondence}  \circ \theta 
        &= 
          \bInfCond{\fonctiondepart}{  \correspondence\Converse{\graph_{\theta}} }
          \eqfinp 
          \label{eq:correspondence_conditional_infimum_correspondence_right_composition}
      \end{align}
    \end{subequations}
    %
  \end{enumerate}
\end{proposition}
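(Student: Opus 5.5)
All ten items follow by unfolding Definition~\ref{de:conditional_infimum}, namely \( \InfCond{\fonctiondepart}{\correspondence}\np{\arrivee} = \inf_{\depart \in \correspondence\arrivee}\fonctiondepart\np{\depart} \), together with the convention~\eqref{eq:correspondence_convention}. The plan is to prove each item pointwise in \( \arrivee \) and reduce it to an elementary fact about infima over subsets. The one workhorse is \emph{associativity of infima over unions}: if \( \Depart = \bigcup_{j \in J} \Depart_{j} \), then \( \inf_{\Depart}\fonctiondepart = \inf_{j\in J}\inf_{\Depart_{j}}\fonctiondepart \), which remains valid for empty index sets and empty pieces by~\eqref{eq:correspondence_convention}.

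\textbf{Items with no obstacle.}
\begin{itemize}
\item \emph{Nonnegativity.} We have \( \InfCond{\fonctiondepart}{\correspondence} \geq 0 \) if and only if \( \fonctiondepart\np{\depart} \geq 0 \) whenever \( \depart\correspondence\arrivee \) for some \( \arrivee \), that is, for \( \depart \in \dom\correspondence \).
\item \emph{Strict epigraph.} We have \( \np{\arrivee,t} \in \StrictEpigraph\InfCond{\fonctiondepart}{\correspondence} \) if and only if \( \inf_{\correspondence\arrivee}\fonctiondepart < t \). This holds if and only if there is \( \depart \in \correspondence\arrivee \) with \( \fonctiondepart\np{\depart} < t \), which is exactly \( \arrivee \Converse{\correspondence} \depart \) and \( \np{\depart,t} \in \StrictEpigraph\fonctiondepart \).
\item \emph{$\wedge$ and $\vee$.} The $\wedge$ identity is commutation of infima. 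The $\vee$ inequality follows from monotonicity, since \( \fonctiondepart_i \leq \Bvee_{j} \fonctiondepart_j \).
\item \emph{Monotonicity and the union/intersection statements.} These follow because \( \correspondence \subset \correspondencebis \) gives \( \correspondence\arrivee \subset \correspondencebis\arrivee \), and \( \np{\bigcup_i\correspondence_i}\arrivee = \bigcup_i \correspondence_i\arrivee \). The chain \( \InfCond{\fonctiondepart}{\DEPART} \leq \InfCond{\fonctiondepart}{\correspondence\ARRIVEE} \leq \InfCond{\fonctiondepart}{\correspondence}\np{\arrivee} \) follows from \( \correspondence\arrivee \subset \correspondence\ARRIVEE \subset \DEPART \).
\item \emph{Nondecreasing \( \varphi \).} We have \( \varphi\np{\inf_{\correspondence\arrivee}\fonctiondepart} \leq \varphi\np{\fonctiondepart\np{\depart}} \) for every \( \depart \in \correspondence\arrivee \). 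When \( \correspondence\arrivee=\emptyset \), both sides are computed with the convention, and the right side is \( +\infty \).
\end{itemize}

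\textbf{Pushforward, tower and right composition.} For the pushforward property, write \( \correspondence\Arrivee = \bigcup_{\arrivee\in\Arrivee}\correspondence\arrivee \) and apply associativity. The tower property is then the pushforward property with \( \Arrivee = \correspondencebis\arriveeter \), using \( \np{\correspondence\correspondencebis}\arriveeter = \correspondence\np{\correspondencebis\arriveeter} \), which is immediate from the definition of composition.

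The right-composition formulas are instances of the tower property:
\begin{itemize}
\item By~\eqref{eq:mapping_composition_and_ConditionalInfimum}, \( \fonctiondepartbis\circ\theta = \InfCond{\fonctiondepartbis}{\Converse{\graph_{\theta}}} \), so applying the tower property with the correspondences \( \Converse{\graph_\theta} \) and \( \correspondence \) gives~\eqref{eq:correspondence_conditional_infimum_right_composition_correspondence}.
\item Likewise, \( \InfCond{\fonctiondepart}{\correspondence}\circ\theta = \InfCond{\nInfCond{\fonctiondepart}{\correspondence}}{\Converse{\graph_\theta}} = \InfCond{\fonctiondepart}{\correspondence\Converse{\graph_\theta}} \).
\end{itemize}

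\textbf{Main obstacle: the $\UppPlus$ item.} This is the only place where extended arithmetic matters. For the inequality, for any \( \depart \in \correspondence\arrivee \) we have \( \fonctiondepart\np{\depart}\UppPlus\fonctiondepartbis\np{\depart} \geq \inf_{\correspondence\arrivee}\fonctiondepart \UppPlus \inf_{\correspondence\arrivee}\fonctiondepartbis \), because \( \UppPlus \) is nondecreasing in each argument. Taking the infimum over \( \depart \) gives the claim. The empty foreset case is \( +\infty \geq +\infty\UppPlus+\infty \).

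For the equality, let \( \fonctiondepartbis \equiv c \in \barRR \) on \( \correspondence\arrivee \) (nonempty; otherwise both sides are \( +\infty \)). We need \( \inf_{\depart}\np{\fonctiondepart\np{\depart}\UppPlus c} = \np{\inf\fonctiondepart}\UppPlus c \). I would split into three cases:
\begin{itemize}
\item If \( c=+\infty \), both sides are \( +\infty \).
\item If \( c \in \RR \), adding \( c \) is an order isomorphism of \( \barRR \) commuting with infima.
\item If \( c=-\infty \), then \( \fonctiondepart\np{\depart}\UppPlus c \) equals \( +\infty \) when \( \fonctiondepart\np{\depart}=+\infty \) and \( -\infty \) otherwise. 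The infimum is therefore \( -\infty \) iff some \( \fonctiondepart\np{\depart}<+\infty \), iff \( \inf\fonctiondepart<+\infty \), matching \( \np{\inf\fonctiondepart}\UppPlus\np{-\infty} \).
\end{itemize}

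Finally, the stated equivalences of the hypothesis (constancy on each foreset, \( \argminInfCond{\fonctiondepartbis}{\correspondence}\np{\arrivee}=\correspondence\arrivee \), and \( \InfCond{\fonctiondepartbis}{\correspondence}=\SupCond{\fonctiondepartbis}{\correspondence} \)) need a check on empty foresets. There the \( \inf \) is \( +\infty \) and the \( \sup \) is \( -\infty \), so the third formulation must be read on \( \range\correspondence \). I would note this caveat explicitly.
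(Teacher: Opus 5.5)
Your proof is correct and takes essentially the paper's route. The paper proves the strict-epigraph identity by the same chain of equivalences and the tower property by the same double-infimum computation, which you route through the pushforward property. It obtains both right-composition formulas exactly as you do, via \eqref{eq:mapping_composition_and_ConditionalInfimum} and the tower property, and leaves the other items to the reader. Your $\UppPlus$ case analysis is sound, and your caveat is accurate. On an empty foreset the conditional infimum is $+\infty$ and the conditional supremum is $-\infty$, so the third reformulation of the hypothesis is strictly stronger than the other two unless $\range\correspondence=\ARRIVEE$. This does not affect the implication, but the paper's ``equivalently'' is imprecise there.
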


\begin{proof} 
  Most of the claims are straightforward 
  consequences of the Definition~\ref{de:conditional_infimum} of the
  conditional infimum, and are left to the reader.

  \noindent $\bullet$
  We prove~\eqref{eq:correspondence_conditional_infimum_strict_epigraph}:
  \begin{align*}
    \np{\arrivee,t} \in \StrictEpigraph\InfCond{\fonctiondepart}{\correspondence}
    & \iff
      \InfCond{\fonctiondepart}{\correspondence}\np{\arrivee} < t      
      \tag{by definition 
      of the strict epigraph}
    \\
    & \iff      
      \exists \depart \in \correspondence\arrivee 
      \eqsepv \fonctiondepart\np{\depart} < t 
      \tag{by definition~\eqref{eq:correspondence_conditional_infimum} of the
      conditional infimum \( \InfCond{\fonctiondepart}{\correspondence} \)}
    \\
    & \iff
      \exists \depart \in \DEPART \eqsepv
      \arrivee \Converse{\correspondence} \depart 
      \text{ and } \np{\depart,t} \in \StrictEpigraph\fonctiondepart
      \tag{by definition 
      of the strict epigraph}
    \\
    & \iff
      \exists \depart \in \DEPART \eqsepv
      \arrivee \Converse{\correspondence} \depart 
      \text{ and } \depart ~\bp{{\StrictEpigraph\fonctiondepart}}~ t 
    \\
    & \iff
      \np{\arrivee,t} \in \Converse{\correspondence} \bp{\StrictEpigraph\fonctiondepart}
      \tag{by definition of the composition of correspondences}
  \end{align*}
\medskip

  \noindent $\bullet$ We prove~\eqref{eq:tower_property} as follows.
  For any pair of correspondences $\correspondence$ between~$\DEPART$ and~$\ARRIVEE$
  and $\correspondencebis$ between~$\ARRIVEE$ and $\ARRIVEEter$,
  any function $ \fonctiondepart \colon \DEPART \to \barRR $
  and any \( \arriveebis \in \ARRIVEEbis \), we have that 
  \begin{align*}
    \bInfCond{ \nInfCond{\fonctiondepart}{\correspondencebis} }{\correspondence}\np{\arriveebis}
    &=
      \inf_{ \arrivee \in \correspondence\arriveebis }
      \InfCond{\fonctiondepart}{\correspondencebis}\np{\arrivee} 
      \tag{by definition~\eqref{eq:correspondence_conditional_infimum} of the 
      conditional infimum}
    \\
    &=
      \inf_{ \arrivee \in \correspondence\arriveebis }
      \inf_{ \depart \in \correspondencebis\arrivee }
      \fonctiondepart\np{\depart}
      \tag{by definition~\eqref{eq:correspondence_conditional_infimum} of the 
      conditional infimum}
    \\
    &=
      \inf_{ \depart \in \correspondencebis\arrivee, \arrivee \in \correspondence\arriveebis }
      \fonctiondepart\np{\depart}
    \\
    &=
      \inf_{ \depart \in \correspondencebis\correspondence\arriveebis }
      \fonctiondepart\np{\depart}
      \tag{by definition of the composition of two correspondences}
    \\
    &=
      \bInfCond{\fonctiondepart}{\correspondence\correspondencebis} 
      \np{\arriveebis}
      \tag{by definition~\eqref{eq:correspondence_conditional_infimum} of the 
      conditional infimum}
  \end{align*}
  
  \noindent $\bullet$
  We prove~\eqref{eq:correspondence_conditional_infimum_right_composition_correspondence}
  as follows.
  For any correspondence~$\correspondence$ between~$\DEPART$ and~$\ARRIVEE$, 
  any function $ \fonctiondepartbis \colon \DEPARTbis \to \barRR $,
  any mapping \( \theta \colon \DEPART \to \DEPARTbis \)
  and any \( \arrivee\in\ARRIVEE \), we have that 
  \begin{align*}
    \nInfCond{\fonctiondepartbis \circ \theta}{\correspondence}
    &=
      \bInfCond{\nInfCond{\fonctiondepartbis}{\Converse{\graph_{\theta}}}}{\correspondence}
      \tag{as $\fonctiondepartbis \circ \theta=\InfCond{\fonctiondepartbis}{\Converse{\graph_{\theta}}}$
      by~\eqref{eq:mapping_composition_and_ConditionalInfimum}}
    \\
    &=\bInfCond{\fonctiondepartbis}{  \Converse{\graph_{\theta}}\correspondence }
      \tag{by the tower property~\eqref{eq:tower_property}}
  \end{align*}
    
  \noindent $\bullet$ We
  prove~\eqref{eq:correspondence_conditional_infimum_correspondence_right_composition}
  as follows.  For any correspondence~$\correspondence$ between~$\DEPART$
  and~$\ARRIVEE$, any function $ \fonctiondepart \colon \DEPART \to \barRR $, any
  mapping \( \theta \colon \ARRIVEEbis \to \ARRIVEE \) and any
  \( \arriveebis \in \ARRIVEEbis \), we have that
  \begin{align*}
    { \nInfCond{\fonctiondepart}{\correspondence}  \circ \theta }
    &=
      \bInfCond{\nInfCond{\fonctiondepart}{\correspondence}}{{\Converse{\graph_{\theta}}}}
      \tag{by~\eqref{eq:mapping_composition_and_ConditionalInfimum}}
    \\
    &=
      \InfCond{\fonctiondepart}{ \correspondence \Converse{\graph_{\theta}} }
      \tag{by the tower property~\eqref{eq:tower_property}}
  \end{align*}
  \smallskip

  This ends the proof.
\end{proof}

\subsubsection{Conditional infimum and convexity}
\label{Conditional_infimum_and_convexity}


\subsubsubsection{Background on vectorial orders}

Let~$\DEPART$ and~$\ARRIVEE$ be two real vector spaces.
Let \( \Cone\subset\ARRIVEE \) be a convex cone containing zero.
The vectorial order~\( \leq_{\Cone} \) on~\( \ARRIVEE \) induced by the cone~\(
\Cone\) is defined by:
\( \arrivee\leq_{\Cone}\arriveebis \iff \arriveebis-\arrivee \in\Cone \).
Let \( \theta \colon \DEPART \to \ARRIVEE \) be a mapping.
We define the
\emph{$\Cone$-epigraph} of \( \theta \) (see~\cite[p.~236]{Pennanen:1999},
\cite[Definition~8 and Equation~(3)]{Gissler-Hoheisel:2023}) by
\begin{equation}
  \Epigraph_{\Cone}\theta = \graph_{\Theta} + \na{0}{\times}\Cone
  = \defset{ \np{\depart,\arrivee} \in \DEPART{\times}\ARRIVEE}%
  { \theta\np{\depart} \leq_{\Cone} \arrivee }
  \eqfinp
  \label{eq:Cone-epigraph}
\end{equation}
We say that \( \theta \colon \DEPART \to \ARRIVEE \) is a \emph{\( \Cone \)-convex mapping} if
\( \Epigraph_{\Cone}\theta \) is a convex subset of~\( \DEPART{\times}\ARRIVEE \).

\subsubsubsection{Conditional infimum and convexity}

It is well known that the epi-composition --- that combines a function with a mapping
(see Example~\ref{epi-composition}) --- 
preserves convexity when the function is convex and the mapping is affine
\cite[Proposition~2.2 (b)]{Rockafellar-Wets:1998} and,  
more generally, that convexity is preserved in inf-projection of a jointly convex function
defined over a product space \cite[Proposition~2.2 (a)]{Rockafellar-Wets:1998} 
(see also \cite[Theorem~5.7]{Rockafellar:1970}),
and also by the transformation recalled in Example~\ref{ex:convex_processes}.
The proofs rely on linear transformations of the convex epigraph of the convex function,
that preserve convexity.
Preservation of convexity also occurs by
pre-composing a convex function with a nondecreasing convex function over the reals
(a result which can be extended using conic orders). 
We obtain the same results, but with a different approach. 
For this purpose, we need the following result that states, for correspondences,
the well-known result that the composition of graph-convex set-valued mappings is graph-convex
(see \cite[Theorem 10.37]{Rockafellar-Wets:1998} and also~\cite[Lemma 2.1]{Pennanen:1999}).
\begin{lemma}
  \label{lem:composition_convexity}
  Let $\DEPART$, $\ARRIVEE$ and $\ARRIVEEter$ be three real vector spaces,
  and let $\correspondence$ be a correspondence between~$\DEPART$ and~$\ARRIVEE$
  and $\correspondencebis$ be a correspondence between~$\ARRIVEE$ and $\ARRIVEEter$.
  If the subsets \( \correspondence \subset \DEPART\times\ARRIVEE \) and
  \( \correspondencebis \subset \ARRIVEE\times\ARRIVEEter \) are convex,
  then the subset \( \correspondence\correspondencebis \subset \DEPART\times\ARRIVEEter \) is convex.
\end{lemma}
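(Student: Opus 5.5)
The plan is to verify convexity of $\correspondence\correspondencebis$ straight from the definition of the composition of correspondences recalled in~\S\ref{Background_on_correspondences}, using the convexity of $\correspondence$ and $\correspondencebis$ as subsets of product spaces. Since $\DEPART$, $\ARRIVEE$ and $\ARRIVEEter$ are real vector spaces, all the convex combinations below make sense.

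First, I would take two pairs $(\depart_1,\arriveeter_1)$ and $(\depart_2,\arriveeter_2)$ in $\correspondence\correspondencebis$, together with $\lambda\in[0,1]$. By definition of the composition, there exist $\arrivee_1,\arrivee_2\in\ARRIVEE$ such that $\depart_i\correspondence\arrivee_i$ and $\arrivee_i\correspondencebis\arriveeter_i$ for $i=1,2$.

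Next, I would introduce the intermediate point $\arrivee_\lambda=\lambda\arrivee_1+(1-\lambda)\arrivee_2$. Convexity of $\correspondence\subset\DEPART\times\ARRIVEE$ gives
\[
\bp{\lambda\depart_1+(1-\lambda)\depart_2,\,\arrivee_\lambda}\in\correspondence .
\]
Convexity of $\correspondencebis\subset\ARRIVEE\times\ARRIVEEter$ gives
\[
\bp{\arrivee_\lambda,\,\lambda\arriveeter_1+(1-\lambda)\arriveeter_2}\in\correspondencebis .
\]
Using $\arrivee_\lambda$ as the witness in the definition of the composition then yields
\[
\bp{\lambda\depart_1+(1-\lambda)\depart_2,\,\lambda\arriveeter_1+(1-\lambda)\arriveeter_2}\in\correspondence\correspondencebis ,
\]
which is the required convexity.

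There is no real obstacle here. The only point needing care is that the \emph{same} combination $\arrivee_\lambda$ of the two intermediate witnesses must serve for both $\correspondence$ and $\correspondencebis$. This works because the same weight $\lambda$ is used in both convex combinations.

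An alternative route would be to write $\correspondence\correspondencebis$ as the image of $(\correspondence\times\ARRIVEEter)\cap(\DEPART\times\correspondencebis)$ under the linear projection $\DEPART\times\ARRIVEE\times\ARRIVEEter\to\DEPART\times\ARRIVEEter$. That set is an intersection of convex sets and linear images preserve convexity, but the direct argument above is shorter.
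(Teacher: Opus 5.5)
Your proof is correct. The paper uses exactly the alternative you mention at the end: it writes $\correspondence\correspondencebis$ as the projection onto $\DEPART\times\ARRIVEEter$ of $(\correspondence\times\ARRIVEEter)\cap(\DEPART\times\correspondencebis)$. Your direct argument, with the common witness $\lambda\arrivee_1+(1-\lambda)\arrivee_2$, is that same argument unpacked pointwise, so the two approaches are essentially the same.
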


\begin{proof}
%
The proof boils down to writing
\(     \correspondence\correspondencebis =
    \Projection_{ \DEPART\times\ARRIVEEter}
    \bp{ \np{\correspondence{\times}\ARRIVEEter} \cap  \np{\DEPART{\times}\correspondencebis}} \),
    which is a projection of an intersection of convex sets, hence is convex.
\end{proof}


As said above, the following Items~\ref{it:value_function_of_the_classic_mathematical_programming_minimization_problem}
and~\ref{it:epi-composition} are well known
(see, for example, \cite[Proposition~1.7.8, page~48]{Pallaschke-Rolewicz:1997}
and \cite[Corollary~1.7.11, page~49]{Pallaschke-Rolewicz:1997}).
\begin{corollary}\quad
  \label{cor:composition_convexity}
  \begin{enumerate}
  \item
    \label{it:infcond_convexity}
    Let~$\DEPART$ and~$\ARRIVEE$ be real vector spaces,
    \( \correspondence \subset \DEPART\times\ARRIVEE \) be a convex subset,
    and $ \fonctiondepart \colon \DEPART \to \barRR $ be a convex function.
    Then, the function \( \InfCond{\fonctiondepart}{\correspondence} \colon \ARRIVEE \to \barRR \)
    is convex.

  \item
    \label{it:composition_convexity}
    Let~$\DEPART$, $\ARRIVEE$ and~$\ARRIVEEbis$ be three real vector spaces,
    \( \Cone\subset\ARRIVEE \) be a convex cone containing zero,
    \( \theta \colon\ARRIVEEbis \to \ARRIVEE \) be a {\( \Cone \)-convex mapping}.
    %
      Let \( F \colon \DEPART\times\ARRIVEE \to \barRR \)
      be a convex function which is {\( \Cone \)-increasing} in the second argument.
      Then, the function \(\DEPART\times\ARRIVEEbis\ni\np{\depart,\arriveebis}\mapsto
\FONCTIONDEPART\bp{\depart,\theta\np{\arriveebis}}\in\barRR\) is convex.    
     
  \item
    \label{it:value_function_of_the_classic_mathematical_programming_minimization_problem}
    Let $\DEPART$ be a real vector space, and 
    $ \fonctiondepart_{0}, \fonctiondepart_{1}, \ldots, \fonctiondepart_{\constraintdim} \colon \DEPART\to\barRR $ be convex functions. Then,
    the value function
    \( \InfCond{\fonctiondepart_{0}}{\Epigraph_{\RR_{+}^{\constraintdim}} \np{\fonctiondepart_{1}, \ldots,\fonctiondepart_{\constraintdim} }}\)
    in~\eqref{eq:value_function_of_the_classic_mathematical_programming_minimization_problem}
    of the classic mathematical programming minimization problem
    (see Example~\ref{ex:Mathematical_programming}) is convex.

  \item
    \label{it:epi-composition}
    Let $\DEPART$ be a real vector space, 
    $ \fonctiondepart_{0}\colon\DEPART\to\barRR $ be a convex function,
    and \( \fonctiondepart_{1}, \ldots, \fonctiondepart_{\constraintdim}\colon\DEPART\to\RR \) be affine functions. 
    Then, the epi-composition (see Example~\ref{epi-composition}) 
    \( \InfCond{\fonctiondepart_{0}}{\np{\fonctiondepart_{1}, \ldots,\fonctiondepart_{\constraintdim} }}\)
    is convex.
  \end{enumerate}
\end{corollary}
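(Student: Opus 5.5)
The plan is to reduce every item to the strict-epigraph formula~\eqref{eq:correspondence_conditional_infimum_strict_epigraph} combined with Lemma~\ref{lem:composition_convexity}. For Item~\ref{it:infcond_convexity}, the function~$\fonctiondepart$ is convex, so its strict epigraph \( \StrictEpigraph\fonctiondepart \subset \DEPART\times\RR \) is convex; indeed, convexity of the epigraph is equivalent to convexity of the strict epigraph, a standard fact I will check directly with a convex combination of strict inequalities. The inverse correspondence \( \Converse{\correspondence} \subset \ARRIVEE\times\DEPART \) is the image of~$\correspondence$ under the linear swap map, hence is convex. Lemma~\ref{lem:composition_convexity}, applied to the spaces $\ARRIVEE$, $\DEPART$, $\RR$, then shows that \( \Converse{\correspondence}\bp{\StrictEpigraph\fonctiondepart} \) is convex. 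By~\eqref{eq:correspondence_conditional_infimum_strict_epigraph} this set is \( \StrictEpigraph\InfCond{\fonctiondepart}{\correspondence} \), so \( \InfCond{\fonctiondepart}{\correspondence} \) is convex.

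For Item~\ref{it:composition_convexity}, I will write the composite function as a conditional infimum. Set \( \correspondence = \defset{ \np{(\depart,\arrivee),(\depart',\arriveebis)} }{ \depart=\depart',\ \theta\np{\arriveebis}\leq_{\Cone}\arrivee } \), a correspondence between \( \DEPART\times\ARRIVEE \) and \( \DEPART\times\ARRIVEEbis \). Up to coordinate permutation, this set is the product of the diagonal of~$\DEPART$ with \( \Epigraph_{\Cone}\theta \), so it is convex because $\theta$ is \( \Cone \)-convex. Then \( \InfCond{\FONCTIONDEPART}{\correspondence}\np{\depart,\arriveebis} = \inf_{\arrivee \geq_{\Cone}\theta(\arriveebis)} \FONCTIONDEPART\np{\depart,\arrivee} \). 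Because $\FONCTIONDEPART$ is \( \Cone \)-increasing in its second argument and \( \arrivee=\theta(\arriveebis) \) is feasible (since \( 0\in\Cone \)), this infimum equals \( \FONCTIONDEPART\bp{\depart,\theta\np{\arriveebis}} \). Item~\ref{it:infcond_convexity} then gives convexity. This is the step needing the most care: I have to check the monotonicity inequality \( \FONCTIONDEPART(\depart,\arrivee)\geq \FONCTIONDEPART(\depart,\theta(\arriveebis)) \) whenever \( \theta(\arriveebis)\leq_{\Cone}\arrivee \), which is exactly the definition of \( \Cone \)-increasing.

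Item~\ref{it:value_function_of_the_classic_mathematical_programming_minimization_problem} follows from Item~\ref{it:infcond_convexity}, since \( \Epigraph_{\RR_{+}^{\constraintdim}}\np{\fonctiondepart_{1},\ldots,\fonctiondepart_{\constraintdim}} \) is the intersection over~$i$ of the sets \( \defset{(\depart,\alpha)}{\fonctiondepart_i(\depart)\leq\alpha_i} \). Each of these sets is convex, being the preimage of the convex epigraph of~$\fonctiondepart_i$ under a linear projection, and so their intersection is convex. Item~\ref{it:epi-composition} also follows from Item~\ref{it:infcond_convexity}, because the graph of the affine mapping \( \depart\mapsto(\fonctiondepart_1(\depart),\ldots,\fonctiondepart_{\constraintdim}(\depart)) \) is an affine subspace and hence convex, and the epi-composition is \( \InfCond{\fonctiondepart_{0}}{\graph} \) by~\eqref{eq:mapping_ConditionalInfimum}.
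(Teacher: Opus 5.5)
Your proposal is correct and follows essentially the same route as the paper. Item~1 goes through the strict-epigraph identity \eqref{eq:correspondence_conditional_infimum_strict_epigraph} combined with Lemma~\ref{lem:composition_convexity}; Item~2 rewrites $\FONCTIONDEPART\bigl(\depart,\theta(\arriveebis)\bigr)$ as a conditional infimum with respect to a rearrangement of $\Delta_{\DEPART}\times\npConverse{\Epigraph_{\Cone}\theta}$, using $\Cone$-monotonicity; and Items~3 and~4 reduce to convexity of the $\RR_{+}^{\constraintdim}$-epigraph and of the affine graph. Your extra details (convexity of the strict epigraph and of the inverse correspondence, and feasibility of $\theta(\arriveebis)$ from $0\in\Cone$) spell out steps the paper leaves implicit.
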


\begin{proof}
  \begin{enumerate}
  \item 
    By~\eqref{eq:correspondence_conditional_infimum_strict_epigraph}, we have that
    \(      \StrictEpigraph\InfCond{\fonctiondepart}{\correspondence}=
    \Converse{\correspondence} \bp{\StrictEpigraph\fonctiondepart}\),
    which is a convex set by Lemma~\ref{lem:composition_convexity}.
    We conclude that the function \( \InfCond{\fonctiondepart}{\correspondence} \colon \ARRIVEE \to \barRR \)
    is convex.

      \item     
        For any \(\np{\depart,\arriveebis}\in\DEPART\times\ARRIVEEbis\), we have that 
        \begin{align*}
          \FONCTIONDEPART\bp{\depart,\theta\np{\arriveebis}}
          &=
            \inf\defset{\FONCTIONDEPART\bp{\depart,\arrivee}}%
            {\theta\np{\arriveebis} \leq_{\Cone} \arrivee }
            \tag{since $\FONCTIONDEPART$ is {\( \Cone \)-increasing} in the second argument}
          \\
          &=
  \inf\defset{\FONCTIONDEPART\bp{\depart,\arrivee}}{\arrivee\in\arriveebis~\Epigraph_{\Cone}\theta}
            \tag{by definition~\eqref{eq:Cone-epigraph} of~$\Epigraph_{\Cone}\theta$}
          \\
          &=
            \InfCond{\FONCTIONDEPART}{%
            \underbrace{\Delta_{\DEPART}\times\npConverse{\Epigraph_{\Cone}\theta}}_{\substack{\textrm{identified
            with a subset of}\\ \np{\DEPART\times\ARRIVEE}\times\np{\DEPART\times\ARRIVEEbis}}}
            }\np{\depart,\arriveebis}
            \tag{by definition~\eqref{eq:correspondence_conditional_infimum} of
            the conditional infimum}
        \end{align*}
As \( \Delta_{\DEPART}\subset\DEPART^2 \) is a convex set and
\(\Epigraph_{\Cone}\theta\subset\ARRIVEEbis\times\ARRIVEE\) is a convex set by assumption, 
then \( \Delta_{\DEPART}\times\npConverse{\Epigraph_{\Cone}\theta}\) is also a convex
subset of \(\DEPART^2\times\ARRIVEE\times\ARRIVEEbis\), and so is its
rearrangement as a subset of
\(\np{\DEPART\times\ARRIVEE}\times\np{\DEPART\times\ARRIVEEbis}\).
We conclude
  with Item~\ref{it:infcond_convexity} that the function \(\DEPART\times\ARRIVEEbis\ni
\FONCTIONDEPART\bp{\depart,\theta\np{\arriveebis}}\in\barRR\) is convex.    

  
  \item
    As $ \fonctiondepart_{1}, \ldots, \fonctiondepart_{\constraintdim}$ are convex functions,
    the $\RR_{+}^{\constraintdim}$-epigraph \( \Epigraph_{\RR_{+}^{\constraintdim}} \np{\fonctiondepart_{1}, \ldots,\fonctiondepart_{\constraintdim} }\)
    in~\eqref{eq:RR_+p-epigraph} is convex.
    We conclude with Item~\ref{it:infcond_convexity}.

  \item
    As \( \fonctiondepart_{1}, \ldots, \fonctiondepart_{\constraintdim} \) are affine functions,
    the graph \( \graph_{\np{\fonctiondepart_{1}, \ldots,\fonctiondepart_{\constraintdim} }} \) in~\eqref{eq:graph}
    is an affine subset, hence is convex.
    We conclude with Item~\ref{it:infcond_convexity}.
  \end{enumerate}
  This ends the proof.
\end{proof}

\subsubsection{Conditional infimum and one-sided homogeneous sublinear couplings}
\label{Conditional_infimum_and_one-sided_homogeneous_sublinear_couplings} 

One-sided linear couplings were introduced
in~\cite{Chancelier-DeLara:2021_ECAPRA_JCA}, and the link with conditional
infimum \wrt\ a mapping~\eqref{eq:mapping_composition_and_ConditionalInfimum}
was established.  Here, we generalize them to one-sided homogeneous sublinear
couplings, and we make the link with conditional infimum and supremum (as
defined in~\S\ref{Definitions_of_the_conditional_infimum}).

We consider a pair $(\PRIMAL, \DUAL)$ of vector spaces that are \emph{paired} in
the following sense \cite[p.~13]{Rockafellar:1974}: there exists a bilinear form
\( \nscal{\,}{} \colon \PRIMAL \times \DUAL \to \RR \) and locally convex topologies
that are compatible in the sense that the continuous linear forms on~$\PRIMAL$
are the functions \( \primal \in \PRIMAL \mapsto \nscal{\primal}{\dual} \), for all
\( \dual \in \DUAL \), and that the continuous linear forms on~$\DUAL$ are the
functions \( \dual \in \DUAL \mapsto \nscal{\primal}{\dual} \), for all \( \primal \in \PRIMAL \).
Now, we review concepts and notations related to the Fenchel conjugacy (we refer
the reader to \cite[Sect.~3]{Rockafellar:1974}).  For any functions
\( \fonctionprimal \colon \PRIMAL \to \barRR \) and
\( \fonctiondual \colon \DUAL \to \barRR \), the different conjugates are defined
by\footnote{%
  In convex analysis, one does not use~\( \LFMr{} \) and~\( \LFMbi{} \), but
  simply~\( \LFM{} \) and~\( ^{\Fenchelcoupling\Fenchelcoupling} \).  We
  use~\( \LFMbi{} \) to be consistent with the notation for general conjugacies.
  \label{ft:to_be_consistent_with_the_notation_for_general_conjugacies}}
\begin{subequations}
  \begin{align}
    \LFM{\fonctionprimal}\np{\dual} 
    &= 
      \sup_{\primal \in \PRIMAL} \bp{ \nscal{\primal}{\dual}
      -\fonctionprimal\np{\primal} }
      \eqsepv \forall \dual \in \DUAL
      \eqfinv
      \label{eq:Fenchel_conjugate}
    \\
    \LFMr{\fonctiondual}\np{\primal} 
    &= 
      \sup_{ \dual \in \DUAL} \bp{ \nscal{\primal}{\dual} 
      -\fonctiondual\np{\dual} } 
      \eqsepv \forall \primal \in \PRIMAL
      \eqfinv
      \label{eq:Fenchel_conjugate_reverse}
    \\
    \LFMbi{\fonctionprimal}\np{\primal} 
    &= 
      \sup_{\dual \in \DUAL} \bp{ \nscal{\primal}{\dual} 
      -\LFM{\fonctionprimal}\np{\dual} }
      \eqsepv \forall \primal \in \PRIMAL
      \eqfinp
      \label{eq:Fenchel_biconjugate}
  \end{align}
\end{subequations}
A function \( \fonctionprimal \colon \PRIMAL \to \barRR \), or
\( \fonctiondual \colon \DUAL \to \barRR \), is said to be
\emph{closed}\footnote{%
  We follow the terminology in \cite[p.~15]{Rockafellar:1974}, although it can
  be misleading.  Indeed, a valley function~\cite{Penot:2000} --- a function
  taking the value $-\infty$ on a closed subset (neither the empty set nor the whole
  set) and $+\infty$ outside --- is \lsc\ but not closed.  Some authors
  \cite{Borwein-Lewis:2006} use closed in the sense of \lsc.
  \label{ft:closed_function}}
if it is either \lsc\ and nowhere having the value $-\infty$,
or is the constant function~$-\infty$.
By definition, \emph{closed convex functions} are the two constant
functions~$-\infty$ and~$+\infty$ united with all proper convex \lsc\
functions\footnote{%
  In particular, any closed convex function that takes at least one finite value
  is necessarily proper convex~lsc.  Notice that a function taking the value
  $-\infty$ on a closed convex subset (neither the empty set nor the whole set) and
  $+\infty$ outside is convex~lsc, but is not closed convex (see
  Footnote~\ref{ft:closed_function}).
  \label{ft:closed_convex_function}}.

\begin{definition}
  Let $\UNCERTAIN$ be a nonempty set and
  \( \correspondence \subset \UNCERTAIN \times \PRIMAL \) 
  be a correspondence between $\UNCERTAIN$ and~$\PRIMAL$.
  We define the \emph{one-sided homogeneous sublinear coupling} $\Fenchelcoupling_{\correspondence}$
  between the set~$\UNCERTAIN$ and the vector space~$\DUAL$ by\footnote{%
    In a one-sided homogeneous sublinear coupling, 
    the second set possesses a linear structure 
    (and is even paired with a vector space by means of a bilinear form),
    whereas the first set is not required to carry any structure.}
  \begin{equation}
    \Fenchelcoupling_{\correspondence} \colon \UNCERTAIN \times \DUAL \to \barRR 
    \eqsepv 
    \Fenchelcoupling_{\correspondence}\np{\uncertain, \dual} 
    = \sup_{\primal\in\uncertain\correspondence}\FenchelCoupling{\primal}{\dual} 
    \eqsepv \forall \uncertain \in \UNCERTAIN
    \eqsepv \forall \dual \in \DUAL
    \eqfinp 
    \label{eq:one-sided_homogeneous_sublinear_coupling_second}
  \end{equation}
  For any functions \( \fonctionuncertain \colon \UNCERTAIN \to \barRR \) and
  \( \fonctiondual \colon \DUAL \to \barRR \), the different
  $\Fenchelcoupling_{\correspondence}$-conjugates are defined by
  \begin{subequations}
    \begin{align}
      \SFM{\fonctionuncertain}{\Fenchelcoupling_{\correspondence}}\np{\dual} 
      &= 
        \sup_{\uncertain \in \UNCERTAIN} \Bp{ \Fenchelcoupling_{\correspondence}\np{\uncertain, \dual} 
        \LowPlus \bp{-\fonctionuncertain\np{\uncertain}} } 
        \eqsepv \forall \dual \in \DUAL
        \eqfinv
      \\
      \SFMr{\fonctiondual}{\Fenchelcoupling_{\correspondence}}\np{\uncertain} 
      &= 
        \sup_{ \dual \in \DUAL} \Bp{ \Fenchelcoupling_{\correspondence}\np{\uncertain, \dual} 
        \LowPlus \bp{-\fonctiondual\np{\dual} } } 
        \eqsepv \forall \uncertain \in \UNCERTAIN
        \eqfinv
      \\
      \SFMbi{\fonctionuncertain}{\Fenchelcoupling_{\correspondence}}\np{\uncertain} 
      &= 
        \sup_{\dual \in \DUAL} \Bp{ \Fenchelcoupling_{\correspondence}\np{\uncertain, \dual} 
        \LowPlus \bp{-\SFM{\fonctionuncertain}{\Fenchelcoupling_{\correspondence}}\np{\dual} } }
        \eqsepv \forall \uncertain \in \UNCERTAIN
        \eqfinp
    \end{align}
  \end{subequations}
\end{definition}

Here below are expressions for the different
$\Fenchelcoupling_{\correspondence}$-conjugates of a function, that make use of
conditional infimum and supremum, and of the classic Fenchel conjugacy.
The proof follows the lines of the proofs of \cite[Proposition~2.5,
2.6]{Chancelier-DeLara:2021_ECAPRA_JCA}, and is thus left to the reader.

\begin{proposition}
  \label{pr:one-sided_homogeneous_sublinear_Fenchel-Moreau_conjugate_second}
  For any function \( \fonctiondual \colon \DUAL \to \barRR \), 
  the $\Fenchelcoupling_{\correspondence}'$-Fenchel-Moreau conjugate 
  \( \SFMr{\fonctiondual}{\Fenchelcoupling_{\correspondence}} \colon \UNCERTAIN \to \barRR \) 
  is given by 
  \begin{subequations}
    \begin{equation}
      \SFMr{\fonctiondual}{\Fenchelcoupling_{\correspondence}}=
      \ConditionalSupremum{\Converse{\correspondence}}{\LFMr{ \fonctiondual } }
      \eqfinp
      \label{eq:one-sided_homogeneous_sublinear_c'-Fenchel-Moreau_conjugate}
    \end{equation}
    For any function \( \fonctionuncertain \colon \UNCERTAIN \to \barRR \), 
    the $\Fenchelcoupling_{\correspondence}$-Fenchel-Moreau conjugate 
    \( \SFM{\fonctionuncertain}{\Fenchelcoupling_{\correspondence}} 
    \colon \DUAL \to \barRR \) is given by 
    \begin{equation}
      \SFM{\fonctionuncertain}{\Fenchelcoupling_{\correspondence}}=
      \LFM{ \bp{\ConditionalInfimum{\correspondence}{\fonctionuncertain}} }
      \eqfinv 
      \label{eq:one-sided_homogeneous_sublinear_Fenchel-Moreau_conjugate}
    \end{equation}
    and the $\Fenchelcoupling_{\correspondence}$-Fenchel-Moreau biconjugate 
    \( \SFMbi{\fonctionuncertain}{\Fenchelcoupling_{\correspondence}}
    \colon \UNCERTAIN \to \barRR \)
    is given by
    \begin{equation}
      \SFMbi{\fonctionuncertain}{\Fenchelcoupling_{\correspondence}}
      = 
      \ConditionalSupremum{\Converse{\correspondence}}{
        \LFMr{ \bp{ \SFM{\fonctionuncertain}{\Fenchelcoupling_{\correspondence}} } } }
      =
      \ConditionalSupremum{\Converse{\correspondence}}{
        \LFMbi{ \bp{\ConditionalInfimum{\correspondence}{\fonctionuncertain}} }
      }
      \eqfinp
      \label{eq:one-sided_homogeneous_sublinear_Fenchel-Moreau_biconjugate}
    \end{equation}
  \end{subequations}
  A function \( \fonctionuncertain \colon \UNCERTAIN \to \barRR \) is 
  $\Fenchelcoupling_{\correspondence}$-convex 
  if and only if it is the conditional supremum of
  a closed convex function \( \fonctionprimal \colon \PRIMAL \to \barRR \)
  \wrt\ the inverse correspondence~\( \Converse{\correspondence} \).
  More precisely, for any function \( \fonctionuncertain \colon \UNCERTAIN \to \barRR \),
  we have the equivalences
  \begin{subequations}
    \begin{align}
      &  
        \fonctionuncertain \textrm{ is $\Fenchelcoupling_{\correspondence}$-convex }
        \label{eq:coupling_correspondence-convex_function_a}
      \\
      \iff & 
             \fonctionuncertain =
             \SFMbi{\fonctionuncertain}{\Fenchelcoupling_{\correspondence}}
             \label{eq:coupling_correspondence-convex_function_b}
      \\
      \iff & 
             \fonctionuncertain =
              \ConditionalSupremum{\Converse{\correspondence}}{
    \LFMbi{ \bp{\ConditionalInfimum{\correspondence}{\fonctionuncertain}} } }
      \\
      &
              \textrm{ (where } 
             \LFMbi{ \bp{\ConditionalInfimum{\correspondence}{\fonctionuncertain}} }
        \textrm{ is a closed convex function) }
              \label{eq:coupling_correspondence-convex_function_c}
      \\
      \iff & \textrm{ there exists a closed convex function }
             \fonctionprimal \colon \PRIMAL  \to \barRR 
             \textrm{ such that }
             \fonctionuncertain =
             \ConditionalSupremum{\Converse{\correspondence}}{\fonctionprimal}
            \label{eq:coupling_correspondence-convex_function_d}
             \eqfinp
    \end{align}
    \label{eq:coupling_correspondence-convex_function}
  \end{subequations}
\end{proposition}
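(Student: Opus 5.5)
The plan is to compute the three conjugates directly from the definitions, using an interchange of suprema, and then to read off the equivalences from the general Fenchel--Moreau theory of conjugacies.

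For~\eqref{eq:one-sided_homogeneous_sublinear_c'-Fenchel-Moreau_conjugate}, fix \( \uncertain\in\UNCERTAIN \). Then
\[
\SFMr{\fonctiondual}{\Fenchelcoupling_{\correspondence}}\np{\uncertain}
=\sup_{\dual}\Bp{\sup_{\primal\in\uncertain\correspondence}\FenchelCoupling{\primal}{\dual}\LowPlus\bp{-\fonctiondual\np{\dual}}} .
\]
Since \( a\mapsto a\LowPlus b \) commutes with suprema over nonempty families, we can exchange the two suprema. This gives \( \sup_{\primal\in\uncertain\correspondence}\LFMr{\fonctiondual}\np{\primal} \), which is \( \SupCond{\LFMr{\fonctiondual}}{\Converse{\correspondence}}\np{\uncertain} \), because \( \Converse{\correspondence}\uncertain=\uncertain\correspondence \). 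The empty case \( \uncertain\correspondence=\emptyset \) has to be checked separately: the coupling equals \(-\infty\), so both sides equal \(-\infty\) by convention~\eqref{eq:correspondence_convention}.

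For~\eqref{eq:one-sided_homogeneous_sublinear_Fenchel-Moreau_conjugate}, the computation is similar. We have
\[
\sup_{\uncertain}\sup_{\primal\in\uncertain\correspondence}\bp{\nscal{\primal}{\dual}\LowPlus(-\fonctionuncertain(\uncertain))}
=\sup_{\primal}\Bp{\nscal{\primal}{\dual}-\inf_{\uncertain\in\correspondence\primal}\fonctionuncertain(\uncertain)}
=\LFM{\bp{\InfCond{\fonctionuncertain}{\correspondence}}}\np{\dual},
\]
where we regroup the pairs \((\uncertain,\primal)\in\correspondence\) by \(\primal\) and use \(-\inf=\sup(-\cdot)\). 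The biconjugate formula~\eqref{eq:one-sided_homogeneous_sublinear_Fenchel-Moreau_biconjugate} then follows by applying the first formula with \( \fonctiondual=\SFM{\fonctionuncertain}{\Fenchelcoupling_{\correspondence}} \) and substituting the second.

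For the equivalences~\eqref{eq:coupling_correspondence-convex_function}, I will proceed as follows. The step (a)\(\iff\)(b) is the standard characterization of convexity for a general coupling: a function is \(c\)-convex iff it is a supremum of \(c\)-elementary functions iff it equals its \(c\)-biconjugate. The step (b)\(\iff\)(c) is the formula just proved. For (c)\(\Rightarrow\)(d), take \( \fonctionprimal=\LFMbi{\bp{\InfCond{\fonctionuncertain}{\correspondence}}} \), which is closed convex as a Fenchel biconjugate. For (d)\(\Rightarrow\)(a), write \( \fonctionprimal=\LFMbi{\fonctionprimal}=\LFMr{\bp{\LFM{\fonctionprimal}}} \), using closed convexity and the paired topologies. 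The first formula then gives \( \fonctionuncertain=\SFMr{\bp{\LFM{\fonctionprimal}}}{\Fenchelcoupling_{\correspondence}} \), which is a \( \Fenchelcoupling_{\correspondence}' \)-conjugate and hence \( \Fenchelcoupling_{\correspondence} \)-convex.

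The main obstacle is the bookkeeping with \( \pm\infty \) and the Moreau lower addition. Specifically, I must justify that \( \sup_i a_i\LowPlus b=\sup_i(a_i\LowPlus b) \), including the cases \( b=-\infty \) and an empty index set. I also need the identity \( -\inf_{\uncertain\in\correspondence\primal}\fonctionuncertain(\uncertain)=\sup_{\uncertain\in\correspondence\primal}\bp{-\fonctionuncertain(\uncertain)} \) combined with \( \nscal{\primal}{\dual}\LowPlus\cdot \), where \( \nscal{\primal}{\dual} \) is finite, which makes this step safe. Following~\cite[Proposition~2.5, 2.6]{Chancelier-DeLara:2021_ECAPRA_JCA}, each of these reduces to a short case analysis.
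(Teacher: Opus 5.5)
Your proposal is correct and follows the route the paper intends: the paper leaves the proof to the reader and points to the direct computations of \cite[Proposition~2.5, 2.6]{Chancelier-DeLara:2021_ECAPRA_JCA}. Those computations are your interchange of suprema (push \( \LowPlus b \) inside the supremum, then regroup the pairs of \( \correspondence \)), followed by the chain (a)\(\iff\)(b)\(\iff\)(c)\(\implies\)(d)\(\implies\)(a) using the Fenchel--Moreau theorem. One small simplification: \( (\sup_i a_i)\LowPlus b=\sup_i (a_i\LowPlus b) \) also holds for an empty family, where both sides equal \( -\infty \), so empty aftersets need no separate case in either formula.
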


\subsection{Conditional infimum in minimization problems}
\label{Applications_of_the_conditional_infimum_to_minimization_problems}

In~\S\ref{Conditional_infimum_and_values/solutions_of_minimization_problems},
we relate the conditional infimum with values of minimization problems
and then with solutions. 
In~\S\ref{Application_to_fractional_programming}, we provide an application to fractional programming
and to lower bound convex programs in~\S\ref{Application_to_lower_bound_convex_programs}. 

\subsubsection{Conditional infimum and values/solutions of minimization problems}
\label{Conditional_infimum_and_values/solutions_of_minimization_problems}

With the conditional infimum, we now establish equalities and inequalities
between two minimization problems, an original problem on the set~$\UNCERTAIN$
and another one on the set~$\PRIMAL$, where the sets $\UNCERTAIN$ and $\PRIMAL$
are possibly different (in particular, $\PRIMAL$ might be a vector space,
whereas $\UNCERTAIN$ is not).
As we deal with optimization problems, we will often resort to the more telling
usage \( \inf_{\uncertain \in \Uncertain} \fonctionuncertain\np{\uncertain} \) or
\( \min_{\uncertain \in \Uncertain} \fonctionuncertain\np{\uncertain} \), rather
than \( \InfCond{\fonctionuncertain}{\Uncertain} \) as
in~\eqref{eq:subset_conditional_infimum}.

\subsubsubsection{Conditional infimum and values of minimization problems}

The following Proposition~\ref{pr:inf_ConditionalInfimum_inf_original} relates
the \emph{infimum} of an original function with the \emph{infimum} of its
conditional infimum.
\begin{proposition}
  \label{pr:inf_ConditionalInfimum_inf_original}
  We consider two sets $\UNCERTAIN$ and $\PRIMAL$, 
  a correspondence~$\correspondence$ between~$\UNCERTAIN$ and~$\PRIMAL$, 
  and a function \( \fonctionuncertain \colon \UNCERTAIN \to \barRR \).
  For any subset \( \Primal \subset \PRIMAL \), 
  we have the equality
  \begin{equation}
    \inf_{\uncertain \in \correspondence\Primal} \fonctionuncertain\np{\uncertain} 
    =
    \inf_{\primal \in \Primal}
    \bp{\nInfCond{\fonctionuncertain}{\correspondence}\np{\primal}}
    \eqfinp
    \label{eq:ConditionalInfimum_tower_property_equality_subset} 
  \end{equation}  
  For any subset \( \Uncertain \subset \UNCERTAIN \), we have the implications
  \begin{subequations}
    \begin{align}
      \Uncertain \subset \domain\correspondence
      & \implies
        \inf_{\uncertain \in \Uncertain} \fonctionuncertain\np{\uncertain} 
        \geq
        \inf_{\primal \in \Uncertain\correspondence} 
        \bp{\nInfCond{\fonctionuncertain}{\correspondence}\np{\primal}}
        \eqfinv
        \label{eq:ConditionalInfimum_tower_property_ineq}
      \\
      \correspondence\Converse{\correspondence}\Uncertain \subset 
      \Uncertain \subset \domain\correspondence
      & \implies
        \inf_{\uncertain \in \Uncertain} \fonctionuncertain\np{\uncertain} 
        =
        \inf_{\primal \in \Uncertain\correspondence} 
        \bp{\nInfCond{\fonctionuncertain}{\correspondence}\np{\primal}}
        \eqfinp
        \label{eq:ConditionalInfimum_tower_property}
    \end{align}
  \end{subequations}
\end{proposition}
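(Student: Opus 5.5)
The equality~\eqref{eq:ConditionalInfimum_tower_property_equality_subset} follows directly from the pushforward property~\eqref{eq:correspondence_conditional_infimum_properties_pushforward}, applied with the subset \( \Primal \subset \PRIMAL \). Indeed, the right-hand side is \( \bInfCond{\nInfCond{\fonctionuncertain}{\correspondence}}{\Primal} \), which equals \( \bInfCond{\fonctionuncertain}{\correspondence\Primal} \). This is exactly the left-hand side. If one prefers to unfold the definitions, both sides are the infimum of \( \fonctionuncertain\np{\uncertain} \) over pairs \( \np{\uncertain,\primal} \) with \( \primal\in\Primal \) and \( \uncertain\correspondence\primal \). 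Interchanging the two infima, with the convention~\eqref{eq:correspondence_convention} for empty sets, gives the equality.

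Both implications then follow from this equality applied with \( \Primal = \Uncertain\correspondence \). It yields
\( \inf_{\primal \in \Uncertain\correspondence} \nInfCond{\fonctionuncertain}{\correspondence}\np{\primal} = \inf_{\uncertain \in \correspondence\Uncertain\correspondence} \fonctionuncertain\np{\uncertain} \),
where \( \correspondence\Uncertain\correspondence \) is the set of \( \uncertain' \) such that there exist \( \uncertain\in\Uncertain \) and \( \primal \) with \( \uncertain\correspondence\primal \) and \( \uncertain'\correspondence\primal \). In correspondence notation this is the foreset \( \correspondence\Converse{\correspondence}\Uncertain \), read appropriately. The whole question is therefore how \( \Uncertain \) compares with this set.

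For~\eqref{eq:ConditionalInfimum_tower_property_ineq}, the hypothesis \( \Uncertain\subset\dom\correspondence \) means that each \( \uncertain\in\Uncertain \) has some \( \primal \) with \( \uncertain\correspondence\primal \). Taking \( \uncertain'=\uncertain \) shows \( \uncertain\in\correspondence\Converse{\correspondence}\Uncertain \). Hence \( \Uncertain \subset \correspondence\Converse{\correspondence}\Uncertain \), and since the infimum is nonincreasing in the set, \( \inf_{\Uncertain}\fonctionuncertain \geq \inf_{\correspondence\Converse{\correspondence}\Uncertain}\fonctionuncertain \). This is the claimed inequality.

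For~\eqref{eq:ConditionalInfimum_tower_property}, the additional hypothesis \( \correspondence\Converse{\correspondence}\Uncertain\subset\Uncertain \) combined with the inclusion just shown gives \( \correspondence\Converse{\correspondence}\Uncertain=\Uncertain \), so the two infima coincide. The only delicate point is bookkeeping: I must check that the set \( \correspondence\Uncertain\correspondence \) produced by the pushforward formula is exactly the set the paper denotes \( \correspondence\Converse{\correspondence}\Uncertain \), consistent with the composition and foreset/afterset conventions of~\S\ref{Background_on_correspondences}. To do this I would write out both memberships explicitly as ``there exist \( \primal \) and \( \uncertain\in\Uncertain \) with \( \uncertain\correspondence\primal \) and \( \uncertain'\correspondence\primal \)'' and confirm that they match. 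Everything else is immediate.
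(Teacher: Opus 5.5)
Your proof is correct and follows the paper's own route. The equality is the pushforward property, and both implications come from applying it with $\Primal=\Converse{\correspondence}\Uncertain=\Uncertain\correspondence$, using the inclusion $\Uncertain\subset\correspondence\Converse{\correspondence}\Uncertain$ (from $\Uncertain\subset\dom\correspondence$) and, for the equality case, the reverse inclusion. The bookkeeping you flag does check out: whether $\correspondence\Converse{\correspondence}\Uncertain$ is read as a composition or as a nested foreset, it is the set of $\uncertain'$ that share some $\primal$ with an element of $\Uncertain$, which is exactly your set.
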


When the set~\( \Primal \) is convex and the
function~\( \InfCond{\fonctionuncertain}{\correspondence}\) is convex, then the
right hand side in
Equation~\eqref{eq:ConditionalInfimum_tower_property_equality_subset} is a
convex program.

\begin{proof}
  The equality~\eqref{eq:ConditionalInfimum_tower_property_equality_subset}
  is proved as follows: 
  \begin{align*}
    \inf_{\uncertain \in \correspondence\Primal} \fonctionuncertain\np{\uncertain} 
    &=
      \InfCond{\fonctionuncertain}{\correspondence\Primal}
      \tag{by definition~\eqref{eq:subset_conditional_infimum}}
    \\
    &=
      \bInfCond{ \nInfCond{\fonctionuncertain}{\correspondence} }{\Primal}
      \tag{by the pushforward property~\eqref{eq:correspondence_conditional_infimum_properties_pushforward}}
    \\
    &=
      \inf_{\primal \in \Primal}
      \bp{\InfCond{\fonctionuncertain}{\correspondence}\np{\primal}}
      \eqfinp
      \tag{by definition~\eqref{eq:subset_conditional_infimum}}
  \end{align*}

  We suppose that \( \Uncertain \subset \domain\correspondence\) and we prove the
  right hand side inequality
  in~\eqref{eq:ConditionalInfimum_tower_property_ineq}.  First, we prove that
  $\Uncertain \subset \correspondence\Converse{\correspondence}\Uncertain$.  Indeed,
  if $\uncertain \in \Uncertain$ we have that
  $\uncertain \in \domain\correspondence$ as
  \( \Uncertain \subset \domain\correspondence\).  Therefore, there exists
  $\primal \in \PRIMAL $ such that $\uncertain \correspondence \primal$ or, equivalently, that
  $\primal \Converse{\correspondence} \uncertain$.  Now,
  $\uncertain \correspondence \primal$ and
  $\primal \Converse{\correspondence} \uncertain$ imply that
  $\uncertain \correspondence \Converse{\correspondence} \uncertain$ and thus
  $\uncertain \in \correspondence \Converse{\correspondence} \Uncertain$. Second,
  we obtain that
  \begin{align*}
    \inf_{\uncertain \in \Uncertain} \fonctionuncertain\np{\uncertain}
    &\ge 
      \inf_{\uncertain \in \correspondence \Converse{\correspondence} \Uncertain}
      \fonctionuncertain\np{\uncertain}
      \tag{since \( \Uncertain \subset \correspondence\Converse{\correspondence}\Uncertain \)}
    \\
    &=
      \inf_{\primal \in\Converse{\correspondence}\Uncertain}
      \bp{\InfCond{\fonctionuncertain}{\correspondence}\np{\primal}}
      \eqfinp
      \tag{by Equation~\eqref{eq:ConditionalInfimum_tower_property_equality_subset}
      with $\Primal=\Converse{\correspondence}\Uncertain$}
    \\ 
  \end{align*}

  When
  \( \correspondence\Converse{\correspondence}\Uncertain \subset \Uncertain \subset
  \domain\correspondence \), the right hand side equality
  in~\eqref{eq:ConditionalInfimum_tower_property} comes from the fact that the
  inequality above is an equality using that
  \( \correspondence\Converse{\correspondence}\Uncertain \subset \Uncertain \).
  
  \medskip
  This ends the proof. 
\end{proof}

\subsubsubsection{Conditional infimum and solutions of minimization problems}
\label{Conditional_infimum_and_solutions_of_minimization_problems}

With the conditional infimum, we now state sufficient conditions to relate the
optimal \emph{solutions} of two minimization problems.  The following
Proposition~\ref{pr:argmin_ConditionalInfimum_argmin_original} relates the
\emph{argmin} of an original function with the \emph{argmin} of its conditional
infimum.

\begin{proposition}
  \label{pr:argmin_ConditionalInfimum_argmin_original}
  We consider a function \( \fonctionuncertain \colon \UNCERTAIN \to \barRR \), 
  a subset \( \Uncertain \subset \UNCERTAIN \) 
  and the minimization problem 
  \begin{equation}
    \min_{ \uncertain\in\Uncertain } \fonctionuncertain\np{\uncertain} 
    \eqfinp  
    \label{eq:minimization_problem_Uncertain}
  \end{equation}
  Assume that there exists 
  \begin{subequations}
    \begin{enumerate}
    \item 
      a set~$\PRIMAL$,
      a correspondence~$\correspondence$ between~$\UNCERTAIN$ and~$\PRIMAL$, 
      and a function 
      \( \fonctionprimal \colon \PRIMAL \to \barRR \) such that 
      \begin{equation}
        \fonctionprimal\np{\primal} \leq 
        \InfCond{\fonctionuncertain}{\correspondence}\np{\primal}
        \eqsepv \forall \primal \in \PRIMAL
        \eqfinv
        \label{eq:argmin_ConditionalInfimum_argmin_original_fonctionprimal}
      \end{equation}
      a subset 
      \( \Primal  \subset \PRIMAL \) such that 
      \begin{equation}
        \Uncertain \subset \correspondence \Primal  
        \eqfinv
        \label{eq:argmin_ConditionalInfimum_argmin_original_subsets}
      \end{equation}
      and an optimal solution~\( \primal\opt \in \PRIMAL \) to the 
      auxiliary minimization problem
      \( \min_{ \primal \in \Primal } \fonctionprimal\np{\primal} \), 
      that is, 
      \begin{equation}
        \primal\opt
        \in 
        \argmin_{ {\primal \in \Primal} } \fonctionprimal\np{\primal}
        \eqfinv
        \label{eq:argmin_ConditionalInfimum_argmin_original_primal}
      \end{equation}
    \item 
      an element \( \uncertain\opt \in \UNCERTAIN \) such that 
      \begin{align}
        \fonctionuncertain\np{\uncertain\opt}
        &=
          \fonctionprimal\np{\primal\opt}
          \eqfinv
          \label{eq:argmin_ConditionalInfimum_argmin_original_=}
        \\
        \uncertain\opt
        &\in 
          \Uncertain
          \eqfinp
          \label{eq:argmin_ConditionalInfimum_argmin_original_in_Uncertain=}
      \end{align}
    \end{enumerate}
  \end{subequations}
  Then, $\uncertain\opt$ is an optimal solution to the original
  minimization problem~\eqref{eq:minimization_problem_Uncertain}, 
  that is, 
  \begin{equation}
    \uncertain\opt \in 
    \argmin_{ \uncertain \in\Uncertain } \fonctionuncertain\np{\uncertain}
    \eqfinp
    \label{eq:argmin_ConditionalInfimum=argmin_original}
  \end{equation}
\end{proposition}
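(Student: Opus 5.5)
The plan is a short chain of inequalities that sandwiches $\inf_{\uncertain\in\Uncertain}\fonctionuncertain\np{\uncertain}$ between $\fonctionuncertain\np{\uncertain\opt}$ and itself. Since $\uncertain\opt\in\Uncertain$ by~\eqref{eq:argmin_ConditionalInfimum_argmin_original_in_Uncertain=}, we trivially have $\inf_{\uncertain \in \Uncertain} \fonctionuncertain\np{\uncertain} \leq \fonctionuncertain\np{\uncertain\opt}$. So it suffices to show the reverse inequality $\fonctionuncertain\np{\uncertain\opt} \leq \fonctionuncertain\np{\uncertain}$ for every $\uncertain\in\Uncertain$.

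For this lower bound, I would use the inclusion~\eqref{eq:argmin_ConditionalInfimum_argmin_original_subsets}, namely $\Uncertain\subset\correspondence\Primal$, together with monotonicity of the infimum with respect to the set:
\begin{equation*}
\inf_{\uncertain \in \Uncertain} \fonctionuncertain\np{\uncertain}
\geq \inf_{\uncertain \in \correspondence\Primal} \fonctionuncertain\np{\uncertain}
= \inf_{\primal \in \Primal} \InfCond{\fonctionuncertain}{\correspondence}\np{\primal} \eqfinv
\end{equation*}
where the equality is exactly~\eqref{eq:ConditionalInfimum_tower_property_equality_subset} in Proposition~\ref{pr:inf_ConditionalInfimum_inf_original}. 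Then the pointwise minorization~\eqref{eq:argmin_ConditionalInfimum_argmin_original_fonctionprimal} gives $\inf_{\primal \in \Primal} \InfCond{\fonctionuncertain}{\correspondence}\np{\primal} \geq \inf_{\primal \in \Primal} \fonctionprimal\np{\primal}$. By~\eqref{eq:argmin_ConditionalInfimum_argmin_original_primal}, this last infimum equals $\fonctionprimal\np{\primal\opt}$, which by~\eqref{eq:argmin_ConditionalInfimum_argmin_original_=} equals $\fonctionuncertain\np{\uncertain\opt}$.

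Chaining these gives $\fonctionuncertain\np{\uncertain\opt} \leq \inf_{\uncertain\in\Uncertain}\fonctionuncertain\np{\uncertain} \leq \fonctionuncertain\np{\uncertain\opt}$. Hence equality holds with $\uncertain\opt\in\Uncertain$, which is~\eqref{eq:argmin_ConditionalInfimum=argmin_original}.

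The argument uses only infima and inequalities in $\barRR$, never arithmetic, so the extended values $\pm\infty$ cause no difficulty. I expect no real obstacle here. The only point to check is the direction of the set inclusion: $\Uncertain\subset\correspondence\Primal$ is precisely what makes the restricted infimum larger. Notably, no condition such as $\correspondence\Converse{\correspondence}\Uncertain\subset\Uncertain$ is needed, because membership $\uncertain\opt\in\Uncertain$ is assumed directly.
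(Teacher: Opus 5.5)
Your proposal is correct and follows essentially the same route as the paper: the chain $\fonctionuncertain\np{\uncertain\opt}=\fonctionprimal\np{\primal\opt}\leq\inf_{\primal\in\Primal}\InfCond{\fonctionuncertain}{\correspondence}\np{\primal}=\inf_{\uncertain\in\correspondence\Primal}\fonctionuncertain\np{\uncertain}\leq\inf_{\uncertain\in\Uncertain}\fonctionuncertain\np{\uncertain}$. That chain uses the equality~\eqref{eq:ConditionalInfimum_tower_property_equality_subset} together with the inclusion $\Uncertain\subset\correspondence\Primal$, and is closed by $\uncertain\opt\in\Uncertain$. You merely write the same inequalities in the reverse order.
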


\begin{proof}
  The equality~\eqref{eq:argmin_ConditionalInfimum=argmin_original}
  between solutions of minimization problems follows from
  \begin{align*}
    \fonctionuncertain\np{\uncertain\opt}
    &=
      \fonctionprimal\np{\primal\opt}
      \tag{by assumption~\eqref{eq:argmin_ConditionalInfimum_argmin_original_=}}
    \\
    &=
      \min_{ \primal \in \Primal } \fonctionprimal\np{\primal}
      \tag{by assumption~\eqref{eq:argmin_ConditionalInfimum_argmin_original_primal}}
    \\
    &\leq
      \inf_{ \primal \in \Primal } \InfCond{\fonctionuncertain}{\correspondence}\np{\primal}
      \tag{because \( \fonctionprimal \leq 
      \InfCond{\fonctionuncertain}{\correspondence}\)
      by assumption~\eqref{eq:argmin_ConditionalInfimum_argmin_original_fonctionprimal}}
    \\
    &=
      \inf_{ \uncertain \in \correspondence\Primal } \fonctionuncertain\np{\uncertain}
      \tag{by the equality~\eqref{eq:ConditionalInfimum_tower_property_equality_subset}}
    \\
    & \leq 
      \inf_{ \uncertain \in \Uncertain} \fonctionuncertain\np{\uncertain}
      \eqfinp
      \tag{because \( \correspondence \Primal \supset \Uncertain \)
      by assumption~\eqref{eq:argmin_ConditionalInfimum_argmin_original_subsets}}
    \\
  \end{align*}
  As \( \uncertain\opt \in \Uncertain \)
  by assumption~\eqref{eq:argmin_ConditionalInfimum_argmin_original_in_Uncertain=},
  this ends the proof.
\end{proof}

\subsubsection{Application to fractional programming}
\label{Application_to_fractional_programming}

Regarding fractional programming, we refer the reader to
\cite{Charnes-Cooper:1962,Schaible:1974} and \cite[\S2.6]{XiaYong:2020}.

Let ${\spacedim} \in \NN^*$ be a positive integer.  Let
$ \fonctiondepart \colon \RR^{\spacedim} \to \barRR $ and
$\fonctiondepartbis \colon \RR^{\spacedim} \to \RR$ be two functions and
$\Convex \subset \RR^{\spacedim} $ be a nonempty set.
When $\depart \in \Convex \implies \fonctiondepartbis(\depart) > 0 $, we consider
the following minimization problem
\begin{align}
  \inf_{\depart \in
  \Convex}\frac{\fonctiondepart(\depart)}{\fonctiondepartbis(\depart)}
  =
  \InfCond{\frac{\fonctiondepart}{\fonctiondepartbis}}{\Convex}
  \label{eq:ratio-pb}
  \eqfinp
\end{align}
We recall that the {perspective function}~$\perspective{\fonctiondepart} \colon
\RR_{++}\times\RR^{\spacedim} \to \barRR $ of the function~$\fonctiondepart$ has
been defined in~\eqref{eq:perspective_function}.

\begin{lemma}
  \label{le:infcond_frac_and_persp}
  Let $ \fonctiondepart \colon \RR^{\spacedim} \to \barRR $ and
  $\fonctiondepartbis \colon \RR^{\spacedim} \to \RR$ be two functions.
  Consider the mapping\footnote{This mapping is
    reminiscent of the generalized Charnes-Cooper-variable transformation
    C.C.T. in \cite[Sect.~1]{Schaible:1974}} \( \theta \colon \RR^{\spacedim} \to \RR{\times}\RR^{\spacedim}$ defined by
  ${ \theta(\depart) } = \bp{\theta_{1}(\depart),\theta_2(\depart)} = \bp{
    \frac{1}{\fonctiondepartbis(\depart)}
    ,
    \frac{\depart}{\fonctiondepartbis(\depart)}
  } \).
  For any nonempty set $\Convex\subset \nset{\depart\in \RR^{\spacedim}}{\fonctiondepartbis(\depart)> 0}$, we have that
  \begin{subequations}
    \begin{align}
      \label{eq:theta_C}
      \theta(\Convex)
      &=
        \bset{(\lambda,\arrivee)\in \RR_{++}{\times}\RR^{\spacedim}}{ \arrivee/\lambda \in \Convex
        \text{ and } \fonctiondepartbis(\arrivee/\lambda)=1/\lambda }
        \eqfinv
      \\        
      \label{eq:infconf_frac_theta}
      \InfCond{\frac{f}{\fonctiondepartbis}}{\theta}
      &=
        \perspective{\fonctiondepart} \UppPlus \Indicator{ \theta(\Convex)} 
        \eqfinv
      \\
      \label{eq:fractional_to_convex}
      \inf_{\depart\in\Convex}
      \frac{\fonctiondepart(\depart)}{\fonctiondepartbis(\depart)}
      &=
        \inf_{(\lambda,\arrivee) \in \theta(\Convex)}
        \perspective{\fonctiondepart}(\lambda,\arrivee)
        \eqfinp
    \end{align}
  \end{subequations}
\end{lemma}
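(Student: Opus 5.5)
The plan is to prove the three identities in order, each from the previous one, using only the definitions and Proposition~\ref{pr:inf_ConditionalInfimum_inf_original}. One point about the domain of~$\theta$: it is only well defined where $\fonctiondepartbis\neq 0$. Since $\Convex \subset \nset{\depart}{\fonctiondepartbis(\depart)>0}$, I will read $\InfCond{\frac{f}{\fonctiondepartbis}}{\theta}$ as the conditional infimum of $f/\fonctiondepartbis$ restricted to~$\Convex$ with respect to~$\theta$. That is, I use the correspondence $\graph_{\theta}\cap(\Convex\times\RR^{1+\spacedim})$, consistent with~\eqref{eq:ratio-pb} where everything lives on~$\Convex$.

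For~\eqref{eq:theta_C}, I check the double inclusion directly.
\begin{itemize}
\item If $\depart\in\Convex$, set $\lambda=1/\fonctiondepartbis(\depart)>0$ and $\arrivee=\depart/\fonctiondepartbis(\depart)$. Then $\arrivee/\lambda=\depart\in\Convex$ and $\fonctiondepartbis(\arrivee/\lambda)=\fonctiondepartbis(\depart)=1/\lambda$.
\item Conversely, given $(\lambda,\arrivee)$ with $\lambda>0$, $\arrivee/\lambda\in\Convex$ and $\fonctiondepartbis(\arrivee/\lambda)=1/\lambda$, put $\depart=\arrivee/\lambda\in\Convex$. Then $\theta(\depart)=(\lambda,\lambda\depart)=(\lambda,\arrivee)$.
\end{itemize}
This computation also shows that $\theta$ is injective on~$\Convex$, with inverse $(\lambda,\arrivee)\mapsto\arrivee/\lambda$ on~$\theta(\Convex)$. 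This injectivity is the key observation for the next step.

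For~\eqref{eq:infconf_frac_theta}, I use~\eqref{eq:mapping_ConditionalInfimum}: $\InfCond{f/\fonctiondepartbis}{\theta}(\lambda,\arrivee)=\inf\{f(\depart)/\fonctiondepartbis(\depart) : \depart\in\Convex,\ \theta(\depart)=(\lambda,\arrivee)\}$. I split into two cases.
\begin{itemize}
\item If $(\lambda,\arrivee)\notin\theta(\Convex)$, the constraint set is empty, so the value is $+\infty$ by convention~\eqref{eq:correspondence_convention}. The right-hand side is also $+\infty$, because $\Indicator{\theta(\Convex)}=+\infty$ there and $\UppPlus$ gives $+\infty$ even if $\perspective{f}=-\infty$. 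This is exactly why the upper addition is used.
\item If $(\lambda,\arrivee)\in\theta(\Convex)$, injectivity leaves the single point $\depart=\arrivee/\lambda$, and $f(\depart)/\fonctiondepartbis(\depart)=\lambda f(\arrivee/\lambda)$. Since $\lambda>0$, this equals $\perspective{f}(\lambda,\arrivee)$ by~\eqref{eq:perspective_function}, and adding $\Indicator{\theta(\Convex)}=0$ changes nothing.
\end{itemize}
Extended values need a brief check: $f(\depart)/\fonctiondepartbis(\depart)=\pm\infty$ when $f(\depart)=\pm\infty$, matching $\lambda\cdot(\pm\infty)$ for $\lambda>0$.

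Finally,~\eqref{eq:fractional_to_convex} follows from~\eqref{eq:ConditionalInfimum_tower_property_equality_subset} with $\correspondence=\graph_{\theta}$ restricted to~$\Convex$ and $\Primal=\RR\times\RR^{\spacedim}$. Then $\correspondence\Primal=\Convex$, so
\[
\inf_{\Convex} f/\fonctiondepartbis=\inf_{\RR^{1+\spacedim}}\InfCond{f/\fonctiondepartbis}{\theta}=\inf\bp{\perspective{f}\UppPlus\Indicator{\theta(\Convex)}}=\inf_{\theta(\Convex)}\perspective{f}.
\]
The only delicate point in the whole argument is the domain and extended-value bookkeeping above; injectivity of~$\theta$ on~$\Convex$ does the real work.
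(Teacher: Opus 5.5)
Your proposal is correct and follows essentially the paper's route: injectivity of $\theta$ on $\Convex$ with inverse $(\lambda,\arrivee)\mapsto\arrivee/\lambda$, a case split on whether $(\lambda,\arrivee)\in\theta(\Convex)$ for~\eqref{eq:infconf_frac_theta}, and the pushforward identity~\eqref{eq:ConditionalInfimum_tower_property_equality_subset} for~\eqref{eq:fractional_to_convex}; the paper takes $\Primal=\theta(\Convex)$ and uses the bijection, while you take the whole space and absorb the indicator, which is only a cosmetic difference. Your explicit restriction of $\graph_{\theta}$ to $\Convex$ is exactly what the paper uses tacitly when it asserts the value $+\infty$ off $\theta(\Convex)$ (without it, images of points $\depart\notin\Convex$ with $\fonctiondepartbis(\depart)\neq 0$ would carry finite values), so it is a welcome clarification rather than a deviation.
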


\begin{proof}
  Using that $\Convex \subset \nset{\depart\in \RR^{\spacedim}}{\fonctiondepartbis(\depart)> 0}$,
  it is straightforward to obain that the mapping $\theta: \Convex \to \theta(\Convex)$ is an injection, hence a bijection.
  The inverse mapping $\Converse{\theta} \colon \theta(\Convex) \to \Convex$ is given by 
  $(\lambda,\arrivee) \mapsto \arrivee/\lambda$, and we easily get~\eqref{eq:theta_C}.
  \medskip

  Now, we prove~\eqref{eq:infconf_frac_theta}.      
  If $(\lambda,\arrivee) \not\in \theta(\Convex)$, it is immediate that
  $\InfCond{\frac{\fonctiondepart}{\fonctiondepartbis}}{\theta}(\lambda,\arrivee) = +\infty$
  by definition~\eqref{eq:mapping_ConditionalInfimum} of $\InfCond{\frac{\fonctiondepart}{\fonctiondepartbis}}{\theta}$.
  Now, for any $(\lambda,\arrivee) \in \theta(\Convex)$, we have
  \begin{align*} 
    \InfCond{\frac{\fonctiondepart}{\fonctiondepartbis}}{\theta}(\lambda,\arrivee)
    &=
      \InfCond{\frac{\fonctiondepart(\depart)}{\fonctiondepartbis(\depart)}}%
      { \theta_{1}(\depart)=\lambda, \theta_2(\depart)=\arrivee}
      \tag{by definition~\eqref{eq:mapping_ConditionalInfimum} of $\InfCond{\frac{\fonctiondepart}{\fonctiondepartbis}}{\theta}$}
    \\
    &=
      \InfCond{\frac{\perspective{\fonctiondepart}(1,\depart)}{\fonctiondepartbis(\depart)}}%
      { \theta_{1}(\depart)=\lambda, \theta_2(\depart)=\arrivee}
      \tag{by~\eqref{eq:perspective_function}}
    \\
    &=
      \InfCond{\frac{\perspective{\fonctiondepart}(1,\depart)}{\fonctiondepartbis(\depart)}}%
      { \frac{1}{\fonctiondepartbis(\depart)}=\lambda, \frac{\depart}{\fonctiondepartbis(\depart)}=\arrivee }
      \tag{by definition of the mapping~$\theta$}
    \\
    &=
      \InfCond{\perspective{\fonctiondepart}(\frac{1}{\fonctiondepartbis(\depart)},
      \frac{\depart}{\fonctiondepartbis(\depart)})}{
      { \frac{1}{\fonctiondepartbis(\depart)}=\lambda, \frac{\depart}{\fonctiondepartbis(\depart)}=\arrivee }}
      \intertext{as the perspective function~$\perspective{\fonctiondepart}$ is 1-homogeneous,
      and as \( \fonctiondepartbis(\depart) > 0 \) for \( \depart\in\Convex \) by assumption,}
    &=
      \perspective{\fonctiondepart}(\lambda,\arrivee)
      \eqfinv
  \end{align*}
  which gives Equation~\eqref{eq:infconf_frac_theta}.
  %

  \medskip

  Finally, we prove~\eqref{eq:fractional_to_convex}.
  We have that
  \begin{align*} 
    \inf_{(\lambda,\arrivee) \in \theta(\Convex)}  \perspective{\fonctiondepart}(\lambda,\arrivee) 
    &=
      \inf_{(\lambda,\arrivee) \in \theta(\Convex)} \InfCond{\frac{\fonctiondepart}{\fonctiondepartbis}}{\theta}(\lambda,\arrivee)
      \tag{by~\eqref{eq:infconf_frac_theta}}
    \\
    &=
      \InfCond{\InfCond{\frac{\fonctiondepart}{\fonctiondepartbis}}{\theta}}{\theta(\Convex)}
      \tag{by~\eqref{eq:subset_conditional_infimum}}
    \\
    &=
      \InfCond{\InfCond{\frac{\fonctiondepart}{\fonctiondepartbis}}{\graph_{\theta} }}{\theta(\Convex)}
      \tag{by~\eqref{eq:mapping_ConditionalInfimum}}
    \\
    &=
      \InfCond{ \frac{\fonctiondepart}{\fonctiondepartbis} }%
      {
      \graph_{\theta} \theta(\Convex)
      }
      \tag{by the pushforward property~\eqref{eq:correspondence_conditional_infimum_properties_pushforward}}
    \\
    &=
      \InfCond{ \frac{\fonctiondepart}{\fonctiondepartbis} }%
      {
      \Converse{\theta}(\theta(\Convex))
      }
      \tag{by~\eqref{eq:graph}}
    \\
    &=
      \InfCond{ \frac{\fonctiondepart}{\fonctiondepartbis} }%
      {\Convex}
      \intertext{as we have shown at the beginning that $\theta: \Convex \to \theta(\Convex)$ is a bijection}
    &= { \inf_{\depart\in\Convex}
      \frac{\fonctiondepart(\depart)}{\fonctiondepartbis(\depart)} }
      \tag{by~\eqref{eq:subset_conditional_infimum}}
      \eqfinv
  \end{align*}
  which gives Equation~\eqref{eq:fractional_to_convex}, and ends the proof.
\end{proof}
  
According to the previous Lemma~\ref{le:infcond_frac_and_persp}, if the function
$\fonctiondepart$ is convex and if the set $\theta(\Convex)$ is convex then the
fractional minimization problem is equivalent to a convex problem as given by
Equation~\eqref{eq:fractional_to_convex}.  The special case where
$\Convex = \nset{\depart \in \RR^{\spacedim}}{ A x \le b }$ and
$\fonctiondepart$ and $\fonctiondepartbis$ are affine functions easily satisfies
the above assumptions: it is known in the literature under the name of
\emph{fractional linear programming} \cite{Charnes-Cooper:1962}.  In the
equivalent formulation, the function~$\perspective{\fonctiondepart}$ is linear
and the equivalent minimization problem is a linear optimization problem.

As in~\cite{Schaible:1974}, we go beyond linear programming and address
fractional \emph{convex} programming in the next
Proposition~\ref{pr:convex_frac}.  However, to the difference of
\cite[Proposition~3]{Schaible:1974}, we treat general convex constraints (not
necessarily given by inequalities).
  
\begin{proposition}
  \label{pr:convex_frac}
  Let $ \fonctiondepart \colon \RR^{\spacedim} \to \barRR $ and
  $\fonctiondepartbis \colon \RR^{\spacedim} \to \RR$ be two convex functions.
  Let $\Convex \subset \RR^{\spacedim} $ be a nonempty convex set such that
  $\depart \in \Convex \implies \fonctiondepartbis(\depart) > 0 $ and
  \( \fonctiondepart(\depart) \leq 0 \).
  Then, the perspective function~$\perspective{\fonctiondepart}$ is convex,
  the set
  \begin{subequations}
    \begin{equation}
      \label{eq:perspective_Convex}
      \perspective{\Convex} = \bset{(\lambda,\arrivee)\in \RR_{++}{\times}\RR^{\spacedim}}%
      { \arrivee/\lambda \in \Convex
        \text{ and } \fonctiondepartbis(\arrivee/\lambda) \leq 1/\lambda}
    \end{equation}
    is convex, and we have that
    \begin{equation}
      \inf_{\depart\in\Convex}
      \frac{\fonctiondepart(\depart)}{\fonctiondepartbis(\depart)}
      =
      \overbrace{
        \inf_{(\lambda,\arrivee) \in \perspective{\Convex}}
        \perspective{\fonctiondepart}(\lambda,\arrivee)
      }^{\textrm{convex program}}
      \eqfinp
    \end{equation}  
  \end{subequations}
\end{proposition}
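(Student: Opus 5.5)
Three things have to be shown: convexity of $\perspective{\fonctiondepart}$, convexity of $\perspective{\Convex}$, and the equality of values. The first two are standard perspective arguments. The equality is obtained by comparing with Lemma~\ref{le:infcond_frac_and_persp}: $\perspective{\Convex}$ is a relaxation of $\theta(\Convex)$ (equality $\fonctiondepartbis=1/\lambda$ relaxed to $\leq$), and the sign assumption $\fonctiondepart\leq 0$ on $\Convex$ shows the relaxation does not lower the infimum.

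\textbf{Convexity of the perspective.} By~\eqref{eq:StrictEpigraph_perspective_function}, $\StrictEpigraph\perspective{\fonctiondepart}=\RR_{++}\bp{\na{1}\times\StrictEpigraph\fonctiondepart}$. This is the cone generated by a convex set lying in the hyperplane $\lambda=1$, hence it is convex. Alternatively, for $(\lambda_i,\arrivee_i)$ with $\lambda_i>0$ and weights $t_i$, I would write
\[
\frac{t_1\arrivee_1+t_2\arrivee_2}{t_1\lambda_1+t_2\lambda_2}
\]
as a convex combination of $\arrivee_1/\lambda_1$ and $\arrivee_2/\lambda_2$ with weights $t_i\lambda_i/(t_1\lambda_1+t_2\lambda_2)$, and then use convexity of $\fonctiondepart$. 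The same computation applies to $\perspective{\Convex}$. The condition $\arrivee/\lambda\in\Convex$ defines the cone $\RR_{++}(\na{1}\times\Convex)$, which is convex. The condition $\fonctiondepartbis(\arrivee/\lambda)\leq 1/\lambda$ is equivalent to $\perspective{\fonctiondepartbis}(\lambda,\arrivee)\leq 1$, a sublevel set of the convex function $\perspective{\fonctiondepartbis}$. So $\perspective{\Convex}$ is an intersection of two convex sets.

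\textbf{Equality of values.} Since $\theta(\Convex)\subset\perspective{\Convex}$ by~\eqref{eq:theta_C}, equation~\eqref{eq:fractional_to_convex} gives
\[
\inf_{\Convex}\fonctiondepart/\fonctiondepartbis\geq\inf_{\perspective{\Convex}}\perspective{\fonctiondepart}.
\]
For the reverse inequality, take $(\lambda,\arrivee)\in\perspective{\Convex}$ and set $\depart=\arrivee/\lambda\in\Convex$. Then
\[
\perspective{\fonctiondepart}(\lambda,\arrivee)=\lambda\fonctiondepart(\depart).
\]
Here $\fonctiondepart(\depart)\leq 0$ and $0<\lambda\leq 1/\fonctiondepartbis(\depart)$, so $\lambda\fonctiondepart(\depart)\geq\fonctiondepart(\depart)/\fonctiondepartbis(\depart)\geq\inf_{\Convex}\fonctiondepart/\fonctiondepartbis$. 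Taking the infimum over $\perspective{\Convex}$ concludes.

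\textbf{Main obstacle.} The delicate point is the extended-valued arithmetic when $\fonctiondepart(\depart)=-\infty$. In that case both sides equal $-\infty$, and the inequality $\lambda\fonctiondepart(\depart)\geq\fonctiondepart(\depart)/\fonctiondepartbis(\depart)$ still holds with $-\infty$ on both sides. I also need to check that the perspective-convexity argument tolerates the value $-\infty$. Working with strict epigraphs, as in~\eqref{eq:StrictEpigraph_perspective_function}, avoids this issue. If needed, I can interpret the result in the paper's framework as the conditional infimum~\eqref{eq:infconf_frac_theta} combined with the monotonicity~\eqref{eq:correspondence_conditional_infimum_properties_inclusion}.
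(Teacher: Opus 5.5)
Your proposal is correct and follows essentially the same route as the paper: $\perspective{\Convex}$ is convex as the intersection of $\RR_{++}(\{1\}\times\Convex)$ with a sublevel set of $\perspective{\fonctiondepartbis}$; one inequality comes from $\theta(\Convex)\subset\perspective{\Convex}$ together with~\eqref{eq:fractional_to_convex}; the reverse inequality comes from the sign condition $\fonctiondepart\leq 0$ on $\Convex$. Your bound $\lambda\fonctiondepart(\depart)\geq\fonctiondepart(\depart)/\fonctiondepartbis(\depart)$ is exactly the paper's rescaling to $(\lambda',\arrivee')=(\lambda/\tau,\arrivee/\tau)=\theta(\depart)$, written directly without introducing $\tau$.
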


\begin{proof}
  By definition~\eqref{eq:perspective_function} of the perspective function, 
  the set~\( \perspective{\Convex} \) in~\eqref{eq:perspective_Convex}
  can be expressed as
  \begin{equation*}
    \perspective{\Convex}=\bset{(\lambda,\arrivee)\in \RR_{++}{\times}\RR^{\spacedim}}%
    { \perspective{\Indicator{\Convex}}(\lambda,\arrivee) \le 0
      \text{ and } \perspective{\fonctiondepartbis}(\lambda,\arrivee) \le 1}
    \eqfinp 
  \end{equation*}
  Under this form, it is immediate to prove that the set~$\perspective{\Convex}$ is convex as
  $\Indicator{\Convex}$ and $\fonctiondepartbis$ are both
  convex functions and their perspective functions, \(
  \perspective{\Indicator{\Convex}} \) and \( \perspective{\fonctiondepartbis}
  \), are therefore also convex ---
  as is well known (see the relationship between epigraphs in~\eqref{eq:StrictEpigraph_perspective_function}). 

  Moreover, comparing Equation~\eqref{eq:theta_C} with Equation~\eqref{eq:perspective_Convex},
  we get the inclusion
  \begin{align*}
    \theta(\Convex)
    &=
      \bset{(\lambda,\arrivee)\in \RR_{++}{\times}\RR^{\spacedim}}%
      { \arrivee/\lambda \in \Convex
      \text{ and } \fonctiondepartbis(\arrivee/\lambda) = 1/\lambda}\\
    \\
    &\subset
      \bset{(\lambda,\arrivee)\in \RR_{++}{\times}\RR^{\spacedim}}%
      { \arrivee/\lambda \in \Convex
      \text{ and } \fonctiondepartbis(\arrivee/\lambda) \leq 1/\lambda} 
      = \perspective{\Convex}
      \eqfinp             
  \end{align*}
  Thus, using~\eqref{eq:fractional_to_convex}, we get the equality and inequality
  \begin{equation}
    \label{ineq:frac}
    \inf_{\depart\in\Convex}
    \frac{\fonctiondepart(\depart)}{\fonctiondepartbis(\depart)}
    =
    \inf_{(\lambda,\arrivee) \in \theta(\Convex)}
    \perspective{\fonctiondepart}(\lambda,\arrivee)
    \geq
    \inf_{(\lambda,\arrivee) \in \perspective{\Convex}}
    \perspective{\fonctiondepart}(\lambda,\arrivee)
    \eqfinp
  \end{equation}
  Using the assumption that the function~$\fonctiondepart$ is nonpositive on~$\Convex$, we are going
  to show that we have an equality in Equation~\eqref{ineq:frac}.
  Indeed, consider $(\lambda,\arrivee)\in \perspective{\Convex}$.
  First, we set
  \( \tau =\perspective{\fonctiondepartbis}(\lambda,\arrivee) = 
  \lambda\fonctiondepartbis(\arrivee/\lambda) \), which is such 
  that $\tau \in \OpenIntervalClosed{0}{1}$,
  by definition~\eqref{eq:perspective_Convex} of~\( \perspective{\Convex} \).
  Second, we set 
  \( (\lambda',\arrivee')= (\lambda/\tau,\arrivee/\tau) \), which is such 
  that $(\lambda',\arrivee')= (\lambda/\tau,\arrivee/\tau) \in \theta(\Convex)$
  by~\eqref{eq:theta_C} as, on the one hand,
  \( \arrivee'/\lambda'= \arrivee/\lambda \in \Convex \) and, on the other hand,
  \( \fonctiondepartbis(\arrivee'/\lambda') = \fonctiondepartbis(\arrivee/\lambda) 
  = \tau/\lambda= 1/\lambda' \).
  Third, as $\tau \in \OpenIntervalClosed{0}{1}$ and $\fonctiondepart$ is nonpositive, we get that
  \begin{align*}
    \perspective{\fonctiondepart}\underbrace{(\lambda,\arrivee)}_{\in \perspective{\Convex}}
    = \perspective{\fonctiondepart}(\tau\lambda',\tau\arrivee')
    = \tau \lambda'\perspective{\fonctiondepart}(1,\arrivee'/\lambda')
    = \tau \lambda'\fonctiondepart(\arrivee'/\lambda')
    \ge  \lambda'\fonctiondepart(\arrivee'/\lambda')
    = \perspective{\fonctiondepart}\underbrace{(\lambda',\arrivee')}_{\in \theta(\Convex)}
    \eqfinp
    \label{eq:vplp}
  \end{align*}
  Thus, we have shown that, for any $(\lambda,\arrivee)\in
  \perspective{\Convex}$, there exists $(\lambda',\arrivee')\in
  \theta(\Convex)$
  such that \( \perspective{\fonctiondepart}(\lambda,\arrivee) \geq
  \perspective{\fonctiondepart}(\lambda',\arrivee') \). From this, we 
  finally deduce that \( \inf_{(\lambda,\arrivee) \in \perspective{\Convex}}
  \perspective{\fonctiondepart}(\lambda,\arrivee) \geq
  \inf_{(\lambda,\arrivee) \in \theta(\Convex)}
  \perspective{\fonctiondepart}(\lambda,\arrivee) \), hence 
  an equality in Equation~\eqref{ineq:frac}.
\end{proof}
  
\subsubsection{Application to lower bound convex programs}
\label{Application_to_lower_bound_convex_programs}

\subsubsubsection{Lower bound convex programs in sparse optimization}

We can use the conditional infimum in
Proposition~\ref{pr:inf_ConditionalInfimum_inf_original} to obtain lower bound
convex programs for nonconvex problems as follows.  Consider, on the one hand, a
function \( \fonctionuncertain \colon \UNCERTAIN \to \barRR \)
and, on the other hand, a correspondence~$\correspondence$ between~$\UNCERTAIN$
and $\PRIMAL$, and a subset \( \Primal \subset \PRIMAL \).
By~\eqref{eq:ConditionalInfimum_tower_property_equality_subset} followed by
well-known inequalities in convex analysis, we have that
\begin{subequations}
  \begin{equation}
    \underbrace{ \inf_{\primal \in \closedconvexhull{\Primal}}
      \LFMbi{\bp{\nInfCond{\fonctionuncertain}{\correspondence}}}\np{\primal} }%
    _{\textrm{lower bound convex program}}
    \leq
    \inf_{\primal \in \Primal}
    \bp{\nInfCond{\fonctionuncertain}{\correspondence}\np{\primal}}
    =
    \inf_{\uncertain \in \correspondence\Primal} \fonctionuncertain\np{\uncertain} 
    \eqfinp      
  \end{equation}
  This may be interesting when \( \LFMbi{\fonctionuncertain} \) is trivial, but
  \( \LFMbi{\bp{\nInfCond{\fonctionuncertain}{\correspondence}}} \) is not, like
  when \( \fonctionuncertain \) is the \lzeropseudonorm\ on~$\RR^{\spacedim}$
  and $\correspondence$ is given by the normalization mapping onto the Euclidean
  sphere \cite[Sect.~3]{Chancelier-DeLara:2021_ECAPRA_JCA}.  In that case,
  \( \LFMbi{\bp{\nInfCond{\fonctionuncertain}{\correspondence}}} \) is a proper
  convex \lsc\ function~ \( {\cal L}_{0}\) defined and studied in
  \cite[\S~4.1]{Chancelier-DeLara:2021_ECAPRA_JCA}, giving
  \begin{equation}
    \underbrace{ \inf_{\primal \in \closedconvexhull{\Primal}}
      {\cal L}_{0}\np{\primal} 
    }%
    _{\textrm{lower bound convex program}}
    \leq
    \inf_{\uncertain \in \RR_{++}\Primal} \lzero\np{\uncertain} 
    \eqfinp      
  \end{equation}
\end{subequations}

\subsubsubsection{Lower bound convex programs with \( \Cone \)-convexity}

\begin{proposition}
  Let~$\DEPART$ and~$\ARRIVEE$ be two real vector spaces.
  Let \( \Cone\subset\ARRIVEE \) be a convex cone containing zero,
  and  \( \theta \colon \DEPART \to \ARRIVEE \) be a \( \Cone \)-convex mapping.   
  Let $ \FONCTIONDEPART \colon \DEPART\times\ARRIVEE \to \barRR $ be a convex function.
  Then, we have that
  \begin{equation}
    \underbrace{ \inf_{\np{\depart,\arrivee} \in \Epigraph_{\Cone}\theta}
      \FONCTIONDEPART\np{\depart,\arrivee} }_{\textrm{convex program}}
    \leq 
    \inf_{\depart \in \DEPART} \FONCTIONDEPART\bp{\depart,\theta\np{\depart}}
    \eqfinv
    \label{eq:lower_bound_convex_programs_with_Cone-convexity}
  \end{equation}
  where the right hand side minimization problem is generally not a convex
  program\footnote{%
It is a convex program if the mapping~\( \theta \) is linear, or under the assumptions
  of Item~\ref{it:composition_convexity} in Corollary~\ref{cor:composition_convexity}.}.
\end{proposition}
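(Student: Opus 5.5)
The inequality itself is elementary; the content lies in the two structural claims: the left hand side is a convex program, and the right hand side is an infimum over a subset of that feasible set. The plan is to compare the two feasible sets first and then use the conditional infimum viewpoint to confirm convexity.

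\emph{Comparing feasible sets.} For any \( \depart\in\DEPART \), the cone~\( \Cone \) contains zero, so \( \theta\np{\depart}-\theta\np{\depart}=0\in\Cone \), that is, \( \theta\np{\depart}\leq_{\Cone}\theta\np{\depart} \). By definition~\eqref{eq:Cone-epigraph}, this gives \( \bp{\depart,\theta\np{\depart}}\in\Epigraph_{\Cone}\theta \). Hence the graph of~\( \theta \) is contained in \( \Epigraph_{\Cone}\theta \), and therefore
\[
\inf_{\np{\depart,\arrivee} \in \Epigraph_{\Cone}\theta}\FONCTIONDEPART\np{\depart,\arrivee}
\leq \FONCTIONDEPART\bp{\depart,\theta\np{\depart}}
\]
for every \( \depart \). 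Taking the infimum over \( \depart\in\DEPART \) yields~\eqref{eq:lower_bound_convex_programs_with_Cone-convexity}.

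In the paper's language, the right hand side equals \( \inf_{\np{\depart,\arrivee}\in\graph_{\theta}}\FONCTIONDEPART \). By monotonicity with respect to correspondences~\eqref{eq:correspondence_conditional_infimum_properties_inclusion}, applied to the inclusion \( \graph_{\theta}\subset\Epigraph_{\Cone}\theta \) with the sets read as subsets, it is bounded below by the infimum over \( \Epigraph_{\Cone}\theta \).

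\emph{Convexity of the left hand side.} The feasible set \( \Epigraph_{\Cone}\theta \) is convex by the very definition of a \( \Cone \)-convex mapping, and \( \FONCTIONDEPART \) is convex by assumption. So the left hand side minimizes a convex function over a convex subset of \( \DEPART\times\ARRIVEE \), which is a convex program. Nothing more is needed here.

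\emph{The footnote on the right hand side.} The only point needing a little care is the remark that the right hand side becomes convex under the hypotheses of Item~\ref{it:composition_convexity} in Corollary~\ref{cor:composition_convexity}. There one additionally needs \( \FONCTIONDEPART \) to be \( \Cone \)-increasing in its second argument. That item is stated for \( \np{\depart,\arriveebis}\mapsto\FONCTIONDEPART\bp{\depart,\theta\np{\arriveebis}} \), so one restricts it to the diagonal \( \arriveebis=\depart \). The restriction is still convex, being the composition of a convex function with a linear map. Moreover, when \( \FONCTIONDEPART \) is \( \Cone \)-increasing, the first part of that item's proof shows the inequality is actually an equality, since \( \inf_{\theta\np{\depart}\leq_{\Cone}\arrivee}\FONCTIONDEPART\np{\depart,\arrivee}=\FONCTIONDEPART\bp{\depart,\theta\np{\depart}} \). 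This is a side remark and not part of the claimed inequality, so I expect no real obstacle.
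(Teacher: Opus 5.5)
Your proof is correct and is essentially the paper's argument. The paper rewrites the right-hand side, via the conditional-infimum identity \eqref{eq:ConditionalInfimum_tower_property_equality_subset}, as the infimum of $F$ plus the indicator of $\graph_{\theta}$, and then uses $0\in\Cone$ to enlarge $\graph_{\theta}$ to $\graph_{\theta}+\{0\}\times\Cone=\Epigraph_{\Cone}\theta$ --- exactly your inclusion, which you state directly without the conditional-infimum detour --- while your added justifications (convexity of the left-hand side, and equality when $F$ is $\Cone$-increasing in its second argument) are correct and go slightly beyond what the paper spells out.
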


\begin{proof}
  We have that
  \begin{align*}
    \inf_{\depart \in \DEPART }\FONCTIONDEPART\bp{\depart,\theta\np{\depart}}
    &=
      \inf_{\depart \in \graph_{\np{\textrm{Id}_{\DEPART},\theta}}\DEPART\times\ARRIVEE }
      \FONCTIONDEPART\bp{\depart,\theta\np{\depart}}
      \tag{since \( \DEPART=\graph_{\np{\textrm{Id}_{\DEPART},\theta}}\np{\DEPART\times\ARRIVEE} \)}
    \\
    &=
      \inf_{\np{\depart,\arrivee} \in \DEPART\times\ARRIVEE }
      \Bp{\nInfCond{\FONCTIONDEPART\bp{\cdot,\theta\np{\cdot}}}{\np{\textrm{Id}_{\DEPART},\theta}}}\np{\depart,\arrivee} 
      \tag{by~\eqref{eq:ConditionalInfimum_tower_property_equality_subset}}
    \\
    &=
      \inf_{\np{\depart,\arrivee} \in \DEPART\times\ARRIVEE }
      \FONCTIONDEPART\np{\depart,\arrivee} + \Indicator{\graph_{\theta}}\np{\depart,\arrivee}
      \intertext{by definition~\eqref{eq:mapping_ConditionalInfimum} of the
      conditional infimum \wrt\ a mapping \( \theta \colon \DEPART \to \ARRIVEE \),       
      and by definition~\eqref{eq:graph} of~\( \graph_{\theta} \)}
    &\geq
      \inf_{\np{\depart,\arrivee} \in \DEPART\times\ARRIVEE }
      \FONCTIONDEPART\np{\depart,\arrivee} + \Indicator{\graph_{\theta}+\Cone}\np{\depart,\arrivee}
      \tag{as \( 0\in\Cone\) by assumption, hence \( \graph_{\theta} \subset \graph_{\theta}+\Cone \)}
    \\
    &=
      \inf_{\np{\depart,\arrivee} \in \Epigraph_{\Cone}\theta}
      \FONCTIONDEPART\np{\depart,\arrivee}
      \tag{as \( \Epigraph_{\Cone}\theta = \graph_{\Theta} + \na{0}{\times}\Cone \) by~\eqref{eq:Cone-epigraph}}
      \eqfinp 
  \end{align*}
\end{proof}

With this approach, we can revisit how hidden convexity is obtained
in~\cite{Ben-Tal-den-Hertog-Laurent:2011}.  We consider \( \DEPART=\RR^{n}\),
\( \ARRIVEE=\RR^{N} \), \( \Cone=\RR_{+}^{N} \),
\( \theta=\np{f_{1},\cdots,f_{N}} \),
\( \FONCTIONDEPART=F_{0}+ \sum_{l\in L}\Indicator{\na{F_{l}\leq \rho_{l}}} \), where all
functions \( f_{1},\cdots,f_{N} \colon \RR^{n} \to \RR\), \( F_{0} \) and
\( F_{l} \colon \RR^{N} \to\RR \), \( l\in L \), are convex.  The inequality between
(NCP) and (CP) in~\cite[Section~1]{Ben-Tal-den-Hertog-Laurent:2011} is exactly
the inequality~\eqref{eq:lower_bound_convex_programs_with_Cone-convexity}
between
\( \inf_{\np{\depart,\arrivee} \in \Epigraph_{\Cone}\theta}
\FONCTIONDEPART\np{\depart,\arrivee} \) and
\( \inf_{\depart \in \DEPART} \FONCTIONDEPART\bp{\depart,\theta\np{\depart}} \).  Then,
\cite[Section~2]{Ben-Tal-den-Hertog-Laurent:2011} provides suitable assumptions
regarding the functions \( f_{1},\cdots,f_{N} \), \( F_{0} \) and \( F_{l}\),
\( l\in L \), under which a solution of (CP) is one of~(NCP), hence under which
the inequality~\eqref{eq:lower_bound_convex_programs_with_Cone-convexity} is an
equality.

\section{Conditional infimum and hidden convexity in quadratic optimization}
\label{Hidden_convexity_in_quadratic_optimization_problems}

Let ${\spacedim} \in \NN^*$ be a positive integer.  A \emph{quadratic function} is
a polynomial of degree~2 on~\( \RR^{\spacedim} \), whereas a \emph{quadratic
  form} is a quadratic function without linear and constant terms.
In this Sect.~\ref{Hidden_convexity_in_quadratic_optimization_problems}, we
contribute to the analysis of minimization problems of the form
\begin{equation}
  \min\bset{ \textrm{quadratic function} }%
  { \text{ quadratic functions belong to a convex set} }
  \eqfinv
  \label{eq:LQ}
\end{equation}
and to identify conditions under which such problems are equivalent to convex
minimization problems (hidden convexity).  


In~\S\ref{Conditional_infimum_of_quadratic_functions}, we provide necessary and
sufficient conditions under which the conditional infimum of a quadratic
function, \wrt\ the square mapping, is convex.
In~\S\ref{Hidden_convexity_in_quadratic_minimization_problems}, we deduce a new
sufficient condition for hidden convexity in quadratic minimization problems,
which covers, and goes beyond, existing conditions, as revealed by comparison
with the literature.

\subsection{Conditional infimum of quadratic functions}
\label{Conditional_infimum_of_quadratic_functions}

We define the \emph{square mapping}
\( \SquareMapping \colon \RR^{\spacedim} \to \RR^{\spacedim} \) by
\begin{equation}
  \SquareMapping\np{\uncertain}=\SquareMapping\np{\uncertain_{1},\ldots,\uncertain_{\spacedim}}
  = \np{\uncertain_{1}^2,\ldots,\uncertain_{\spacedim}^2}
  \eqsepv \forall \uncertain \in \RR^{\spacedim}
  \eqfinp
  \label{eq:SquareMapping}
\end{equation}
We consider a vector~$\vecteur \in \RR^{\spacedim}$, a
$\spacedim{\times}\spacedim$ symmetric matrix~$\matrice$,
and a \emph{quadratic function} \( \LinearQuadratic \colon \RR^{\spacedim} \to \barRR \) given
by (where~$\transp$ denotes transposition)
\begin{equation}
  \LinearQuadratic\np{\uncertain}=
  \uncertain\transp\matrice\uncertain + \uncertain\transp\vecteur
  \eqsepv \forall  \uncertain \in \RR^{\spacedim}
  \eqfinp
  \label{eq:LinearQuadraticFonctionuncertain}
\end{equation}
The function \( \ConditionalInfimum{\SquareMapping}{\LinearQuadratic} \colon \RR^{\spacedim}
\to \barRR \), defined in~\eqref{eq:mapping_ConditionalInfimum}, is given by
\begin{equation}
  \begin{split}
    \ConditionalInfimum{\SquareMapping}{\LinearQuadratic}\np{\primal_{1},\ldots,\primal_{\spacedim}}
    =\inf\bset{ \uncertain\transp\matrice\uncertain + \uncertain\transp\vecteur }%
    { \uncertain_{1}^2=\primal_{1},\ldots,\uncertain_{\spacedim}^2=\primal_{\spacedim} }
    \eqsepv       \\
    \forall \primal=\np{\primal_{1},\ldots,\primal_{\spacedim}} \in \RR^{\spacedim}
    \eqfinp
  \end{split}
  \label{eq:LinearQuadraticFonctionuncertain_wrt_SquareMapping}
\end{equation}

In~\S\ref{Convex_lower_bound_for_the_conditional_infimum_of_a_quadratic_function},
we provide a convex lower bound for the conditional infimum of the quadratic
function~\eqref{eq:LinearQuadraticFonctionuncertain} \wrt\ the square mapping~\eqref{eq:SquareMapping}. 
In~\S\ref{Block-signed_form_of_a_pair_matrix-vector}, we define the notion of
{block-signed form of a pair matrix-vector}.
In~\S\ref{Equivalence_between_block-signed_form_and_convex_conditional_infimum},
we prove the equivalence between block-signed form of a pair matrix-vector and convexity of the
conditional infimum of the corresponding quadratic function.

\subsubsection{Convex lower bound for the conditional infimum of a quadratic
  function}
\label{Convex_lower_bound_for_the_conditional_infimum_of_a_quadratic_function}

\begin{proposition}
  \label{pr:ConditionalInfimum_quadratic_convex_LB_inequality}
    Let 
  $\vecteur \in \RR^{\spacedim}$ be a vector, and $\matrice$ be a
  $\spacedim{\times}\spacedim$ symmetric matrix.
  Let the quadratic
  function \( \LinearQuadratic \colon \RR^{\spacedim} \to \barRR \) be given
  by~\eqref{eq:LinearQuadraticFonctionuncertain}.

  Then, the function~\( \widetilde{\LinearQuadratic} \colon \RR^{\spacedim} \to \RR \cup
  \na{+\infty} \), defined by
  \begin{equation}
      \widetilde{\LinearQuadratic}\np{\primal_{1},\ldots,\primal_{\spacedim}} =
    \begin{cases}
      +\infty & \text{ if } \np{\primal_{1},\ldots,\primal_{\spacedim}} \not\in \RR_+^{\spacedim}
           \eqfinv
      \\
      \displaystyle
      \sum_{i=1}^{\spacedim} \matrice_{i,i} \primal_i
      -
      \sum_{i\neq j} \module{\matrice_{i,j}}\sqrt{\primal_i \primal_j}
      -
      \sum_{i=1}^{\spacedim} \module{\vecteur_i} \sqrt{\primal_i }
         & \text{ if } \np{\primal_{1},\ldots,\primal_{\spacedim}} \in \RR_+^{\spacedim} 
           \eqfinv
    \end{cases}
    \label{eq:ConditionalInfimum_quadratic_convex_LB}
  \end{equation}
  is proper convex \lsc\  with effective domain \(
  \domain\widetilde{\LinearQuadratic} =\RR_+^{\spacedim}\), and we have that
  \begin{equation}
    \ConditionalInfimum{\SquareMapping}{\LinearQuadratic} \geq \widetilde{\LinearQuadratic}
    \eqfinp
    \label{eq:ConditionalInfimum_quadratic_convex_LB_inequality}
  \end{equation}
\end{proposition}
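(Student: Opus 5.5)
The plan has three parts: bound each term of $\LinearQuadratic$ from below on a level set of $\SquareMapping$, then check convexity term by term, then check lower semicontinuity and properness.

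\emph{The inequality.} Fix $\primal\in\RR^{\spacedim}$. If $\primal\not\in\RR_+^{\spacedim}$, the foreset $\Converse{\SquareMapping}\np{\na{\primal}}$ is empty. Hence $\ConditionalInfimum{\SquareMapping}{\LinearQuadratic}\np{\primal}=+\infty=\widetilde{\LinearQuadratic}\np{\primal}$ by the convention~\eqref{eq:correspondence_convention}. If $\primal\in\RR_+^{\spacedim}$, take any $\uncertain$ with $\uncertain_i^2=\primal_i$ for all $i$, so that $\module{\uncertain_i}=\sqrt{\primal_i}$. Expanding $\LinearQuadratic$ gives
$\LinearQuadratic\np{\uncertain}=\sum_i\matrice_{i,i}\primal_i+\sum_{i\neq j}\matrice_{i,j}\uncertain_i\uncertain_j+\sum_i\vecteur_i\uncertain_i$.
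We bound each cross term below by $\matrice_{i,j}\uncertain_i\uncertain_j\geq-\module{\matrice_{i,j}}\module{\uncertain_i}\module{\uncertain_j}=-\module{\matrice_{i,j}}\sqrt{\primal_i\primal_j}$. Similarly, $\vecteur_i\uncertain_i\geq-\module{\vecteur_i}\sqrt{\primal_i}$. This yields $\LinearQuadratic\np{\uncertain}\geq\widetilde{\LinearQuadratic}\np{\primal}$. Taking the infimum over the foreset, using~\eqref{eq:mapping_ConditionalInfimum}, gives~\eqref{eq:ConditionalInfimum_quadratic_convex_LB_inequality}.

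\emph{Convexity.} On $\RR_+^{\spacedim}$, the function $\widetilde{\LinearQuadratic}$ is a sum of three kinds of terms:
\begin{itemize}
\item linear terms $\matrice_{i,i}\primal_i$;
\item terms $-\module{\matrice_{i,j}}\sqrt{\primal_i\primal_j}$ with nonnegative coefficients;
\item terms $-\module{\vecteur_i}\sqrt{\primal_i}$ with nonnegative coefficients.
\end{itemize}
So it suffices to show that $\primal\mapsto-\sqrt{\primal_i\primal_j}$ and $\primal\mapsto-\sqrt{\primal_i}$ are convex on $\RR_+^{\spacedim}$. Indeed, $\sqrt{\cdot}$ is concave on $\RR_+$, and the geometric mean $\np{a,b}\mapsto\sqrt{ab}$ is concave on $\RR_+^2$. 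The latter can be seen in two ways: it is positively homogeneous and its upper level sets $\nset{ab\geq t^2}$ are convex, or one computes that its Hessian on $\RR_{++}^2$ is negative semidefinite and extends by continuity. Composing these with coordinate projections preserves concavity. Since $\RR_+^{\spacedim}$ is convex, adding the indicator $\Indicator{\RR_+^{\spacedim}}$ to this finite convex function gives a convex function on $\RR^{\spacedim}$.

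\emph{Lower semicontinuity and properness.} These follow at once. On $\RR_+^{\spacedim}$, $\widetilde{\LinearQuadratic}$ is finite and continuous, since each term is continuous up to the boundary. The set $\RR_+^{\spacedim}$ is closed, so the epigraph is closed. The function is finite on the nonempty set $\RR_+^{\spacedim}$ and equals $+\infty$ elsewhere, so it is proper with $\domain\widetilde{\LinearQuadratic}=\RR_+^{\spacedim}$.

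The only step needing genuine care is the concavity of the geometric mean including the boundary of $\RR_+^2$. I would handle it by the homogeneity argument, which avoids differentiability issues at the boundary. Everything else is routine.
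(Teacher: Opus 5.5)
Your proof is correct and follows essentially the same route as the paper: a term-by-term lower bound $\Lambda_{i,j}w_iw_j \geq -\lvert\Lambda_{i,j}\rvert\sqrt{x_ix_j}$ and $\delta_i w_i \geq -\lvert\delta_i\rvert\sqrt{x_i}$ on the fiber $\{w : w_i^2 = x_i\}$, followed by convexity of $(x_i,x_j)\mapsto -\sqrt{x_ix_j}$ on $\mathbb{R}_+^2$. The paper differs only in first rewriting the conditional infimum exactly as a minimum over sign vectors $\varepsilon'\in\{-1,1\}^n$, an expression it reuses in the next theorem to characterize when equality holds. You bound directly instead, and you treat the boundary of $\mathbb{R}_+^2$ and the lower semicontinuity with more care than the paper does.
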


  \begin{proof}
        As \( \SquareMapping\np{\RR^{\spacedim}}= \RR_+^{\spacedim}\) by
    definition~\eqref{eq:SquareMapping} of the square mapping, we have
    \(  \bp{\ConditionalInfimum{\SquareMapping}{\LinearQuadratic}}\np{\primal}
    =+\infty \) for any \( \primal \not\in \RR_+^{\spacedim}\), by~\eqref{eq:dom_ConditionalInfimum_subset_range}.
    Then, we have, for any \( \primal=\np{\primal_{1},\ldots,\primal_{\spacedim}}
    \in \RR_+^{\spacedim} \),
    \begin{align}
         \bp{\ConditionalInfimum{
        \SquareMapping}{\LinearQuadratic}}
        \np{\primal_{1},\ldots,\primal_{\spacedim}}
       &=
        \inf\bset{\uncertain\transp\matrice\uncertain + \uncertain\transp\vecteur }{%
        \uncertain\in\RR^{\spacedim} \eqsepv
        \np{\uncertain_{1}^2,\ldots,\uncertain_{\spacedim}^2} = \np{\primal_{1},\ldots,\primal_{\spacedim}} }
        \nonumber
        \intertext{by definition~\eqref{eq:mapping_ConditionalInfimum} of the conditional
        infimum \wrt\ a mapping, and by definition~\eqref{eq:SquareMapping} of the square mapping}
      &=
        \inf\Bset{ \sum_{i=1}^{\spacedim} \matrice_{i,i} \uncertain_i^2 +
        \sum_{i\neq j} \matrice_{i,j} \uncertain_i\uncertain_j
        + \sum_{i=1}^{\spacedim} \vecteur_i \uncertain_i }{%
        \uncertain_{1}^2=\primal_{1},\ldots,\uncertain_{\spacedim}^2=\primal_{\spacedim}}
        \nonumber
      \\
      &=
        \sum_{i=1}^{\spacedim} \matrice_{i,i} \primal_i +
        \inf\Bset{
        \sum_{i\neq j} \matrice_{i,j} \uncertain_i\uncertain_j
        + \sum_{i=1}^{\spacedim} \vecteur_i \uncertain_i }{%
        \uncertain_{1}=\pm\sqrt{\primal_{1}},\ldots,\uncertain_{\spacedim}=\pm\sqrt{\primal_{\spacedim}} }
        \nonumber
      \\
      &=
        \sum_{i=1}^{\spacedim} \matrice_{i,i} \primal_i +
        \min\Bset{
        \sum_{i\neq j} \matrice_{i,j}\varepsilon'_i\varepsilon'_j \sqrt{\primal_i\primal_j}
        + \sum_{i=1}^{\spacedim} \vecteur_i\varepsilon'_i \sqrt{\primal_i} }{%
        \varepsilon' \in \na{-1,1}^{\spacedim} }
        \label{eq:ConditionalInfimum_quadratic-example_varepsilon'_inproof}
      \\
      & \geq
        \sum_{i=1}^{\spacedim} \matrice_{i,i} \primal_i +
        \sum_{i\neq j} \module{\matrice_{i,j}} \np{-\sqrt{\primal_i \primal_j} }
        + \sum_{i=1}^{\spacedim} \module{\vecteur_i} \np{-\sqrt{\primal_i }}
        \eqfinv
        \label{eq:ConditionalInfimum_quadratic-example_inproof}
    \end{align}
    by taking term by term inequality, and
    where we recognize, in this last expression~\eqref{eq:ConditionalInfimum_quadratic-example_inproof},
    the expression~\eqref{eq:ConditionalInfimum_quadratic_convex_LB} of the
    function~\( \widetilde{\LinearQuadratic}  \).
    This gives the inequality~\eqref{eq:ConditionalInfimum_quadratic_convex_LB_inequality}.

    Now, it is easily checked (by computing the Hessian) that the functions
    \( \np{\primal_i,\primal_j} \in \RR_+^2 \mapsto \np{-\sqrt{\primal_i
        \primal_j} } \) are convex, for all $i\neq j$.
    Therefore, it is easily deduced that the function \( \widetilde{\LinearQuadratic} \colon \RR^{\spacedim} \to \barRR \)
    in~\eqref{eq:ConditionalInfimum_quadratic_convex_LB} is convex \lsc\
    with effective domain \( \domain\widetilde{\LinearQuadratic} =\RR_+^{\spacedim}\),
    hence is proper convex \lsc. 
  \end{proof}

\subsubsection{Block-signed form of a pair matrix-vector}
 \label{Block-signed_form_of_a_pair_matrix-vector}

As a second step towards tackling problems of the form~\eqref{eq:LQ}, we will
provide a necessary and sufficient condition under which the conditional infimum~\eqref{eq:LinearQuadraticFonctionuncertain_wrt_SquareMapping}
of a quadratic function~\eqref{eq:LinearQuadraticFonctionuncertain}, \wrt\ the square
mapping~\eqref{eq:SquareMapping}, is convex.
For this purpose, we define the notion of \emph{block-signed form of a pair matrix-vector}.

The function~$\sign{\cdot}$ returns the sign$~\sign{z} \in\na{-1,0,1}$ of a real
number~$z$ and, by extension, the sign of a vector (resp. of a matrix) is the
vector (resp. the matrix) whose entries are the signs of the original vector
(resp. matrix) entries.

\begin{definition}
  \label{de:block-signed form}
  Let 
  $\vecteur \in \RR^{\spacedim}$ be a vector,
  and $\matrice$ be a $\spacedim{\times}\spacedim$ symmetric matrix.
  %
  We say that \emph{the pair \( \np{\matrice,\vecteur} \)}
  can be put in \emph{block-signed form} or, equivalently, that the
quadratic function~\eqref{eq:LinearQuadraticFonctionuncertain}
  can be put in \emph{block-signed form}
  if there exists two (possibly empty) subsets~\( P \) (plus), $M$ (minus) \( \subset \ic{1,\spacedim} \) such that
  \begin{subequations}
    \begin{align}
      &
        {P \cap M = \emptyset \eqsepv P \cup M = \ic{1,\spacedim} }
      \\
      \sign{\vecteur_{P}}
      &\in \na{+1,0}^{P}
      \\
      \sign{\vecteur_M}
      &\in \na{-1,0}^M
      \\
      \sign{\matrice_{PP}}
      &\in \na{-1,0}^{P\times P} \qquad\text{ outside of the diagonal}
      \\
      \sign{\matrice_{MM}}
      &\in \na{-1,0}^{M\times M} \qquad\text{ outside of the diagonal}
      \\
      \sign{\matrice_{PM}}
      &\in \na{+1,0}^{P\times M}
      \\
      \sign{\matrice_{MP}}
      &\in \na{+1,0}^{M\times P}
    \end{align}
    in which case we set \( \varepsilon \in \na{-1,+1}^\spacedim \) defined by
    \begin{equation}
      \varepsilon_i=-1 \eqsepv \forall i\in P \text{ and }
      \varepsilon_i=+1 \eqsepv \forall i\in M
      \eqfinp
    \end{equation}
    \label{eq:block-signed_form_math}
  \end{subequations}
  Thus, after rearranging the indices, the pair {\( \np{\matrice,\vecteur} \)}
  is in {block-signed form}~\eqref{eq:block-signed_form}
  \begin{equation}
    \matrice=
    \left(
      \begin{array}{c|c}
        {
        \begin{array}{ccc}
          \matrice_{1,1}&& \matrice_{i,j}\leq 0 \\
                        &\ddots& \\
          \matrice_{i,j}\leq 0 && \matrice_{\cardinality{P},\cardinality{P}}
        \end{array}
        }
        &
          \matrice_{i,j}\geq 0
        \\ \hline
        \matrice_{i,j}\geq 0
                        &   \begin{array}{ccc}
                          \matrice_{\cardinality{P}+1,\cardinality{P}+1}
                          && \matrice_{i,j}\leq 0 \\ &\ddots& \\ \matrice_{i,j}\leq 0 && \matrice_{\spacedim,\spacedim}
                        \end{array}
      \end{array}
    \right)
    \eqsepv
    \vecteur=
    \left(
      \begin{array}{c}
        \vecteur_{i}\geq 0 \\ \hline \vecteur_{i}\leq 0
      \end{array} \right)
    \eqfinp
    \label{eq:block-signed_form}
  \end{equation}
  In the two extreme cases \( P=\emptyset\) or \( M=\emptyset\),
  the matrix~\( \matrice \) in Equation~\eqref{eq:block-signed_form} has nonpositive elements
  outside of the diagonal.
\end{definition}
Notice that, if the square matrix~\( \matrice\) is diagonal, the pair
\( \np{\matrice,\vecteur} \) can be put in {block-signed} form by simply
rearranging the entries of the vector~\( \vecteur \) in nonnegative values
followed by nonpositive ones.
Also, for any vector~\( \vecteur \) and scalar~\(\lambda\geq 0\), the pair
\( \np{\vecteur\vecteur',\lambda\vecteur} \) can be put in {block-signed} form.

\subsubsection{Equivalence between block-signed form and convex conditional
  infimum}
\label{Equivalence_between_block-signed_form_and_convex_conditional_infimum}

The following equivalence will be our main tool in deriving sufficient
conditions for hidden convexity in quadratic minimization problems.

\begin{theorem}
  Let 
  $\vecteur \in \RR^{\spacedim}$ be a vector, and $\matrice$ be a
  $\spacedim{\times}\spacedim$ symmetric matrix.
  Let the quadratic
  function \( \LinearQuadratic \colon \RR^{\spacedim} \to \barRR \) be given
  by~\eqref{eq:LinearQuadraticFonctionuncertain}.
  The following assertions are equivalent.
  \begin{enumerate}
  \item
    \label{it:ConditionalInfimum_SquareMapping_LinearQuadratic_convex}
The conditional infimum~\eqref{eq:LinearQuadraticFonctionuncertain_wrt_SquareMapping}
of the quadratic function~\eqref{eq:LinearQuadraticFonctionuncertain}, \wrt\ the square
mapping~\eqref{eq:SquareMapping}, is convex.
  \item
    \label{it:block-signed_form}
    The pair \( \np{\matrice,\vecteur} \) can be put in {block-signed} form
    as in Definition~\ref{de:block-signed form} 
    (or, equivalently, the quadratic function \( \LinearQuadratic \colon \RR^{\spacedim} \to \barRR \) given
    by~\eqref{eq:LinearQuadraticFonctionuncertain} can be put in {block-signed} form).
  \end{enumerate}

    In these two equivalent cases, the function~\( \ConditionalInfimum{\SquareMapping}{\LinearQuadratic}\)
  in~\eqref{eq:LinearQuadraticFonctionuncertain_wrt_SquareMapping}
  is proper convex \lsc\
  with effective domain \( \domain\ConditionalInfimum{\SquareMapping}{\LinearQuadratic} =\RR_+^{\spacedim}\),
  and has the expression~\eqref{eq:ConditionalInfimum_quadratic_convex_LB}.
  \label{th:Conditional_infimum_of_a_quadratic_function_knowing_squares}
\end{theorem}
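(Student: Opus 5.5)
The plan is to work from the exact formula~\eqref{eq:ConditionalInfimum_quadratic-example_varepsilon'_inproof} obtained in the proof of Proposition~\ref{pr:ConditionalInfimum_quadratic_convex_LB_inequality}. On $\RR_+^{\spacedim}$ the conditional infimum is the minimum, over $\varepsilon'\in\na{-1,1}^{\spacedim}$, of the functions
\[
g_{\varepsilon'}(\primal)=\sum_i \matrice_{i,i}\primal_i+\sum_{i\neq j}\matrice_{i,j}\varepsilon'_i\varepsilon'_j\sqrt{\primal_i\primal_j}+\sum_i\vecteur_i\varepsilon'_i\sqrt{\primal_i} ,
\]
and it equals $+\infty$ outside $\RR_+^{\spacedim}$. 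It is therefore proper with domain $\RR_+^{\spacedim}$ in all cases.

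\textbf{Direction (block-signed $\Rightarrow$ convex).} Take $\varepsilon$ as in Definition~\ref{de:block-signed form}. The sign conditions give $\matrice_{i,j}\varepsilon_i\varepsilon_j=-\module{\matrice_{i,j}}$ for $i\neq j$ and $\vecteur_i\varepsilon_i=-\module{\vecteur_i}$.
\begin{itemize}
\item For $i,j$ both in $P$ or both in $M$: $\varepsilon_i\varepsilon_j=+1$ and $\matrice_{i,j}\le0$.
\item For $i,j$ in different blocks: $\varepsilon_i\varepsilon_j=-1$ and $\matrice_{i,j}\ge0$.
\item For $i\in P$: $\varepsilon_i=-1$ and $\vecteur_i\ge0$; for $i\in M$: $\varepsilon_i=+1$ and $\vecteur_i\le0$.
\end{itemize}
Hence $g_\varepsilon=\widetilde{\LinearQuadratic}$ on $\RR_+^{\spacedim}$. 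Combined with the lower bound~\eqref{eq:ConditionalInfimum_quadratic_convex_LB_inequality}, this gives $\ConditionalInfimum{\SquareMapping}{\LinearQuadratic}=\widetilde{\LinearQuadratic}$. Proposition~\ref{pr:ConditionalInfimum_quadratic_convex_LB_inequality} then supplies proper convexity, lower semicontinuity and the domain.

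\textbf{Direction (convex $\Rightarrow$ block-signed).} This is the main obstacle. The idea is to test convexity along the diagonal $\primal=t\mathbf 1$ near $0$ and along rays, using the square-root singularity.

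\emph{Step 1: restriction to rays.} Restrict to the ray $\primal=s^2 a$ with $a\in\RR_{++}^{\spacedim}$ fixed and $s\ge0$. One gets
\[
h(s)=s^2\,A(a)+s\,B(a), \qquad B(a)=\min_{\varepsilon'}\sum_i\vecteur_i\varepsilon'_i\sqrt{a_i}=-\sum_i\module{\vecteur_i}\sqrt{a_i} \ \text{ at leading order}.
\]
More precisely, $\min_{\varepsilon'} g_{\varepsilon'}(s^2a)$ is a minimum of functions of the form $c_{\varepsilon'}s^2+d_{\varepsilon'}s$. For small $s$, the minimizing $\varepsilon'$ must minimize $d_{\varepsilon'}$, and among those, minimize $c_{\varepsilon'}$.

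\emph{Step 2: compare with $\widetilde{\LinearQuadratic}$.} I would rather argue directly that convexity forces $\ConditionalInfimum{\SquareMapping}{\LinearQuadratic}=\widetilde{\LinearQuadratic}$. Both functions are positively homogeneous of degree~$1$ in $\primal$ once the linear term is isolated.

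Cleaner route: set $\vecteur=0$ first via homogeneity. Since $\ConditionalInfimum{\SquareMapping}{\LinearQuadratic}(t\primal)$ for $t\to\infty$ is dominated by the quadratic part, convexity of $\ConditionalInfimum{\SquareMapping}{\LinearQuadratic}$ implies convexity of its recession/asymptotic function. That asymptotic function is the conditional infimum of $\uncertain\transp\matrice\uncertain$, which is $1$-homogeneous. A convex $1$-homogeneous function is sublinear, hence subadditive:
\[
\phi(a+b)\le\phi(a)+\phi(b).
\]

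\emph{Step 3: extract signs.} Apply subadditivity to $a=e_i+e_j$ and similar small supports. Work on the faces $\primal_k=0$ for $k\notin S$, which reduces to sub-problems. On each two-coordinate face the function is $\matrice_{ii}\primal_i+\matrice_{jj}\primal_j-\module{\matrice_{ij}}\sqrt{\primal_i\primal_j}$ (plus linear-in-$\sqrt{\cdot}$ terms), which is fine. The obstruction appears on three-coordinate faces: the minimum over $\varepsilon'$ of $\sum_{i\neq j}\matrice_{ij}\varepsilon'_i\varepsilon'_j\sqrt{\primal_i\primal_j}$ differs from $-\sum\module{\matrice_{ij}}\sqrt{\cdot}$ exactly when the sign pattern is frustrated, that is, when the signed graph with edge weights $-\sign{\matrice_{ij}}$ is not balanced. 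Balancedness of a signed graph (including a node $0$ linked to $i$ with sign of $-\vecteur_i$, to handle the linear term) is precisely the existence of the bipartition $P,M$ of Definition~\ref{de:block-signed form}, by Harary's theorem.

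So the key claim is: if the pattern is unbalanced, then $\ConditionalInfimum{\SquareMapping}{\LinearQuadratic}$ is a minimum of at least two of the smooth functions $g_{\varepsilon'}$ that cross nontransversally in the wrong direction, producing a concave kink. I would exhibit this on a frustrated cycle: restrict to the face spanned by the cycle's indices. Choose $\primal$ with equal entries on the cycle, where two sign choices (flipping the frustrated edge at different locations) tie. Moving $\primal$ along a direction that increases the weight of one edge versus another makes the active $g_{\varepsilon'}$ switch. The minimum of two distinct $C^1$ functions tying at a point with different gradients is not convex there. This is the step I expect to require the most care, in particular when some entries vanish and when the linear term is involved.

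Finally, once convexity yields the block-signed form, the first direction gives the expression~\eqref{eq:ConditionalInfimum_quadratic_convex_LB} and the regularity claims.
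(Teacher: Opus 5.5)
Your forward direction (block-signed $\Rightarrow$ convex) is the paper's argument. The $\varepsilon$ of Definition~\ref{de:block-signed form} turns the termwise inequality in the proof of Proposition~\ref{pr:ConditionalInfimum_quadratic_convex_LB_inequality} into an equality, and that proposition then gives properness, convexity, lower semicontinuity and the domain.

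For the converse your route differs from the paper's. The paper argues locally: it covers $]0,+\infty[^{n}$ by the finitely many closed sets on which a given sign pattern $\varepsilon$ is optimal. It extracts a ball $B$ on which the conditional infimum $f$ equals one smooth piece $g_\varepsilon$, and reads the signs off the convexity of that piece. You aim instead at a global obstruction: a concave kink produced by an unbalanced cycle of the signed graph, with $\delta$ encoded by an extra node $0$. Harary's theorem is the right tool here, and vanishing entries are simply absent edges.

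The global viewpoint is the right one, because convexity of the active piece on a ball does not force the signs. Take $\Lambda_{11}=\Lambda_{22}=0$, $\Lambda_{12}=\Lambda_{21}=1$, $\delta=(1,1)$.
\begin{itemize}
\item The pattern $\varepsilon=(-1,-1)$ is the unique minimizer on $0<x_1,x_2<1/4$, where $f=2\sqrt{x_1x_2}-\sqrt{x_1}-\sqrt{x_2}$.
\item This piece is strictly convex near $(1/100,1/100)$: at $(t,t)$ its Hessian has eigenvalues proportional to $t^{-1/2}$ and $t^{-1/2}-4$.
\item Yet $\Lambda_{12}\varepsilon_1\varepsilon_2=1>0$.
\item Non-convexity only appears at $t=1/4$ on the diagonal, where $f(t,t)$ switches from $2t-2\sqrt{t}$ to $-2t$.
\end{itemize}
So the paper's step from $B$ to coordinate faces, which lie outside $B$, also needs global input.

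The genuine gap is that your converse is announced rather than proved, and the concrete recipe fails in three places.

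(i) Equal entries on the cycle do not produce a tie. On a chordless unbalanced cycle $C$ (node $0$ allowed, with $x_0:=1$), set $w_{ij}=2|\Lambda_{ij}|\sqrt{x_ix_j}$ and $w_{0i}=|\delta_i|\sqrt{x_i}$. Every sign pattern makes an odd number of cycle terms positive, and any single edge can be chosen as the positive one. Hence the restricted function is a linear term plus $-\sum_{e\in C}w_e+2\min_{e\in C}w_e$. At equal entries the minimizing edge is just the one with the smallest coefficient, so generically nothing ties. A tie strictly below all other edges is obtained from two edges sharing a vertex $k\neq0$: let $x_k\to0$ and tune $x_j$ for a neighbour $j\neq0$. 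The two tied weights then have different gradients, so along a transversal line the left derivative of $2\min(w_{e_1},w_{e_2})$ exceeds the right one, and the smooth remainder cannot repair this.

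(ii) Chords. On the face spanned by an arbitrary unbalanced cycle, all $\Lambda_{ij}$ among its vertices survive, and the one-broken-edge formula fails. Take a shortest unbalanced cycle instead; it is chordless, since a chord would split it into two shorter cycles, one of them unbalanced.

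(iii) The linear term. Your recession function $\lim_{t\to+\infty}f(tx)/t$ is convex but forgets $\delta$, so it only yields a block-signed form of $\Lambda$ alone, not of the pair. You must split cases.
\begin{itemize}
\item If $0\notin C$, restrict the recession function to the face of $C$; restricting $f$ itself would keep the terms $\delta_i\sqrt{x_i}$.
\item If $0\in C$, restrict $f$ itself; chordlessness ensures that only the two neighbours of $0$ on $C$ carry a nonzero $\delta_i$.
\end{itemize}
In particular, your assertion that two-coordinate faces are ``fine'' is false when $\delta\neq0$: the example above is an unbalanced triangle $\{0,1,2\}$ living on a two-coordinate face.

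With (i)--(iii) supplied, and the ray and subadditivity steps dropped, your strategy does give a complete proof of the converse.
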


\begin{proof}
  \begin{subequations}
    %


    \noindent$\bullet$ Suppose that the pair
    \( \np{\matrice,\vecteur} \) can be put in {block-signed} form or,
    equivalently by Proposition~\ref{pr:varepsilon_sign_appendix} (in
    Appendix~\ref{Appendix}), suppose that there exists
    \( \varepsilon=\np{\varepsilon_{1},\ldots,\varepsilon_{\spacedim}} \in \na{-1,1}^{\spacedim} \) such
    that~\eqref{eq:varepsilon} below holds true:
    \begin{equation}
       \begin{cases}
        \varepsilon_i\vecteur_i \leq 0 \eqsepv & \forall i=1,\ldots,\spacedim
        \eqfinv
        \\
        \text{and} &
        \\
        \varepsilon_i\varepsilon_j\matrice_{i,j}  \leq 0 \eqsepv
        &
        \forall i,j=1,\ldots,\spacedim \eqsepv i\neq j
        \eqfinp
      \end{cases}
      \label{eq:varepsilon}
    \end{equation}
    Then, it is easy to check that \( \varepsilon \in \na{-1,1}^{\spacedim}  \)
    satisfying~\eqref{eq:varepsilon}
    provides an equality in the inequality
    between~\eqref{eq:ConditionalInfimum_quadratic-example_varepsilon'_inproof}
    and~\eqref{eq:ConditionalInfimum_quadratic-example_inproof}.
    Thus, we get that the function
    \( \ConditionalInfimum{\SquareMapping}{\LinearQuadratic} \)
    is the function \( \widetilde{\LinearQuadratic} \)
    given by~\eqref{eq:ConditionalInfimum_quadratic_convex_LB}.
    Then, we apply Proposition~\ref{pr:ConditionalInfimum_quadratic_convex_LB_inequality}.
     \medskip

    \noindent$\bullet$
    Suppose that the function
    \( \fonctionprimalbis=\ConditionalInfimum{\SquareMapping}{\LinearQuadratic} \colon \RR^{\spacedim} \to \barRR \)
    is convex.
    For any \( \varepsilon \in \na{-1,1}^{\spacedim} \),
    the following subset \( \Primal_{\varepsilon} \) of \( ]0,+\infty[^{\spacedim} \) is
    closed (as easily follows from its second expression below)
    \begin{align*}
      \Primal_{\varepsilon}
      =&
         \Bset{ \primal\in ]0,+\infty[^{\spacedim} }{ \varepsilon \in \argmin\defset{
         \sum_{i\neq j} \matrice_{i,j}\varepsilon'_i\varepsilon'_j \sqrt{\primal_i\primal_j}
         + \sum_{i=1}^{\spacedim} \vecteur_i\varepsilon'_i \sqrt{\primal_i} }{%
         \varepsilon' \in \na{-1,1}^{\spacedim} \} } }
      \\
      =&
         \bigg\{ \primal\in ]0,+\infty[^{\spacedim}
         \; \bigg\vert \;
         \sum_{i\neq j} \matrice_{i,j}\varepsilon_i\varepsilon_j \sqrt{\primal_i\primal_j}
         + \sum_{i=1}^{\spacedim} \vecteur_i\varepsilon_i \sqrt{\primal_i}
      \\
       & \qquad\qquad\qquad\qquad \leq
         \sum_{i\neq j} \matrice_{i,j}\varepsilon'_i\varepsilon'_j \sqrt{\primal_i\primal_j}
         + \sum_{i=1}^{\spacedim} \vecteur_i\varepsilon'_i \sqrt{\primal_i} \eqsepv
         \forall \varepsilon' \in \na{-1,1}^{\spacedim} \bigg\}
         \eqfinp
    \end{align*}
    We are going to show that one of the subsets \( \Primal_{\varepsilon} \), when
    \( \varepsilon \in \na{-1,1}^{\spacedim} \), has nonempty interior.
    As \( \bigcup_{\varepsilon' \in \na{-1,1}^{\spacedim} } \Primal_{\varepsilon'}  = ]0,+\infty[^{\spacedim}
    \), there is at least one subset \( L \subset \na{-1,1}^{\spacedim} \) such that
    \( \bigcup_{\varepsilon' \in L} \Primal_{\varepsilon'}  = ]0,+\infty[^{\spacedim} \)
    and the subset~$L$ has the smallest possible cardinal.
    If \( \cardinal{L}=1 \), then there is one \( \varepsilon \in \na{-1,1}^{\spacedim} \) such that \(
    \Primal_{\varepsilon} = ]0,+\infty[^{\spacedim}\), and this \( \Primal_{\varepsilon} \) obviously has nonempty interior.
    If \( \cardinal{L} \geq 2 \), then
    \( \bigcup_{\varepsilon' \in \na{-1,1}^{\spacedim} } \Primal_{\varepsilon'}  = ]0,+\infty[^{\spacedim} \)
    implies that, for any \( \varepsilon \in L \),
    we have that \( \emptyset \subsetneq \Bp{ \bigcup_{\varepsilon' \in L\setminus \na{\varepsilon}}
      \Primal_{\varepsilon'} }^c \subset \Primal_{\varepsilon} \).
    Therefore, the subset \( \Primal_{\varepsilon} \) has nonempty interior since
    it contains the nonempty set  \( \Bp{ \bigcup_{\varepsilon' \in L\setminus \na{\varepsilon}}
      \Primal_{\varepsilon'} }^c \), which is open as the complementary set of
    a finite union of closed subsets.

    As a consequence
    of~\eqref{eq:ConditionalInfimum_quadratic-example_varepsilon'_inproof},
    there is one \( \varepsilon \in \na{-1,1}^{\spacedim} \)
    and there is a ball~$B$ in~\( ]0,+\infty[^{\spacedim} \) such that
    \[
      \fonctionprimalbis\np{\primal_{1},\ldots,\primal_{\spacedim}}
      =
      \sum_{i\neq j} \matrice_{i,j}\varepsilon_i\varepsilon_j \sqrt{\primal_i\primal_j}
      + \sum_{i=1}^{\spacedim} \vecteur_i\varepsilon_i \sqrt{\primal_i}
      \eqsepv \forall \primal \in B
      \eqfinp
    \]
    As the function \( \fonctionprimalbis \) is convex,
    so is the function \( k \colon B \ni \primal \mapsto \sum_{i\neq j} \matrice_{i,j}\varepsilon_i\varepsilon_j \sqrt{\primal_i\primal_j}
    + \sum_{i=1}^{\spacedim} \vecteur_i\varepsilon_i \sqrt{\primal_i} \), and so are
    the restrictions
    \begin{align*}
      \primal_i \mapsto k\np{0,\ldots,0,\primal_i,0,\ldots,0}
      &=
        \matrice_{i,i}\varepsilon_i^2\primal_i + \vecteur_i\varepsilon_i \sqrt{\primal_i}
        \eqfinv
      \\
      \np{\primal_i,\primal_j} \mapsto
      k\np{0,\ldots,0,\primal_i,0,\ldots,0,\primal_j,0,\ldots,0}
      &=
        \matrice_{i,j}\varepsilon_i\varepsilon_j \sqrt{\primal_i\primal_j}
        + \vecteur_i\varepsilon_i \sqrt{\primal_i} + \vecteur_j\varepsilon_j \sqrt{\primal_j}
        \eqfinp
    \end{align*}
    We deduce that, for any~$i$, the functions \( \primal_i \mapsto \vecteur_i\varepsilon_i \sqrt{\primal_i} \)
    are convex in a neighborhood of a point in \( ]0,+\infty[ \), hence that
    \( \vecteur_i\varepsilon_i \leq 0 \) (by differentiating twice in~\( \primal_i \) for example).
    We also deduce that, for any \( i \neq j \), the functions \( \np{\primal_i,\primal_j} \mapsto
    \matrice_{i,j}\varepsilon_i\varepsilon_j \sqrt{\primal_i\primal_j}
    + \vecteur_i\varepsilon_i \sqrt{\primal_i} + \vecteur_j\varepsilon_j \sqrt{\primal_j} \)
    are convex in a neighborhood of a point in \( ]0,+\infty[^{2} \), hence
    --- with what we have just obtained ---
    so are the functions \( \np{\primal_i,\primal_j} \mapsto
    \matrice_{i,j}\varepsilon_i\varepsilon_j \sqrt{\primal_i\primal_j} \).
    In the same way, we get that \( \matrice_{i,j}\varepsilon_i\varepsilon_j \leq 0 \).
    Thus, we have obtained that \( \varepsilon \in \na{-1,1}^{\spacedim} \) satifies~\eqref{eq:varepsilon},
    hence that the pair \( \np{\matrice,\vecteur} \) can be put in {block-signed} form
    by Proposition~\ref{pr:varepsilon_sign_appendix} in Appendix~\ref{Appendix}.
    This ends the proof.
  \end{subequations}
\end{proof}

Under an additional assumption, we obtain a sufficient condition for
the function \(
\ConditionalInfimum{\Epigraph_{\RR_{+}^{\spacedim}}\SquareMapping}{\LinearQuadratic}
\) to be convex.

\begin{proposition}
  Let 
  $\vecteur \in \RR^{\spacedim}$ be a vector, and $\matrice$ be a
  $\spacedim{\times}\spacedim$ symmetric matrix.
  Let the quadratic
  function \( \LinearQuadratic \colon \RR^{\spacedim} \to \barRR \) be given
  by~\eqref{eq:LinearQuadraticFonctionuncertain}.
If the pair \( \np{\matrice,\vecteur} \) can be put in {block-signed} form
    (or, equivalently, the quadratic function \( \LinearQuadratic \colon \RR^{\spacedim} \to \barRR \) given
    by~\eqref{eq:LinearQuadraticFonctionuncertain} can be put in {block-signed} form),
    and all the diagonal terms in~\eqref{eq:block-signed_form} are nonpositive,
    then
    the function \( \ConditionalInfimum{\Epigraph_{\RR_{+}^{\spacedim}}\SquareMapping}{\LinearQuadratic} \colon \RR^{\spacedim}
    \to \RR\cup\na{+\infty} \), defined by, for all \( \np{\primal_{1},\ldots,\primal_{\spacedim}} \in
      \RR^{\spacedim} \), 
    \begin{equation}
      \begin{split}
\ConditionalInfimum{\Epigraph_{\RR_{+}^{\spacedim}}\SquareMapping}{\LinearQuadratic}\np{\primal_{1},\ldots,\primal_{\spacedim}}
        =\inf\bset{ \uncertain\transp\matrice\uncertain + \uncertain\transp\vecteur }%
      { \uncertain_{1}^2\leq\primal_{1},\ldots,\uncertain_{\spacedim}^2\leq \primal_{\spacedim} }
      \eqfinv
      \end{split}
    \end{equation}
is proper convex \lsc\ and satisfies
\(    \ConditionalInfimum{\SquareMapping}{\LinearQuadratic}=
    \ConditionalInfimum{\Epigraph_{\RR_{+}^{\spacedim}}\SquareMapping}{\LinearQuadratic}
    \), that is, 
  \begin{align}
& 
      \inf\bset{ \uncertain\transp\matrice\uncertain + \uncertain\transp\vecteur }%
 { \uncertain_{1}^2=\primal_{1},\ldots,\uncertain_{\spacedim}^2=\primal_{\spacedim} }
\nonumber \\
    =&
 \inf\bset{ \uncertain\transp\matrice\uncertain + \uncertain\transp\vecteur }%
 { \uncertain_{1}^2\leq\primal_{1},\ldots,\uncertain_{\spacedim}^2\leq\primal_{\spacedim} }
      \eqsepv
      \forall \np{\primal_{1},\ldots,\primal_{\spacedim}} \in \RR^{\spacedim}
       \eqfinp
  \end{align}
\label{pr:Conditional_infimum_of_a_quadratic_function_knowing_squares}
\end{proposition}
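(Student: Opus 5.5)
The plan is to factor the $\RR_{+}^{\spacedim}$-epigraph of the square mapping through the graph of~$\SquareMapping$, so that the tower property~\eqref{eq:tower_property} reduces the inequality-constrained conditional infimum to a monotone minimization of the function~$\widetilde{\LinearQuadratic}$ in~\eqref{eq:ConditionalInfimum_quadratic_convex_LB}.

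First, I write the correspondence $\Epigraph_{\RR_{+}^{\spacedim}}\SquareMapping \subset \RR^{\spacedim}\times\RR^{\spacedim}$ as a composition. Let $\correspondencebis \subset \RR^{\spacedim}\times\RR^{\spacedim}$ be the order correspondence $\arrivee\,\correspondencebis\,\primal \iff \arrivee\leq_{\RR_{+}^{\spacedim}}\primal$. By~\eqref{eq:Cone-epigraph} and the definition of the composition of correspondences,
\[
\Epigraph_{\RR_{+}^{\spacedim}}\SquareMapping = \graph_{\SquareMapping}\,\correspondencebis .
\]
Indeed, $\uncertain\,(\graph_{\SquareMapping}\correspondencebis)\,\primal$ holds iff there is $\arrivee$ with $\arrivee=\SquareMapping(\uncertain)$ and $\arrivee\leq\primal$, that is, iff $\uncertain_i^2\leq\primal_i$ for all~$i$. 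The tower property~\eqref{eq:tower_property} then gives
\[
\ConditionalInfimum{\Epigraph_{\RR_{+}^{\spacedim}}\SquareMapping}{\LinearQuadratic}
=\bInfCond{\nInfCond{\LinearQuadratic}{\SquareMapping}}{\correspondencebis},
\]
that is, for every $\primal\in\RR^{\spacedim}$,
\[
\ConditionalInfimum{\Epigraph_{\RR_{+}^{\spacedim}}\SquareMapping}{\LinearQuadratic}(\primal)=\inf_{\arrivee\leq\primal}\ConditionalInfimum{\SquareMapping}{\LinearQuadratic}(\arrivee).
\]

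Second, since the pair $\np{\matrice,\vecteur}$ can be put in block-signed form, Theorem~\ref{th:Conditional_infimum_of_a_quadratic_function_knowing_squares} gives $\ConditionalInfimum{\SquareMapping}{\LinearQuadratic}=\widetilde{\LinearQuadratic}$, which is $+\infty$ outside $\RR_+^{\spacedim}$. Hence the infimum above runs only over $\arrivee\in\RR_+^{\spacedim}$ with $\arrivee\leq\primal$.
\begin{itemize}
\item If $\primal\notin\RR_+^{\spacedim}$, this set is empty, so both sides equal $+\infty$, in agreement with~\eqref{eq:dom_ConditionalInfimum_subset_range}.
\item If $\primal\in\RR_+^{\spacedim}$, I use that each term of $\widetilde{\LinearQuadratic}$ is nonincreasing in every coordinate on $\RR_+^{\spacedim}$. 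The terms $\matrice_{i,i}\arrivee_i$ have $\matrice_{i,i}\leq0$ by the extra diagonal assumption. The terms $-\module{\matrice_{i,j}}\sqrt{\arrivee_i\arrivee_j}$ and $-\module{\vecteur_i}\sqrt{\arrivee_i}$ are nonincreasing because $\sqrt{\cdot}$ is nondecreasing. So $\widetilde{\LinearQuadratic}$ is nonincreasing for the componentwise order, and the infimum over $0\leq\arrivee\leq\primal$ is attained at $\arrivee=\primal$, with value $\widetilde{\LinearQuadratic}(\primal)$.
\end{itemize}
Thus $\ConditionalInfimum{\Epigraph_{\RR_{+}^{\spacedim}}\SquareMapping}{\LinearQuadratic}=\widetilde{\LinearQuadratic}=\ConditionalInfimum{\SquareMapping}{\LinearQuadratic}$. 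Proper convexity and lower semicontinuity are then inherited from Proposition~\ref{pr:ConditionalInfimum_quadratic_convex_LB_inequality} (or from the last statement of Theorem~\ref{th:Conditional_infimum_of_a_quadratic_function_knowing_squares}).

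As a consistency check, the inequality $\ConditionalInfimum{\Epigraph_{\RR_{+}^{\spacedim}}\SquareMapping}{\LinearQuadratic}\leq\ConditionalInfimum{\SquareMapping}{\LinearQuadratic}$ also follows directly from~\eqref{eq:correspondence_conditional_infimum_properties_inclusion}, since $\graph_{\SquareMapping}\subset\Epigraph_{\RR_{+}^{\spacedim}}\SquareMapping$. Only the reverse inequality uses the monotonicity argument.

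The main point needing care is the identification $\Epigraph_{\RR_{+}^{\spacedim}}\SquareMapping=\graph_{\SquareMapping}\correspondencebis$, including the order of composition in the tower property, and checking that the nonpositive-diagonal assumption is exactly what makes every term of~$\widetilde{\LinearQuadratic}$ monotone. No other obstacle is expected.
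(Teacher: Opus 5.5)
Your argument is correct, but it reaches the result by a different route than the paper.

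The paper first uses $\graph_{\sigma}\subset\Epigraph_{\RR_{+}^{n}}\sigma$ and \eqref{eq:correspondence_conditional_infimum_properties_inclusion} for the easy inequality. It then computes the box-constrained infimum directly: it writes $w_i=\varepsilon'_i\sqrt{x_i}$ with $\varepsilon'\in[-1,1]^{n}$ and bounds term by term. The diagonal terms satisfy $\Lambda_{ii}(\varepsilon'_i)^2x_i\geq\min\{0,\Lambda_{ii}\}x_i=\Lambda_{ii}x_i$, and the cross and linear terms are bounded below by $-|\Lambda_{ij}|\sqrt{x_ix_j}$ and $-|\delta_i|\sqrt{x_i}$. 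This lands on $\widetilde{q}(x)$, which equals $\ConditionalInfimum{\sigma}{q}(x)$ by Theorem~\ref{th:Conditional_infimum_of_a_quadratic_function_knowing_squares}, and that closes the chain.

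You instead factor $\Epigraph_{\RR_{+}^{n}}\sigma$ as $\graph_{\sigma}$ composed with the componentwise order correspondence, and apply the tower property \eqref{eq:tower_property}. This gives the exact identity $\ConditionalInfimum{\Epigraph_{\RR_{+}^{n}}\sigma}{q}(x)=\inf_{y\leq x}\ConditionalInfimum{\sigma}{q}(y)$ for any $q$, with no sign assumptions. The diagonal hypothesis then enters only as coordinatewise monotonicity of $\widetilde{q}$.

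What your route buys:
\begin{itemize}
\item It makes transparent why the nonpositive-diagonal hypothesis is needed.
\item It shows the hypothesis is sharp under block-signed form. If some $\Lambda_{ii}>0$ and $e_i$ is the $i$-th basis vector, then $\widetilde{q}(te_i)=\Lambda_{ii}t-|\delta_i|\sqrt{t}>0=\widetilde{q}(0)$ for $t$ large. Since $0\leq te_i$, the two conditional infima then differ at $x=te_i$.
\end{itemize}

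What the paper's route buys:
\begin{itemize}
\item It is a self-contained computation that parallels the proof of Proposition~\ref{pr:ConditionalInfimum_quadratic_convex_LB_inequality}.
\item Its intermediate inequality holds with no block-signed or diagonal assumption at all. It thus gives the explicit convex minorant $\sum_i\min\{0,\Lambda_{ii}\}x_i-\sum_{i\neq j}|\Lambda_{ij}|\sqrt{x_ix_j}-\sum_i|\delta_i|\sqrt{x_i}$ of $\ConditionalInfimum{\Epigraph_{\RR_{+}^{n}}\sigma}{q}$ on $\RR_{+}^{n}$.
\end{itemize}
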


\begin{proof}
    Suppose that the pair \( \np{\matrice,\vecteur} \) can be put in {block-signed} form
     where all the diagonal terms in~\eqref{eq:block-signed_form} are nonpositive.
    The functions \( \ConditionalInfimum{\SquareMapping}{\LinearQuadratic} \) and
    \( \ConditionalInfimum{\Epigraph_{\RR_{+}^{\spacedim}}\SquareMapping}{\LinearQuadratic} \)
    coincide outside of~\( \RR_+^{\spacedim} \) as they both take the value~\( +\infty \).
We have, for any \( \primal=\np{\primal_{1},\ldots,\primal_{\spacedim}}
    \in \RR_+^{\spacedim} \),
    \begin{align*}
 &        \bp{\ConditionalInfimum{\SquareMapping}{\LinearQuadratic}}
   \np{\primal_{1},\ldots,\primal_{\spacedim}}
           \nonumber
      \\
      \geq&
            \bp{\ConditionalInfimum{\Epigraph_{\RR_{+}^{\spacedim}}\SquareMapping}{\LinearQuadratic}}
            \np{\primal_{1},\ldots,\primal_{\spacedim}}
            \tag{by monotonicity~\eqref{eq:correspondence_conditional_infimum_properties_inclusion} as
            \( \graph_{\SquareMapping} \subset \Epigraph_{\RR_{+}^{\spacedim}}\SquareMapping \)
            by~\eqref{eq:Cone-epigraph}}
      \\
      =&
        \inf\bset{\uncertain\transp\matrice\uncertain + \uncertain\transp\vecteur }{%
        \uncertain\in\RR^{\spacedim} \eqsepv
        \np{\uncertain_{1}^2,\ldots,\uncertain_{\spacedim}^2} \leq_{\RR_{+}^{\spacedim}} \np{\primal_{1},\ldots,\primal_{\spacedim}} }
        \nonumber
        \intertext{by definition~\eqref{eq:correspondence_conditional_infimum} of the conditional
        infimum \wrt\ the \( \RR_{+}^{\spacedim} \)-epigraphic correspondence
~\( \Epigraph_{\RR_{+}^{\spacedim}}\SquareMapping \) as defined
  in~\eqref{eq:RR_+p-epigraph}, and by definition~\eqref{eq:SquareMapping} of the square mapping}
      =&
        \inf\Bset{ \sum_{i=1}^{\spacedim} \matrice_{i,i} \uncertain_i^2 +
        \sum_{i\neq j} \matrice_{i,j} \uncertain_i\uncertain_j
        + \sum_{i=1}^{\spacedim} \vecteur_i \uncertain_i }{%
        \uncertain_{1}^2\leq\primal_{1},\ldots,\uncertain_{\spacedim}^2\leq\primal_{\spacedim}}
        \nonumber
      \\
      =&
         \inf\Bset{ \sum_{i=1}^{\spacedim} \matrice_{i,i} \uncertain_i^2 +
        \sum_{i\neq j} \matrice_{i,j} \uncertain_i\uncertain_j
        + \sum_{i=1}^{\spacedim} \vecteur_i \uncertain_i }{%
        \uncertain_{1}\in\ClosedIntervalClosed{-\sqrt{\primal_{1}}}{\sqrt{\primal_{1}}},
        \ldots,\uncertain_{\spacedim}\in\ClosedIntervalClosed{-\sqrt{\primal_{\spacedim}}}{\sqrt{\primal_{\spacedim}}}
}
        \nonumber
      \\
      =&
        \min\Bset{
       \sum_{i=1}^{\spacedim} \matrice_{i,i} \np{\varepsilon'_i}^2 \primal_i +
         \sum_{i\neq j} \matrice_{i,j}\varepsilon'_i\varepsilon'_j \sqrt{\primal_i\primal_j}
        + \sum_{i=1}^{\spacedim} \vecteur_i\varepsilon'_i \sqrt{\primal_i} }{%
        \varepsilon' \in \ClosedIntervalClosed{-1}{1}^{\spacedim} }
      \\
      \geq&
        \sum_{i=1}^{\spacedim} \min\na{0,\matrice_{i,i}} \primal_i +
        \sum_{i\neq j} \module{\matrice_{i,j}} \np{-\sqrt{\primal_i \primal_j} }
            + \sum_{i=1}^{\spacedim} \module{\vecteur_i} \np{-\sqrt{\primal_i }}
\tag{by taking term by term inequality}
      \\
      =&
        \sum_{i=1}^{\spacedim} \matrice_{i,i} \primal_i +
        \sum_{i\neq j} \module{\matrice_{i,j}} \np{-\sqrt{\primal_i \primal_j} }
            + \sum_{i=1}^{\spacedim} \module{\vecteur_i} \np{-\sqrt{\primal_i }}
\tag{as \( \min\na{0,\matrice_{i,i}}=\matrice_{i,i} \) since \( \matrice_{i,i}\leq 0 \) by assumption}
      \\
      =&
      \bp{\ConditionalInfimum{\SquareMapping}{\LinearQuadratic}}
         \np{\primal_{1},\ldots,\primal_{\spacedim}}
     \end{align*}
     by~\eqref{eq:ConditionalInfimum_quadratic_convex_LB},
     since the pair \( \np{\matrice,\vecteur} \) can be put in {block-signed} form by assumption,
 hence Item~\ref{it:block-signed_form} of Theorem~\ref{th:Conditional_infimum_of_a_quadratic_function_knowing_squares}
 holds true.
 We conclude that
\( \ConditionalInfimum{\Epigraph_{\RR_{+}^{\spacedim}}\SquareMapping}{\LinearQuadratic}
= \ConditionalInfimum{\SquareMapping}{\LinearQuadratic} \)
    is a proper convex \lsc\ function.
  \end{proof}

\subsection{Hidden convexity in quadratic minimization problems}
\label{Hidden_convexity_in_quadratic_minimization_problems}

In~\S\ref{New_sufficient_conditions}, we derive from
Theorem~\ref{th:Conditional_infimum_of_a_quadratic_function_knowing_squares} a
new sufficient condition for hidden convexity in quadratic minimization problems
(Theorem~\ref{th:ConditionalInfimum_quadratic-example}).
Then, we provide several corollaries 
that we compare with existing results on quadratic minimization under quadratic
constraints in~\S\ref{quadratic_minimization_under_quadratic_constraints}, the
determination of trust regions in optimization in~\S\ref{Trust_region_problems},
and weighted max-cut problems in~\S\ref{Weighted_max-cut_problems}.

\subsubsection{New sufficient condition for hidden convexity in quadratic minimization problems}
\label{New_sufficient_conditions}

We provide now a sufficient condition under which the minimization of a
quadratic function plus a convex function of the square
mapping~\eqref{eq:SquareMapping} can be turned into a convex minimization
problem.

\begin{theorem}
  \label{th:ConditionalInfimum_quadratic-example}
  Let \( \phi\colon \RR_+^{\spacedim} \to \RR\cup\na{+\infty} \) be a convex function (possibly incorporating constraints).
  Let $\matrice$ be a $\spacedim\times\spacedim$ symmetric matrix,
  $\vecteur \in \RR^{\spacedim}$ be a vector.
  If the pair \( \np{\matrice,\vecteur} \) can be put in {block-signed} form
  (see Definition~\ref{de:block-signed form}),
  then we have that
  \begin{equation}
    \inf_{\uncertain\in\RR^{\spacedim}}\uncertain\transp\matrice\uncertain + \uncertain\transp\vecteur
    + \phi\np{\uncertain_{1}^2,\ldots,\uncertain_{\spacedim}^2}         
    =
    \overbrace{ 
      \inf_{ \primal \in \RR_+^{\spacedim} }\widetilde{\LinearQuadratic}\np{\primal}+\phi\np{\primal}
    }^{\textrm{convex program}} 
    \eqfinv
    \label{eq:ConditionalInfimum_quadratic_problem}
  \end{equation}
  where the function \( \widetilde{\LinearQuadratic} \colon \RR^{\spacedim} \to \RR\cup\na{+\infty} \)
  is proper convex \lsc\ with effective domain \( \domain\widetilde{\LinearQuadratic}=\RR_+^{\spacedim}\),
  and is given by~\eqref{eq:ConditionalInfimum_quadratic_convex_LB}.

  Let \( \varepsilon=\np{\varepsilon_{1},\ldots,\varepsilon_{\spacedim}} \in
  \na{-1,1}^{\spacedim} \) be given by Definition~\ref{de:block-signed form}.
  Then, regarding argmin, we have the following implication
  \begin{equation}
    \begin{split}
      \primal\opt \in \argmin_{ \primal \in \RR_+^{\spacedim} }\widetilde{\LinearQuadratic}\np{\primal}+\phi\np{\primal}
      \implies \\
      \varepsilon \cdot \sqrt{\primal\opt} \in
      \argmin_{\uncertain\in\RR^{\spacedim}}\uncertain\transp\matrice\uncertain + \uncertain\transp\vecteur
      + \phi\np{\uncertain_{1}^2,\ldots,\uncertain_{\spacedim}^2}         
      \eqfinv
    \end{split}
    \label{eq:argmin}
  \end{equation}
  where the vector \( \varepsilon \cdot \sqrt{\primal\opt} \in \RR^{\spacedim} \)
  has components
  \( \varepsilon_i\sqrt{\primal\opt_i} \), for \( i=1,\ldots,\spacedim \).
  %
\end{theorem}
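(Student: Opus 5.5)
The plan is to apply the pushforward/tower machinery to the square mapping~$\SquareMapping$, with the correspondence $\graph_{\SquareMapping}$ between $\RR^{\spacedim}$ and $\RR^{\spacedim}$. Write the objective as $\LinearQuadratic \UppPlus \phi\circ\SquareMapping$, where $\phi$ is extended by $+\infty$ outside $\RR_+^{\spacedim}$. The term $\phi\circ\SquareMapping$ is constant on every foreset $\graph_{\SquareMapping}\primal=\Converse{\SquareMapping}\np{\primal}$. Hence, by the linearity property~\eqref{eq:correspondence_conditional_infimum_property_UppPlus}, we get
\[
\InfCond{\LinearQuadratic \UppPlus \phi\circ\SquareMapping}{\SquareMapping}
= \InfCond{\LinearQuadratic}{\SquareMapping} \UppPlus \InfCond{\phi\circ\SquareMapping}{\SquareMapping}\eqfinp
\]
For $\primal\in\RR_+^{\spacedim}$, the second term equals $\phi\np{\primal}$, because the foreset is nonempty. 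For $\primal\notin\RR_+^{\spacedim}$, both sides equal $+\infty$. Since $\LinearQuadratic$ is finite, no $-\infty$ issues arise with $\UppPlus$.

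Next, Theorem~\ref{th:Conditional_infimum_of_a_quadratic_function_knowing_squares} applies because the pair $\np{\matrice,\vecteur}$ is in block-signed form. It gives $\InfCond{\LinearQuadratic}{\SquareMapping}=\widetilde{\LinearQuadratic}$, which is proper convex \lsc\ with domain $\RR_+^{\spacedim}$. Applying~\eqref{eq:ConditionalInfimum_tower_property_equality_subset} with $\Primal=\RR^{\spacedim}$ (so that $\graph_{\SquareMapping}\Primal=\RR^{\spacedim}$) yields
\[
\inf_{\uncertain}\LinearQuadratic\np{\uncertain}+\phi\np{\SquareMapping\np{\uncertain}}
=\inf_{\primal\in\RR^{\spacedim}}\widetilde{\LinearQuadratic}\np{\primal}\UppPlus\phi\np{\primal}
=\inf_{\primal\in\RR_+^{\spacedim}}\widetilde{\LinearQuadratic}\np{\primal}+\phi\np{\primal}\eqfinv
\]
which is~\eqref{eq:ConditionalInfimum_quadratic_problem}. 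The right-hand side is a convex program because it is the sum of two convex functions that never take the value $-\infty$.

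For the argmin implication~\eqref{eq:argmin}, I would invoke Proposition~\ref{pr:argmin_ConditionalInfimum_argmin_original} with the following data:
\begin{itemize}
\item $\Uncertain=\UNCERTAIN=\RR^{\spacedim}$, $\Primal=\RR^{\spacedim}$ and $\correspondence=\graph_{\SquareMapping}$;
\item the objective $\LinearQuadratic+\phi\circ\SquareMapping$;
\item $\fonctionprimal=\widetilde{\LinearQuadratic}\UppPlus\phi$, which equals the conditional infimum by the previous step, so~\eqref{eq:argmin_ConditionalInfimum_argmin_original_fonctionprimal} holds.
\end{itemize}
It then remains to check~\eqref{eq:argmin_ConditionalInfimum_argmin_original_=} for $\uncertain\opt=\varepsilon\cdot\sqrt{\primal\opt}$. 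First, $\SquareMapping\np{\uncertain\opt}=\primal\opt$, so the $\phi$ terms agree. Second, $\LinearQuadratic\np{\uncertain\opt}=\widetilde{\LinearQuadratic}\np{\primal\opt}$ must hold.

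This last equality is the main point to verify. Expanding $\LinearQuadratic\np{\varepsilon\cdot\sqrt{\primal}}$ gives
\[
\sum_i\matrice_{i,i}\primal_i+\sum_{i\neq j}\matrice_{i,j}\varepsilon_i\varepsilon_j\sqrt{\primal_i\primal_j}+\sum_i\vecteur_i\varepsilon_i\sqrt{\primal_i}\eqfinp
\]
By Definition~\ref{de:block-signed form}, $\varepsilon_i=-1$ on $P$, where $\vecteur_i\geq0$, and $\varepsilon_i=+1$ on $M$, where $\vecteur_i\leq0$; hence $\vecteur_i\varepsilon_i=-\module{\vecteur_i}$. Similarly, $\varepsilon_i\varepsilon_j=+1$ within the blocks, where $\matrice_{i,j}\leq0$, and $\varepsilon_i\varepsilon_j=-1$ across the blocks, where $\matrice_{i,j}\geq0$; hence $\matrice_{i,j}\varepsilon_i\varepsilon_j=-\module{\matrice_{i,j}}$. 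These are exactly the sign conditions~\eqref{eq:varepsilon}, and they give term-by-term equality with~\eqref{eq:ConditionalInfimum_quadratic_convex_LB}.

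Thus $\uncertain\opt$ attains $\fonctionprimal\np{\primal\opt}$, and Proposition~\ref{pr:argmin_ConditionalInfimum_argmin_original} concludes that $\varepsilon\cdot\sqrt{\primal\opt}$ is a minimizer of the original problem.
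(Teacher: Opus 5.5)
Your proposal is correct and follows essentially the paper's route: you use the $\UppPlus$-linearity property~\eqref{eq:correspondence_conditional_infimum_property_UppPlus} to split off $\phi$, Theorem~\ref{th:Conditional_infimum_of_a_quadratic_function_knowing_squares} to identify $\InfCond{\LinearQuadratic}{\SquareMapping}$ with $\widetilde{\LinearQuadratic}$, the equality~\eqref{eq:ConditionalInfimum_tower_property_equality_subset} (the paper takes $\Primal=\RR_+^{\spacedim}$ directly, whereas you take $\Primal=\RR^{\spacedim}$ and discard the $+\infty$ values), and Proposition~\ref{pr:argmin_ConditionalInfimum_argmin_original} for the argmin. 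Your explicit sign computation $\LinearQuadratic\np{\varepsilon\cdot\sqrt{\primal\opt}}=\widetilde{\LinearQuadratic}\np{\primal\opt}$ spells out a step the paper leaves implicit; with your choice $\Primal=\RR^{\spacedim}$, also add one line noting that a minimizer over $\RR_+^{\spacedim}$ is one over $\RR^{\spacedim}$, since the extended objective is $+\infty$ off $\RR_+^{\spacedim}$.
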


\begin{proof}
  Equation~\eqref{eq:ConditionalInfimum_quadratic_problem} is
  a straightforward application of
  Proposition~\ref{pr:inf_ConditionalInfimum_inf_original},
  as we have that
  \begin{subequations}
    \begin{align*}
      \inf_{\uncertain\in\RR^{\spacedim}}\uncertain\transp\matrice\uncertain + \uncertain\transp\vecteur
      + \phi\np{\uncertain_{1}^2,\ldots,\uncertain_{\spacedim}^2}
      &=
        \inf_{\uncertain \in \graph_{\SquareMapping}\RR_{+}^{\spacedim}}
        \LinearQuadratic\np{\uncertain}+ \phi\bp{\SquareMapping\np{\uncertain}}
        \intertext{as \( \RR^{\spacedim}=\graph_{\SquareMapping}\RR_{+}^{\spacedim} \)
        by definition~\eqref{eq:graph} of the graph of the square mapping~\eqref{eq:SquareMapping},
        and by definition~\eqref{eq:LinearQuadraticFonctionuncertain} of~\( \LinearQuadratic\),}
      &=
        \inf_{\primal \in \RR_{+}^{\spacedim}}
        \bp{\nInfCond{\LinearQuadratic + \phi\circ\SquareMapping}{\graph_{\SquareMapping}}\np{\primal}}
        \intertext{by Equation~\eqref{eq:ConditionalInfimum_tower_property_equality_subset}
        with function \( \fonctionuncertain=\LinearQuadratic+ \phi\circ\SquareMapping\),
        correspondence \( \correspondence=\graph_{\SquareMapping} \) given
        by the graph of the square mapping~\eqref{eq:SquareMapping},
        and (convex) subset \( \Primal=\RR_+^{\spacedim} \),}
      &=
        \inf_{\primal \in \RR_{+}^{\spacedim}}
        \bp{\nInfCond{\LinearQuadratic}{\SquareMapping}\np{\primal} + \phi\np{\primal} }
        \tag{by~\eqref{eq:correspondence_conditional_infimum_property_UppPlus}}
      \\
      &=
        \inf_{ \primal \in \RR_{+}^{\spacedim}} \widetilde{\LinearQuadratic}\np{\primal}+ \phi\np{\primal} 
        \tag{as \( \widetilde{\LinearQuadratic} = \ConditionalInfimum{\SquareMapping}{\LinearQuadratic}\)
        by~\eqref{eq:LinearQuadraticFonctionuncertain_wrt_SquareMapping}}
        \eqfinp
    \end{align*}
  \end{subequations}

  The function \( \widetilde{\LinearQuadratic} \colon \RR^{\spacedim} \to \barRR \) is proper
  convex \lsc\ with effective domain
  \( \domain\widetilde{\LinearQuadratic}=\RR_+^{\spacedim}\), and is given
  by~\eqref{eq:ConditionalInfimum_quadratic_convex_LB}, by
  Theorem~\ref{th:Conditional_infimum_of_a_quadratic_function_knowing_squares}.

  The correspondence~\eqref{eq:argmin} between argmins follows from
  Proposition~\ref{pr:argmin_ConditionalInfimum_argmin_original} 
  by using the vector
  \( \uncertain\opt = \varepsilon \cdot \sqrt{\primal\opt} \in \RR^{\spacedim} \) where
  \( \varepsilon \) is given by Definition~\ref{de:block-signed form}.  \medskip

  This ends the proof.
\end{proof}

%


Because Equation~\eqref{eq:ConditionalInfimum_quadratic_problem} is a
straightforward application of
Proposition~\ref{pr:inf_ConditionalInfimum_inf_original} --- which itself is an
equivalence between {block-signed} form and convexity of a conditional infimum ---
we suspect that Theorem~\ref{th:ConditionalInfimum_quadratic-example} provides
the most general sufficient condition to detect hidden convexity in quadratic
minimization problems under square constraints.  To illustrate this point, we
will now provide several corollaries derived fom
Theorem~\ref{th:ConditionalInfimum_quadratic-example}, and compare them with
prominent results in the literature.

\subsubsection{Quadratic minimization under quadratic constraints}
\label{quadratic_minimization_under_quadratic_constraints}

First, we state a Corollary of
Theorem~\ref{th:ConditionalInfimum_quadratic-example}, then show that it covers
the result in \cite[Theorem~3]{Ben-Tal-den-Hertog:2014}, and that it allows to
detect hidden convexity in the Celis-Dennis-Tapia subproblem
\cite[Sect.~5]{XiaYong:2020}.

\subsubsubsection{Corollary of Theorem~\ref{th:ConditionalInfimum_quadratic-example}}

The first corollary is an answer to the question raised at the beginning of this
Sect.~\ref{Hidden_convexity_in_quadratic_optimization_problems}, as we provide a
new sufficient condition to obtain hidden convexity in problems of minimization
of a quadratic function under quadratic constraints like in~\eqref{eq:LQ}.

\begin{corollary}
  \label{cor:LinearQuadraticProblem}
  Let \( p \in \NN^* \) and let
  \( \varphi\colon \RR^{\constraintdim} \to \barRR \) be a convex function (possibly incorporating constraints).
  Let $ \Quadratic, \Quadratic^{1}, \ldots, \Quadratic^{\constraintdim} \colon \RR^{\spacedim}
  \to \RR $ be quadratic functions, that is, there exist
  $\spacedim\times\spacedim$ (nonzero) symmetric matrices
  $\MatriceObjective, \MatriceConstraint^{1}, \ldots, \MatriceConstraint^{\constraintdim}$,
  vectors $\VecteurObjective, \VecteurConstraint^{1}, \ldots, \VecteurConstraint^{\constraintdim} \in \RR^{\spacedim}$,
  and scalars $\scalaire, \scalaire^{1}, \ldots, \scalaire^{\constraintdim} \in \RR$,
  such that
  \begin{subequations}
    \begin{align}
      \Quadratic\np{\uncertainbis}
      &=
        \uncertainbis\transp\MatriceObjective\uncertainbis + \uncertainbis\transp\VecteurObjective
        + \scalaire
        \eqsepv \forall \uncertainbis \in \RR^{\spacedim}
        \eqfinv
      \\
      \Quadratic^{\LocalIndex}\np{\uncertainbis}
      &=
        \uncertainbis\transp\MatriceConstraint^{\LocalIndex}\uncertainbis + \uncertainbis\transp\VecteurConstraint^{\LocalIndex}
        + \scalaire^{\LocalIndex}
        \eqsepv \forall \uncertainbis \in \RR^{\spacedim}
        \eqsepv \forall \LocalIndex\in\ic{1,\constraintdim}
        \eqfinp
    \end{align}
  \end{subequations}

  Suppose that there exist a non singular $\spacedim{\times}\spacedim$ matrix~$\NonsingularMatrix$ and
  a vector $\bar{\uncertain} \in \RR^{\spacedim}$ such that 
  \begin{subequations}
    \begin{enumerate}
    \item
      ({block-signed} form of the quadratic objective function)
      
      denoting
      \begin{equation}
        \Quadratic\bp{\NonsingularMatrix\np{\uncertain-\bar{\uncertain}}}
        =\uncertain\transp\bar{\MatriceObjective}\uncertain + \uncertain\transp\bar{\VecteurObjective}  + \bar{\scalaire}
        \eqsepv \forall \uncertain \in \RR^{\spacedim}
        \eqfinv
      \end{equation}
      the pair \( \np{\bar{\MatriceObjective},\bar{\VecteurObjective}} \) can be put in {block-signed} form
      (see Definition~\ref{de:block-signed form}),
    \item
      (joint diagonalization of ``constraint'' matrices)
      
      there exist $\spacedim\times\spacedim$ diagonal matrices $\bar{\MatriceConstraint}^{1}, \ldots, \bar{\MatriceConstraint}^{\constraintdim}$
      and scalars $\bar{\scalaire}^{1}, \ldots, \bar{\scalaire}^{\constraintdim} \in \RR$,
      such that
      \begin{equation}
        \Quadratic^{\LocalIndex}\bp{\NonsingularMatrix\np{\uncertain-\bar{\uncertain}}}
        = \uncertain\transp\bar{\MatriceConstraint}^{\LocalIndex}\uncertain
        + \bar{\scalaire}^{\LocalIndex}
        \eqsepv \forall \uncertain \in \RR^{\spacedim}
        \eqsepv \forall \LocalIndex\in\ic{1,\constraintdim}
        \eqfinv
      \end{equation}
    \end{enumerate}
  \end{subequations}
  Then, the minimization problem
  \begin{equation}
    \inf_{\uncertain \in \RR^{\spacedim}} \Quadratic\np{\uncertain}
    + \varphi \bp{\Quadratic^{1}\np{\uncertain}, \ldots, \Quadratic^{\constraintdim}\np{\uncertain} }
    \label{eq:LinearQuadraticProblem}
  \end{equation}
  is equivalent to a convex minimization problem.
\end{corollary}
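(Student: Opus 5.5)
The plan is to reduce Corollary~\ref{cor:LinearQuadraticProblem} to Theorem~\ref{th:ConditionalInfimum_quadratic-example} through the affine change of variables \( \uncertainbis=\NonsingularMatrix\np{\uncertain-\bar{\uncertain}} \). Since \( \NonsingularMatrix \) is nonsingular, the map \( \uncertain\mapsto\NonsingularMatrix\np{\uncertain-\bar{\uncertain}} \) is a bijection of \( \RR^{\spacedim} \). Hence the infimum in~\eqref{eq:LinearQuadraticProblem} equals
\[
\inf_{\uncertain\in\RR^{\spacedim}} \uncertain\transp\bar{\MatriceObjective}\uncertain + \uncertain\transp\bar{\VecteurObjective}+\bar{\scalaire}
+ \varphi\bp{\uncertain\transp\bar{\MatriceConstraint}^{1}\uncertain+\bar{\scalaire}^{1},\ldots,\uncertain\transp\bar{\MatriceConstraint}^{\constraintdim}\uncertain+\bar{\scalaire}^{\constraintdim}} \eqfinv
\]
and minimizers correspond through the same bijection. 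The constant \( \bar{\scalaire} \) plays no role and can be dropped.

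Next, I would use that each \( \bar{\MatriceConstraint}^{\LocalIndex} \) is diagonal. This gives \( \uncertain\transp\bar{\MatriceConstraint}^{\LocalIndex}\uncertain=\sum_{i}\bar{\MatriceConstraint}^{\LocalIndex}_{i,i}\uncertain_i^2 \), which is a linear function of \( \SquareMapping\np{\uncertain} \). So the second term equals \( \phi\bp{\SquareMapping\np{\uncertain}} \), where
\[
\phi\np{\primal}=\varphi\bp{\ell\np{\primal}}\eqsepv
\ell\np{\primal}=\Bp{\sum_{i}\bar{\MatriceConstraint}^{1}_{i,i}\primal_i+\bar{\scalaire}^{1},\ldots,\sum_{i}\bar{\MatriceConstraint}^{\constraintdim}_{i,i}\primal_i+\bar{\scalaire}^{\constraintdim}}\eqfinp
\]
Because \( \ell \) is affine and \( \varphi \) is convex, \( \phi \) is convex; this is the standard fact, also covered by Item~\ref{it:epi-composition} of Corollary~\ref{cor:composition_convexity} (convexity of the epigraph is preserved under affine precomposition). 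I restrict \( \phi \) to \( \RR_+^{\spacedim} \), which is what Theorem~\ref{th:ConditionalInfimum_quadratic-example} requires.

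The pair \( \np{\bar{\MatriceObjective},\bar{\VecteurObjective}} \) is in block-signed form by assumption, so Theorem~\ref{th:ConditionalInfimum_quadratic-example} applies. It shows that the transformed problem equals \( \bar{\scalaire}+\inf_{\primal\in\RR_+^{\spacedim}}\widetilde{\LinearQuadratic}\np{\primal}+\phi\np{\primal} \), a convex program because \( \widetilde{\LinearQuadratic} \) is proper convex \lsc\ and \( \phi \) is convex. For solutions, if \( \primal\opt \) solves this convex program, then \( \uncertain\opt=\NonsingularMatrix\np{\varepsilon\cdot\sqrt{\primal\opt}-\bar{\uncertain}} \) solves~\eqref{eq:LinearQuadraticProblem}, by~\eqref{eq:argmin} and the bijection. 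This gives equivalence both in value and in the recovery of solutions.

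The main obstacle is the extended-value bookkeeping. The function \( \varphi \) may take the value \( -\infty \), whereas Theorem~\ref{th:ConditionalInfimum_quadratic-example} assumes \( \phi \) is valued in \( \RR\cup\na{+\infty} \). If \( \varphi \) takes \( -\infty \) somewhere on \( \ell\np{\RR_+^{\spacedim}} \), both sides are \( -\infty \), and I would treat this case separately and trivially. Otherwise, I would check that the step invoking~\eqref{eq:correspondence_conditional_infimum_property_UppPlus} in the proof of the theorem still holds, since \( \phi\circ\SquareMapping \) is constant on each foreset of \( \graph_{\SquareMapping} \). I would also keep track of the sign convention for the \( \varepsilon \) of Definition~\ref{de:block-signed form}.
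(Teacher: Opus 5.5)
Your proof is correct, but it decomposes the problem differently from the paper's sketch. After the same change of variables, the paper conditions on the vector of constraint values. By~\eqref{eq:ConditionalInfimum_tower_property_equality_subset} and~\eqref{eq:correspondence_conditional_infimum_property_UppPlus}, it rewrites the problem as an infimum over $x\in\RR^p$ of the conditional infimum of the objective with respect to $(Q^1,\ldots,Q^p)$, plus $\varphi(x)$. It then writes the constraint map as $u\mapsto\bar{\Gamma}\sigma(u)+\bar{\varpi}$, with $\bar{\Gamma}$ the $p\times n$ matrix of diagonal entries. The tower property~\eqref{eq:tower_property} then factors that conditional infimum as the conditional infimum of $\widetilde{q}$ (convex by block-signed form) with respect to the linear map $\bar{\Gamma}$. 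This function is convex by Corollary~\ref{cor:composition_convexity}, so the paper's convex program lives in $\RR^p$.

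You instead condition only on $\sigma$ and absorb $\bar{\Gamma}$ into the outer function by precomposition, $\phi=\varphi\circ\ell$. Theorem~\ref{th:ConditionalInfimum_quadratic-example} then applies verbatim and gives a convex program on $\RR^n_+$. The two programs have the same value, since $\inf_{x\in\RR^p}\bigl[\inf\{\widetilde{q}(y) : \bar{\Gamma}y=x-\bar{\varpi}\}\UppPlus\varphi(x)\bigr]=\inf_{y\in\RR^n_+}\widetilde{q}(y)+\varphi(\bar{\Gamma}y+\bar{\varpi})$.

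Your route is more elementary. It also gives explicit recovery of minimizers through~\eqref{eq:argmin}, which the paper's sketch does not address; from the $\RR^p$ formulation, recovery would additionally require attainment of the inner infimum. The paper's route reduces the dimension to $p$ and exhibits the objective as a convex function of the constraint values, in line with the ``infimal regression'' statement that follows it.

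One small correction: Item~\ref{it:epi-composition} of Corollary~\ref{cor:composition_convexity} concerns infimal postcomposition by affine maps, not precomposition. Convexity of $\varphi\circ\ell$ follows instead from Item~\ref{it:infcond_convexity} combined with~\eqref{eq:mapping_composition_and_ConditionalInfimum}, since the converse of the graph of $\ell$ is affine. It also follows directly from the fact that the epigraph of $\varphi\circ\ell$ is the preimage of a convex set under an affine map.
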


\begin{proof}
  We just provide a sketch of proof.
  For this purpose, we introduce
the $p\times\spacedim$ matrix~\( \bar{\MatriceConstraint}
=  \sequence{\bar{\MatriceConstraint}^{\LocalIndex}_{\LocalIndexbis,\LocalIndexbis}}%
{\LocalIndex\in\ic{1,\constraintdim}, \LocalIndexbis\in\ic{1,\spacedim}} \), and get that
\begin{align*}
&  \inf_{\uncertain \in \RR^{\spacedim}} \Quadratic\np{\uncertain}
    + \varphi \bp{\Quadratic^{1}\np{\uncertain}, \ldots, \Quadratic^{\constraintdim}\np{\uncertain} }
  \\
  =&
     \inf_{\primal\in \RR^{\constraintdim}}
     \InfCond{\Quadratic + \varphi \np{\Quadratic^{1}, \ldots, \Quadratic^{\constraintdim}}}%
     {\np{\Quadratic^{1}, \ldots, \Quadratic^{\constraintdim}}}\np{\primal}
  \\
  =&
     \inf_{\primal\in \RR^{\constraintdim}}
     \InfCond{\Quadratic}%
     {\np{\Quadratic^{1}, \ldots, \Quadratic^{\constraintdim}}}\np{\primal}
     \UppPlus \varphi \np{\primal^{1}, \ldots, \primal^{\constraintdim}}
  \\
  =&
     \inf_{\primal\in \RR^{\constraintdim}}
     \ConditionalInfimum%
    {\uncertain\transp\bar{\MatriceConstraint}^{\LocalIndex}\uncertain
      + \bar{\scalaire}^{\LocalIndex}=\primal_{\LocalIndex} \eqsepv
      \LocalIndex\in\ic{1,\constraintdim} }%
    {\uncertain\transp\bar{\MatriceObjective}\uncertain +
    \uncertain\transp\bar{\VecteurObjective} + \bar{\scalaire}}%
     \UppPlus \varphi \np{\primal^{1}, \ldots, \primal^{\constraintdim}}
  \\
  =&
          \inf_{\primal\in \RR^{\constraintdim}}
    \ConditionalInfimum%
     {\bar{\MatriceConstraint}\SquareMapping\np{\uncertain}=
\primal_{\LocalIndex}-\bar{\scalaire}^{\LocalIndex} \eqsepv
      \LocalIndex\in\ic{1,\constraintdim} }%
    {\uncertain\transp\bar{\MatriceObjective}\uncertain +
      \uncertain\transp\bar{\VecteurObjective} + \bar{\scalaire}}%
     \UppPlus \varphi \np{\primal^{1}, \ldots, \primal^{\constraintdim}}
  \\
  =&
          \inf_{\primal\in \RR^{\constraintdim}}
    \underbrace{ \BInfCond{%
    \underbrace{ \InfCond{\uncertain\transp\bar{\MatriceObjective}\uncertain +
     \uncertain\transp\bar{\VecteurObjective} + \bar{\scalaire}}%
     {\SquareMapping}}_{\textrm{convex function}} }%
     {\underbrace{\bar{\MatriceConstraint}}_{\substack{\textrm{linear}\\ \textrm{mapping}}}}%
     }_{\textrm{convex function}}%
     \np{\primal_{\LocalIndex}-\bar{\scalaire}^{\LocalIndex} \eqsepv     
     \LocalIndex\in\ic{1,\constraintdim} }     
     \UppPlus \underbrace{ \varphi }_{\substack{\textrm{convex}\\ \textrm{function}}}\np{\primal^{1}, \ldots, \primal^{\constraintdim}}
  \eqfinp
\end{align*}
\end{proof}

To our knowledge, the conditions that we propose in
Corollary~\ref{cor:LinearQuadraticProblem} go beyond existing sufficient
conditions.  Indeed, as far as we know, \emph{all known results} rely on
\emph{joint diagonalization} of \emph{all} the symmetric \emph{matrices
  $\MatriceObjective$ (objective) and
  $\MatriceConstraint^{1}, \ldots, \MatriceConstraint^{\constraintdim}$ (constraints)}
in~\eqref{eq:LinearQuadraticProblem}.  By contrast, our result in
Corollary~\ref{cor:LinearQuadraticProblem} only requires, on the one hand, joint
diagonalization of the symmetric matrices
$\MatriceConstraint^{1}, \ldots, \MatriceConstraint^{\constraintdim}$ that appear in the constraint
and, on the other hand, that the pair
\( \np{\bar{\MatriceObjective},\bar{\VecteurObjective}} \) can be put in
{block-signed} form.  Thus, the symmetric matrix~$\bar{\MatriceObjective}$ can
have, in the diagonalizing basis and after rearrangement of indices, an
expression like in~\eqref{eq:block-signed_form}, hence we do not require the
symmetric matrix~$\bar{\MatriceObjective}$ in the objective function to be
diagonal, but we leave room for off-diagonal terms as far as these terms have
signs like in~\eqref{eq:block-signed_form}. 

\subsubsubsection{Parallel between probability theory and optimization}

Let us point an analogy between probability theory
and optimization.
The so-called \emph{linearity of gaussian regression} is well known:
if \( \np{ \va{X}_0, \va{X}_1, \ldots, \va{X}_{\constraintdim} } \) is a Gaussian random
vector in~$\RR^\spacedim$, then
\[
 \espe\conditionaly{\va{X}_0}{ \va{X}_1, \ldots, \va{X}_{\constraintdim} }
  =\textrm{affine function of~}\np{ \va{X}_1, \ldots, \va{X}_{\constraintdim} } \eqfinp
\]
In optimization, this becomes the \emph{convexity of quadratic forms infimal
  regression} as follows. 
\begin{theorem}
  If \( \Quadratic_{0}, \Quadratic_{1}, \ldots, \Quadratic_{\constraintdim} \) are
  diagonal quadratic forms on~$\RR^\spacedim$, then
    \[
\InfCond{\Quadratic_{0}}{\Quadratic_{1}, \ldots, \Quadratic_{\constraintdim} }
    =\textrm{convex function of~}\np{\Quadratic_{1}, \ldots, \Quadratic_{\constraintdim} }  \eqfinp
  \]
\end{theorem}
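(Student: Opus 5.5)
The argument is a tower-property decomposition that reduces everything to Theorem~\ref{th:Conditional_infimum_of_a_quadratic_function_knowing_squares} and Corollary~\ref{cor:composition_convexity}. Write the diagonal quadratic forms as \( \Quadratic_{k}\np{\uncertain}=\sum_{i=1}^{\spacedim} d^{k}_{i}\uncertain_i^2 \) for \( k\in\ic{0,\constraintdim} \). Let \( D \) be the \( \constraintdim\times\spacedim \) matrix with entries \( D_{k,i}=d^{k}_{i} \) for \( k\geq1 \). Then the mapping \( \np{\Quadratic_{1},\ldots,\Quadratic_{\constraintdim}} \colon \RR^{\spacedim}\to\RR^{\constraintdim} \) factors as \( D\circ\SquareMapping \), with \( \SquareMapping \) the square mapping~\eqref{eq:SquareMapping} and \( D \) linear. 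Hence \( \graph_{\np{\Quadratic_{1},\ldots,\Quadratic_{\constraintdim}}}=\graph_{\SquareMapping}\graph_{D} \) as a composition of correspondences. The tower property~\eqref{eq:tower_property} then gives
\[
\InfCond{\Quadratic_{0}}{\Quadratic_{1},\ldots,\Quadratic_{\constraintdim}}
=\bInfCond{\nInfCond{\Quadratic_{0}}{\SquareMapping}}{D}
\eqfinp
\]

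For the inner step, \( \Quadratic_{0} \) has the form~\eqref{eq:LinearQuadraticFonctionuncertain} with \( \matrice=\mathrm{diag}\np{d^{0}} \) and \( \vecteur=0 \). By the remark after Definition~\ref{de:block-signed form}, a pair with diagonal matrix can always be put in block-signed form: take \( P=\ic{1,\spacedim} \) and \( M=\emptyset \), which works since there are no off-diagonal entries and \( \vecteur=0 \). Theorem~\ref{th:Conditional_infimum_of_a_quadratic_function_knowing_squares} then shows that \( \InfCond{\Quadratic_{0}}{\SquareMapping} \) is proper convex \lsc. Concretely it equals the linear function \( \primal\mapsto\sum_i d^{0}_i\primal_i \) on \( \RR_+^{\spacedim} \) and \( +\infty \) elsewhere, which one can also verify directly from~\eqref{eq:ConditionalInfimum_quadratic_convex_LB}.

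For the outer step, conditional infimum with respect to the linear mapping \( D \) is an epi-composition with an affine map. Its graph \( \graph_{D} \) is a linear subspace, hence convex, so Item~\ref{it:infcond_convexity} (or Item~\ref{it:epi-composition}) of Corollary~\ref{cor:composition_convexity} shows that \( \bInfCond{\nInfCond{\Quadratic_{0}}{\SquareMapping}}{D} \) is convex on \( \RR^{\constraintdim} \). This is the convex function of \( \np{\Quadratic_{1},\ldots,\Quadratic_{\constraintdim}} \) in the statement. Explicitly, it is the value function of the linear program \( \primal\mapsto\inf\nset{\sum_i d^0_i w_i}{w\in\RR_+^{\spacedim},\,Dw=\primal} \), which is visibly convex in \( \primal \).

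The only point needing care is the interpretation of the statement: \enquote{convex function of} means that the conditional infimum, viewed as a function on \( \RR^{\constraintdim} \), is convex, possibly taking the values \( \pm\infty \). Corollary~\ref{cor:composition_convexity} handles extended values through strict epigraphs, so it needs no properness assumption. I do not expect a real obstacle: the main task is checking the composition identity \( \graph_{D\circ\SquareMapping}=\graph_{\SquareMapping}\graph_{D} \), which is immediate from the definition of composition of correspondences.
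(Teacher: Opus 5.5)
Your proposal is correct and is essentially the argument the paper relies on. The paper gives no separate proof of this theorem, but the chain of equalities in the proof of Corollary~\ref{cor:LinearQuadraticProblem} does exactly what you do: factor \( (\Quadratic_{1},\ldots,\Quadratic_{\constraintdim}) \) as a linear map composed with the square mapping \( \SquareMapping \), apply the tower property, get convexity of the inner conditional infimum from Theorem~\ref{th:Conditional_infimum_of_a_quadratic_function_knowing_squares} (a diagonal pair is always block-signed), and conclude with Corollary~\ref{cor:composition_convexity} for the linear outer step.
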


\subsubsubsection{Comparison with \cite[Theorem~3]{Ben-Tal-den-Hertog:2014}}

We state a version of \cite[Theorem~3]{Ben-Tal-den-Hertog:2014} without
\cite[Assumption~2]{Ben-Tal-den-Hertog:2014}.  The proof shows that it is a
consequence of Corollary~\ref{cor:LinearQuadraticProblem}.  \medskip

\begin{subequations}
  \noindent\emph{\textbf{\cite[Theorem~3]{Ben-Tal-den-Hertog:2014} without \cite[Assumption~2]{Ben-Tal-den-Hertog:2014}} 
    Let $D$ and $A$ be symmetric $\spacedim\times\spacedim$ matrices,
    $b, e\in\RR^{\spacedim}$ be vectors, and $c$ be a real number.
    Suppose that there exists a non singular matrix $S$
    such that
    \begin{equation}
      S \transp D S = \mathrm{diag}(\delta_{1},\ldots,\delta_{\spacedim})
      \eqsepv
      S \transp A S =\mathrm{diag}(\alpha_{1},\ldots,\alpha_{\spacedim})
      \eqfinp
      \label{eq:BenTal-Hertog:diag_assumption}
    \end{equation}
    Then, the minimization problem
    \begin{equation}
      \inf\bset{\frac{1}{2} z \transp D z  + e\transp z }{%
        z\in\RR^{\spacedim} \eqsepv \frac{1}{2} z \transp A z  + b\transp z +c \leq 0}
      \label{eq:Ben-Tal-den-Hertog:2014}
    \end{equation}
    is equivalent to a convex minimization problem and,
      when there are optimal solutions, they are in one-to-one correspondence.
  }
  \medskip

  \begin{proof} It is a simple consequence of Corollary~\ref{cor:LinearQuadraticProblem} but we detail the steps
    to follow~\cite[p.~4]{Ben-Tal-den-Hertog:2014}.
    Using assumption~\eqref{eq:BenTal-Hertog:diag_assumption}, and as done
  in \cite[p.~4]{Ben-Tal-den-Hertog:2014}, the minimization
  problem~\eqref{eq:Ben-Tal-den-Hertog:2014} is equivalent to the minimization problem
  \begin{equation}
    \inf\Bset{ \sum_{i=1}^{\spacedim} \frac{1}{2} \delta_i z_i^2 + e_i z_i }{%
      z\in\RR^{\spacedim} \eqsepv \sum_{i=1}^{\spacedim}
      \frac{1}{2} \alpha_i z_i^2   + b_i z_i +c \leq 0}
    \eqfinv
  \end{equation}
  which is also equivalent --- up to the affine transformation $y= z + \rho$,
  whith $\rho_i = b_i/\alpha_i$ when $\alpha_i \not=0$ and $\rho_i = 0$ when $\alpha_i =0$ --- to
  \begin{equation}
    \inf\Bset{ \sum_{i=1}^{\spacedim} \frac{1}{2} \delta_i y_i^2 + (e_i - \delta_i \rho_i) y_i -
       \frac{3}{2} \delta_i \rho_i ^2 - e_i \rho_i
     }{%
      y\in\RR^{\spacedim} \eqsepv \sum_{i=1}^{\spacedim}
      \frac{1}{2} \alpha_i y_i^2  + (c - \alpha_i\rho_i) \leq 0}
    \eqfinp
  \end{equation}
  We conclude that this latter problem is equivalent to a convex minimization problem
  using Theorem~\ref{th:ConditionalInfimum_quadratic-example} with
  \( \varphi=\Indicator{\RR_{-}} \) the indicator function of~\( \RR_{-} \). 
\end{proof}
\end{subequations}

\subsubsubsection{Hidden convexity in the Celis-Dennis-Tapia (CDT) subproblem \cite[Sect.~5]{XiaYong:2020}}

It is said in \cite[Sect.~5]{XiaYong:2020} that hidden convex reformulation remains unknown
for the so-called \emph{Celis-Dennis-Tapia (CDT) subproblem}, which consists in
minimizing a quadratic function under two quadratic constraints.
From Corollary~\ref{cor:LinearQuadraticProblem}, we easily deduce the following convex reformulation result.

\begin{corollary}
  If \( \MatriceConstraint^{1} \), \( \MatriceConstraint^{2} \) are diagonal matrices, and if
  the pair \( \np{\MatriceObjective,\VecteurObjective} \) can be put in {block-signed} form
  (see Definition~\ref{de:block-signed form}),
  there is a convex reformulation for the minimization problem
  \begin{subequations}
    \begin{align}
      \min \quad &
           \uncertainbis\transp\MatriceObjective\uncertainbis + \uncertainbis\transp\VecteurObjective
      \\
             &
               \uncertainbis\transp\MatriceConstraint^{1}\uncertainbis  + \scalaire^{1} \leq 0
      \\
             &
               \uncertainbis\transp\MatriceConstraint^{2}\uncertainbis + \scalaire^{2} \leq 0
               \eqfinp
    \end{align}  
  \end{subequations}  
\end{corollary}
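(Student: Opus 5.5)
The plan is to obtain this statement as a direct instance of Corollary~\ref{cor:LinearQuadraticProblem}. We take $\constraintdim=2$, the identity as nonsingular matrix $\NonsingularMatrix$, and $\bar{\uncertain}=0$. The objective is $\Quadratic\np{\uncertainbis}=\uncertainbis\transp\MatriceObjective\uncertainbis+\uncertainbis\transp\VecteurObjective$, with $\scalaire=0$. The two constraint functions are $\Quadratic^{\LocalIndex}\np{\uncertainbis}=\uncertainbis\transp\MatriceConstraint^{\LocalIndex}\uncertainbis+\scalaire^{\LocalIndex}$ for $\LocalIndex=1,2$, with vanishing linear parts $\VecteurConstraint^{\LocalIndex}=0$.

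The constraints are encoded in the outer convex function. We choose $\varphi=\Indicator{\RR_{-}\times\RR_{-}}$, the indicator of the closed convex orthant $\RR_{-}^2$, which is a convex function on $\RR^2$. With this choice the problem~\eqref{eq:LinearQuadraticProblem} is exactly the CDT subproblem: the objective is finite precisely when both constraints hold, and it equals $+\infty$ otherwise.

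Next we check the two hypotheses of Corollary~\ref{cor:LinearQuadraticProblem}.
\begin{enumerate}
\item Since $\NonsingularMatrix$ is the identity and $\bar{\uncertain}=0$, we have $\bar{\MatriceObjective}=\MatriceObjective$, $\bar{\VecteurObjective}=\VecteurObjective$ and $\bar{\scalaire}=0$. The pair $\np{\bar{\MatriceObjective},\bar{\VecteurObjective}}$ can therefore be put in block-signed form by assumption.
\item The matrices $\bar{\MatriceConstraint}^{\LocalIndex}=\MatriceConstraint^{\LocalIndex}$ are diagonal by assumption, and the constants satisfy $\bar{\scalaire}^{\LocalIndex}=\scalaire^{\LocalIndex}$. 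So the joint diagonalization hypothesis holds trivially, and the constraint functions have no linear term, as that hypothesis requires.
\end{enumerate}
The corollary then gives that the CDT problem is equivalent to a convex minimization problem, which is the claimed convex reformulation.

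For concreteness, the chain of equalities in the proof of Corollary~\ref{cor:LinearQuadraticProblem} shows the reformulation explicitly. It is the minimization over $\primal\in\RR^2$ of the conditional infimum of $\widetilde{\LinearQuadratic}$ (from Theorem~\ref{th:Conditional_infimum_of_a_quadratic_function_knowing_squares}) with respect to the linear map $\bar{\MatriceConstraint}$, evaluated at $\primal-\scalaire$, subject to $\primal\le 0$. Equivalently, using Theorem~\ref{th:ConditionalInfimum_quadratic-example} with $\phi\np{\primal}=\Indicator{\RR_{-}^2}\np{\bar{\MatriceConstraint}\primal+\scalaire}$, it is the convex program
\[
\min\Bset{\widetilde{\LinearQuadratic}\np{\primal}}{\primal\in\RR_+^{\spacedim},\ \textstyle\sum_{i}\MatriceConstraint^{\LocalIndex}_{i,i}\primal_i+\scalaire^{\LocalIndex}\le 0,\ \LocalIndex=1,2}\eqfinp
\]
Here $\phi$ is convex because it is the indicator of a polyhedron. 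This second route is cleaner, and it also yields the recovery of solutions through~\eqref{eq:argmin}.

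The only point needing care is the formal requirement in Corollary~\ref{cor:LinearQuadraticProblem} that the matrices be nonzero. If some $\MatriceConstraint^{\LocalIndex}=0$, the corresponding constraint is constant, and we would simply drop it, or treat the problem directly via Theorem~\ref{th:ConditionalInfimum_quadratic-example}. No real obstacle is expected beyond this bookkeeping.
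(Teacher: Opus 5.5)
Your proposal is correct and follows the paper, which simply deduces this statement from Corollary~\ref{cor:LinearQuadraticProblem}; your choice of $\constraintdim=2$, $\NonsingularMatrix$ the identity, $\bar{\uncertain}=0$ and $\varphi$ the indicator function of $\RR_{-}^{2}$ is exactly that instantiation. Your second, more direct route through Theorem~\ref{th:ConditionalInfimum_quadratic-example}, with $\phi$ the indicator of the polyhedron cut out by the two linear constraints on the squares, is also valid, and it additionally gives the recovery of minimizers via~\eqref{eq:argmin}.
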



\subsubsection{Determination of trust regions in optimization} 
\label{Trust_region_problems}

In optimization, a trust region is a subset of the domain of the objective
function where this latter is suitably approximated using a model function
(often a quadratic).  First, we state a Corollary of
Theorem~\ref{th:ConditionalInfimum_quadratic-example}, then show that it covers
the results in \cite[Theorem~7]{BenTal-Ben-Teboulle:1996}, \cite[TRS
\S8.2.7]{Beck:2014} and \cite[Proposition~2.4]{Xia-Sheu-Yuan:2017}.

\subsubsubsection{Corollary of Theorem~\ref{th:ConditionalInfimum_quadratic-example}}

\begin{subequations}
  \begin{corollary}
    \label{cor:trust_phi}
    Let $\MatriceObjective, \MatriceConstraint$ be two $\spacedim\times\spacedim$ symmetric matrices,
    $\VecteurObjective \in \RR^{\spacedim}$ be a vector,
    and \( \varphi\colon \RR \to \barRR \) be a convex function (possibly incorporating constraints).
    Suppose that there exists a nonsingular matrix~$\NonsingularMatrix$ such that
    \begin{enumerate}
    \item
      \label{it:cor:trust_phi_block-signed}
      ({block-signed} form of the quadratic objective function)
      
      the pair \( \np{ \bar{\MatriceObjective}, \bar{\VecteurObjective}}
      =\np{ \NonsingularMatrix\transp \MatriceObjective \NonsingularMatrix,
        \NonsingularMatrix\transp\VecteurObjective } \) can be put in {block-signed} form
      (see Definition~\ref{de:block-signed form}),
    \item
      (diagonalization of the ``constraint'' matrix)
      
      the matrix \( \bar{\MatriceConstraint}= \NonsingularMatrix\transp \MatriceConstraint \NonsingularMatrix \)
      is diagonal.    
    \end{enumerate}
    Then, the solutions of the minimization problem
    \begin{equation}
      \min_{\uncertainbis\in\RR^{\spacedim}} \uncertainbis\transp\MatriceObjective\uncertainbis
      + \VecteurObjective\transp\uncertainbis + \varphi\np{ \uncertainbis\transp \MatriceConstraint \uncertainbis }         
      \label{eq:symmetric_matrix_nondecreasing_convex_function}
    \end{equation}
    can be obtained from those of the convex minimization problem
    \begin{equation}
      \min_{ \np{\primal_{1},\ldots,\primal_{\spacedim}} \in \RR_+^{\spacedim} }
      \sum_{i=1}^{\spacedim} \bar{\MatriceObjective}_{ii} \primal_i
      - \sum_{i=1}^{\spacedim} \module{\bar{\VecteurObjective}_i} \sqrt{\primal_i}
      +\varphi\np{\sum_{i=1}^{\spacedim} \bar{\MatriceConstraint}_{ii} \primal_i}
      \eqfinp
    \end{equation}
  \end{corollary}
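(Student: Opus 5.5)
We reduce Corollary~\ref{cor:trust_phi} to Theorem~\ref{th:ConditionalInfimum_quadratic-example} by the linear change of variables $\uncertainbis=\NonsingularMatrix\uncertain$. Since $\NonsingularMatrix$ is nonsingular, $\uncertain\mapsto\NonsingularMatrix\uncertain$ is a bijection of $\RR^{\spacedim}$. Hence the infimum in~\eqref{eq:symmetric_matrix_nondecreasing_convex_function} equals
\[
\inf_{\uncertain\in\RR^{\spacedim}} \uncertain\transp\bar{\MatriceObjective}\uncertain + \uncertain\transp\bar{\VecteurObjective}
+ \varphi\np{\uncertain\transp\bar{\MatriceConstraint}\uncertain} \eqfinv
\]
and the argmins correspond through $\uncertainbis\opt=\NonsingularMatrix\uncertain\opt$. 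Here we use $\VecteurObjective\transp\NonsingularMatrix\uncertain=\uncertain\transp\NonsingularMatrix\transp\VecteurObjective=\uncertain\transp\bar{\VecteurObjective}$. Because $\bar{\MatriceConstraint}$ is diagonal, we have $\uncertain\transp\bar{\MatriceConstraint}\uncertain=\sum_{i}\bar{\MatriceConstraint}_{ii}\uncertain_i^2$. The penalty term is therefore $\phi\np{\uncertain_1^2,\ldots,\uncertain_{\spacedim}^2}$ with
\[
\phi\np{\primal}=\varphi\Bp{\sum_{i=1}^{\spacedim}\bar{\MatriceConstraint}_{ii}\primal_i}
\eqsepv \forall \primal\in\RR_+^{\spacedim} \eqfinp
\]
This $\phi$ is convex as the composition of a convex function with a linear map.

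The next step is to check that $\phi$ meets the hypotheses of Theorem~\ref{th:ConditionalInfimum_quadratic-example}, which asks for values in $\RR\cup\na{+\infty}$. If $\varphi$ takes the value $-\infty$ at some point of $\bar{\MatriceConstraint}\RR_+^{\spacedim}$, then both problems have value $-\infty$ and the statement is immediate or vacuous. Otherwise we restrict to the case $\varphi>-\infty$ there, which is the case of interest (for instance $\varphi$ an indicator function). Note also that the proof of Theorem~\ref{th:ConditionalInfimum_quadratic-example} only uses~\eqref{eq:correspondence_conditional_infimum_property_UppPlus}, which holds for $\barRR$-valued $\phi$ with the upper addition, so this point is minor.

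By hypothesis~\ref{it:cor:trust_phi_block-signed}, the pair $\np{\bar{\MatriceObjective},\bar{\VecteurObjective}}$ can be put in block-signed form. Theorem~\ref{th:ConditionalInfimum_quadratic-example} therefore gives equality of values with $\inf_{\primal\in\RR_+^{\spacedim}}\widetilde{\LinearQuadratic}\np{\primal}+\phi\np{\primal}$, where $\widetilde{\LinearQuadratic}$ is given by~\eqref{eq:ConditionalInfimum_quadratic_convex_LB}. The same theorem gives the implication~\eqref{eq:argmin}: any minimizer $\primal\opt$ of the convex problem yields the minimizer $\varepsilon\cdot\sqrt{\primal\opt}$ of the transformed problem. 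Mapping it back, $\NonsingularMatrix\np{\varepsilon\cdot\sqrt{\primal\opt}}$ solves~\eqref{eq:symmetric_matrix_nondecreasing_convex_function}.

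The main point to watch is that the convex problem displayed in the corollary omits the off-diagonal terms $-\sum_{i\neq j}\module{\bar{\MatriceObjective}_{ij}}\sqrt{\primal_i\primal_j}$ of $\widetilde{\LinearQuadratic}$. Two readings make the display match:
\begin{itemize}
\item the intended case is a diagonal $\bar{\MatriceObjective}$ (simultaneous diagonalization, as in the trust-region literature), where those terms vanish and block-signed form is automatic;
\item otherwise, the displayed objective should be read as $\widetilde{\LinearQuadratic}+\phi$, that is, with the extra terms restored.
\end{itemize}
In both cases the argument above applies unchanged, and the off-diagonal terms do not affect convexity by Proposition~\ref{pr:ConditionalInfimum_quadratic_convex_LB_inequality}.
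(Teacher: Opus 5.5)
Your proposal is correct and follows the paper's route. The paper also makes the change of variables $z=\Sigma w$ and then applies Theorem~\ref{th:ConditionalInfimum_quadratic-example} to $(\bar\Pi,\bar\pi)$ with the convex function $\phi(x)=\varphi\bigl(\sum_i \bar\Gamma_{ii}x_i\bigr)$.

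Your caveat about the displayed convex problem is justified. The paper's proof writes the transformed objective as $\sum_i \bar\Pi_{ii}w_i^2+\sum_i\bar\pi_i w_i+\varphi(\cdot)$ and takes $\varepsilon=-\mathrm{sign}(\bar\pi)$, and both steps are only valid when $\bar\Pi$ is diagonal. So in the general block-signed case your second reading is the right one: the objective is $\widetilde{q}+\phi$, and $\varepsilon$ comes from Definition~\ref{de:block-signed form}.
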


  \begin{proof}
    By setting \( \uncertainbis=\NonsingularMatrix\uncertain \)
    (that is, by the change of variable \( \uncertainbis \mapsto \Converse{\NonsingularMatrix}\uncertainbis=\uncertain \))
    --- and using the definitions
    \( \bar{\MatriceConstraint}= \NonsingularMatrix\transp \MatriceConstraint \NonsingularMatrix \),
    \( \bar{\MatriceObjective}=  \NonsingularMatrix\transp \MatriceObjective \NonsingularMatrix \),
    \( \bar{\VecteurObjective}= \NonsingularMatrix\transp\VecteurObjective \) --- 
    the minimization problem~\eqref{eq:symmetric_matrix_nondecreasing_convex_function} is equivalent to
    \begin{equation}
      \min_{\uncertain\in\RR^{\spacedim}} \sum_{i=1}^{\spacedim} \bar{\MatriceObjective}_{ii} \uncertain_i^2
      + \sum_{i=1}^{\spacedim} \bar{\VecteurObjective}_{i}\uncertain_{i}
      + \varphi\np{ \sum_{i=1}^{\spacedim} \bar{\MatriceConstraint}_{ii} \uncertain_i^2}         
      \eqfinp 
      \label{eq:symmetric_matrix_nondecreasing_convex_function_bis}
    \end{equation}
    Then, it suffices to apply
    Theorem~\ref{th:ConditionalInfimum_quadratic-example}
    with \( \matrice=\bar{\MatriceObjective} \), \( \vecteur=\bar{\VecteurObjective}\),
    \( \varepsilon =- \textrm{sign} \np{\vecteur} =- \textrm{sign} \np{\bar{\VecteurObjective}} \),
    and with the convex function~\( \phi \) defined by  \( \phi\np{\primal_{1},\ldots,\primal_{\spacedim}}
    = \varphi\np{\sum_{i=1}^{\spacedim} \bar{\MatriceConstraint}_{ii} \primal_i} \)
    for all \( \np{\primal_{1},\ldots,\primal_{\spacedim}} \in \RR_+^{\spacedim} \).
  \end{proof}
\end{subequations}

\subsubsubsection{Comparison with \cite[Theorem~7]{BenTal-Ben-Teboulle:1996}}
  
We state \cite[Theorem~7]{BenTal-Ben-Teboulle:1996} and then show that it is a
consequence of Corollary~\ref{cor:trust_phi}. 
\medskip

\noindent\emph{\textbf{\cite[Theorem~7]{BenTal-Ben-Teboulle:1996}}
  Let $Q$ and $M$ be symmetric $\spacedim\times\spacedim$ matrices,
  $g\in\RR^{\spacedim}$ be a vector, and $l$, $u$ be real numbers.
  Suppose that the following minimization problem (\cite[Problem~Q, p.~4]{BenTal-Ben-Teboulle:1996}) 
  \begin{subequations}
    \begin{equation}
      \min\bset{z\transp Q z  - 2 g\transp z }{%
        z\in\RR^{\spacedim} \eqsepv l \leq z\transp M z  \leq u}
      \label{eq:BenTal-Ben-Teboulle:1996}
    \end{equation}
    is feasible (\cite[blanket Assumption~(a)]{BenTal-Ben-Teboulle:1996}), and that
    (\cite[blanket Assumption~(b)]{BenTal-Ben-Teboulle:1996})
    \begin{equation}
      \exists \alpha\in\RR \eqsepv Q + \alpha M > 0
      \eqfinv
      \label{eq:BenTal-Ben-Teboulle:1996_assumption}
    \end{equation}
    where the notation $Q + \alpha M > 0$ means that the real symmetric matrix $Q + \alpha M $ is positive definite.
    Then, there exists a nonsingular matrix~$\NonsingularMatrix $ such that
    (\cite[Equations~(1)-(2)]{BenTal-Ben-Teboulle:1996})
    \begin{equation}
      \NonsingularMatrix \transp Q\NonsingularMatrix =\mathrm{diag}(d_{1},\ldots,d_{\spacedim}) \eqsepv
      \NonsingularMatrix \transp M\NonsingularMatrix = \mathrm{diag}(s_{1},\ldots,s_{\spacedim}) \eqfinv
      \label{eq:BenTal-Ben-Teboulle:1996_assumption_implied}
    \end{equation}
    and Problem~\eqref{eq:BenTal-Ben-Teboulle:1996} is equivalent to
    \begin{equation}
      \min\Bset{ \sum_{i=1}^{\spacedim} d_i \primal_i
        -
        \sum_{i=1}^{\spacedim} \module{c_i} \sqrt{\primal_i } }%
      { \np{\primal_{1},\ldots,\primal_{\spacedim}} \in \RR_+^{\spacedim} \eqsepv
        l \leq s\transp  \primal \leq u }
      \eqfinv
    \end{equation}
    with $c = \NonsingularMatrix \transp g$.
  \end{subequations}
}
\medskip

\begin{proof}
  This is a straightforward application of Corollary~\ref{cor:trust_phi}
  with
  $\MatriceObjective=Q, \MatriceConstraint=M$, $\VecteurObjective=-2g$ and
  $\bar{\MatriceObjective}=\mathrm{diag}(d_{1},\ldots,d_{\spacedim})$,
  $\bar{\MatriceConstraint}=\mathrm{diag}(s_{1},\ldots,s_{\spacedim})$,
  $\bar{\VecteurObjective}=c = \NonsingularMatrix \transp g$
  and \( \varphi=\Indicator{[l,u]} \) the indicator function of the segment~\( [l,u] \).
  Indeed, as the square matrix~\( \bar{\MatriceObjective}\) is diagonal,
  the pair \( \np{\bar{\MatriceObjective},\bar{\VecteurObjective}} \) can be put in {block-signed} form
  by simply rearranging the vector~\( \bar{\VecteurObjective} \)
  (see Definition~\ref{de:block-signed form} and especially
  Equation~\eqref{eq:block-signed_form}).
\end{proof}

Corollary~\ref{cor:trust_phi} is more general than
\cite[Theorem~7]{BenTal-Ben-Teboulle:1996}.  Indeed, our proof does not require
the Assumption~\eqref{eq:BenTal-Ben-Teboulle:1996_assumption}, but the weaker
Assumption~\eqref{eq:BenTal-Ben-Teboulle:1996_assumption_implied}, and the even
weaker assumption in Item~\ref{it:cor:trust_phi_block-signed} of
Corollary~\ref{cor:trust_phi}.

\subsubsubsection{Comparison with \cite[TRS \S8.2.7]{Beck:2014}}

We state \cite[TRS \S8.2.7]{Beck:2014}
and then show that it is a consequence of Corollary~\ref{cor:trust_phi}.
\medskip

Let $\norm{\cdot}_2$ denote the Euclidean norm on~\( \RR^{\spacedim} \).

\noindent\emph{\textbf{\cite[TRS \S8.2.7]{Beck:2014}}
  For any symmetric $\spacedim{\times}\spacedim$ matrix $M$, vector $\beta \in \RR^{\spacedim}$ and $c \in \RR$,
  the (possibly nonconvex) \emph{trust region subproblem}
  \begin{equation}
    \min\bset{z\transp M z  + 2 \beta\transp z + c }{%
      z\in\RR^{\spacedim} \eqsepv
      \norm{z}_2 \leq 1}
    \label{eq:trs}
  \end{equation}
  is equivalent to a convex optimization problem.
}
\medskip

\begin{proof}
  The minimization problem~\eqref{eq:trs} is in the format of Corollary~\ref{cor:trust_phi} with
  $\MatriceObjective=M$, \( \MatriceConstraint = \textrm{Id}\),
  $\VecteurObjective=2\beta$, \( \varphi=c+\Indicator{[0,1]} \), where
  \( \Indicator{[0,1]} \) is the indicator function of the segment~\( [0,1] \).
  Let $\NonsingularMatrix$ be an orthogonal matrix such that
  \( \bar{\MatriceObjective}=  \NonsingularMatrix\transp \MatriceObjective \NonsingularMatrix \) is diagonal.
  Then, we get that 
  \( \bar{\MatriceConstraint}= \NonsingularMatrix\transp \MatriceConstraint \NonsingularMatrix = \textrm{Id}\),
  and we set \( \bar{\VecteurObjective}= \NonsingularMatrix\transp\VecteurObjective \).
  As the square matrix~\( \bar{\MatriceObjective}\) is diagonal,
  the pair \( \np{\bar{\MatriceObjective},\bar{\VecteurObjective}} \) can be put in {block-signed} form
  by simply rearranging the vector~\( \bar{\VecteurObjective} \)
  (see Definition~\ref{de:block-signed form} and especially
  Equation~\eqref{eq:block-signed_form}).
  The assumptions of Corollary~\ref{cor:trust_phi} are satisfied, and we conclude. 
\end{proof}

\subsubsubsection{Comparison with \cite[Proposition~2.4]{Xia-Sheu-Yuan:2017}}

We state \cite[Proposition~2.4]{Xia-Sheu-Yuan:2017} 
and then show that it is a consequence of Corollary~\ref{cor:trust_phi}.
\medskip

\noindent\emph{\textbf{\cite[Proposition~2.4]{Xia-Sheu-Yuan:2017}}
  For any symmetric\footnote{%
    \cite[Proposition~2.4]{Xia-Sheu-Yuan:2017} is stated in the case where $M$ is a diagonal matrix,
    but there is no loss of generality since
    there exists an orthogonal matrix~$\NonsingularMatrix$
    such that \( \bar{\MatriceObjective}=\NonsingularMatrix\transp M \NonsingularMatrix \) is diagonal,
    and since \( \norm{z}_2 = \norm{\NonsingularMatrix z}_2 \).}
  $\spacedim{\times}\spacedim$ matrix $M$, vector $\beta \in \RR^{\spacedim}$,
  \( \rho > 0 \) and \( p > 2 \), the (possibly nonconvex) \emph{$p$-regularized subproblem}
  \begin{equation}
    \min_{z\in\RR^{\spacedim}} z\transp M z  + 2 \beta\transp z + \rho\norm{z}_2^{\constraintdim}
    \label{eq:p-regularized_subproblem}
  \end{equation}
  is equivalent to a convex optimization problem.
}
\medskip


\begin{proof}
  The proof is the same as right above, except for
  \( \varphi(t)=t^{p/2} \). 
  
  
\end{proof}

\subsubsection{Weighted max-cut problems}
\label{Weighted_max-cut_problems}

Let $\GRAPH = (\VERTEX, \EDGE)$ be an undirected graph with
${\spacedim} = \cardinal{\VERTEX}$ and $\EDGE \subset \VERTEX{\times}\VERTEX$.  From now
on, we suppose that the nodes are indexed by $\na{1,\ldots,{\spacedim}}$ and we
identify $\VERTEX=\na{1,\ldots,{\spacedim}}$ and
$\EDGE \subset \na{1,\ldots,{\spacedim}}{\times}\na{1,\ldots,{\spacedim}}$.
We suppose given a ${\spacedim}{\times}{\spacedim}$ symmetric matrix~$\matrice$ such
that $\matrice_{i,j} \not=0$ iff there is an arc between the two nodes~$i$
and~$j$ of~$\VERTEX$, that is $\na{i,j}\in E$.
A \emph{cut} in the weighted graph
$(\VERTEX,\matrice)$ is a partition\footnote{%
  Being a partition, one should have $\emptyset\subsetneq S \subsetneq \VERTEX$,
  but the literature is not totally clear on that point.}
$(S, \VERTEX\backslash S)$ of the node
set~$\VERTEX$; the so-called \emph{weighted max-cut
  problem} is to find a cut of maximum total weight or, equivalently, to find a
subset $S\subset \VERTEX$ such that
$\sum_{i\in S, j\in \VERTEX\backslash S} \matrice_{i,j}$ is maximal.  As a cut can be
equivalently given by a vector $\uncertain \in \na{-1,1}^{\spacedim}$, the weighted max-cut problem can
be reformulated as\footnote{%
  Here, there is a subtlety which is not totally clear in the literature.
  Indeed, if a cut is a partition $(S, \VERTEX\backslash S)$ of the node
  set~$\VERTEX$, it should not cover the two polar cases $S=\emptyset$ and $S=\VERTEX$.
  If it does, what is the meaning given to the expression
  $\sum_{i\in S, j\in \VERTEX\backslash S} \matrice_{i,j}$ when $S=\emptyset$ or $S=\VERTEX$?
  If it does not, a cut should be identified with a vector in
  $\na{-1,1}^{\spacedim}\setminus\na{-\1,\1}$, where
\( \1\in\na{-1,1}^{\spacedim} \) is made of ones, hence the
maximum~\eqref{eq:weighted_max-cut_problem} should be changed.
}
\begin{equation}
  \max\bset{
    \frac{1}{4} \sum_{i,j} \matrice_{i,j}(1- \uncertain_i\uncertain_j)}%
  { \uncertain\in \na{-1,1}^{\spacedim} } 
  \eqfinv
  \label{eq:weighted_max-cut_problem}
\end{equation}
with $S=\nset{i \in \VERTEX }{w_i=-1}$.
The max-cut problem is one of the central problems of combinatorial
optimization. This problem was shown to be NP-complete~\cite{Karp:1972};
however, various special cases with polynomial solvability have been
identified~\cite{Boros:1991} as for example when all weights are nonpositive.
%
Using the unconstrained quadratic 0-1 programming formulation, an $O(n)$
algorithm was proposed in~\cite{Barahona:1986} for the case of series-parallel
graphs and in~\cite{Chakradha-Bushnell:1992} when the associated graph is
transformable into a combinational circuit of logic gates. More generally, an
$O(n)$ algorithm was presented in~\cite{Crama-et-al:1990} for graphs of constant
tree width.

Our contribution, a corollary of
Theorem~\ref{th:Conditional_infimum_of_a_quadratic_function_knowing_squares}, is
the following.

\begin{corollary}[Weighted max-cut problem]
  \label{co:weighted-max-cut}
  Let $\spacedim \in \NN^*$ be a positive integer and
  \( \matrice \) be a $\spacedim\times \spacedim$ symmetric matrix.
  If the pair $(\matrice, 0)$ can be put in block-signed form with $(P,M)$
  to be found in Definition~\ref{de:block-signed form},
  we have that 
  \begin{equation}
    \max\bset{
      \frac{1}{4} \sum_{i,j} \matrice_{i,j}(1- \uncertain_i\uncertain_j)}%
    { \uncertain\in \na{-1,1}^{\spacedim} }
    = \sum_{i\in P, j \in M} \matrice_{i,j}
    \label{eq:maxcut_val}
    \eqfinv
  \end{equation}
with the convention that \( \sum_{i\in P, j \in M} \matrice_{i,j} =0\) if one of
the sets~$P$ or~$M$ is empty,
and the solution of the weighted max-cut
problem~\eqref{eq:weighted_max-cut_problem} is given by~\(S=M\).
\end{corollary}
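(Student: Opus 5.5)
The plan is to rewrite the max-cut objective as an affine function of a quadratic form restricted to the set of vectors whose squares are all equal to one. Minimizing that quadratic form over this set is exactly a value of the conditional infimum \( \ConditionalInfimum{\SquareMapping}{\LinearQuadratic} \), which Theorem~\ref{th:Conditional_infimum_of_a_quadratic_function_knowing_squares} computes in closed form.

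\emph{Step 1 (reformulation).} Expanding the objective gives, for any \( \uncertain\in\na{-1,1}^{\spacedim} \),
\[
\frac{1}{4}\sum_{i,j}\matrice_{i,j}(1-\uncertain_i\uncertain_j)=\frac{1}{4}\sum_{i,j}\matrice_{i,j}-\frac{1}{4}\,\uncertain\transp\matrice\uncertain .
\]
Hence the maximum in~\eqref{eq:weighted_max-cut_problem} equals \( \frac{1}{4}\sum_{i,j}\matrice_{i,j}-\frac{1}{4}\min\bset{\uncertain\transp\matrice\uncertain}{\uncertain\in\na{-1,1}^{\spacedim}} \). The constraint set \( \na{-1,1}^{\spacedim} \) is exactly the foreset \( \bset{\uncertain}{\SquareMapping(\uncertain)=\1} \), where \( \1\in\RR^{\spacedim} \) is the vector of ones. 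So, with \( \vecteur=0 \) and the quadratic function \( \LinearQuadratic(\uncertain)=\uncertain\transp\matrice\uncertain \), the inner minimum is \( \ConditionalInfimum{\SquareMapping}{\LinearQuadratic}(\1) \) by~\eqref{eq:mapping_ConditionalInfimum}.

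\emph{Step 2 (closed form).} Since \( (\matrice,0) \) can be put in block-signed form, Theorem~\ref{th:Conditional_infimum_of_a_quadratic_function_knowing_squares} gives \( \ConditionalInfimum{\SquareMapping}{\LinearQuadratic}=\widetilde{\LinearQuadratic} \) from~\eqref{eq:ConditionalInfimum_quadratic_convex_LB}. Evaluating at \( \1 \) yields \( \sum_i\matrice_{i,i}-\sum_{i\neq j}\module{\matrice_{i,j}} \). From the proof of that theorem, this minimum is attained at the sign vector \( \varepsilon \) of Definition~\ref{de:block-signed form}, because \( \varepsilon_i\varepsilon_j\matrice_{i,j}\le 0 \) for all \( i\neq j \) by~\eqref{eq:varepsilon}.

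\emph{Step 3 (bookkeeping).} Substituting this value, the maximum equals
\[
\frac{1}{4}\sum_{i\neq j}\bp{\matrice_{i,j}+\module{\matrice_{i,j}}}=\frac{1}{2}\sum_{i\neq j,\ \matrice_{i,j}>0}\matrice_{i,j}.
\]
By the block-signed structure~\eqref{eq:block-signed_form}, strictly positive off-diagonal entries can only lie in the blocks \( P\times M \) and \( M\times P \). Entries in these two blocks are nonnegative, and entries in the diagonal blocks are nonpositive, so the last sum equals \( \sum_{P\times M}+\sum_{M\times P}=2\sum_{i\in P,j\in M}\matrice_{i,j} \) by symmetry of \( \matrice \). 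This gives~\eqref{eq:maxcut_val}. When \( P \) or \( M \) is empty, all off-diagonal entries are nonpositive and the value is \( 0 \), which matches the stated convention.

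\emph{Step 4 (optimal cut).} The optimal vector \( \varepsilon \) has \( \varepsilon_i=-1 \) exactly on \( P \), so it yields \( S=P \). Since the objective is invariant under \( \uncertain\mapsto-\uncertain \), the vector \( -\varepsilon \) is also optimal and yields \( S=M \), the same cut \( (P,M) \). The main point needing care is Step 3, namely checking that every sign pattern of the blocks is used correctly. The remaining steps are direct applications of earlier results.
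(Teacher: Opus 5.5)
Your proof is correct and follows the paper's own route. You write the max-cut value as $\frac14\sum_{i,j}\matrice_{i,j}-\frac14\ConditionalInfimum{\SquareMapping}{\LinearQuadratic}(1,\ldots,1)$, evaluate this with the closed form $\widetilde{\LinearQuadratic}$ from Theorem~\ref{th:Conditional_infimum_of_a_quadratic_function_knowing_squares}, and then use the block sign pattern and the symmetry of $\matrice$ to reduce $\frac14\sum_{i\neq j}(\matrice_{i,j}+\module{\matrice_{i,j}})$ to $\sum_{i\in P,j\in M}\matrice_{i,j}$. Your Step~4 also makes explicit the claim $S=M$, which the paper's proof leaves unaddressed: you show the optimum is attained at $\varepsilon$, which gives $S=P$, and invariance under $\uncertain\mapsto-\uncertain$ then gives the same cut with $S=M$.
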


\begin{proof} We have that
  \begin{align*}
    \max_{\uncertain\in \na{-1,1}^{\spacedim}}
    \frac{1}{4} \sum_{i,j} \matrice_{i,j}(1- \uncertain_i\uncertain_j)
    &= 
      \frac{1}{4} \sum_{i,j} \matrice_{i,j}
      - \frac{1}{4} \min\bset{ \uncertain\transp\matrice\uncertain }%
      { \uncertain_{1}^2=1,\ldots,\uncertain_{\spacedim}^2=1 }
     \\
    &= 
      \frac{1}{4} \sum_{i,j} \matrice_{i,j} - \frac{1}{4} \widetilde{\LinearQuadratic}\bp{\np{1,\ldots,1}}
      \intertext{where the function ~\( \widetilde{\LinearQuadratic}\) is given by~\eqref{eq:ConditionalInfimum_quadratic_convex_LB}
      by Theorem~\ref{th:ConditionalInfimum_quadratic-example} since
      the pair $(\matrice, 0)$ can be put in block-signed form}      
    &= 
      \frac{1}{4} \sum_{i,j} \matrice_{i,j}
      - \frac{1}{4} \bp{ \sum_{i=1}^{\spacedim} \matrice_{i,i} - \sum_{i\neq j} \module{\matrice_{i,j}} }
      \tag{by~\eqref{eq:ConditionalInfimum_quadratic_convex_LB}}
    \\
    &=
      \frac{1}{4} \sum_{i\neq j}\bp{ \matrice_{i,j} + \module{\matrice_{i,j}}}
      \nonumber
    \\
    &=
      \frac{1}{4} \sum_{i\in P, j \in M} 2\matrice_{i,j}
      +\frac{1}{4} \sum_{j\in P, i \in M} 2\matrice_{i,j}
      \intertext{as $\matrice_{i,j}\leq 0$, hence \(\matrice_{i,j} + \module{\matrice_{i,j}}=0\), when both indices are in either~$P$
      or~$M$ by~\eqref{eq:block-signed_form_math} or
      \eqref{eq:block-signed_form}, and also in the case where either
      \(P=\emptyset\), \(M=\ic{1,\spacedim}\) or \(P=\ic{1,\spacedim}\),
      \(M=\emptyset\), in which case \( \sum_{i\in P, j \in M} \) and \(
      \sum_{j\in P, i \in M} \) sum to zero}
    &=
      \sum_{i\in P, j \in M} \matrice_{i,j}
 \tag{as \( \matrice \) is a symmetric matrix}
      \eqfinv
  \end{align*}
with the convention that \( \sum_{i\in P, j \in M} \matrice_{i,j} =0\) if one of
the sets~$P$ or~$M$ is empty.

  This ends the proof.  
\end{proof}

Thus, when the pair $(\matrice, 0)$ can be put in block-signed form, we obtain
with Corollary~\ref{co:weighted-max-cut} and explicit solution of the weighted
max-cut problem.
As checking that a pair can be put in block-signed form is obtained by a
$O(n +m)$~algorithm (see Algorithm~\ref{tts:alg:2tssdp}) --- where $m$ is the
number of arcs in the graph associated with~$\matrice$ --- we have obtained a new
class for which the weighted max-cut problem is solved in $O(n +m)$, which gives
$O(n^2)$ in the worst case.

Assuming that the pair $(\matrice, 0)$ can be put in block-signed form, the
weighted max-cut problem for the matrix $\matrice$ has a value given by
Equation~\eqref{eq:maxcut_val} which only depends on the nonnegative value of
the matrix $\matrice$. Thus we obtain, that the weighted max-cut problem for the
matrix $\matrice_{+}= \max(\matrice,0)$ --- which is easily seen to be also
block-signed with the same~$(P,M)$ ---
has the same value. Moreover, the weighted graph $(\VERTEX,\matrice_{+})$ is bipartite and a
possible partition of vertices is given by $(P,M)$.

However, proving that the graph $(\VERTEX,\matrice_{+})$ is bipartite and finding a
partition~$(P,M)$ is not enough to solve the weighted max-cut problem for the matrix $\matrice$ when the matrix
$\matrice$ is not block-signed. Consider $\matrice$ given by
\begin{equation}
  \matrice=
  \left(
    \begin{array}{cccc}
      0 & 1 & -1 & -1 \\
      1 & 0 &  0 &  0 \\
      -1& 0 &  0 &  1 \\
      -1& 0 &  1 &  0
    \end{array}
    \right)\eqfinp
\end{equation}
We obtain that the graph $(\VERTEX,\matrice_{+})$ is bipartite and its associated max-cut value
is $2$ obtained by the partition $\ba{\na{1,3},\na{2,4}}$. However for the graph $(\VERTEX,\matrice)$ we
obtain a max-cut value of $1$ which is obtained by the three possible partitions
$\ba{\na{1,3},\na{2,4}}$, $\ba{\na{1,4},\na{2,3}}$ and $\ba{\na{2},\na{1,3,4}}$.

Finally, it is worth noting that it is easy to generate random instances of
block-signed matrices and thus, using Corollary~\ref{co:weighted-max-cut}, it is easy to
generate large weighted max-cut instances for which the optimal value is known. This could be of
interest for numerically evaluating the performances of max-cut heuristics.




\section{Conditional infimum and S-procedure}
\label{Conditional_infimum_and_the_S-procedure}


In~\S\ref{The_S-procedure_revisited_with_the_conditional_infimum}, we briefly
present the so-called S-procedure (see the survey
paper~\cite{Polik-Terlaky:2007}).  Then, we reformulate it by means of the
conditional infimum.
Finally, in~\S\ref{The_S-procedure_for_quadratic_functions}, we provide
conditions for the S-procedure to be valid for quadratic functions,
using results obtained in Sect.~\ref{Hidden_convexity_in_quadratic_optimization_problems}.

\subsection{The S-procedure revisited with the conditional infimum}
\label{The_S-procedure_revisited_with_the_conditional_infimum}



\subsubsubsection{Definition of the classic S-procedure}

Let $\DEPART$ be a nonempty set, 
$ \fonctiondepart^{0}, \fonctiondepart^{1}, \ldots, \fonctiondepart^{\constraintdim} \colon \DEPART
\to \RR $ be real-valued functions, and consider the statements
\begin{subequations}
  \begin{align}
    \textrm{(I)}\qquad
    &
      \bp{ \fonctiondepart^{\LocalIndex}\np{\depart} \geq 0 \eqsepv \forall \LocalIndex\in\ic{1,\constraintdim} }
      \implies
      \fonctiondepart^{0}\np{\depart} \geq 0 
      \eqfinv
      \label{eq:classic(I)}
    \\
    \textrm{(C)}\qquad
    &
      \exists\, \alpha^{1} \geq 0, \ldots, \alpha^{\constraintdim} \geq 0
      \mtext{ such that }\fonctiondepart^{0} -
      \sum_{\LocalIndex=1}^{\constraintdim}  \alpha^{\LocalIndex}\fonctiondepart^{\LocalIndex} \geq 0
      \eqfinp
      \label{eq:classic(C)}
  \end{align}
  \label{eq:classic_S-procedure}
\end{subequations}
It is obvious that (C) $\implies$ (I). The classic S-procedure consists in finding
sufficient conditions to ensure that (I) $\implies$ (C), that is, to obtain 
(I) + \emph{suitable conditions} $\implies$ (C).

\subsubsubsection{The classic S-procedure revisited with the conditional infimum}

The proof of the following
Proposition~\ref{pr:the_classic_S-procedure_is_valid} will be given
in~\S\ref{Proof_of_Proposition_pr:the_classic_S-procedure_is_valid}.
It uses the notion of {closed convex hull}~$\closedconvexhull f$ of a
function~$f$ as recalled in~\eqref{eq:closed_convex_hull_of_a_function},
and of {conic hull}~$f_c$ as recalled in~\eqref{eq:conic_hull} (see also~\eqref{eq:1-homogeneous_envelope}).

\begin{proposition}
  \label{pr:the_classic_S-procedure_is_valid}
  The following statements are equivalent:
  \begin{enumerate}
  \item
    \label{it:the_classic_S-procedure_is_valid}
    the {classic S-procedure}
    is valid, that is, (I)~$\implies$~(C) in~\eqref{eq:classic_S-procedure},
  \item
    \label{it:the_classic_S-procedure_is_valid_RR_++^constraintdim_StrictEpigraph}
    \( \closedconvexhull\Bp{
      \BInfCond{\fonctiondepart^{0}}{\Epigraph_{\RR_{+}^{\constraintdim}}
        \np{-\fonctiondepart^{1}, \ldots,-\fonctiondepart^{\constraintdim} }}_{c}}\np{0} > -\infty \),
  \item
    \label{it:the_classic_S-procedure_is_valid_StrictEpigraph}  
    \( \closedconvexhull\Bp{
      \BInfCond{\fonctiondepart^{0}}{\np{-\fonctiondepart^{1},
          \ldots,-\fonctiondepart^{\constraintdim} }}_{c} \UppPlus \Indicator{\RR_{-}^{\constraintdim}} }\np{0} > -\infty \).
  \end{enumerate}
\end{proposition}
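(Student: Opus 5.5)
The plan is to reduce each item to the existence of a linear minorant for one value function, and then use the standard description of the closed convex hull of a positively homogeneous function. Set
\[
h=\InfCond{\fonctiondepart^{0}}{\Epigraph_{\RR_{+}^{\constraintdim}}\np{-\fonctiondepart^{1},\ldots,-\fonctiondepart^{\constraintdim}}}\colon\RR^{\constraintdim}\to\barRR ,
\]
so that $h\np{\beta}=\inf\defset{\fonctiondepart^{0}\np{\depart}}{\fonctiondepart^{\LocalIndex}\np{\depart}\geq-\beta_{\LocalIndex}\eqsepv\forall\LocalIndex}$. Two properties of $h$ follow from the definitions. First, $h$ is nonincreasing for $\leq_{\RR_{+}^{\constraintdim}}$. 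Second, $\dom h$ contains every $\beta=-\np{\fonctiondepart^{1}\np{\depart},\ldots,\fonctiondepart^{\constraintdim}\np{\depart}}$, so it is nonempty because $\DEPART\neq\emptyset$ and the $\fonctiondepart^{\LocalIndex}$ are real-valued. Also, (I) is exactly $h\np{0}\geq0$.

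\emph{Step 1: (C) holds iff $h$ has a linear minorant.} If (C) holds with $\alpha\geq0$, then for any $\depart$ with $\fonctiondepart^{\LocalIndex}\np{\depart}\geq-\beta_{\LocalIndex}$ we get $\fonctiondepart^{0}\np{\depart}\geq\sum_{\LocalIndex}\alpha^{\LocalIndex}\fonctiondepart^{\LocalIndex}\np{\depart}\geq-\sum_{\LocalIndex}\alpha^{\LocalIndex}\beta_{\LocalIndex}$. Hence $h\geq\nscal{-\alpha}{\cdot}$.

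Conversely, let $h\geq\nscal{\gamma}{\cdot}$. Since $\dom h$ is nonempty and upward closed, and $h$ is nonincreasing, evaluating along $\beta+tv$ with $v\in\RR_{+}^{\constraintdim}$ and $t\to+\infty$ forces $\nscal{\gamma}{v}\leq0$, that is, $\gamma=-\alpha$ with $\alpha\geq0$. Taking $\beta=-\np{\fonctiondepart^{\LocalIndex}\np{\depart}}_{\LocalIndex}$ then gives (C).

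\emph{Step 2: from minorants to item 2.} A linear form $\ell$ satisfies $\ell\leq h$ iff $\ell\leq\lambda h\np{\cdot/\lambda}$ for all $\lambda>0$, that is, iff $\ell\leq h_{c}$. Any affine minorant of the positively homogeneous function $h_{c}$ is dominated by a linear one. The function $\closedconvexhull\np{h_{c}}$ is lsc sublinear, so its value at $0$ is either $0$ or $-\infty$. It is $0$ exactly when it is proper, which by Fenchel--Moreau (it is the supremum of the affine minorants of $h_{c}$) happens iff $h_{c}$ has at least one affine minorant. Combined with Step 1, item 2 is equivalent to (C).

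One point needs to be stated explicitly. Item 2 forces $h_{c}\np{0}\geq0$, hence $h\np{0}\geq0$, which is (I). So items 2 and 3 are equivalent to (C), which is stronger than (I). Item 1 must therefore be read as ``(C) holds whenever (I) does'' in the regime where (I) holds. I will check this reading against the authors' intended convention; since (C)$\implies$(I) always, it amounts to the equivalence of items 1 and 2.

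\emph{Step 3: item 3.} Let $g=\InfCond{\fonctiondepart^{0}}{\np{-\fonctiondepart^{1},\ldots,-\fonctiondepart^{\constraintdim}}}$. By the tower property~\eqref{eq:tower_property}, $h=\InfCond{g}{\Epigraph_{\RR_{+}^{\constraintdim}}\mathrm{Id}}$, that is, $h\np{\alpha}=\inf_{\beta\leq\alpha}g\np{\beta}$. The epigraphic correspondence of the identity is a cone, so~\eqref{eq:conic_hull_of_correspondence_conditional_supremum_infimum} gives $h_{c}=\inf_{\beta\leq\cdot}g_{c}\np{\beta}$. It remains to show that $h_{c}$ has a linear minorant iff $g_{c}\UppPlus\Indicator{\RR_{-}^{\constraintdim}}$ has one.

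A minorant $-\nscal{\alpha}{\cdot}$ of $h_{c}$ with $\alpha\geq0$ is also a minorant of $g_{c}\geq h_{c}$, hence of $g_{c}+\Indicator{\RR_{-}^{\constraintdim}}$. The converse is the step I expect to be the main obstacle: a linear minorant of $g_{c}$ that is only known on $\RR_{-}^{\constraintdim}$ must be modified into a minorant with nonpositive coefficients that holds on the points $\beta$ actually used, so that it passes to $h_{c}$ through the monotone infimum. I would first try a separation argument in $\RR^{\constraintdim}\times\RR$ between the convex cone generated by the strict epigraph of $g_{c}$ restricted to $\RR_{-}^{\constraintdim}$ and the open half-line $\na{0}\times\OpenIntervalOpen{-\infty}{0}$. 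I would then check, using the orthant structure, that the separating normal can be taken in $-\RR_{+}^{\constraintdim}$. If this fails in general, I would instead identify precisely which sign constraints on the range of $\np{\fonctiondepart^{1},\ldots,\fonctiondepart^{\constraintdim}}$ the indicator $\Indicator{\RR_{-}^{\constraintdim}}$ is meant to encode.
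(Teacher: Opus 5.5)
Your Steps~1 and~2 are sound and redo the paper's argument for Items~1--2 directly. The paper embeds the classic procedure into the general S$_f$-procedure with value function $h$ and invokes Proposition~\ref{pr:New_necessary_and_sufficient_conditions_for_the_S-procedure_to_be_valid}. You instead show directly that (C) holds iff $h$ (equivalently $h_{c}$) has a linear minorant iff $\closedconvexhull\np{h_{c}}\np{0}>-\infty$.

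Your caveat is correct, and it is a defect of the statement rather than of your plan. If (I) fails then $h\np{0}<0$, so $h_{c}\np{0}=-\infty$ and Item~2 fails, while ``(I)$\implies$(C)'' holds vacuously. This is exactly the role of the hypothesis $h\np{0}\geq0$ in \cite[Theorem~3.1]{Volle-Barro:2024}, which the paper's footnote sets aside. Item~1 therefore has to be read with (I) in force.

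The genuine gap is the converse in Step~3. It cannot be closed because it is false, so your remark that Item~3 is equivalent to (C) is also wrong. In fact Item~3 follows from (I) alone:
if $x\in\RR_{-}^{p}$ and $-f^{j}\np{u}=x_{j}$ for all $j$, then $u$ is feasible, so (I) gives $g\np{x}\geq0$. Hence $g_{c}\geq0$ on the cone $\RR_{-}^{p}$, the zero function minorizes $g_{c}\UppPlus\Indicator{\RR_{-}^{p}}$, and $\closedconvexhull\np{g_{c}\UppPlus\Indicator{\RR_{-}^{p}}}\np{0}\geq0$.

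For a concrete failure, take $\DEPART=\RR$, $p=1$, $f^{0}\np{u}=u^{3}$, $f^{1}\np{u}=u$:
\begin{itemize}
\item (I) holds.
\item (C) fails, since $u^{3}-\alpha u\to-\infty$ as $u\to-\infty$ for every $\alpha\geq0$.
\item $g\np{x}=h\np{x}=-x^{3}$, so $g_{c}=h_{c}$ equals $0$ on $\RR_{-}$ and $-\infty$ on $\RR_{++}$. Item~2 fails, because any lsc minorant of $h_{c}$ is $-\infty$ at $0$.
\item $g_{c}\UppPlus\Indicator{\RR_{-}}=\Indicator{\RR_{-}}$, so Item~3 holds.
\end{itemize}
The indicator $\Indicator{\RR_{-}^{p}}$ discards exactly the points where some $f^{j}\np{u}<0$, and those are the points (C) must control. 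So no separating normal can be forced into $-\RR_{+}^{p}$. The only version of Step~3 that survives replaces the sum with $\Indicator{\RR_{-}^{p}}$ by the inf-convolution $\inf_{v\in\RR_{+}^{p}}g\np{\cdot-v}$, which is just $h$ again.

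The paper's own proof of Item~1 $\iff$ Item~3 does not close this gap either. Its embedding is $f\np{u,x}=f^{0}\np{u}\UppPlus\sum_{j}\Indicator{\na{-f^{j}\np{u}=x_{j}}}\UppPlus\Indicator{\RR_{-}^{p}}\np{x}$. With this $f$, the general (I) only constrains points where $f^{1}\np{u}=\cdots=f^{p}\np{u}=0$. The general (C) only asks for some $y\in\RR^{p}$, of arbitrary sign, with $f^{0}\geq\sum_{j}y_{j}f^{j}$ on the feasible set. This is not the classic S-procedure, so Proposition~\ref{pr:New_necessary_and_sufficient_conditions_for_the_S-procedure_to_be_valid} applied to it says nothing about (I)$\implies$(C).
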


\subsection{The S-procedure for quadratic functions}
\label{The_S-procedure_for_quadratic_functions}

Now, we specialize Proposition~\ref{pr:the_classic_S-procedure_is_valid} to the
quadratic case, using the study of the conditional infimum of a quadratic
function \wrt\ the square mapping, done
in~\S\ref{Conditional_infimum_of_quadratic_functions}. 

\begin{proposition}
  \label{pr:the_classic_S-procedure_is_valid_quadratic}
  Let $\bar{\MatriceObjective}$ be a $\spacedim\times\spacedim$ matrix,
  \( \bar{\VecteurObjective} \in \RR^{\spacedim} \) be a vector,
  $\bar\scalaire\in \RR$ be a scalar,
  and $\bar{\MatriceConstraint}^{1}, \ldots, \bar{\MatriceConstraint}^{\constraintdim}$ be
  $\spacedim\times\spacedim$ diagonal matrices.
  We define the quadratic function
  \begin{subequations}
    \begin{align}
      \fonctiondepart^{0}\np{\depart} 
      &=
        \depart\transp\bar{\MatriceObjective}\depart + \depart\transp\bar{\VecteurObjective}  + \bar{\scalaire}
        \eqsepv \forall \depart \in \RR^{\spacedim}
        \eqfinv    
        \label{eq:the_classic_S-procedure_is_valid_quadratic_data^{0}}
        \intertext{the quadratic forms}
        \fonctiondepart^{\LocalIndex}\np{\depart} 
      &=
        \depart\transp\bar{\MatriceConstraint}^{\LocalIndex}\depart
        = \sum_{\LocalIndexbis\in\ic{1,\spacedim}}
        \bar{\MatriceConstraint}^{\LocalIndex}_{\LocalIndexbis,\LocalIndexbis}\depart_{\LocalIndexbis}^2
        \eqsepv \forall \depart \in \RR^{\spacedim}
        \eqsepv \forall \LocalIndex\in\ic{1,\constraintdim}
        \eqfinv    
        \label{eq:the_classic_S-procedure_is_valid_quadratic_data_constraintdim}
        \intertext{and the $p\times\spacedim$ matrix $\bar{\MatriceConstraint}$ with}
        \bar{\MatriceConstraint}_{i,j}
      &=
        \bar{\MatriceConstraint}^{\LocalIndex}_{\LocalIndexbis,\LocalIndexbis}
        \eqsepv
        \forall \LocalIndex\in\ic{1,\constraintdim} \eqsepv \forall \LocalIndexbis\in\ic{1,\spacedim}
        \eqfinp 
        \label{eq:the_classic_S-procedure_is_valid_quadratic_data_MatriceConstraint}
    \end{align}
    \label{eq:the_classic_S-procedure_is_valid_quadratic_data}
  \end{subequations}
  Suppose that
  \begin{itemize}
  \item
the (criterion) quadratic function~\( \fonctiondepart^{0} \) 
  in~\eqref{eq:the_classic_S-procedure_is_valid_quadratic_data^{0}}
  either
  is a quadratic form (that is, \( \bar{\scalaire}=0 \) and \( \bar{\VecteurObjective}=0 \))
  or satisfies \( \fonctiondepart^{0}\np{0}>0 \) 
  (that is, \( \bar{\scalaire}> 0 \)),
\item
the (constraints) matrix $\bar{\MatriceConstraint}$ either is invertible or
  satisfies
  $\bar{\MatriceObjective}_{\LocalIndexbis,\LocalIndexbis} > 0$ for all $\LocalIndexbis \in \ic{1,\spacedim}$.
  \end{itemize}
%
  Then, the {classic S-procedure} is valid for the functions
  \( \fonctiondepart^{0}, \fonctiondepart^{1}, \ldots, \fonctiondepart^{\constraintdim} \colon
  \RR^{\spacedim} \to \RR \)
  in~\eqref{eq:the_classic_S-procedure_is_valid_quadratic_data^{0}}--\eqref{eq:the_classic_S-procedure_is_valid_quadratic_data_constraintdim}.
\end{proposition}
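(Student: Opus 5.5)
The plan is to verify Item~\ref{it:the_classic_S-procedure_is_valid_StrictEpigraph} of Proposition~\ref{pr:the_classic_S-procedure_is_valid}: we need \( \closedconvexhull\bp{ h_{c} \UppPlus \Indicator{\RR_{-}^{\constraintdim}} }\np{0} > -\infty \), where \( h=\InfCond{\fonctiondepart^{0}}{\np{-\fonctiondepart^{1},\ldots,-\fonctiondepart^{\constraintdim}}} \). We will bound \(h\) from below by an explicit convex, positively homogeneous function that has an affine (in fact linear) minorant.

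\emph{Step 1 (factor through the square mapping).} Since the \(\bar{\MatriceConstraint}^{\LocalIndex}\) are diagonal, \( \np{-\fonctiondepart^{1},\ldots,-\fonctiondepart^{\constraintdim}} = -\bar{\MatriceConstraint}\circ\SquareMapping \). Write \(\graph_{-\bar{\MatriceConstraint}\circ\SquareMapping}=\graph_{\SquareMapping}\graph_{-\bar{\MatriceConstraint}}\). The tower property~\eqref{eq:tower_property} then gives
\( h=\InfCond{\nInfCond{\fonctiondepart^{0}}{\SquareMapping}}{-\bar{\MatriceConstraint}} \).
By Proposition~\ref{pr:ConditionalInfimum_quadratic_convex_LB_inequality} (applied with \(\matrice=\bar{\MatriceObjective}\), \(\vecteur=\bar{\VecteurObjective}\)), we have \( \InfCond{\fonctiondepart^{0}}{\SquareMapping}\geq k:=\widetilde{\LinearQuadratic}+\bar{\scalaire} \), and \(k\) is proper convex \lsc\ with domain \(\RR_+^{\spacedim}\). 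Note that no block-signed assumption is needed, because only a lower bound is used. Monotonicity~\eqref{eq:correspondence_conditional_infimum_properties_monotonicity}, together with the monotonicity of the conic hull, of adding the indicator, and of the closed convex hull, reduces the goal to
\( \closedconvexhull\bp{ \InfCond{k}{-\bar{\MatriceConstraint}}_{c}\UppPlus\Indicator{\RR_{-}^{\constraintdim}} }\np{0}>-\infty \).
Since \(\graph_{-\bar{\MatriceConstraint}}\) is a cone, \eqref{eq:conic_hull_of_correspondence_conditional_supremum_infimum} gives \( \InfCond{k}{-\bar{\MatriceConstraint}}_{c}=\InfCond{k_{c}}{-\bar{\MatriceConstraint}} \).

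\emph{Step 2 (compute \(k_c\)).} There are two cases.
\begin{itemize}
\item If \(\bar{\scalaire}=0\) and \(\bar{\VecteurObjective}=0\), then \(k=\widetilde{\LinearQuadratic}\) is positively \(1\)-homogeneous, so \(k_{c}=k\).
\item If \(\bar{\scalaire}>0\), write \(a\np{\primal}=\sum_i\module{\bar{\VecteurObjective}_i}\sqrt{\primal_i}\). Minimizing \(-a\sqrt{\lambda}+\lambda\bar{\scalaire}\) over \(\lambda>0\) gives
\( k_{c}\np{\primal}=\sum_i\bar{\MatriceObjective}_{ii}\primal_i-\sum_{i\neq j}\module{\bar{\MatriceObjective}_{ij}}\sqrt{\primal_i\primal_j}-a\np{\primal}^2/\np{4\bar{\scalaire}} \) on \(\RR_+^{\spacedim}\). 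Expanding \(a^2\) shows it is a linear term plus nonnegative combinations of \(\sqrt{\primal_i\primal_j}\), hence concave.
\end{itemize}
In both cases \(k_{c}\) is therefore convex, positively homogeneous, finite and continuous on \(\RR_+^{\spacedim}\), and satisfies \(k_{c}\np{0}=0\). By homogeneity and compactness of the simplex, \(k_c\geq -C\norm{\cdot}_1\) on \(\RR_+^{\spacedim}\) for some \(C\geq0\).

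\emph{Step 3 (push through \(-\bar{\MatriceConstraint}\)).} Set \(G=\InfCond{k_{c}}{-\bar{\MatriceConstraint}}\UppPlus\Indicator{\RR_{-}^{\constraintdim}}\). By Corollary~\ref{cor:composition_convexity}, \(G\) is convex and positively homogeneous, and \(G\np{0}\leq 0\). It suffices to exhibit a linear minorant of \(G\), since then \(\closedconvexhull G\np{0}\geq 0\).
\begin{itemize}
\item If \(\bar{\MatriceConstraint}\) is invertible, the fiber over \(y\) is the single point \(-\bar{\MatriceConstraint}^{-1}y\). Then \(G\np{y}=k_{c}\np{-\bar{\MatriceConstraint}^{-1}y}\UppPlus\Indicator{\RR_{-}^{\constraintdim}}\np{y}\), which is proper convex \lsc; it has an affine minorant, and the bound \(k_c\geq -C\norm{\cdot}_1\) even gives a linear one.
\item If instead \(\bar{\MatriceObjective}_{jj}>0\) for all \(j\), the fibers \(\nset{\primal\geq0}{-\bar{\MatriceConstraint}\primal=y}\) may be unbounded, and this is the main obstacle. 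We must show \(k_{c}\) does not go to \(-\infty\) along a fiber, and we must get lower semicontinuity of \(G\) (or directly a linear minorant). First I would try to show that \(k_{c}\geq0\) on the recession cone \(\RR_+^{\spacedim}\cap\ker\bar{\MatriceConstraint}\). The aim is to exploit the positive diagonal \(\sum_j\bar{\MatriceObjective}_{jj}\primal_j\) against the \(\sqrt{\primal_i\primal_j}\) terms, using \(\sqrt{\primal_i\primal_j}\leq(\primal_i+\primal_j)/2\) and the sign structure of \(\bar{\MatriceConstraint}\) on \(\ker\bar{\MatriceConstraint}\cap\RR_+^{\spacedim}\) (for example, when \(\bar{\MatriceConstraint}\) has a nonnegative or nonpositive row, that cone shrinks). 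If this works, a standard closedness argument for images of convex functions under linear maps with no direction of recession yields a linear minorant of \(G\). If it does not, I expect to need an additional hypothesis linking \(\bar{\MatriceObjective}\) to \(\ker\bar{\MatriceConstraint}\).
\end{itemize}
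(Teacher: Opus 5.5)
Your Steps~1--3 follow the paper's own proof closely. Both use the tower property through $\sigma$, the lower bound $\widetilde q+\bar\varpi$ from Proposition~\ref{pr:ConditionalInfimum_quadratic_convex_LB_inequality}, the conic hull pushed through the linear map $-\bar\Gamma$, and a lower bound at $0$ for Item~3 of Proposition~\ref{pr:the_classic_S-procedure_is_valid}. In the invertible case your linear minorant is a correct verification of Item~3. You leave the positive-diagonal branch open, and you were right to be wary of it. The paper closes that branch by asserting that $\widetilde f^0_c$ is coercive when all $\bar\Pi_{jj}>0$, and this is false. Take $\bar\Pi_{11}=\bar\Pi_{22}=\delta\in(0,1)$, $\bar\Pi_{12}=\bar\Pi_{21}=1$, $\bar\pi=0$, $\bar\varpi=0$. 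Then $\widetilde f^0_c(x)=\delta(x_1+x_2)-2\sqrt{x_1x_2}$, which is negative and positively homogeneous on the ray $x_1=x_2$. The same data with $f^1(u)=u_1^2-u_2^2$ also violate your hoped-for inequality $k_c\ge0$ on $\RR^n_+\cap\ker\bar\Gamma$.

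\textbf{The reduction to Item~3 is the real gap, and the statement is false.} Item~3 does not imply Item~1 in Proposition~\ref{pr:the_classic_S-procedure_is_valid}, so no completion of either branch can work. In the appendix, Item~3 is derived from the general S-procedure with $F(u,x)=f^0(u)+\sum_j\iota_{\{-f^j(u)=x_j\}}+\iota_{\RR^p_-}(x)$, where $\iota$ denotes an indicator. Its condition (C) only asks for some $y\in\RR^p$ with $f^0(u)\ge -y^{\top}f(u)$ for those $u$ satisfying $f(u)\ge0$. That allows multipliers of either sign and imposes the inequality only on the feasible set. Only Item~2, built on the $\RR^p_+$-epigraph, encodes the classic (C).

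\emph{Counterexample in the invertible branch.} Take $n=p=2$, $f^0(u)=2u_1u_2$, $f^1(u)=-u_1^2$, $f^2(u)=u_2^2$. Then $f^0$ is a form and $\bar\Gamma=\mathrm{diag}(-1,1)$ is invertible. Feasibility forces $u_1=0$, so (I) holds. Yet for all $\alpha^1,\alpha^2\ge0$ we have $(f^0-\alpha^1f^1-\alpha^2f^2)(1,-s)=\alpha^1-2s-\alpha^2s^2<0$ for large $s$, so (C) fails. Meanwhile the function of Item~3 is the indicator of $\{0\}\times\RR_-$, and your Step~3 correctly finds the minorant $0$.

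\emph{Counterexample in the positive-diagonal branch.} Take $n=3$, $p=1$, $f^0(u)=u_1^2+2u_1u_2+(u_2-u_3)^2$ (diagonal $(1,1,1)$) and $f^1(u)=-u_1^2$. (I) holds because $u_1=0$ leaves $(u_2-u_3)^2$. But $(f^0+\alpha u_1^2)(-1,s,s)=1+\alpha-2s<0$ for large $s$. Both objectives are even block-signed.

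\emph{What is missing is a constraint qualification.} For instance, assume $(\bar\Pi,\bar\pi)$ is block-signed and some feasible $u$ has all coordinates nonzero. By Theorem~\ref{th:Conditional_infimum_of_a_quadratic_function_knowing_squares} and (I), the convex program $\min\{\widetilde q(x)+\bar\varpi : x\in\RR^n_+,\ \bar\Gamma x\ge0\}$ has value $\ge0$ and a feasible point in $\RR^n_{++}$. Then \cite[Theorem~28.2]{Rockafellar:1970} gives $\alpha\ge0$ with $\widetilde q+\bar\varpi\ge\alpha^{\top}\bar\Gamma(\cdot)$ on $\RR^n_+$. Composing with $\sigma$ yields (C).
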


\begin{proof}
  We introduce the function
  \( \widetilde{\fonctiondepart}^{0} \colon \RR^{\spacedim} \to \RR \cup \na{+\infty} \) defined by
  (see~\eqref{eq:ConditionalInfimum_quadratic_convex_LB})
  \begin{equation}
    \label{eq:ConditionalInfimum_quadratic-example_bis}
    \widetilde{\fonctiondepart}^{0} = 
    \begin{cases}
      +\infty & \text{ if } \depart\not\in \RR_+^{\spacedim}
           \eqfinv
      \\
      \displaystyle 
      \sum_{\LocalIndexbis=1}^{\spacedim} \bar{\MatriceObjective}_{\LocalIndexbis,\LocalIndexbis} \depart_{\LocalIndexbis} 
      -
      \sum_{\LocalIndexbis\neq \LocalIndexter} \module{\bar{\MatriceObjective}_{\LocalIndexbis,\LocalIndexter}}\sqrt{\depart_{\LocalIndexbis} \depart_\LocalIndexter}
      -
      \sum_{\LocalIndexbis=1}^{\spacedim} \module{\bar{\VecteurObjective}_{\LocalIndexbis}} \sqrt{\depart_{\LocalIndexbis} }
      + \bar{\scalaire}
         & \text{ if } \depart\in \RR_+^{\spacedim} 
           \eqfinv
    \end{cases}      
  \end{equation}
  and the functions \( \fonctionprimalbis \colon \RR^{\constraintdim} \to \barRR \),
  \( \bar\fonctionprimalbis \colon \RR^{\constraintdim} \to \barRR \),
  defined by 
  \begin{subequations}
    \begin{align*}
      \fonctionprimalbis
      &=
        \InfCond{\fonctiondepart^{0}}{\np{-\fonctiondepart^{1},
        \ldots,-\fonctiondepart^{\constraintdim} }}
        \UppPlus \Indicator{\RR_{-}^{\constraintdim}}
        \tag{see Item~\ref{it:the_classic_S-procedure_is_valid_StrictEpigraph}
        in Proposition~\ref{pr:the_classic_S-procedure_is_valid}}
      \\
      &=
        \InfCond{\fonctiondepart^{0}}{-\bar{\MatriceConstraint}\SquareMapping}
        \UppPlus \Indicator{\RR_{-}^{\constraintdim}}    
        \intertext{
        by definitions~\eqref{eq:the_classic_S-procedure_is_valid_quadratic_data_constraintdim}
        of \( \fonctiondepart^{1}, \ldots, \fonctiondepart^{\constraintdim} \),
        \eqref{eq:the_classic_S-procedure_is_valid_quadratic_data_MatriceConstraint}
        of~\( \bar{\MatriceConstraint} \), and~\eqref{eq:SquareMapping}
        of the square mapping~\( \SquareMapping \)}
      &=
        \bInfCond{ \InfCond{\fonctiondepart^{0}}{\SquareMapping} }{-\bar{\MatriceConstraint}}
        \UppPlus \Indicator{\RR_{-}^{\constraintdim}}
        \tag{by the tower property~\eqref{eq:tower_property}}
      \\
      &\geq
        \InfCond{ \widetilde{\fonctionprimal}^0 }{-\bar{\MatriceConstraint}}
        \UppPlus \Indicator{\RR_{-}^{\constraintdim}} 
        \intertext{as \(\InfCond{\fonctiondepart^{0}}{\SquareMapping}\geq\widetilde{\fonctionprimal}^0 \)
        by~\eqref{eq:ConditionalInfimum_quadratic_convex_LB_inequality},
        and as the conditional infimum preserves inequalities by~\eqref{eq:correspondence_conditional_infimum_properties_monotonicity}}
      &:=
        \bar\fonctionprimalbis
        \tag{a definition}
        \eqfinp
    \end{align*}
  \end{subequations}
  Then, from the just obtained inequality
  \(\fonctionprimalbis\geq\bar\fonctionprimalbis\),
  we deduce that
  \begin{align} \fonctionprimalbis_{c}
    &\geq \bar\fonctionprimalbis_{c}\nonumber \\
    &= \Bp{\InfCond{ \tilde{\fonctionprimal}^0 }{-\bar{\MatriceConstraint}} \UppPlus
      \Indicator{\RR_{-}^{\constraintdim}}}_{c}
      \tag{by definition of $\bar\fonctionprimalbis$}\\
    &= \Bp{\InfCond{ \tilde{\fonctionprimal}^0 }{-\bar{\MatriceConstraint}}}_{c} \UppPlus
      \Indicator{\RR_{-}^{\constraintdim}}\nonumber \\
    &= \InfCond{ \tilde{\fonctionprimal}^0_{c} }{-\bar{\MatriceConstraint}} \UppPlus
      \Indicator{\RR_{-}^{\constraintdim}}
      \eqfinv
      \tag{by~\eqref{eq:conic_hull_of_correspondence_conditional_supremum_infimum}}
  \end{align}
  hence that \( \closedconvexhull \fonctionprimalbis_{c}\np{0}
  \geq 
  \closedconvexhull\bar\fonctionprimalbis_{c}\np{0} \).
  As the function
  \( \tilde{\fonctionprimal}^0 \colon \RR^{\spacedim} \to \RR \cup \na{+\infty} \) defined
  by~\eqref{eq:ConditionalInfimum_quadratic-example_bis} is convex, by
  Theorem~\ref{th:Conditional_infimum_of_a_quadratic_function_knowing_squares},
  then so is the function
  \( \tilde{\fonctionprimal}^0_{c} \colon \RR^{\spacedim} \to \barRR \)
  by~\eqref{eq:StrictEpigraph_conic_hull}, and so is the function
  \( \InfCond{ \tilde{\fonctionprimal}^0_{c} }{-\bar{\MatriceConstraint}} \colon
  \RR^{\spacedim} \to \barRR \) by Corollary~\ref{cor:composition_convexity}.
  Therefore, our strategy of proof is to determine when the function
  \( \InfCond{ \tilde{\fonctionprimal}^0_{c} }{-\bar{\MatriceConstraint}} \) is \lsc, 
  then evaluate
  \(
  \closedconvexhull\bar\fonctionprimalbis_{c}\np{0}=\bar\fonctionprimalbis_{c}\np{0}
  \), and finally check when
  \( \bar\fonctionprimalbis_{c}\np{0} > -\infty \).
  Indeed, in that case, we will get that
\(   \closedconvexhull\Bp{
    \BInfCond{\fonctiondepart^{0}}{\np{-\fonctiondepart^{1},
    \ldots,-\fonctiondepart^{\constraintdim} }}_{c} \UppPlus \Indicator{\RR_{-}^{\constraintdim}}
    }\np{0}
    =  \closedconvexhull\fonctionprimalbis_{c}\np{0}
    \geq 
    \closedconvexhull\bar\fonctionprimalbis_{c}\np{0}  > -\infty \),
    and conclude, by means of Item~\ref{it:the_classic_S-procedure_is_valid_StrictEpigraph}
    in Proposition~\ref{pr:the_classic_S-procedure_is_valid},
    that the {classic S-procedure} is valid.
    
    For this purpose, we will use the property that (see~\eqref{eq:conic_hull})
    \begin{subequations}
      \label{eq:ConditionalInfimum_quadratic-example_bis_conic_hull}
      \begin{align}
        \label{eq:ConditionalInfimum_quadratic-example_bis_conic_hull_one}
        \tilde{\fonctionprimal}^0_{c}\np{\depart}
        &= 
          \begin{cases}
            +\infty & \text{ if } \depart\not\in \RR_+^{\spacedim}
                 \eqfinv
            \\
            \displaystyle 
            \sum_{\LocalIndexbis=1}^{\spacedim} \bar{\MatriceObjective}_{\LocalIndexbis,\LocalIndexbis} \depart_{\LocalIndexbis} 
            -
            \sum_{\LocalIndexbis\neq \LocalIndexter} \module{\bar{\MatriceObjective}_{\LocalIndexbis,\LocalIndexter}}\sqrt{\depart_{\LocalIndexbis} \depart_\LocalIndexter}
            +\inf_{\lambda > 0}   \lambda \Bp{ -\frac{1}{\sqrt{\lambda}}
            \sum_{\LocalIndexbis=1}^{\spacedim} \module{\bar{\VecteurObjective}_{\LocalIndexbis}} \sqrt{\depart_{\LocalIndexbis} }
            + \bar{\scalaire} }
               & \text{ if } \depart\in \RR_+^{\spacedim} 
                 \eqfinv
          \end{cases}
          \intertext{ hence, if \( \bar{\scalaire} = 0 \) and \( \bar{\VecteurObjective}=0 \), we get  that }
          \label{eq:ConditionalInfimum_quadratic-example_bis_conic_hull_two}
          \tilde{\fonctionprimal}^0_{c}\np{\depart}
        &=
          \begin{cases}
            +\infty & \text{ if } \depart\not\in \RR_+^{\spacedim}
                 \eqfinv
            \\
            \displaystyle 
            \sum_{\LocalIndexbis=1}^{\spacedim} \bar{\MatriceObjective}_{\LocalIndexbis,\LocalIndexbis} \depart_{\LocalIndexbis} 
            -
            \sum_{\LocalIndexbis\neq \LocalIndexter} \module{\bar{\MatriceObjective}_{\LocalIndexbis,\LocalIndexter}}\sqrt{\depart_{\LocalIndexbis} \depart_\LocalIndexter}
               & \text{ if } \depart\in \RR_+^{\spacedim} 
                 \eqfinv
          \end{cases}          
          \intertext{and if \( \bar{\scalaire} > 0 \), we get that}
          \label{eq:ConditionalInfimum_quadratic-example_bis_conic_hull_three}
          \tilde{\fonctionprimal}^0_{c}\np{\depart}
        &=
          \begin{cases}
            +\infty & \text{ if } \depart\not\in \RR_+^{\spacedim}
                 \eqfinv
            \\
            \displaystyle 
            \sum_{\LocalIndexbis=1}^{\spacedim} \bar{\MatriceObjective}_{\LocalIndexbis,\LocalIndexbis} \depart_{\LocalIndexbis} 
            -
            \sum_{\LocalIndexbis\neq \LocalIndexter} \module{\bar{\MatriceObjective}_{\LocalIndexbis,\LocalIndexter}}\sqrt{\depart_{\LocalIndexbis} \depart_\LocalIndexter}
            -
            \frac{ \bp{ \sum_{\LocalIndexbis=1}^{\spacedim} \module{\bar{\VecteurObjective}_{\LocalIndexbis}} \sqrt{\depart_{\LocalIndexbis} } }^2 }%
            { 4 \bar{\scalaire} }
               & \text{ if } \depart\in \RR_+^{\spacedim} 
                 \eqfinv
          \end{cases}          
      \end{align}
    \end{subequations}
    We conclude that, in the two considered cases, the function~$\tilde{\fonctionprimal}^0_{c}$ is \lsc .
    
    Now, for any $\arrivee\in\RR^{\constraintdim}$, we have that
    \begin{align*}
      \bar\fonctionprimalbis_{c}\np{\arrivee}
      &=\InfCond{ \tilde{\fonctionprimal}^0_{c} }{-\bar{\MatriceConstraint}}\np{\arrivee}
        \UppPlus \Indicator{\RR_{-}^{\constraintdim}}\np{\arrivee}
      \\
      &=
      \inf\defset{\tilde{\fonctionprimal}^0_{c} \np{\depart}}{\depart\in \RR_+^{\spacedim}
        \mtext{ and } \bar{\MatriceConstraint}\depart=-\arrivee}
        \UppPlus \Indicator{\RR_{-}^{\constraintdim}}\np{\arrivee}
        \tag{by~\eqref{eq:ConditionalInfimum_quadratic-example_bis_conic_hull}
        \( \depart\not\in \RR_+^{\spacedim} \implies
        \tilde{\fonctionprimal}^0_{c} \np{\depart}=+\infty \)}
      \\
    &= \inf_{u} \tilde{\fonctionprimal}^0_{c}(u)
      + \Indicator{\defset{(u,v)\in\RR_+^{\spacedim}\times\RR_{-}^{\constraintdim} }{{\bar{\MatriceConstraint}}u=-v }}
      \eqfinp
    \end{align*}
    We have to prove that $\bar\fonctionprimalbis_{c}(0) > -\infty$ and that the function~$\bar\fonctionprimalbis_{c}$ is \lsc\ at $0$ in
    the two assumed possible cases regarding the (constraints) matrix~$\bar{\MatriceConstraint}$.
    First, we assume that the matrix $\bar{\MatriceConstraint}$ is invertible. Then,
  the function~$\bar\fonctionprimalbis_{c}$ reduces to
  \begin{equation}
    \bar\fonctionprimalbis_{c}\np{\arrivee} = \inf_{u} F(u,v) =
    \tilde{\fonctionprimal}^0_{c} \bp{\bar{\MatriceConstraint}^{-1}(-v)}
    \UppPlus \Indicator{\RR_+^{\spacedim}}\bp{{{\bar{\MatriceConstraint}}^{-1}(-v) }}
    \UppPlus \Indicator{\RR_{-}^{\constraintdim} }(v)
    \eqfinv
  \end{equation}
  hence we have that $\bar\fonctionprimalbis_{c}(0)=0$, and that the function~$\bar\fonctionprimalbis_{c}$ is \lsc\ at $0$.

  Second, we assume that, for all $\LocalIndexbis \in \ic{1,\spacedim}$,
  $\bar{\MatriceObjective}_{\LocalIndexbis,\LocalIndexbis} > 0$.  Then, the
  function $\tilde{\fonctionprimal}^0_{c}$ is coercive
  by~\eqref{eq:ConditionalInfimum_quadratic-example_bis_conic_hull}.
  We deduce that, on the one hand, 
  $\bar\fonctionprimalbis_{c}(0) > -\infty$ as the infimum is achieved,
  and that, on the other hand, the function~$\bar\fonctionprimalbis_{c}$ is \lsc\ at~$0$
  using~\cite[Theorem~1.17 p.~16]{Rockafellar-Wets:1998}.

  Finally, in the two cases, we obtain that
  \(
  \closedconvexhull\fonctionprimalbis_{c}\np{0}
  \geq
  \closedconvexhull\bar\fonctionprimalbis_{c}\np{0}=\bar\fonctionprimalbis_{c}\np{0} > -\infty \).
  We conclude, by means of Item~\ref{it:the_classic_S-procedure_is_valid_StrictEpigraph}
  in Proposition~\ref{pr:the_classic_S-procedure_is_valid},
  that the {classic S-procedure} is valid.
\end{proof}

\section{Conclusion}
\label{Conclusion}

Detecting hidden convexity is one of the tools to address nonconvex
minimization problems.
In this paper, we have contributed to this research program by
putting forward the notion of conditional infimum. 
Building upon a well-known parallelism between optimization and probability
theories, we have established a list of properties of the conditional infimum,
among which a tower formula, relevant for minimization problems. 
Thus equipped, we have provided a new sufficient condition for 
hidden convexity in nonconvex quadratic minimization problems,
as well as new sufficient conditions for the S-procedure.
We have also pointed out perspectives in obtaining lower bound convex programs,
using a new class of one-sided homogeneous sublinear couplings.
\bigskip


\appendix

\section{More on block-signed form}
\label{Appendix}

\subsubsubsection{Another characterization of block-signed form (see Definition~\ref{de:block-signed form})}

\begin{proposition}
  \label{pr:varepsilon_sign_appendix}
  Let $\spacedim \in \NN^*$ be a positive integer,
  $\vecteur \in \RR^{\spacedim}$ be a vector, and $\matrice$ be a
  $\spacedim{\times}\spacedim$ symmetric matrix.
  The following assertions are equivalent.
  \begin{subequations}
    \begin{enumerate}
    \item
      \label{it:varepsilon_appendix}
      There exists \( \varepsilon=\np{\varepsilon_{1},\ldots,\varepsilon_{\spacedim}} \in \na{-1,1}^{\spacedim} \)
      such that
      \begin{equation}
        \begin{cases}
          \varepsilon_i\vecteur_i \leq 0 \eqsepv & \forall i=1,\ldots,\spacedim 
                                      \eqfinv
          \\
          \text{and} &
          \\
          \varepsilon_i\varepsilon_j\matrice_{i,j}  \leq 0 \eqsepv 
                                    & 
                                      \forall i,j=1,\ldots,\spacedim \eqsepv i\neq j
                                      \eqfinp 
        \end{cases}
        \label{eq:varepsilon_appendix}
      \end{equation}
    \item
      \label{it:varepsilon_sign_bis_appendix}
      There exists \( \varepsilon=\np{\varepsilon_{1},\ldots,\varepsilon_{\spacedim}} \in \na{-1,1}^{\spacedim} \)
      such that
      \begin{equation}
        \begin{cases}
          \sign{\vecteur_i} \in \na{-\varepsilon_i,0} \eqsepv & \forall i=1,\ldots,\spacedim 
                                                    \eqfinv
          \\
          \text{and} &
          \\
          \sign{\matrice_{i,j}} \in \na{-\varepsilon_i\varepsilon_j,0} \eqsepv 
                                                  & 
                                                    \forall i,j=1,\ldots,\spacedim \eqsepv i\neq j
                                                    \eqfinp 
        \end{cases}
        \label{eq:varepsilon_sign_bis_appendix}
      \end{equation}

    \item
      The pair \( \np{\matrice,\vecteur} \) can be put in {block-signed} form
      (see Definition~\ref{de:block-signed form}), and
      \( \varepsilon=\np{\varepsilon_{1},\ldots,\varepsilon_{\spacedim}} \in \na{-1,1}^{\spacedim} \)
      is given by Definition~\ref{de:block-signed form}.
    \end{enumerate}
  \end{subequations}
\end{proposition}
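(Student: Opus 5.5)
We prove the cycle of equivalences (1)$\iff$(2) and (2)$\iff$(3). All three are restatements of the same sign pattern, so the argument is elementary bookkeeping on signs. The only convention to fix is $\sign{0}=0$; then a real number~$z$ satisfies $\sign{z}\in\na{-s,0}$ for $s\in\na{-1,1}$ if and only if $s z\leq 0$.

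For (1)$\iff$(2), take the same~$\varepsilon$ in both items. Apply the convention coordinatewise with $s=\varepsilon_i$ and $z=\vecteur_i$. Then apply it with $s=\varepsilon_i\varepsilon_j\in\na{-1,1}$ and $z=\matrice_{i,j}$ for $i\neq j$. This converts each inequality in~\eqref{eq:varepsilon_appendix} into the corresponding membership in~\eqref{eq:varepsilon_sign_bis_appendix}, and conversely.

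For (3)$\implies$(2), start from a partition $(P,M)$ as in Definition~\ref{de:block-signed form}, and set $\varepsilon_i=-1$ on~$P$ and $\varepsilon_i=+1$ on~$M$. We check each clause of~\eqref{eq:block-signed_form_math}:
\begin{itemize}
\item for $i\in P$, $-\varepsilon_i=+1$, matching $\sign{\vecteur_P}\in\na{+1,0}^P$;
\item for $i\in M$, $-\varepsilon_i=-1$, matching $\sign{\vecteur_M}\in\na{-1,0}^M$;
\item for $i\neq j$ in the same block, $\varepsilon_i\varepsilon_j=1$, so $-\varepsilon_i\varepsilon_j=-1$, matching the nonpositive off-diagonal entries of $\matrice_{PP}$ and $\matrice_{MM}$;
\item for $i,j$ in different blocks, $-\varepsilon_i\varepsilon_j=+1$, matching $\matrice_{PM}$ and $\matrice_{MP}$.
\end{itemize}

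For (2)$\implies$(3), given~$\varepsilon$, define $P=\nset{i}{\varepsilon_i=-1}$ and $M=\nset{i}{\varepsilon_i=+1}$. These are disjoint and cover $\ic{1,\spacedim}$, and reading the case analysis above backwards gives the sign conditions of Definition~\ref{de:block-signed form}. By construction, the $\varepsilon$ attached to $(P,M)$ by the definition is the given one. The extreme cases $P=\emptyset$ or $M=\emptyset$ need no separate treatment: they correspond to $\varepsilon$ constant, and then every off-diagonal entry of~$\matrice$ must be nonpositive.

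There is no real obstacle. The one point requiring care is keeping the identification $\varepsilon_i=-1\iff i\in P$ consistent in both directions, so that the $\varepsilon$ produced in (3) is exactly the one "given by Definition~\ref{de:block-signed form}".
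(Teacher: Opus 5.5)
Your proposal is correct and takes the paper's route. The paper simply declares (1)$\iff$(2) obvious and calls (2) a rephrasing of Definition~\ref{de:block-signed form}; your identity $\mathrm{sign}(z)\in\{-s,0\}\iff sz\leq 0$ for $s\in\{-1,1\}$, together with the identification $P=\{i : \varepsilon_i=-1\}$ and $M=\{i : \varepsilon_i=+1\}$, is exactly the sign bookkeeping those two sentences leave to the reader.
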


\begin{proof}
  The equivalence between Item~\ref{it:varepsilon_appendix}
  and Item~\ref{it:varepsilon_sign_bis_appendix} is obvious.
  Item~\ref{it:varepsilon_sign_bis_appendix} is a rephrasing of
  Definition~\ref{de:block-signed form}.
\end{proof}

\subsubsubsection{A greedy $O(n+m)$ algorithm to obtain $\epsilon$ satisfying Equation~\eqref{eq:varepsilon}}

We consider a simple labeled graph $G=(V,\matrice)$ with vertices
$V=\ic{1,\spacedim}$ and adjacency matrix
$A^d_{i,j}=\abs{\mathrm{sign}(\matrice_{i,j})}$ for $i\not=j$ and $A^d_{i,i}=0$.
Thus, there is an edge between vertex $i$ and vertex $j$ if $i\not=j$ and
$\matrice_{i,j}\not=0$ and the label of the edge is
$\mathrm{sign}(\matrice_{i,j})\in \na{-1,1}$. The number of edges of the graph is $m$.

Thus, finding a vector $\epsilon$ satisfying
$\varepsilon_i\varepsilon_j\matrice_{i,j} \leq 0$ forall $i\not=j$ in $V$, amounts to finding a function
$\phi \colon \ic{1,\spacedim} \to \na{-1,1}$ such that $\phi(i)=\phi(j)$ if there is an edge
joining $i$ and $j$ with a label $-1$ and $\phi(i)\not=\phi(j)$ if there is an edge
joining $i$ and $j$ with a label $1$.

Now, let $v \in V$ be given and denote by $C(v,G)$ the connected component of
$G$ which contains the vertex $v$.  If we fix the value of $\phi(v)$, then a walk
on the subgraph associated to the connected component $C(v,G)$ starting from
vertex $v$, will fix all the values of $\phi(c')$ for $c'\in C(v,G)$ or will find an
incompatibility rejecting the value $\phi(v)$ for vertex $v$.

Then, the following greedy algorithm will find a vector $\epsilon$
satisfying~\eqref{eq:varepsilon} or will return that there is no
solution. First, we consider the subset of indices
$I=\nset{i \in \ic{1,\spacedim}}{b_i\not=0}$ for which the values
$\nset{\phi(i)}{i\in I}$ are imposed by the nonzero values in the components of
vector $b$. Now, for each $i \in I$, we explore $C(i,G)$ and fix the values of
$\phi$ on $C(i,G)$ or stop if an incompatiblity is met.

Second, until all the values of $\phi$ are fixed or an incompatibility is found, we
loop as follows.  We pick a vertex $v\in G$, for which the value of $\phi(v)$ is not
set (indeed, we are done if they are all set).  We test successively the two
possible values $\phi(v)=\pm 1$, and explore as above $C(v,G)$ and stop in case of
incompatibility.  The algorithm is greedy as fixing the values of $\phi$ on a
connected component does not affect the remaining connected components. Thus a
successful choice for $\phi$ on a vertex $v$ does not need to be questioned later
in the algorithm. The complexity of the algorithm is similar to finding the
connected component of the graph~$G$ which is known to be of complexity
$O(n+m)$.

\begin{algorithm}[H]
  \KwResult{$\epsilon$ solution or $\mathrm{Fail}$ if there is no solution}
  \textbf{Initialization}: $\epsilon = 0$ \\
  \While{$(\exists i, \epsilon_i = 0)$}
  {
    $K \leftarrow \text{\bf{if} } (b_i \not= 0) \text{ \bf{then} } \na{-b_i} \text{ \bf{else} } \na{-1,1}$ \\
    \For{$k \in K$}
    {
      $\epsilon_i \leftarrow k$ \textbf{;} $(\epsilon,R) \leftarrow \texttt{ExploreC}(i, \epsilon, A, b)$  \\
      \lIf{$(R = \mathrm{Success})$}{\textbf{break};\texttt{// quit the loop}}
    }
    \lIf{$(R = \mathrm{Fail})$}{\Return $(\epsilon,\mathrm{Fail})$}
  }
  
  \SetKwFunction{Fun}{ExploreC}
  \SetKwProg{Fn}{Function}{:}{}
  \Fn{\Fun{$i, \epsilon, A, b$}}
  {
    \For{$j \in \nset{j \in \ic{1,n}}{j\not=i \wedge A_{i,j} \not=0}$}%
    {
      $\overline{\epsilon} \leftarrow (- \epsilon_i {A_{i,j}})$\\
      \lIf{$(\epsilon_j \not=0 \wedge \epsilon_j \not= \overline{\epsilon}) \vee ((b_j \not= 0) \wedge \overline{\epsilon} \not= -b_j)$}{\Return $(\epsilon,\mathrm{Fail})$}
      $\epsilon_j \leftarrow \overline{\epsilon}$ \textbf{;}$(\epsilon,R) \leftarrow \texttt{ExploreC}(j, \epsilon, A, b)$ \\
      \lIf{$(R = \mathrm{Fail})$}{\Return $(\epsilon,\mathrm{Fail})$}
    }%
    \Return $(\epsilon,\mathrm{Success})$
  }
  \caption{If $R =  \mathrm{Fail}$ there is no $\epsilon$ solution to Equation~\eqref{eq:varepsilon}
    else if $R =  \mathrm{Success}$ there is a solution returned in vector $\epsilon$}
  \label{tts:alg:2tssdp}
\end{algorithm}

\section{More on the general S-procedure}


In~\S\ref{New_necessary_and_sufficient_conditions_for_the_S-procedure_to_be_valid},
we briefly present the so-called general S-procedure \cite{Volle-Barro:2024},
and we show new necessary and sufficient conditions for its validity.
Thus equipped, we prove
in~\S\ref{Proof_of_Proposition_pr:the_classic_S-procedure_is_valid} our result
exposed in~\S\ref{The_S-procedure_revisited_with_the_conditional_infimum}.

\subsection{Validity of the general S-procedure}
\label{New_necessary_and_sufficient_conditions_for_the_S-procedure_to_be_valid}

\subsubsubsection{Definition of the general S-procedure}

We outline the \emph{general S-procedure} as introduced in~\cite{Volle-Barro:2024}.
Let \( \DEPART \) be a nonempty set,
\( \PRIMAL, \DUAL \) be paired\footnote{%
  In fact, in~\cite{Volle-Barro:2024} \( \PRIMAL \) is a locally convex Hausdorff topological vector space
  and \( \DUAL=\PRIMAL' \) is the topological dual vector space of~\( \PRIMAL \) with the standard bilinear 
  form \( \nscal{\primal}{\dual} = \dual\np{\primal} \). 
}
vector spaces (see~\S\ref{Conditional_infimum_and_one-sided_homogeneous_sublinear_couplings}),
\( \fonctiondepart \colon \DEPART \times \PRIMAL \to \barRR \) be a function,
and consider the statements
\begin{subequations}
  \begin{align}
    \textrm{(I)}\qquad
    &
      \fonctiondepart\np{\depart,0} \geq 0 \eqsepv \forall \depart\in\DEPART 
      \eqfinv
      \label{eq:general(I)}
    \\
    \textrm{(C)}\qquad
    &
      \exists\, \dual\in\DUAL \eqsepv \fonctiondepart\np{\depart,\primal}
      \geq \nscal{\primal}{\dual} \eqsepv \forall \depart\in\DEPART \eqsepv \forall \primal\in\PRIMAL
      \eqfinp
      \label{eq:general(C)}
  \end{align}
\end{subequations}
We have that (C) $\implies$ (I) because \( \fonctiondepart\np{\depart,0} \geq \nscal{0}{\dual} = 0 \).
The \emph{S$_{\fonctionprimal}$-procedure} is \emph{valid} \cite[Definition~1.1]{Volle-Barro:2024}
when there are sufficient conditions which ensure that (I) $\implies$ (C), that is, to obtain 
(I) + \emph{suitable conditions} $\implies$ (C).

\subsubsubsection{Recalls on characterization of the validity of the {S$_\fonctionprimal$-procedure}}

We recall the approach in \cite[Sect.~3]{Volle-Barro:2024} that characterizes
the validity of the {S$_\fonctionprimal$-procedure} in terms of the value function
\( \fonctionprimalbis \colon \PRIMAL \to \barRR \) defined by
\begin{equation}
  \fonctionprimalbis\np{\primal}
  =
  \inf_{\depart \in \DEPART} \fonctiondepart\np{\depart,\primal} 
  \eqsepv \forall \primal\in\PRIMAL
  \eqfinp
  \label{eq:S_fonctionprimal-procedure_value_function} 
\end{equation}


\noindent\emph{\textbf{Excerpt from \cite[Theorem~3.1]{Volle-Barro:2024}}
  The following statements are equivalent\footnote{%
    In \cite[Theorem~3.1]{Volle-Barro:2024}, the statement is done under the assumption that \( \fonctionprimalbis\np{0}\geq
    0 \), but we have not seen the role played by this assumption.}
  \begin{enumerate}
  \item
    the {S$_\fonctionprimal$-procedure} is valid,
  \item
    the value function~\( \fonctionprimalbis \) in~\eqref{eq:S_fonctionprimal-procedure_value_function}
    has a (continuous) linear minorant,
  \item
    \( \ba{ \LFM{\fonctionprimalbis} \leq 0 } \neq \emptyset \).
  \end{enumerate}
}

\subsubsubsection{New necessary and sufficient conditions for the general
  S-procedure to be valid}

The following
Proposition~\ref{pr:New_necessary_and_sufficient_conditions_for_the_S-procedure_to_be_valid}
builds upon \cite[Theorem~3.1]{Volle-Barro:2024} and
\cite[Proposition~2.1]{Volle-Barro:2024}.  We provide characterizations of when
the {S$_{\fonctionprimal}$-procedure} is valid.

\begin{proposition}
  \label{pr:New_necessary_and_sufficient_conditions_for_the_S-procedure_to_be_valid}
  The following statements are equivalent:
  \begin{enumerate}
  \item
    \label{it:the_S_fonctionprimal-procedure_is_valid}
    the {S$_{\fonctionprimal}$-procedure} is valid, 
  \item
    \label{it:ba_LFM_fonctionprimalbis_leq_{0}}
    \( \ba{ \LFM{\fonctionprimalbis} \leq 0 } \neq \emptyset\),
  \item
    \label{it:the_function_fonctionprimalbis_has_a_linear_minorant}
    the function~\( \fonctionprimalbis 
    \) has a continuous linear minorant,
    that is, there exists \( \dual\in\DUAL \) such that \( \fonctionprimalbis\np{\primal}
    \geq \nscal{\primal}{\dual} \), for all \( \primal\in\PRIMAL \),
  \item
    \label{it:the_function_LFMbifonctionprimalbis_has_a_linear_minorant}
    the function~\( \LFMbi{\fonctionprimalbis} \) has a continuous linear minorant,
  \item
    \label{it:the_function_closedconvexhull_fonctionprimalbis_has_a_linear_minorant}
    the function~\( \closedconvexhull\fonctionprimalbis \) has a
    continuous linear minorant,
  \item
    \label{it:ba_LFM_fonctionprimalbis_c_leq_{0}}
    \( \ba{ \LFM{\fonctionprimalbis}_{c} \leq 0 } \neq \emptyset\),
  \item
    \label{it:the_function_fonctionprimalbis_c_has_a_linear_minorant}
    the conic hull function~\( \fonctionprimalbis_{c} \) (see~\eqref{eq:conic_hull})
    has a continuous linear minorant,
  \item
    \label{it:the_function_LFMbifonctionprimalbis_c_has_a_linear_minorant}
    the function~\( \LFMbi{\fonctionprimalbis_{c}} \) has a
    continuous linear minorant,
  \item
    \label{it:the_function_closedconvexhull_fonctionprimalbis_c_has_a_linear_minorant}
    the closed convex conic hull function~\( \closedconvexhull\np{\fonctionprimalbis_{c}} \) has a
    continuous linear minorant,
  \item
    \label{it:closedconvexhull_RR++_StrictEpigraph_fonctionprimalbis_cap_=}
    \( \np{0,-1} \not\in \closedconvexhull\np{\RR_{++}\StrictEpigraph\fonctionprimalbis} \),
  \item
    \label{it:closedconvexhullnpfonctionprimalbis_cnp0inna0,+infty}
    \( \closedconvexhull\np{\fonctionprimalbis_{c}}\np{0} \in \na{0,+\infty} \),
  \item
    \label{it:LFMbifonctionprimalbis_cnp0inna0,+infty}
    \( \LFMbi{\fonctionprimalbis_{c}}\np{0} \in \na{0,+\infty} \).
    %
  \end{enumerate}
\end{proposition}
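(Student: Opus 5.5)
The plan is to split the twelve statements into three groups and link the groups through Item~\ref{it:the_function_fonctionprimalbis_has_a_linear_minorant}. The groups are: the ``linear minorant'' statements for $\fonctionprimalbis$ and its envelopes; the same statements for the conic hull $\fonctionprimalbis_{c}$; and the three ``geometric/value at $0$'' statements.

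\textbf{Group 1 (Items \ref{it:the_S_fonctionprimal-procedure_is_valid}--\ref{it:the_function_closedconvexhull_fonctionprimalbis_has_a_linear_minorant}).} Items~\ref{it:the_S_fonctionprimal-procedure_is_valid}, \ref{it:ba_LFM_fonctionprimalbis_leq_{0}} and~\ref{it:the_function_fonctionprimalbis_has_a_linear_minorant} are equivalent by the excerpt of \cite[Theorem~3.1]{Volle-Barro:2024}. This can also be checked directly: \( \LFM{\fonctionprimalbis}\np{\dual}\leq 0 \) holds iff \( \fonctionprimalbis\geq\nscal{\cdot}{\dual} \), and by definition~\eqref{eq:S_fonctionprimal-procedure_value_function}, \( \fonctionprimalbis\geq\nscal{\cdot}{\dual} \) is exactly~(C).

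For Items~\ref{it:the_function_LFMbifonctionprimalbis_has_a_linear_minorant}--\ref{it:the_function_closedconvexhull_fonctionprimalbis_has_a_linear_minorant}, I will use the chain \( \LFMbi{\fonctionprimalbis}\leq\closedconvexhull\fonctionprimalbis\leq\fonctionprimalbis \). Any continuous linear (hence affine, closed convex) minorant of $\fonctionprimalbis$ is also a minorant of $\closedconvexhull\fonctionprimalbis$ and of $\LFMbi{\fonctionprimalbis}$, the latter being the supremum of all continuous affine minorants. The reverse implications are immediate from the chain.

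\textbf{Group 2 (Items \ref{it:ba_LFM_fonctionprimalbis_c_leq_{0}}--\ref{it:the_function_closedconvexhull_fonctionprimalbis_c_has_a_linear_minorant}).} The key observation is a two-sided comparison. On one side, \( \fonctionprimalbis_{c}\leq\fonctionprimalbis \) (take $\lambda=1$ in~\eqref{eq:conic_hull}). On the other side, if \( \fonctionprimalbis\geq\nscal{\cdot}{\dual} \), then \( \lambda\fonctionprimalbis\np{\primal/\lambda}\geq\nscal{\primal}{\dual} \) for every $\lambda>0$, so \( \fonctionprimalbis_{c}\geq\nscal{\cdot}{\dual} \). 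Hence $\fonctionprimalbis$ and $\fonctionprimalbis_{c}$ have exactly the same continuous linear minorants, which gives Item~\ref{it:the_function_fonctionprimalbis_c_has_a_linear_minorant}$\iff$Item~\ref{it:the_function_fonctionprimalbis_has_a_linear_minorant}. Items~\ref{it:ba_LFM_fonctionprimalbis_c_leq_{0}}, \ref{it:the_function_LFMbifonctionprimalbis_c_has_a_linear_minorant} and~\ref{it:the_function_closedconvexhull_fonctionprimalbis_c_has_a_linear_minorant} then follow exactly as in Group~1, with $\fonctionprimalbis_{c}$ in place of $\fonctionprimalbis$. I read Item~\ref{it:ba_LFM_fonctionprimalbis_c_leq_{0}} as $\LFM{(\fonctionprimalbis_{c})}$. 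In fact $\LFM{(\fonctionprimalbis_{c})}$ is the indicator of $\ba{\LFM{\fonctionprimalbis}\leq 0}$, so both readings agree.

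\textbf{Group 3 (Items \ref{it:closedconvexhull_RR++_StrictEpigraph_fonctionprimalbis_cap_=}--\ref{it:LFMbifonctionprimalbis_cnp0inna0,+infty}).} For Item~\ref{it:closedconvexhull_RR++_StrictEpigraph_fonctionprimalbis_cap_=}, set \( K=\closedconvexhull\np{\RR_{++}\StrictEpigraph\fonctionprimalbis} \); by~\eqref{eq:StrictEpigraph_conic_hull}, $K$ is the closed convex hull of \( \StrictEpigraph\fonctionprimalbis_{c} \), and it is a closed convex cone.
\begin{itemize}
\item If \( \fonctionprimalbis\geq\nscal{\cdot}{\dual} \), then $K$ lies in the closed half-space \( \{(\primal,t) : t\geq\nscal{\primal}{\dual}\} \), which does not contain $(0,-1)$.
\item Conversely, if \( (0,-1)\notin K \), Hahn--Banach separation in $\PRIMAL\times\RR$ (paired with $\DUAL\times\RR$) gives $(\dual',\beta)$ with \( \nscal{\primal}{\dual'}+\beta t\geq 0 \) on the cone $K$ and \( -\beta<0 \). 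Hence $\beta>0$; normalising $\beta=1$ and using $(\primal,t)\in K$ for every $t>\fonctionprimalbis\np{\primal}$ gives the linear minorant $-\dual'$.
\end{itemize}

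For Items~\ref{it:closedconvexhullnpfonctionprimalbis_cnp0inna0,+infty} and~\ref{it:LFMbifonctionprimalbis_cnp0inna0,+infty}, the relevant function $g$ (either $\closedconvexhull\np{\fonctionprimalbis_{c}}$ or $\LFMbi{\fonctionprimalbis_{c}}$) is convex, lower semicontinuous and positively homogeneous. Its value at $0$ therefore lies in \( \na{-\infty,0,+\infty} \), and it has a continuous linear minorant iff it never takes the value $-\infty$. By lower semicontinuity and homogeneity (a convex lsc function taking $-\infty$ somewhere takes $-\infty$ on the closure of its domain, which contains $0$ for a cone-domain), this happens iff \( g\np{0}\neq-\infty \). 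The $+\infty$ case corresponds to $g\equiv+\infty$, which trivially has a linear minorant. This links Items~\ref{it:closedconvexhullnpfonctionprimalbis_cnp0inna0,+infty} and~\ref{it:LFMbifonctionprimalbis_cnp0inna0,+infty} to Items~\ref{it:the_function_closedconvexhull_fonctionprimalbis_c_has_a_linear_minorant} and~\ref{it:the_function_LFMbifonctionprimalbis_c_has_a_linear_minorant}.

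I expect this last step to be the main obstacle. It requires handling the extended-value conventions carefully: the difference between ``lsc convex hull'' and ``closed convex hull'' in the paper's sense (see Footnote~\ref{ft:closed_convex_function}), valley-type functions equal to $-\infty$ on a cone and $+\infty$ outside it, and the empty-domain case.
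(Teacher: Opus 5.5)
Your proof is correct, but from Group~2 on it is wired differently from the paper's. Group~1 matches the paper.

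\textbf{Group~2.} The paper obtains Item~2 $\iff$ Item~6 from the conjugate formula $\LFM{\np{\fonctionprimalbis_{c}}}=\Indicator{\na{\LFM{\fonctionprimalbis}\le 0}}$ of Appendix~C. You instead observe directly that $\fonctionprimalbis$ and $\fonctionprimalbis_{c}$ have exactly the same continuous linear minorants. That is more elementary and makes the formula unnecessary.

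\textbf{Item~10.} The paper proves 9 $\Rightarrow$ 10 by contradiction, through the identity $\Epigraph\np{\closedconvexhull\np{\fonctionprimalbis_{c}}}=\closedconvexhull\np{\RR_{++}\StrictEpigraph\fonctionprimalbis}$. It proves 10 $\Rightarrow$ 2 by separation, using $\closedconvexhull\np{\RR_{++}\StrictEpigraph\fonctionprimalbis}=\closedconvexhull\np{\RR_{+}\StrictEpigraph\fonctionprimalbis}$ to put $\lambda=0$ and then let $\lambda\to+\infty$. You prove 3 $\iff$ 10 directly: half-space containment in one direction, and separation of a point from a closed convex cone in the other. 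Neither epigraph identity is needed.

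\textbf{Items~11--12.} The paper ties Item~11 to Item~10 geometrically. Since $\closedconvexhull\np{\fonctionprimalbis_{c}}$ is $1$-homogeneous, its value at $0$ is $-\infty$ iff $\np{0,-1}$ lies in its epigraph, which is the cone $\closedconvexhull\np{\RR_{++}\StrictEpigraph\fonctionprimalbis}$. It reaches Item~12 via 6 $\Rightarrow$ 12 (the biconjugate is the support function of $\na{\LFM{\fonctionprimalbis}\le 0}$) and 12 $\Rightarrow$ 11 (from $\LFMbi{\fonctionprimalbis_{c}}\le\closedconvexhull\np{\fonctionprimalbis_{c}}$ and the trichotomy at $0$).

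You instead get 9 $\iff$ 11 and 8 $\iff$ 12 uniformly from two facts about lsc convex positively homogeneous functions $g$:
\begin{itemize}
\item if $g$ equals $-\infty$ somewhere, it equals $-\infty$ on the closed cone $\dom g\ni 0$;
\item if $g$ is proper, it is a support function, hence has a linear minorant.
\end{itemize}
These facts are standard, but they are the only real input of your Group~3, so a write-up should prove them. The first follows from convexity along a segment plus lower semicontinuity; the second from Fenchel--Moreau together with the fact that the conjugate of a positively homogeneous function is an indicator. Both use only convexity and lower semicontinuity, so valley-type functions cause no trouble; this settles the obstacle you anticipated. The paper's route stays inside the strict-epigraph and conic-hull calculus it develops and reuses the computed conjugate; yours is more uniform and self-contained.

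\textbf{Two small corrections.}
\begin{itemize}
\item Your aside that both readings of Item~6 agree is wrong. For $\fonctionprimalbis\equiv-1$ one has $\np{\LFM{\fonctionprimalbis}}_{c}\np{0}=0$ while $\na{\LFM{\fonctionprimalbis}\le 0}=\emptyset$. Only the reading $\LFM{\np{\fonctionprimalbis_{c}}}$, which you and the paper actually use, makes Item~6 equivalent to the others.
\item Your direct check shows that Items~2--3 are equivalent to (C) itself. Item~1, read as the implication (I) $\Rightarrow$ (C), matches them only under Volle--Barro's standing assumption $\fonctionprimalbis\np{0}\ge 0$. Otherwise Item~1 holds vacuously while $\fonctionprimalbis$ has no linear minorant. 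This caveat comes from the paper's citation, not from your argument.
\end{itemize}
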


\begin{proof}
  
  \noindent$\bullet$   
  Item~\ref{it:the_S_fonctionprimal-procedure_is_valid}
  $\iff$ Item~\ref{it:ba_LFM_fonctionprimalbis_leq_{0}}
  $\iff$ Item~\ref{it:the_function_fonctionprimalbis_has_a_linear_minorant}
  is proven in \cite[Theorem~3.1]{Volle-Barro:2024}.
  \medskip

  \noindent$\bullet$ 
  Item~\ref{it:ba_LFM_fonctionprimalbis_leq_{0}}
  $\iff$ Item~\ref{it:the_function_LFMbifonctionprimalbis_has_a_linear_minorant}
  follows from 
  Item~\ref{it:ba_LFM_fonctionprimalbis_leq_{0}}
  $\iff$ Item~\ref{it:the_function_fonctionprimalbis_has_a_linear_minorant},
  because \( \LFM{\np{ \LFMbi{\fonctionprimalbis} }}=
  \LFM{\fonctionprimalbis} \).
  \medskip

  \noindent$\bullet$ 
  Item~\ref{it:the_function_fonctionprimalbis_has_a_linear_minorant}
  $\iff$ Item~\ref{it:the_function_LFMbifonctionprimalbis_has_a_linear_minorant}
  $\iff$ Item~\ref{it:the_function_closedconvexhull_fonctionprimalbis_has_a_linear_minorant}
  because, by the very definition~\eqref{eq:closed_convex_hull_of_a_function}
  of the function~\( \closedconvexhull\fonctionprimalbis \), we have the inequalities
  \( \LFMbi{\fonctionprimalbis} \leq 
  \closedconvexhull\fonctionprimalbis \leq
  \fonctionprimalbis \)
  (see Equation~\eqref{eq:LFMbi_fonctionprimal_leq_closedconvexhull_fonctionprimal_leq_fonctionprimal}).
  \medskip

  \noindent$\bullet$ 
  Item~\ref{it:ba_LFM_fonctionprimalbis_leq_{0}} $\iff$ 
  Item~\ref{it:ba_LFM_fonctionprimalbis_c_leq_{0}} because
  \( \LFM{\fonctionprimalbis_{c}} = \Indicator{\na{ \LFM{\fonctionprimalbis} \leq
      0 }} \) by~\eqref{eq:LFM_conic_hull},
  hence \( \ba{ \LFM{\fonctionprimalbis_{c}} \leq 0 } = \ba{ \LFM{\fonctionprimalbis} \leq 0 } \).
  \medskip

  \noindent$\bullet$
  Item~\ref{it:ba_LFM_fonctionprimalbis_c_leq_{0}} 
  $\iff$ Item~\ref{it:the_function_fonctionprimalbis_c_has_a_linear_minorant}
  $\iff$ Item~\ref{it:the_function_LFMbifonctionprimalbis_c_has_a_linear_minorant}
  $\iff$ Item~\ref{it:the_function_closedconvexhull_fonctionprimalbis_c_has_a_linear_minorant}
  is proved in the same way as we have proved that 
  Item~\ref{it:ba_LFM_fonctionprimalbis_leq_{0}} 
  $\iff$ Item~\ref{it:the_function_fonctionprimalbis_has_a_linear_minorant}
  $\iff$ Item~\ref{it:the_function_LFMbifonctionprimalbis_has_a_linear_minorant}
  $\iff$ Item~\ref{it:the_function_closedconvexhull_fonctionprimalbis_has_a_linear_minorant}.

  
  \medskip

  \noindent$\bullet$
  We prove that Item~\ref{it:the_function_closedconvexhull_fonctionprimalbis_c_has_a_linear_minorant}
  $\implies$ Item~\ref{it:closedconvexhull_RR++_StrictEpigraph_fonctionprimalbis_cap_=}.
  %
  The proof is by contradiction.
  Suppose that \( \np{0,-1} \in \closedconvexhull\np{\RR_{++}\StrictEpigraph\fonctionprimalbis}\).
  As the function~\( \closedconvexhull\np{\fonctionprimalbis_{c}} \) has a linear minorant,
  by Item~\ref{it:the_function_closedconvexhull_fonctionprimalbis_c_has_a_linear_minorant}, there exists
  \( \dual\in\DUAL \) such that \( \closedconvexhull\np{\fonctionprimalbis_{c}}\np{\primal} \geq -\nscal{\primal}{\dual} \),
  for all \( \primal\in\PRIMAL \).
  As \( \np{0,-1} \in \closedconvexhull\np{\RR_{++}\StrictEpigraph\fonctionprimalbis}=
  \closedconvexhull\np{\StrictEpigraph\fonctionprimalbis_{c}}=
  \Epigraph~\closedconvexhull\np{\fonctionprimalbis_{c}} \)
  by~\eqref{eq:Epigraph_closedconvexhull_fonctionprimal_c}, we get that 
  \( -1 \geq \closedconvexhull\np{\fonctionprimalbis_{c}}\np{0} \geq -\nscal{0}{\dual}=0 \),
  hence $-1 \geq 0$ which is impossible.
  As a consequence, we get that
  \( \np{0,-1} \not\in \closedconvexhull\np{\RR_{++}\StrictEpigraph\fonctionprimalbis}\),
  that is, we have proved that
  Item~\ref{it:closedconvexhull_RR++_StrictEpigraph_fonctionprimalbis_cap_=} holds true.

  \medskip

  \noindent$\bullet$
  We prove that Item~\ref{it:closedconvexhull_RR++_StrictEpigraph_fonctionprimalbis_cap_=}
  $\implies$ Item~\ref{it:ba_LFM_fonctionprimalbis_leq_{0}}.
  Suppose that \( \np{0,-1} \not\in \closedconvexhull\np{\RR_{++}\StrictEpigraph\fonctionprimalbis}\).
  As \( \np{0,-1} \) is compact and \( \closedconvexhull\np{\RR_{++}\StrictEpigraph\fonctionprimalbis}\)
  is closed convex, there exists a nonzero \( \np{\dual,r}\in\DUAL\times\RR \) such that
  for all \( \np{\primal,t} \in \closedconvexhull\np{\RR_{++}\StrictEpigraph\fonctionprimalbis}\),
  \( \nscal{\primal}{\dual} +rt < \nscal{0}{\dual} +r\times (-1) = -r \)
  (see for example \cite[Theorem~1.1.5]{Zalinescu:2002}, \cite[Theorem~5.79, Corollary~5.80]{Aliprantis-Border:2006}).
  As \( \closedconvexhull\np{\RR_{++}\StrictEpigraph\fonctionprimalbis}
  = \closedconvexhull\np{\RR_{+}\StrictEpigraph\fonctionprimalbis}\)
  by~\eqref{eq:Epigraph_closedconvexhull_fonctionprimal_c}, 
  this implies that,
  for all \( \np{\primal,t} \in \StrictEpigraph\fonctionprimalbis\) and for all \( \lambda\geq 0\),
  \( \lambda\np{\nscal{\primal}{\dual} +rt} < -r \).
  With \( \lambda=0 \), we get \( 0 <-r \), that is, \( r < 0 \) and, without loss of generality, \( r=-1\). 
  By leeting \( \lambda \to +\infty \) in the inequality
  \( \nscal{\primal}{\dual} -t < -1/\lambda \), we get that \( \nscal{\primal}{\dual} -t \leq 0 \)
  for all \( \np{\primal,t} \in \StrictEpigraph\fonctionprimalbis\).
  This gives
  \[
    \LFM{\fonctionprimalbis}\np{\dual}
    = \sup_{\primal\in\PRIMAL} \nscal{\primal}{\dual} -\fonctionprimalbis\np{\primal}
    = \sup_{\primal\in\PRIMAL, \fonctionprimalbis\np{\primal} < t} \nscal{\primal}{\dual} -t
    = \sup_{\np{\primal,t} \in \StrictEpigraph\fonctionprimalbis}\nscal{\primal}{\dual} -t \leq 0
    \eqfinv
  \]
  that is, Item~\ref{it:ba_LFM_fonctionprimalbis_leq_{0}} holds true.
  \medskip

  \noindent$\bullet$
  We prove that Item~\ref{it:closedconvexhull_RR++_StrictEpigraph_fonctionprimalbis_cap_=}
  is equivalent to
  Item~\ref{it:closedconvexhullnpfonctionprimalbis_cnp0inna0,+infty}
  by contraposition (where the symbol~$\neg$ means the negation of a statement) as follows:
  \begin{align*}
    \neg \bp{ \textrm{Item~\ref{it:closedconvexhullnpfonctionprimalbis_cnp0inna0,+infty}} }
    &\iff
      \closedconvexhull\np{\fonctionprimalbis_{c}}\np{0} \not\in \na{0,+\infty} 
    \\
    &\iff
      \closedconvexhull\np{\fonctionprimalbis_{c}}\np{0}=-\infty
      \tag{as \( \closedconvexhull\np{\fonctionprimalbis_{c}}\np{0} \in
      \na{-\infty,0,+\infty} \) by~\eqref{eq:1-homogeneous_function_zero}}       
    \\
    &\iff
      \na{0}\times\RR \subset \Epigraph~\closedconvexhull\np{\fonctionprimalbis_{c}} 
      \tag{by definition of \( \Epigraph~\closedconvexhull\np{\fonctionprimalbis_{c}} \)}
    \\
    &\iff
      \na{0}\times \OpenIntervalOpen{-\infty}{0} \subset \Epigraph~\closedconvexhull\np{\fonctionprimalbis_{c}} 
      \tag{because \( \Epigraph~\closedconvexhull\np{\fonctionprimalbis_{c}} \) is an epigraph}
    \\
    &\iff
      \na{0}\times \OpenIntervalOpen{-\infty}{0} \subset \closedconvexhull\np{\RR_{++}\StrictEpigraph\fonctionprimalbis}
      \tag{as \( \closedconvexhull\np{\RR_{++}\StrictEpigraph\fonctionprimalbis}=
      \Epigraph~\closedconvexhull\np{\fonctionprimalbis_{c}} \) by~\eqref{eq:Epigraph_closedconvexhull_fonctionprimal_c}}
    \\
    &\iff
      \np{0,-1} \in \closedconvexhull\np{\RR_{++}\StrictEpigraph\fonctionprimalbis}
      \tag{as \( \closedconvexhull\np{\RR_{++}\StrictEpigraph\fonctionprimalbis} \) is a cone}
    \\
    &\iff
      \neg \bp{ \textrm{Item~\ref{it:closedconvexhull_RR++_StrictEpigraph_fonctionprimalbis_cap_=}} }
      \eqfinp 
  \end{align*}
  \medskip

  \noindent$\bullet$
  We have that Item~\ref{it:LFMbifonctionprimalbis_cnp0inna0,+infty} implies
  Item~\ref{it:closedconvexhullnpfonctionprimalbis_cnp0inna0,+infty} 
  because, by the very definition~\eqref{eq:closed_convex_hull_of_a_function}
  of the function~\( \closedconvexhull\np{\fonctionprimalbis_{c}} \), we have the inequalities
  \( \LFMbi{\fonctionprimalbis}_{c}\np{0} \leq 
  \closedconvexhull\np{\fonctionprimalbis_{c}}\np{0} \)
  (see Equation~\eqref{eq:LFMbi_fonctionprimal_leq_closedconvexhull_fonctionprimal_leq_fonctionprimal}).
  \medskip

  \noindent$\bullet$
  Finally, Item~\ref{it:ba_LFM_fonctionprimalbis_c_leq_{0}} implies
  Item~\ref{it:LFMbifonctionprimalbis_cnp0inna0,+infty}
  because, \( \LFM{\fonctionprimalbis_{c}} = \Indicator{\na{ \LFM{\fonctionprimalbis} \leq
      0 }} \) by~\eqref{eq:LFM_conic_hull},
  hence  \( \LFMbi{\fonctionprimalbis_{c}}\np{0}
  = \LFMr{\Indicator{\na{ \LFM{\fonctionprimalbis}\leq 0}}}\np{0}
  =\SupportFunction{\na{ \LFM{\fonctionprimalbis}\leq 0}}\np{0} > -\infty \) --- where 
\( \SupportFunction{} \) denotes the support function --- 
  as \( \na{ \LFM{\fonctionprimalbis}\leq 0 } \neq\emptyset \) by
  Item~\ref{it:ba_LFM_fonctionprimalbis_c_leq_{0}}.
\end{proof}

\subsection{Proof of Proposition~\ref{pr:the_classic_S-procedure_is_valid}}
\label{Proof_of_Proposition_pr:the_classic_S-procedure_is_valid}

\begin{proof}
  
  \noindent$\bullet$
  We prove that
  Item~\ref{it:the_classic_S-procedure_is_valid}
  $\iff$
  Item~\ref{it:the_classic_S-procedure_is_valid_RR_++^constraintdim_StrictEpigraph}.

  For that purpose, we first show that the classic S-procedure
  in~\S\ref{The_S-procedure_revisited_with_the_conditional_infimum} is a special
  case of the general S-procedure
  in~\S\ref{New_necessary_and_sufficient_conditions_for_the_S-procedure_to_be_valid}.
  We follow \cite{Volle-Barro:2024}, and we take \( \PRIMAL= \DUAL = \RR^{\constraintdim}\) and 
  \begin{subequations}
    \begin{align}
      \fonctiondepart\np{\depart,\primal_{1},\ldots,\primal_{\constraintdim}}
      &=
        \fonctiondepart_{0}\np{\depart} \UppPlus \sum_{j=1}^{\constraintdim}  \Indicator{\na{-\fonctiondepart_j\np{\depart} \leq \primal_j}}
        \eqsepv   \forall \np{\depart,\primal_{1},\ldots,\primal_{\constraintdim}} \in \DEPART\times\RR^{\constraintdim} 
        \eqfinp
        \intertext{In that case, the value function~\eqref{eq:S_fonctionprimal-procedure_value_function}
        is given, for any \( \np{\primal_{1},\ldots,\primal_{\constraintdim}} \in \RR^{\constraintdim} \), by}
        \fonctionprimalbis\np{\primal_{1},\ldots,\primal_{\constraintdim}}
      & =
        \inf_{\depart \in \DEPART} \bp{
        \fonctiondepart_{0}\np{\depart} \UppPlus \sum_{j=1}^{\constraintdim}  \Indicator{\na{-\fonctiondepart_j\np{\depart} \leq \primal_j}} }
        \tag{by~\eqref{eq:S_fonctionprimal-procedure_value_function}}
      \\
      &=
        \inf \defset{ \fonctiondepart_{0}\np{\depart} }%
        { \depart\in\DEPART \eqsepv -\fonctiondepart_{1}\np{\depart} \leq\primal_{1}, \ldots,
        -\fonctiondepart_{\constraintdim}\np{\depart} \leq\primal_{\constraintdim} }
        \nonumber
      \\
      &=
        \InfCond{\fonctiondepart_{0}}{ \defset{ \depart\in\DEPART }%
        { -\fonctiondepart_{1}\np{\depart} \leq\primal_{1}, \ldots,
        -\fonctiondepart_{\constraintdim}\np{\depart} \leq\primal_{\constraintdim} } }
        \tag{by~\eqref{eq:subset_conditional_infimum}}
      \\
      &=
\BInfCond{\fonctiondepart_{0}}{\Epigraph_{\RR_{+}^{\constraintdim}}
        \np{-\fonctiondepart_{1}, \ldots,-\fonctiondepart_{\constraintdim} }}\np{\primal_{1},\ldots,\primal_{\constraintdim}}
        \tag{by~\eqref{eq:value_function_of_the_classic_mathematical_programming_minimization_problem}
        and definition~\eqref{eq:RR_+p-epigraph} 
        of \( \Epigraph_{\RR_{+}^{\constraintdim}}\np{-\fonctiondepart_{1}, \ldots,-\fonctiondepart_{\constraintdim} } \) }
    \end{align}
  \end{subequations}
We have obtained that \( \fonctionprimalbis = \BInfCond{\fonctiondepart_{0}}{\Epigraph_{\RR_{+}^{\constraintdim}}
      \np{-\fonctiondepart_{1}, \ldots,-\fonctiondepart_{\constraintdim} }} \).
  Then, we use the equivalence between
  Item~\ref{it:the_S_fonctionprimal-procedure_is_valid}
  and
  Item~\ref{it:closedconvexhullnpfonctionprimalbis_cnp0inna0,+infty}
  in
  Proposition~\ref{pr:New_necessary_and_sufficient_conditions_for_the_S-procedure_to_be_valid},
  and get that the {S$_{\fonctionprimal}$-procedure} is valid.
  \medskip

  \noindent$\bullet$
  We prove that
  Item~\ref{it:the_classic_S-procedure_is_valid}
  $\iff$
  Item~\ref{it:the_classic_S-procedure_is_valid_StrictEpigraph}.

  For that purpose, we first show that the classic S-procedure
  in~\S\ref{The_S-procedure_revisited_with_the_conditional_infimum}
  is a special case of the general S-procedure
  in~\ref{New_necessary_and_sufficient_conditions_for_the_S-procedure_to_be_valid}.
  We take \( \PRIMAL= \DUAL = \RR^{\constraintdim}\) and 
  \begin{subequations}
    \begin{align*}
      \fonctiondepart\np{\depart,\primal_{1},\ldots,\primal_{\constraintdim}}
      &=
        \fonctiondepart_{0}\np{\depart} \UppPlus \sum_{j=1}^{\constraintdim}
        \Indicator{\na{-\fonctiondepart_j\np{\depart}=\primal_j}}
        \UppPlus \Indicator{\RR_{-}^{\constraintdim}}\np{\primal_{1},\ldots,\primal_{\constraintdim}}
        \eqsepv   \forall \np{\depart,\primal_{1},\ldots,\primal_{\constraintdim}} \in \DEPART\times\RR^{\constraintdim} 
        \eqfinp
        \intertext{In that case, the value function~\eqref{eq:S_fonctionprimal-procedure_value_function} is given, for any \( \np{\primal_{1},\ldots,\primal_{\constraintdim}} \in \RR^{\constraintdim} \), by}
        \fonctionprimalbis\np{\primal_{1},\ldots,\primal_{\constraintdim}}
      & =
        \inf_{\depart \in \DEPART} \bp{
        \fonctiondepart_{0}\np{\depart} \UppPlus \sum_{j=1}^{\constraintdim}  \Indicator{\na{-\fonctiondepart_j\np{\depart} = \primal_j}} }
        \UppPlus \Indicator{\RR_{-}^{\constraintdim}}\np{\primal_{1},\ldots,\primal_{\constraintdim}}
        \tag{by~\eqref{eq:S_fonctionprimal-procedure_value_function}}
      \\
      &=
        \inf \defset{ \fonctiondepart_{0}\np{\depart} }%
        { \depart\in\DEPART \eqsepv -\fonctiondepart_{1}\np{\depart} =\primal_{1}, \ldots,
        -\fonctiondepart_{\constraintdim}\np{\depart} =\primal_{\constraintdim} }
        \UppPlus \Indicator{\RR_{-}^{\constraintdim}}\np{\primal_{1},\ldots,\primal_{\constraintdim}}
        \nonumber
      \\
      &=
        \InfCond{\fonctiondepart_{0}}{ \defset{ \depart\in\DEPART }%
        { -\fonctiondepart_{1}\np{\depart} =\primal_{1}, \ldots,
        -\fonctiondepart_{\constraintdim}\np{\depart} =\primal_{\constraintdim} } }
        \UppPlus \Indicator{\RR_{-}^{\constraintdim}}\np{\primal_{1},\ldots,\primal_{\constraintdim}}
        \tag{by~\eqref{eq:subset_conditional_infimum}}
      \\
      &=
        \InfCond{\fonctiondepart_{0}}{\Epigraph_{}
        \np{-\fonctiondepart_{1}, \ldots,-\fonctiondepart_{\constraintdim} }}\np{\primal_{1},\ldots,\primal_{\constraintdim}}
        \UppPlus \Indicator{\RR_{-}^{\constraintdim}}\np{\primal_{1},\ldots,\primal_{\constraintdim}}
        \eqfinp
        \tag{by~\eqref{eq:mapping_ConditionalInfimum}}
    \end{align*}
  \end{subequations}
We have obtained that \( \fonctionprimalbis =
        \InfCond{\fonctiondepart_{0}}{\np{-\fonctiondepart_{1},
        \ldots,-\fonctiondepart_{\constraintdim} }}
        \UppPlus \Indicator{\RR_{-}^{\constraintdim}} \).
  Then, we use the equivalence between
  Item~\ref{it:the_S_fonctionprimal-procedure_is_valid}
  and
  Item~\ref{it:closedconvexhullnpfonctionprimalbis_cnp0inna0,+infty}
  in
  Proposition~\ref{pr:New_necessary_and_sufficient_conditions_for_the_S-procedure_to_be_valid},
  and get that the {S$_{\fonctionprimal}$-procedure} is valid.
\end{proof}

\section{Recalls on closed convex and conic hulls of a function}
\label{Recalls_on_closed_convex_and_conic_hulls_of_a_function}

Here, we gather well-known definitions and properties, and stress the use of the
strict epigraph of a function. For any set~$\UNCERTAIN$ and function
\( \fonctionuncertain \colon \UNCERTAIN \to \barRR \), its \emph{strict epigraph} is
\( \epigraph_{s}\fonctionuncertain= \defset{
  \np{\uncertain,t}\in\UNCERTAIN\times\RR}%
{\fonctionuncertain\np{\uncertain} < t} \).  

\subsubsubsection{Closed convex hull of a function}

Let \( \PRIMAL \) be a topological vector space and \( \fonctionprimal \colon \PRIMAL \to \barRR \) be a function.
The \emph{lower semicontinuous convex envelope},
or \emph{closed convex hull}, 
of the function 
\( \fonctionprimal \colon \PRIMAL \to \barRR \) is the function
\( \closedconvexhull{\fonctionprimal} \colon \PRIMAL \to \barRR \) given by
\begin{equation}
  \closedconvexhull{\fonctionprimal}
  =
  \sup \defset{ \fonctiontrois \leq \fonctionprimal }%
  { \fonctiontrois \colon \PRIMAL \to \barRR \textrm{ is lower semicontinuous convex}}
  \eqfinp
\label{eq:closed_convex_hull_of_a_function}  
\end{equation}

\begin{lemma}
  Let \( \PRIMAL, \DUAL \) be paired vector spaces
  (see~\S\ref{Conditional_infimum_and_one-sided_homogeneous_sublinear_couplings}),
  and \( \fonctionprimal \colon \PRIMAL \to \barRR \) be a function.
  We have that
  \begin{subequations}
    \begin{align}
      \LFMbi{\fonctionprimal}
      & \leq 
        \closedconvexhull\fonctionprimal \leq
        \fonctionprimal
        \eqfinv        
        \label{eq:LFMbi_fonctionprimal_leq_closedconvexhull_fonctionprimal_leq_fonctionprimal}        
      \\
      \Epigraph\np{\closedconvexhull\fonctionprimal}
      &=
        \closedconvexhull\np{\Epigraph\fonctionprimal}
        = \closedconvexhull\np{\StrictEpigraph\fonctionprimal}
        \eqfinp
        \label{eq:Epigraph_closedconvexhull_fonctionprimal_closedconvexhull_Epigraph_fonctionprimal}
    \end{align}
  \end{subequations}
\end{lemma}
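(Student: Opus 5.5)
We prove the two statements of the lemma separately. \textbf{First inequality chain~\eqref{eq:LFMbi_fonctionprimal_leq_closedconvexhull_fonctionprimal_leq_fonctionprimal}.}

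The inequality \( \closedconvexhull\fonctionprimal \leq \fonctionprimal \) is immediate from the definition~\eqref{eq:closed_convex_hull_of_a_function}. Every function in the family over which the supremum is taken lies below~\( \fonctionprimal \), and the supremum of functions bounded above by~\( \fonctionprimal \) is still bounded above by~\( \fonctionprimal \).

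For \( \LFMbi{\fonctionprimal} \leq \closedconvexhull\fonctionprimal \), it suffices to show that \( \LFMbi{\fonctionprimal} \) belongs to the family in~\eqref{eq:closed_convex_hull_of_a_function}. First, by the Fenchel--Young inequality \( \nscal{\primal}{\dual} - \LFM{\fonctionprimal}\np{\dual} \leq \fonctionprimal\np{\primal} \), taken with the Moreau lower and upper addition conventions, the supremum over~\( \dual \) gives \( \LFMbi{\fonctionprimal} \leq \fonctionprimal \). Second, \( \LFMbi{\fonctionprimal} \) is a pointwise supremum of the functions \( \primal \mapsto \nscal{\primal}{\dual} - \LFM{\fonctionprimal}\np{\dual} \). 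Each of these is either a continuous affine function or one of the constants~\( \pm\infty \). Constant functions are \lsc\ convex, and so are continuous affine ones, since the pairing makes \( \primal\mapsto\nscal{\primal}{\dual} \) continuous. A supremum of \lsc\ convex functions is \lsc\ convex, so \( \LFMbi{\fonctionprimal} \) is a candidate in the family and the inequality follows.

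\textbf{Epigraph identities~\eqref{eq:Epigraph_closedconvexhull_fonctionprimal_closedconvexhull_Epigraph_fonctionprimal}.}

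\emph{Step 1: two closures agree.} We first show \( \closedconvexhull\np{\Epigraph\fonctionprimal} = \closedconvexhull\np{\StrictEpigraph\fonctionprimal} \). On one side, \( \StrictEpigraph\fonctionprimal \subset \Epigraph\fonctionprimal \). On the other side, every \( \np{\primal,t} \in \Epigraph\fonctionprimal \) is the limit of the points \( \np{\primal,t+1/k} \in \StrictEpigraph\fonctionprimal \). Hence \( \Epigraph\fonctionprimal \) lies in the closure of \( \StrictEpigraph\fonctionprimal \), and the two closed convex hulls coincide.

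\emph{Step 2: \( C \) is the epigraph of a candidate function.} Set \( C = \closedconvexhull\np{\Epigraph\fonctionprimal} \). Adding \( \na{0}\times\RR_+ \) preserves the upward-closed property of \( \Epigraph\fonctionprimal \); it also preserves convexity, and it is preserved under closure. Hence \( C \) is a closed convex set stable under \( \np{\primal,t}\mapsto\np{\primal,t+s} \), \( s\geq 0 \). Define \( g\np{\primal}=\inf\nset{t}{\np{\primal,t}\in C} \). Then \( \Epigraph g = C \), where closedness is used to include the infimum when it is finite. As \( C \) is closed and convex, \( g \) is \lsc\ and convex. From \( \Epigraph\fonctionprimal \subset C \) we get \( g \leq \fonctionprimal \), so \( g \leq \closedconvexhull\fonctionprimal \), which gives \( \Epigraph\np{\closedconvexhull\fonctionprimal} \subset C \).

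\emph{Step 3: reverse inclusion.} The function \( \closedconvexhull\fonctionprimal \) is itself \lsc\ and convex, being a supremum of such functions. Its epigraph is therefore closed and convex, and it contains \( \Epigraph\fonctionprimal \) because \( \closedconvexhull\fonctionprimal\leq\fonctionprimal \). Consequently \( C \subset \Epigraph\np{\closedconvexhull\fonctionprimal} \), which closes the argument.

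The main delicate point is the identification of \( g \) in Step~2, namely that a closed convex upward-closed set is exactly the epigraph of its lower boundary function. Some care is needed when \( g \) takes the value \( -\infty \) on a vertical line: the whole line \( \na{\primal}\times\RR \) then lies in \( C \), and this is consistent with the definition of the epigraph.
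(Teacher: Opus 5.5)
Your proof is correct, and it is more self-contained than the paper's. The paper cites \cite{Zalinescu:2002} for the chain \( \LFMbi{\fonctionprimal} \leq \closedconvexhull\fonctionprimal \leq \fonctionprimal \) and for \( \Epigraph\np{\closedconvexhull\fonctionprimal} = \closedconvexhull\np{\Epigraph\fonctionprimal} \). You prove both. The first follows from Fenchel--Young plus the observation that each minorant \( \primal\mapsto\nscal{\primal}{\dual}-\LFM{\fonctionprimal}\np{\dual} \) is continuous affine or a constant \( \pm\infty \). The second follows by writing the upward-closed closed convex set \( C \) as the epigraph of its lower boundary function and comparing with \( \closedconvexhull\fonctionprimal \) in both directions.

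The paper only actually argues the equality with the strict epigraph, and there your route differs. The paper writes a point of \( \closedconvexhull\np{\Epigraph\fonctionprimal} \) as the limit of a sequence of convex combinations of epigraph points and lifts every term by \( 1/n \). You simply note that \( \Epigraph\fonctionprimal \subset \overline{\StrictEpigraph\fonctionprimal} \), because \( \np{\primal,t+1/k}\to\np{\primal,t} \). Your version is shorter and valid in any topological vector space, since it only uses that the limit of a convergent sequence lies in the closure. The paper's version tacitly treats closure points as sequential limits; that is fine in \( \RR^{\spacedim} \) but would need nets in general paired spaces.

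One expository fix: your sentence justifying that \( C \) is upward closed is garbled. Say instead that the shift \( \np{\primal,t}\mapsto\np{\primal,t+s} \), \( s\geq 0 \), maps \( \convexhull\np{\Epigraph\fonctionprimal} \) into itself (shift each point of a convex combination). Being a homeomorphism, it then maps the closure into the closure.
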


\begin{proof}
  We prove~\eqref{eq:LFMbi_fonctionprimal_leq_closedconvexhull_fonctionprimal_leq_fonctionprimal}.
  By the very definitions of the functions~\( \LFMbi{\fonctionprimal} \)
  and~\( \closedconvexhull\fonctionprimal \),
  we have the inequalities~\eqref{eq:LFMbi_fonctionprimal_leq_closedconvexhull_fonctionprimal_leq_fonctionprimal}
  \cite[Theorem~2.3.1 (iv)]{Zalinescu:2002}.

  We prove~\eqref{eq:Epigraph_closedconvexhull_fonctionprimal_closedconvexhull_Epigraph_fonctionprimal}.
  The equality 
  \( \Epigraph\np{\closedconvexhull\fonctionprimal}
  = \closedconvexhull\np{\Epigraph\fonctionprimal} \) follows from the very definition
  of the function~\( \closedconvexhull\fonctionprimal \)
  \cite[p.~63]{Zalinescu:2002}.
  As \( \StrictEpigraph\fonctionprimal \subset \Epigraph\fonctionprimal \),
  hence \( \closedconvexhull\np{\StrictEpigraph\fonctionprimal}
  \subset \closedconvexhull\np{\Epigraph\fonctionprimal} \),
  there remains to show that
  \( \closedconvexhull\np{\Epigraph\fonctionprimal} \subset
  \closedconvexhull\np{\StrictEpigraph\fonctionprimal} \).
  Let \( \np{\uncertain,t} \in \closedconvexhull\np{\Epigraph\fonctionprimal} \) be the limit of a sequence
  \( \sequence{\np{\uncertain_{n},t_{n}}}{n\in\NN^*} \subset \convexhull\np{\Epigraph\fonctionprimal} \),
  that is, for all \( n\in\NN^* \), \( \uncertain_{n}=\sum_{k\in\Lambda_{n}} \lambda_{n}^{k}\uncertain_{n}^{k}\),
  \( t_{n}=\sum_{k\in\Lambda_{n}} \lambda_{n}^{k}t_{n}^{k}\),
  where the \( \sequence{\lambda_{n}^{k}}{k\in\Lambda_{n}} \) are the coefficients of a convex combination,
  and \( \fonctionprimal\np{\uncertain_{n}^{k}} \leq t_{n}^{k} \) for all \( k\in\Lambda_{n} \).
  It is clear that \( \np{\uncertain,t} \) is the limit of the sequence
  \( \sequence{\np{\uncertain_{n},t_{n}+1/n}}{n\in\NN^*} \subset \convexhull\np{\StrictEpigraph\fonctionprimal} \),
  because \( t_{n}+1/n=\sum_{k\in\Lambda_{n}} \lambda_{n}^{k}\np{t_{n}^{k}+1/n} \),
  where \( \fonctionprimal\np{\uncertain_{n}^{k}} \leq t_{n}^{k} < t_{n}^{k}+1/n\) for all \( k\in\Lambda_{n} \).
  This ends the proof.
\end{proof}

\subsubsubsection{Conic hull of a function}

\begin{subequations}
  Let $\PRIMAL$ be a (real) vector space. 
  We say that a function \( \fonctionprimal \colon \PRIMAL \to \barRR \) is \emph{$1$-homogeneous} if 
  \begin{equation}
    {\fonctionprimal\np{\lambda\primal} = \lambda \fonctionprimal\np{\primal}}
    \eqsepv \forall \lambda \in {\RR_{++}}
    \eqsepv \forall \primal \in \PRIMAL
    \eqfinv
    \label{eq:1-homogeneous_function}
  \end{equation}
  or, equivalently, if \( \Epigraph\fonctionprimal \) (or \( \StrictEpigraph\fonctionprimal \)) is a cone.
  Then, necessarily, we have that
  \begin{equation}
    \fonctionprimal\np{0} \in \na{-\infty,0,+\infty}
    \eqfinp
    \label{eq:1-homogeneous_function_zero}
  \end{equation}
  
  The \emph{$1$-homogeneous envelope},
  or \emph{conic hull},
  of the function \( \fonctionprimal \colon \PRIMAL \to \barRR \) is the function
  \( \fonctionprimal_{c} \colon \PRIMAL \to \barRR \) given by
  \begin{equation}
    \fonctionprimal_{c}
    =
    \sup \defset{ \fonctiontrois \leq \fonctionprimal }%
    { \fonctiontrois \colon \PRIMAL \to \barRR \textrm{ is $1$-homogeneous}}
    \eqfinp
    \label{eq:1-homogeneous_envelope}   
  \end{equation}  
\end{subequations}

\begin{subequations}
  \begin{lemma}
    For any {function}~$\fonctionprimal \colon \PRIMAL \to {\barRR}$,
    we have that 
    \begin{equation}
      \StrictEpigraph\fonctionprimal_{c}
      = \RR_{++}\StrictEpigraph\fonctionprimal
      \eqfinp
      \label{eq:StrictEpigraph_fonctionprimal_c_RR++_StrictEpigraph_fonctionprimal}
    \end{equation}
    Its \emph{closed convex conic hull} \( \closedconvexhull\fonctionprimal_{c} \colon \PRIMAL \to \barRR \)
    is $1$-homogeneous, with
    \begin{equation}
      \Epigraph\np{\closedconvexhull\fonctionprimal_{c}}
      =
      \closedconvexhull\np{\RR_{++}\StrictEpigraph\fonctionprimal}
      = \closedconvexhull\np{\RR_{+}\StrictEpigraph\fonctionprimal}
      \eqfinp
      \label{eq:Epigraph_closedconvexhull_fonctionprimal_c}
    \end{equation}
    %
    Moreover, we have that
    \begin{equation}
      \LFM{\fonctionprimal_{c}} = \Indicator{\na{ \LFM{\fonctionprimal} \leq 0}}
      \eqfinp 
      \label{eq:LFM_conic_hull}  
    \end{equation}
  \end{lemma}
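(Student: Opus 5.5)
The three claims are proved in order. Each relies only on the conic hull formula~\eqref{eq:conic_hull}, its epigraphical version~\eqref{eq:StrictEpigraph_conic_hull}, and the previous lemma~\eqref{eq:Epigraph_closedconvexhull_fonctionprimal_closedconvexhull_Epigraph_fonctionprimal}.

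\emph{First claim,~\eqref{eq:StrictEpigraph_fonctionprimal_c_RR++_StrictEpigraph_fonctionprimal}.} This is the same computation as for the strict epigraph of a conditional infimum. The function $\primal\mapsto\inf_{\lambda>0}\lambda\fonctionprimal\np{\primal/\lambda}$ satisfies $\inf_{\lambda>0}\lambda\fonctionprimal\np{\primal/\lambda}<t$ if and only if there is $\lambda>0$ with $\fonctionprimal\np{\primal/\lambda}<t/\lambda$. This in turn holds if and only if $\np{\primal,t}\in\lambda\StrictEpigraph\fonctionprimal$.

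The one point needing care is that the formula~\eqref{eq:conic_hull} coincides with the envelope~\eqref{eq:1-homogeneous_envelope}. The set $\RR_{++}\StrictEpigraph\fonctionprimal$ is a cone of the form $\StrictEpigraph$ of some function, namely the function defined by the infimum formula. That function is therefore $1$-homogeneous and lies below $\fonctionprimal$ (take $\lambda=1$). Conversely, any $1$-homogeneous $\fonctiontrois\leq\fonctionprimal$ satisfies $\StrictEpigraph\fonctiontrois\supset\StrictEpigraph\fonctionprimal$. Since $\StrictEpigraph\fonctiontrois$ is a cone, it then contains $\RR_{++}\StrictEpigraph\fonctionprimal$, so $\fonctiontrois$ lies below the infimum formula. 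Hence the supremum in~\eqref{eq:1-homogeneous_envelope} is attained by the infimum formula.

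\emph{Second claim.} Applying~\eqref{eq:Epigraph_closedconvexhull_fonctionprimal_closedconvexhull_Epigraph_fonctionprimal} to $\fonctionprimal_{c}$ gives
\[
\Epigraph\np{\closedconvexhull\fonctionprimal_{c}}=\closedconvexhull\np{\StrictEpigraph\fonctionprimal_{c}}=\closedconvexhull\np{\RR_{++}\StrictEpigraph\fonctionprimal}
\]
by the first claim. The closed convex hull of a cone is a closed convex cone. It is also an epigraph, so $\closedconvexhull\fonctionprimal_{c}$ is $1$-homogeneous. Finally, $\RR_{+}\StrictEpigraph\fonctionprimal$ differs from $\RR_{++}\StrictEpigraph\fonctionprimal$ only by the origin, when $\StrictEpigraph\fonctionprimal\neq\emptyset$. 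The origin lies in the closure of $\RR_{++}\StrictEpigraph\fonctionprimal$ because $\lambda\np{\primal,t}\to0$ as $\lambda\to0^{+}$. When $\StrictEpigraph\fonctionprimal=\emptyset$ both sets are empty. The two closed convex hulls therefore agree.

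\emph{Third claim,~\eqref{eq:LFM_conic_hull}.} This is a direct conjugate computation:
\[
\LFM{\fonctionprimal_{c}}\np{\dual}=\sup_{\np{\primal,t}\in\StrictEpigraph\fonctionprimal_{c}}\bp{\nscal{\primal}{\dual}-t}=\sup_{\lambda>0}\;\sup_{\np{\primal,t}\in\StrictEpigraph\fonctionprimal}\lambda\bp{\nscal{\primal}{\dual}-t}=\sup_{\lambda>0}\lambda\LFM{\fonctionprimal}\np{\dual},
\]
where the expression of the conjugate via the strict epigraph was already used in the proof of Proposition~\ref{pr:New_necessary_and_sufficient_conditions_for_the_S-procedure_to_be_valid}. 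It remains to evaluate $\sup_{\lambda>0}\lambda s$ for $s=\LFM{\fonctionprimal}\np{\dual}\in\barRR$. If $s\leq0$, including $s=-\infty$, the supremum is $0$. If $s>0$, including $s=+\infty$, it is $+\infty$.

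This gives the indicator $\Indicator{\na{\LFM{\fonctionprimal}\leq0}}$, except in the degenerate case $\StrictEpigraph\fonctionprimal=\emptyset$, i.e.\ $\fonctionprimal\equiv+\infty$. There $\LFM{\fonctionprimal}\equiv-\infty$, whereas the indicator would be $0$. I expect this corner case, together with the careful handling of $\pm\infty$ in the supremum over $\lambda$, to be the main obstacle. I would handle it by treating $\fonctionprimal\equiv+\infty$ separately, noting that then $\fonctionprimal_{c}\equiv+\infty$ and $\LFM{\fonctionprimal_{c}}\equiv-\infty$, and by assuming $\StrictEpigraph\fonctionprimal\neq\emptyset$ elsewhere (this is the case in all uses of~\eqref{eq:LFM_conic_hull} in the paper).
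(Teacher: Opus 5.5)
Your proof is correct and follows the paper's route: the strict-epigraph computation (with the check that the infimum formula is the $1$-homogeneous envelope), then the previous lemma together with $0\in\overline{\RR_{++}\StrictEpigraph\fonctionprimal}$ for the two hulls, and finally $\LFM{\fonctionprimal_{c}}=\sup_{\lambda>0}\lambda\LFM{\fonctionprimal}$ for the conjugate. You are also right that~\eqref{eq:LFM_conic_hull} fails for $\fonctionprimal\equiv+\infty$: the paper's final equality $\sup_{\rho>0}\rho\LFM{\fonctionprimal}\np{\dual}=\Indicator{\na{\LFM{\fonctionprimal}\leq 0}}\np{\dual}$ breaks exactly when $\LFM{\fonctionprimal}\np{\dual}=-\infty$, i.e.\ when $\dom\fonctionprimal=\emptyset$. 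However, correct your own sentence ``including $s=-\infty$, the supremum is $0$'': for $s=-\infty$ one has $\sup_{\lambda>0}\lambda s=-\infty$, which is precisely the degenerate case you then isolate.
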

\end{subequations}

\begin{proof}
  Let us call \( \hat\fonctionprimal_{c} \) the function given by~\eqref{eq:conic_hull}.
  On the one hand, it is easy to see that \( \hat\fonctionprimal_{c} \) 
  is $1$-homogeneous and below~\( \fonctionprimal \). By definition~\eqref{eq:1-homogeneous_envelope},
  we deduce that \( \hat\fonctionprimal_{c} \leq \fonctionprimal_{c} \).
  On the other hand, consider a $1$-homogeneous function $\fonctiontrois \colon \PRIMAL \to \barRR$
  such that $\fonctiontrois(x) \le \fonctionprimal(x)$. Then, for any $\alpha >0$ and any $x\in\PRIMAL$ we have that
  $\fonctiontrois(x) = \alpha \fonctiontrois(x/\alpha) \le  \alpha \fonctionprimal(x/\alpha)$ which implies that
  $\fonctiontrois(x) \le \inf_{\alpha >0}\fonctionprimal(x/\alpha) = \hat\fonctionprimal_{c}(x)$.
  We deduce from definition~\eqref{eq:1-homogeneous_envelope} that \( \fonctionprimal_{c} \leq \hat\fonctionprimal_{c} \).
  We conclude that \( \fonctionprimal_{c}=\hat\fonctionprimal_{c} \), that is,
  the definitions~\eqref{eq:conic_hull} and~\eqref{eq:1-homogeneous_envelope} coincide.
  \medskip

  We have that \( \StrictEpigraph\fonctionprimal_{c}
  = \RR_{++}\StrictEpigraph\fonctionprimal \) because
  \begin{align*}
    \np{\primal,t} \in\StrictEpigraph\fonctionprimal_{c}
    &\iff
      \inf_{\rho >0} \rho\fonctionprimal\np{\primal/\rho}= \fonctionprimal_{c}\np{\primal} < t 
      \tag{by~\eqref{eq:conic_hull} as \( \fonctionprimal_{c}=\hat\fonctionprimal_{c} \)}
    \\
    &\iff
      \exists \rho >0 \eqsepv \rho\fonctionprimal\np{\primal/\rho} < t
    \\
    &\iff
      \exists \rho >0 \eqsepv \frac{1}{\rho} \np{\primal,t} \in\StrictEpigraph\fonctionprimal
      \iff
      \np{\primal,t} \in \RR_{++} \StrictEpigraph\fonctionprimal
      \eqfinp
  \end{align*}
  \medskip


  We prove~\eqref{eq:Epigraph_closedconvexhull_fonctionprimal_c}.
  The equality \( \Epigraph\np{\closedconvexhull\fonctionprimal_{c}}=
  \closedconvexhull\np{\RR_{++}\StrictEpigraph\fonctionprimal} \)
  follows from~\eqref{eq:Epigraph_closedconvexhull_fonctionprimal_closedconvexhull_Epigraph_fonctionprimal}
  and~\eqref{eq:StrictEpigraph_fonctionprimal_c_RR++_StrictEpigraph_fonctionprimal}.
  As \( \closedconvexhull\np{\RR_{++}\StrictEpigraph\fonctionprimal} \subset
  \closedconvexhull\np{\RR_{+}\StrictEpigraph\fonctionprimal} \),
  there remains to show that
  \( \closedconvexhull\np{\RR_{+}\StrictEpigraph\fonctionprimal} \subset
  \closedconvexhull\np{\RR_{++}\StrictEpigraph\fonctionprimal} \).
  We set \( \Cone=\RR_{++}\StrictEpigraph\fonctionprimal \), a cone.
  If \( \Cone=\emptyset \), the above inclusion is true.
  Else, we observe that
  \( \RR_{+}\StrictEpigraph\fonctionprimal= \Cone\cup\na{0} \).
  Then, we have that \( \Cone\cup\na{0} \subset \overline{\Cone}\cup\na{0}= \overline{\Cone}\) 
  because \( 0\in \overline{\Cone} \) since \( \Cone \) is a nonempty cone.
  As \( \overline{\Cone} \subset \overline{\convexhull{\Cone}}=\closedconvexhull{\Cone} \),
  we get that \( \Cone\cup\na{0} \subset \closedconvexhull{\Cone} \), hence that
  \( \closedconvexhull\np{\Cone\cup\na{0}} \subset \closedconvexhull{\Cone} \).
  Thus, we have shown that \( \closedconvexhull\np{\RR_{+}\StrictEpigraph\fonctionprimal} \subset
  \closedconvexhull\np{\RR_{++}\StrictEpigraph\fonctionprimal} \),
  hence that \( \closedconvexhull\np{\RR_{+}\StrictEpigraph\fonctionprimal} =
  \closedconvexhull\np{\RR_{++}\StrictEpigraph\fonctionprimal} \).
  \medskip

  Finally Equation~\eqref{eq:LFM_conic_hull} follows from:  
  for any \( \dual\in\DUAL \), we have that
  \begin{align*}
    \LFM{\fonctionprimal_{c}}\np{\dual}
    &=
      \sup_{\primal\in\PRIMAL} \bp{ \nscal{\primal}{\dual} - \fonctionprimal_{c}\np{\primal} }
    \\
    &=
      \sup_{\primal\in\PRIMAL} \sup_{\rho >0} 
      \bp{ \nscal{\primal}{\dual} - \rho\fonctionprimal\np{\primal/\rho} }
      \tag{by~\eqref{eq:conic_hull}}
    \\
    &=
      \sup_{\primal\in\PRIMAL} \sup_{\rho >0} 
      \bp{ \rho\nscal{\primal/\rho}{\dual} - \rho\fonctionprimal\np{\primal/\rho} }
    \\
    &=
      \sup_{\rho >0} \rho\sup_{\primal\in\PRIMAL} 
      \bp{ \nscal{\primal}{\dual} - \fonctionprimal\np{\primal} }
    \\
    &=
      \sup_{\rho >0} \rho \LFM{\fonctionprimal}\np{\dual}
    \\
    &=
      \Indicator{\na{ \LFM{\fonctionprimal} \leq 0 }}\np{\dual}
      \eqfinp 
  \end{align*}
  This ends the proof.
\end{proof}

\newcommand{\noopsort}[1]{} \ifx\undefined\allcaps\def\allcaps#1{#1}\fi

\end{document}